\theoremstyle{plain} 
\newtheorem{theorem}{Theorem}[section]
\newtheorem{lemma}[theorem]{Lemma}
\newtheorem{corollary}[theorem]{Corollary}
\newtheorem{proposition}[theorem]{Proposition}
\theoremstyle{definition} 
\newtheorem{definition}[theorem]{Definition}
\newcommand{\Z}{\mathbb{Z}}
\newcommand{\bR}{\mathbb{R}}
\newcommand{\bS}{\mathbb{S}}
\newcommand{\bH}{\mathbb{H}}
\DeclareMathOperator{\arcosh}{arcosh}
\begin{document}

\begin{titlepage}

\begin{title}
{Periodic Polyhedra in Spaces of Constant Curvature}
\end{title}

\author
{Christina Duffield}
\address{Christina Duffield\\Department of Mathematics\\Indiana University\\
Bloomington, IN 47405
\\USA}

\author
{Daniel Freese}
\address{Daniel Freese\\Department of Mathematics\\Indiana University\\
Bloomington, IN 47405
\\USA}

\author
{William Holt}
\address{William Holt\\Department of Mathematics\\Indiana University\\
Bloomington, IN 47405
\\USA}

\author{
Matthias Weber
}
\address{Matthias Weber\\Department of Mathematics\\Indiana University\\
Bloomington, IN 47405
\\USA}

\author
{Ramazan Yol}
\address{Ramazan Yol\\Department of Mathematics\\Indiana University\\
Bloomington, IN 47405
\\USA}

\date{\today}

\begin{abstract}
We show the existence of families of  periodic polyhedra in spaces of constant curvature whose fundamental domains can be obtained by attaching prisms and antiprisms to Archimedean solids. These polyhedra  have constant discrete curvature and are weakly regular in the sense that all faces are congruent regular polygons and all vertex figures are congruent as well. Some of our examples have stronger conformal or metric regularity. The polyhedra are invariant under either a group generated by reflections at the faces of a Platonic solid, or a group generated by transformations that are reflections at the faces of a Platonic solid, followed by a rotation about an axis perpendicular to the respective face. In particular, suitable quotients will be compact polyhedral surfaces in (possibly non-compact) spaceforms. 
\end{abstract}

\maketitle

\end{titlepage}
\section{Introduction}

Imagine a collection of Platonic solids in space, with regular prisms (or antiprisms) connecting the faces in pairs. In the case of prisms (antiprisms), this will be an infinite polyhedral surface with all square (triangle) faces and vertices that have the same valency, given them a weak form of regularity. 

\begin{definition}
A  polyhedral surfaces that consist entirely of prisms (resp. antiprisms) will be called {\em prismatic} (resp. {\em antiprismatic}).
\end{definition}

A simple example of  a prismatic polyhedron in Euclidean space is the {\em mucube} (figure \ref{fig:mucube}), where cubes are connected by square prisms. Are there others? 

An inductive construction might begin with a Platonic solid, attach prisms (or antiprisms) to all its faces, and then Platonic solids to the open ends of the prisms (or antiprisms), etc. We have no reason to expect that this construction will ``close up''. In fact, in general it doesn't. But when we allow the polyhedra to reside in Hyperbolic or Spherical space, it turns out that this construction will close up, for certain choices of the size of the polyhedra. 
This is somewhat surprising. The closing-up condition is reminiscent of the period condition for minimal surfaces. Here we have many  periods to solve but only one free parameter, the size (or dihedral angle) of the Platonic solid we begin with. 

That this construction works for both prisms and antiprisms for many sizes of the polyhedra is due to symmetry, which we will explain next.

In the case of  the hyperbolic mucube, we will start with a Platonic cube that tiles hyperbolic space, i.e. whose dihedral angle is of the form $2\pi/n$ for some integer $n$. Inside of it we will place a smaller Platonic cube so that the regular prisms erected on top of its faces are symmetric with respect to a reflection at the faces of the larger cube. This argument will require justification,  taking into account that the larger cube might be {\em hyperideal}, i.e. whose vertices do not exist in hyperbolic space.

In the antiprismatic case, the construction hinges on finding a fundamental domain for discrete groups generated by reflections at a Platonic cube that are followed by a $45^\circ$ rotation. These fundamental domains can be obtained from the cube by attaching pyramids to the triangular faces of an Archimedean truncated cube. The pyramids will need to have  dihedral angle of the form $2\pi/n$, and a  matching condition will impose an additional angle condition on the dihedral angles of the truncated cube and the pyramids.  For $n=3,4,5$, the pyramids will fit together to form a hyperbolic Platonic tetrahedron, octahedron, and icosahedron.

%
%

In this paper, we will focus on adding prisms and antiprisms to Platonic solids in order to obtain periodic polyhedra with a high degree of regularity: all faces and vertex figures will be congruent. There are some straightforward variations that instead use  certain Archimedean solids obtained as truncations and rectifications of Platonic solids, which we will also discuss, see sections \ref{sec:ppricub}, \ref{sec:pricubsp}, \ref{sec:apricuboc}. 

Our  prismatic surfaces will be invariant under a discrete group of isometries that is generated by the reflections at the faces of a suitable Platonic solid (in the space under consideration) where the dihedral angle between the faces is an integral fraction of $2\pi$.

For instance, we can use the $72^\circ$ cube in hyperbolic space, place a smaller cube inside, so that the faces of the $72^\circ$ cube divide the attached regular prisms in half, see figure \ref{fig:hypmucube}.

\begin{figure}[h] 
   \centering
   \includegraphics[width=2.5in]{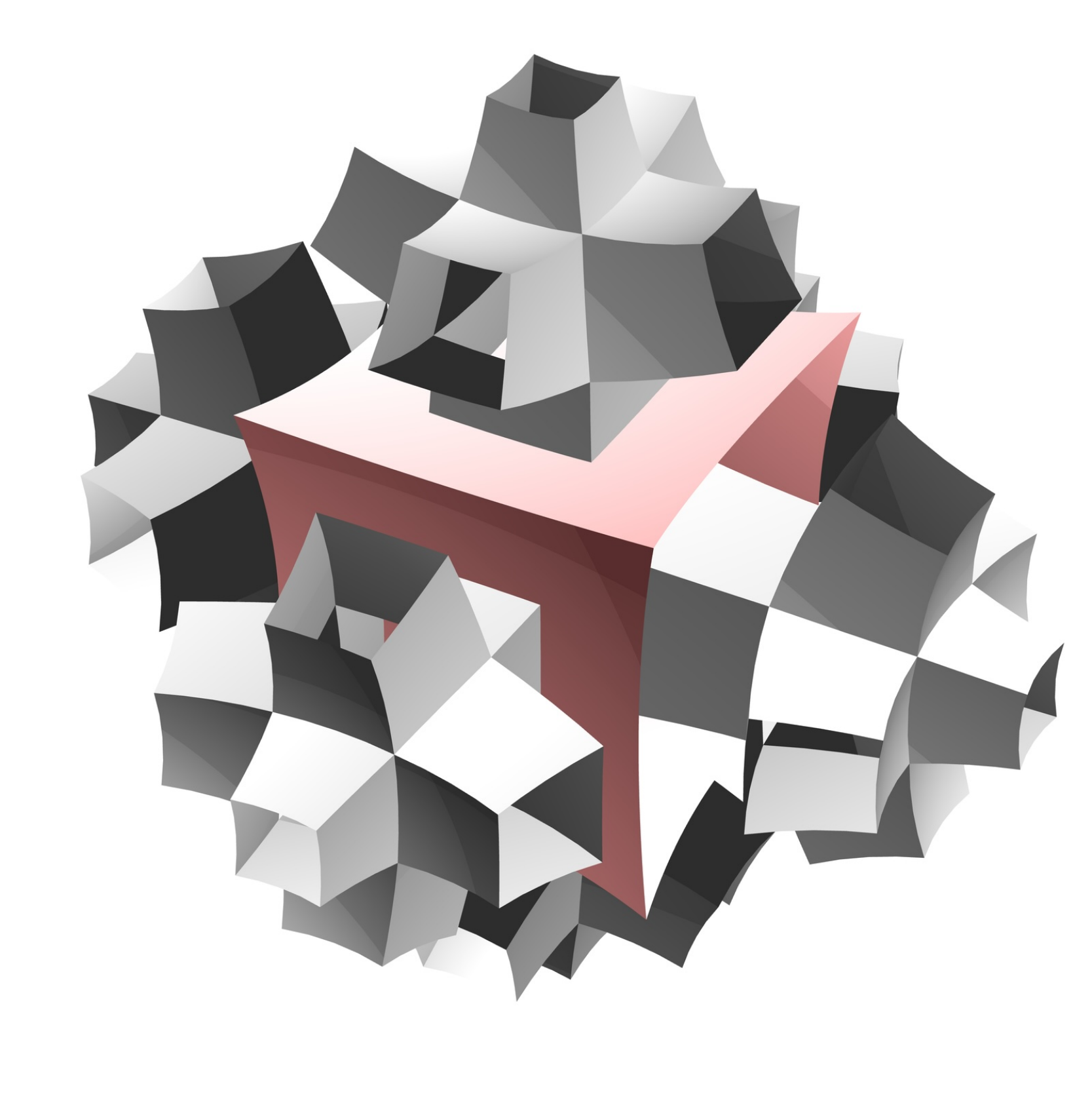}\qquad 
   \includegraphics[width=2.5in]{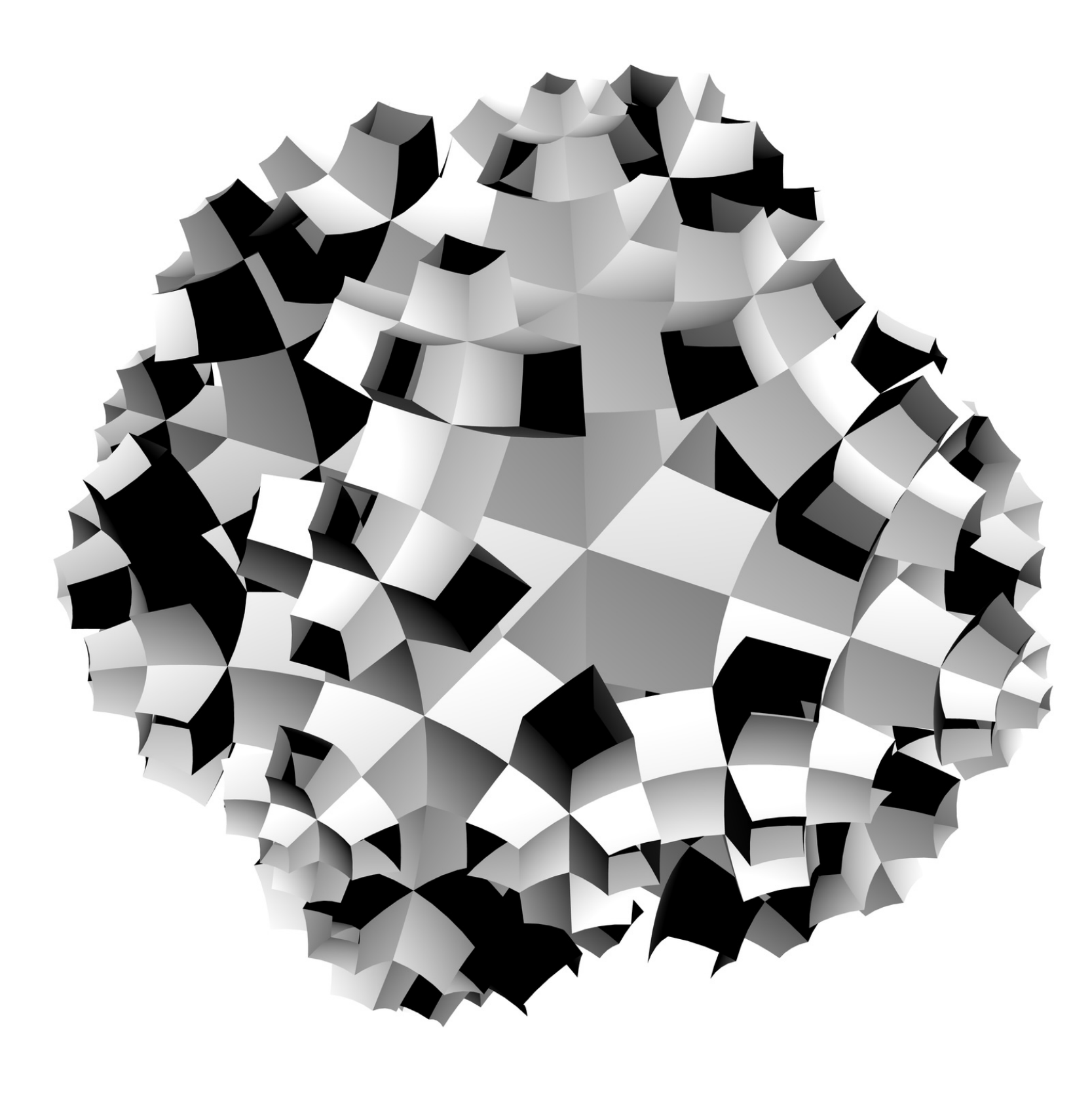} 
   \caption{Generation 2 and 3 of the hyperbolic prismatic cube ($n=5$)}
   \label{fig:hypmucube}
\end{figure}

While in Euclidean space, all cubes differ only by scale and have the same dihedral angle, in spherical and hyperbolic geometry, the dihedral angle between the faces of a cube varies with the size of the cube. We will see that in hyperbolic space there is a discrete family of polyhedral surfaces of each type.

The resulting polyhedral surfaces will  be regular in a weaker sense: Isometries of the ambient space will act transitively on vertex figures, and all faces will be regular polygons.

\section{Notations and Definitions}

\subsection{Platonic Solids, Truncations, and Rectifications}

Let us denote a Platonic solid with $p$-gonal faces and of vertex valency $q$ by $P_{p,q}$. Thus $P_{3,3}$ is a tetrahedron, $P_{4,3}$ a cube, $P_{3,4}$ an octahedron, $P_{5,3}$ a dodecahedron and $P_{3,5}$ an icosahedron. 

We will also consider truncations and rectifications. Denote by $TP_{p,q}$ the truncation of $P_{p,q}$ where we cut off a pyramid for each vertex whose base meets the edges of $P_{p,q}$ so that all new faces become regular polygons. The truncation will have both regular $2p$-gons and $q$-gons as faces.

Similarly, denote by $RP_{p,q}$ the rectification of $P_{p,q}$ where we cut off a pyramid for each vertex whose base meets the edges of $P_{p,q}$ at their midpoints. The rectification will have both regular $p$-gons and $q$-gons as faces.

\begin{table}[h]
   \centering
   \begin{tabular}{@{} llll @{}} 
      \toprule
      \multicolumn{4}{c}{Platonic polyhedra and relatives} \\
      \midrule 
       index   & polyhedron & truncation& rectification\\
      \midrule
      $\{3,3\} $     & tetrahedron & truncated tetrahedron & octahedron \\
      $ \{4,3\}$         & cube     &  truncated cube & cuboctahedron \\
      $ \{3,4\}$       & octahedron  & truncated octahedron & cuboctahedron \\
      $\{5,3\} $       & dodecahedron  & truncated dodecahedron &icosidodecahedron\\
      $\{3,5\} $       & icosahedron  & truncated icosahedron & icosidodecahedron\\
      \bottomrule
   \end{tabular}
   \label{tab:booktabs}
\end{table}

All these polyhedra have realizations in $\bS^3$, $\bR^3$ and $\bH^3$ which we will discuss in detail in later sections. 

We will also need a modification $KP_{p,q}^n$ of the truncated polyhedron $TP_{p,q}$ that is related to Conway's {\em kis} operation (\cite{brink}): We attach to each truncated face (a $q$-gon) of a $TP_{p,q}$ a pyramid $PY_{q}^n$ whose sides make dihedral angles of $2\pi/n$ for $n\ge2$. For $n=2$ these pyramids become just planar polygons, and $KP_{p,q}^2=TP_{p,q}$. 

In Euclidean space, examples of pyramids $PY_{q}^n$ can be obtained by centrally subdividing a Platonic solid: $P_{p,q}$ is the union of as many  pyramids $PY_p^q$ as $P_{p,q}$ has faces. The existence of general and typically non-compact pyramids $PY_{q}^n$ will be established in section \ref{sec:pyramids}. These infinite pyramids exist only in Euclidean and hyperbolic space and can be used to construct infinite   Platonic solids $P_{p,q}$, e.g. a polyhedron $P_{7,3}$ with  regular heptagons as faces and valency 3.

An  example of a modification $KP_{p,q}^n$ in Euclidean space that is relevant to us is the {\em Triakis Truncated Tetrahedron}  $KP_{3,3}^3$. It tiles space when using $60^\circ$ rotations to glue copies of it together along the hexagonal faces, see section \ref{sec:qch}.

\subsection{Notions of Regularity}

We will consider  finite and infinite polyhedral surfaces in the three simply connected spaces of constant curvature that enjoy certain regularity properties. The strongest notion of regularity we consider is that of Platonicity:

\begin{definition}
A polyhedron $P$ is called {\em geometrically Platonic} if the group of isometries of the ambient space that preserve $P$ acts transitively on the set of flags $(v\in e \in f)$ of $P$.
\end{definition}

A much weaker notion can be obtained by forgetting the ambient space and merely considering the combinatorial structure: The polyhedron then becomes a map on a surface.

\begin{definition}
A map $P$ is called {\em combinatorially Platonic} if the group of map preserving automorphisms  of  $P$ acts transitively on the set of flags $(v\in e \in f)$ of $P$.
\end{definition}

In between these two notions is what we will call conformal Platonicity: Here we forget ambient space but keep the map as well as the conformal structure of the polyhedron:

\begin{definition}
A polyhedron $P$ is called {\em conformally Platonic} if the group of conformal and map preserving automorphisms  of  $P$ acts transitively on the set of flags $(v\in e \in f)$ of $P$.
\end{definition}

One of the purposes of this paper is to exhibit examples of infinite conformally Platonic polyhedra that are not geometrically Platonic, i.e. that possess hidden conformal symmetries.

All polyhedral surfaces we construct will have in common that all vertex figures are congruent: That is, for any two vertices there is an isometry of ambient space that maps one vertex to the other and all edges incident with  the first vertex to the edges incident with the second vertex. This implies that our polyhedra have constant (discrete) curvature in any sense that assigns a local curvature quantity to the vertices of the polyhedron, see for instance \cite{sull} for  definitions of discrete  curvatures. 

This observation  makes our polyhedra candidates of discrete approximations of smooth constant mean curvature surfaces. In fact, the mucube (or prismatic cube) and the antiprismatic octahedron are conformally correct discretizations of the Hermann Schwarz's  P surface and Alan Schoen's I-WP surface (\cite{lwy}). Polthier (\cite{polth}) has constructed smooth minimal surfaces in hyperbolic space with the same symmetries and topology as our prismatic Platonic polyhedra.

\subsection{Geometric Formulas in Hyperbolic and Spherical Space}
\label{sec:formulas}

We will (mainly) use the ball model for hyperbolic space $\bH^3$.

\begin{definition}
For any point $x$ with $|x|>1$, denote by $S(x)$ the sphere centered at $x$ with radius $\sqrt{|x|^2-1}$. The intersection of this sphere with the unit ball represents a geodesic (hyperbolic) plane. Denote by $H(x)$ the unbounded component of the complement of $S(x)$ in $\bR^3$; its portion in the unit ball represents a geodesic half space.
\end{definition}

We will use intersections of these half spaces to construct Platonic solids.

To compute dihedral angles we will frequently use the formula for the angle of intersection of two general spheres:

\begin{lemma} 
The spheres centered at $x_i$ with radius $r_i$ for $i=1,2$ intersect at an angle $\phi$ with 
\[
\cos \phi = \frac{|x_1 - x_2|^2 - r_1^2 - r_2^2}{2 r_1 r_2} \ .
\]
\end{lemma}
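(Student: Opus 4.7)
The plan is to reduce the statement to the law of cosines applied to a triangle formed by the two sphere centers and a single point on their intersection circle. Let $p$ be any point lying on the intersection of the two spheres, so that $|p-x_1| = r_1$ and $|p-x_2| = r_2$. The angle of intersection of the spheres at $p$ is defined as the angle between their tangent planes there, which equals (up to supplement) the angle between the outward unit normals $n_i := (p-x_i)/r_i$ for $i=1,2$.

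First I would compute the angle at $p$ in the Euclidean triangle with vertices $x_1, x_2, p$ directly from the law of cosines: denoting $d = |x_1 - x_2|$,
\[
d^{2} = r_{1}^{2} + r_{2}^{2} - 2 r_{1} r_{2} \cos(\angle x_{1} p x_{2}),
\]
so that
\[
\cos(\angle x_{1} p x_{2}) = \frac{r_{1}^{2} + r_{2}^{2} - |x_{1}-x_{2}|^{2}}{2 r_{1} r_{2}}.
\]
Observing that $\angle x_1 p x_2$ is the angle between the vectors $x_1 - p$ and $x_2 - p$, and hence equal to the angle between the inward normals $-n_1, -n_2$, it follows that the angle between the outward normals $n_1, n_2$ is the supplement $\pi - \angle x_1 p x_2$. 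The formula stated in the lemma is precisely $-\cos(\angle x_1 p x_2)$, so $\phi$ is this supplementary angle.

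The only subtle point is the sign/convention, which will be the main thing to justify: why the formula deserves to be called the angle of intersection. I would argue that when the spheres $S(x_i)$ are used (as in the preceding definition) to bound the geodesic half-spaces $H(x_i)$ in the ball model, the dihedral angle of the resulting hyperbolic polyhedron at the edge of intersection is measured between the two half-spaces, and this interior dihedral angle is precisely the angle between the outward normals, not its supplement. Thus the expression given is the dihedral angle in the sense needed for constructing Platonic solids in $\bH^3$ via intersections of half-spaces $H(x)$, which is how the formula will be applied throughout the paper. The computation is independent of the choice of $p$ on the intersection circle, since the three side lengths $r_1, r_2, d$ do not depend on $p$.
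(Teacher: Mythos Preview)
The paper states this lemma without proof, treating it as a standard fact, so your law-of-cosines argument is more than the authors supply and is the right idea.

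There is, however, a small slip in your sign bookkeeping that happens to self-cancel. The angle between $-n_1$ and $-n_2$ equals the angle between $n_1$ and $n_2$ (negating \emph{both} vectors preserves the angle), so the angle between the outward normals is exactly $\angle x_1 p x_2$, not its supplement. Conversely, your claim that the interior dihedral angle of $H(x_1)\cap H(x_2)$ equals the angle between the outward normals is also off by a supplement: the outward normals $n_i=(p-x_i)/r_i$ point \emph{into} the solid (since $H(x_i)$ is the exterior of $S(x_i)$), and for a wedge of opening $\theta$ the inward-pointing face normals make an angle $\pi-\theta$. These two errors cancel, so your final identification of $\phi$ with the dihedral angle used throughout the paper is correct; just fix the justification so each individual step is right.
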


We also recall the formula for the hyperbolic metric in the ball model:

\begin{lemma}
The hyperbolic distance  between two points $p,q$ with $|p|,|q|<1$ is given by
\[
\cosh d_h(p,q) = 1+2 \frac{|p-q|^2}{(1-|p|^2)(1-|q|^2)} \ .
\]
\end{lemma}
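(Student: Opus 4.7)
The plan is to pass through the hyperboloid model, where the distance formula takes the much simpler shape $\cosh d(X,Y) = -\langle X, Y\rangle$ with $\langle\cdot,\cdot\rangle$ the Minkowski form on $\bR^{3,1}$, and then translate back into ball coordinates. This avoids integrating the Poincar\'e metric directly, although an alternative route via isometry invariance and reduction to $p = 0$ would also work (see the last paragraph).

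First I would set up the stereographic identification of the upper sheet $\{X \in \bR^{3,1} : -X_0^2 + |\vec X|^2 = -1,\ X_0 > 0\}$ with the open unit ball in $\{X_0 = 0\}$, projecting from the point $(-1,\vec 0)$. A short calculation shows that $X$ corresponds to $p = \vec X/(1+X_0)$, with inverse
\[
X_0 = \frac{1+|p|^2}{1-|p|^2}, \qquad \vec X = \frac{2\,p}{1-|p|^2}.
\]
I would note (or quickly verify) that this map pulls the induced Lorentzian metric on the hyperboloid back to the Poincar\'e metric on the ball, so hyperbolic distances agree on the two sides.

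Next, for points $p, q$ in the ball with hyperboloid preimages $X, Y$, I would substitute the formulas above into
\[
\cosh d_h(p,q) = -\langle X, Y\rangle = X_0 Y_0 - \vec X \cdot \vec Y,
\]
to obtain
\[
\cosh d_h(p,q) = \frac{(1+|p|^2)(1+|q|^2) - 4\,p\cdot q}{(1-|p|^2)(1-|q|^2)}.
\]
The concluding step is the algebraic identity
\[
(1+|p|^2)(1+|q|^2) - 4\,p\cdot q = (1-|p|^2)(1-|q|^2) + 2\bigl(|p|^2 - 2\,p\cdot q + |q|^2\bigr),
\]
which exposes the factor $|p - q|^2$ and yields the stated formula after dividing.

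The only real obstacle is recognizing the numerator after substitution as $(1-|p|^2)(1-|q|^2) + 2|p-q|^2$; everything else is the setup of coordinates. If one wanted to avoid the hyperboloid model, the alternative is to observe that the right-hand side is invariant under all Möbius transformations of the ball (via the standard identities $1 - |T_a(p)|^2 = (1-|a|^2)(1-|p|^2)/|\sigma(a,p)|^2$ and the analogous formula for $|T_a(p) - T_a(q)|^2$), reduce to $p = 0$ by moving $p$ to the origin with such a transformation, and then integrate $ds = 2\,dr/(1-r^2)$ along the diameter to get $d_h(0,q) = \log\tfrac{1+|q|}{1-|q|}$, whose hyperbolic cosine is $(1+|q|^2)/(1-|q|^2)$. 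Either route is brief, and I would choose the hyperboloid one for its directness.
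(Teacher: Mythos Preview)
Your argument is correct: the stereographic map to the hyperboloid, the substitution into $\cosh d = X_0Y_0 - \vec X\cdot\vec Y$, and the algebraic rearrangement all check out, and your alternative M\"obius-invariance route is also valid.

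The paper, however, does not prove this lemma at all --- it is simply stated as a recalled formula (``We also recall the formula for the hyperbolic metric in the ball model''). So there is nothing to compare against: you have supplied a full proof where the paper gives none.
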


As we will only need to compare distances, all equations involving distances will be algebraic. In fact, all our equations have algebraic solutions.

For computations in the sphere $\bS^3$, we work in $\bR^3$ using the stereographic projection

\[
\sigma(x_1,x_2,x_3,x_4) = \frac1{1-x_4}(x_1,x_2,x_3) 
\]
and its inverse. This allows us to use the same coordinates for the vertices of polyhedra as we are used to in $\bR^3$. Reflections in $\bS^3\subset \bR^4$ are just reflections at hyperplanes in $\bR^4$. To find a reflection at a face of a spherical polyhedron stereographically projected into $\bR^3$, we take three of its vertices, find their preimages in $\bS^3$ under the stereographic projection and the normal vector to the hyperplane in $\bR^4$ through these points. Conjugation of the reflection at this hyperplane with the stereographic projection becomes a sphere inversion in $\bR^3$ at the sphere that contains the face of the polyhedron.

General rotations about great circles in $\bS^3$ are harder to describe. Fortunately, we  will only need very special rotations: The Platonic solid we begin with will be centered at the origin in $\bR^3$, and the groups we are interested in will be generated by reflections at a face followed by a rotation about the line perpendicular to that face and through the origin.
 
The $90^\circ$ rotation about a line through the origin in $\bR^3$ in direction of a unit vector $v=(a,b,c)$ is given by the matrix

\[
R^{90}_v =\left(
\begin{array}{ccc}
 0 & -c & b \\
 c & 0 & -a \\
 -b & a & 0 \\
\end{array}
\right) \ ,
\]
and general rotations by an angle $\phi$ are then given by

\[
R^{\phi}_v =\sin(\phi)R^{90}_v + \cos(\phi) (Id-P_v)+P_v
 \]
 
 where 
 \[
 P_v = v\cdot v^\top = 
 \left(
\begin{array}{ccc}
 a^2 & a b & a c \\
 a b & b^2 & b c \\
 a c & b c & c^2 \\
\end{array}
\right) \ .
\]

\subsection{Platonic Reflection Groups}

The following is well known:

\begin{lemma} For every $n\ge 3$, there exists a possibly infinite Platonic solid $P_{p,q}^n$ of each type in spherical, Euclidean, or hyperbolic space with dihedral angle $2\pi/n$. Table \ref{tab:plato} lists which value of $n$ occurs in which space form.
\end{lemma}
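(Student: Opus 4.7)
The plan is to fix the combinatorial type $(p,q)$ and construct, in each of the three space forms, a one-parameter family of Platonic solids of that type parametrized by a size variable $R$ (say the circumradius). This is standard: one fixes a $q$-fold axis through a prospective vertex and a $p$-fold axis through a prospective face center, takes the intersection of the $F$ half-spaces generated by the action of the reflection group $[p,q]$, and obtains a Platonic solid varying continuously with $R$. In $\bS^3$ one has $R \in (0,\pi/2)$; in $\bR^3$ only scale varies; and in $\bH^3$ the parameter ranges over $R \in (0,\infty)$ for the compact regime and continues into ideal and hyperideal configurations beyond.

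Next I would compute the dihedral angle $\alpha$ as a function of $R$ using the characteristic orthoscheme of the Platonic solid---the tetrahedron with vertices at the center, a face center, an edge midpoint, and an adjacent vertex---together with the spherical/hyperbolic laws of cosines. A direct computation shows that $\alpha(R)$ is strictly monotone: increasing on $\bS^3$, constant on $\bR^3$, and decreasing on $\bH^3$, with limits $\alpha(R) \to \alpha_E$ as $R \to 0$, where $\sin(\alpha_E/2) = \cos(\pi/q)/\sin(\pi/p)$ is the Euclidean dihedral angle; $\alpha(R) \to \pi$ as a spherical solid grows to fill a hemisphere; and $\alpha(R) \to \pi - 2\pi/q$ as the hyperbolic vertices become ideal, since the Euclidean vertex link must then be a regular $q$-gon with interior angle $\pi - 2\pi/q$. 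Beyond the ideal threshold the $q$ face planes meeting at a would-be vertex no longer share a common point in $\bH^3$, but the intersection of the $F$ half-spaces still bounds a non-compact convex region---this is the ``possibly infinite'' Platonic solid in the statement---and $\alpha$ continues to decrease continuously down to $0$.

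By the intermediate value theorem applied to the three monotone branches of $\alpha(R)$, every value in $(0,\pi)$ is attained exactly once across the three space forms. Since $2\pi/n \le 2\pi/3 < \pi$ for every $n \ge 3$, each target $\alpha = 2\pi/n$ is realized; comparing $2\pi/n$ to the thresholds $\alpha_E$ and $\pi - 2\pi/q$ for each of the five types $(p,q)$ determines the corresponding entry of Table~\ref{tab:plato}. The main obstacle I foresee is making the hyperideal case precise---verifying that the half-space intersection yields a combinatorially correct $P_{p,q}$ with the full Platonic reflection symmetry, and giving a sensible meaning to its non-existent vertices---which is exactly what the pyramid construction of Section~\ref{sec:pyramids} accomplishes by exhibiting the non-compact pyramids $PY_q^n$ from which the infinite $P_{p,q}^n$ are built.
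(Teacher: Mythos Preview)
Your argument is correct and more comprehensive than the paper's, but the route is different. The paper gives only a two-line proof addressing the one nontrivial point---that arbitrarily small dihedral angles are realized in $\bH^3$---and does so concretely in the ball model: it places the spheres $S(ax)$ of Section~\ref{sec:formulas} at the vertices $x$ of the \emph{dual} Platonic solid (normalized to $|x|=1$), observes that for $a\to 1^+$ neighboring spheres are disjoint while for large $a$ they intersect, so by continuity there is a value of $a$ where they touch (dihedral angle $0$); increasing $a$ from there sweeps through all small positive angles. The spherical and Euclidean ranges, and the precise thresholds in Table~\ref{tab:plato}, are treated as known.

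Your approach via the characteristic orthoscheme and the monotonicity of $\alpha(R)$ is the classical one; it has the advantage of giving the full picture (all three geometries, the exact ideal threshold $\alpha=\pi-2\pi/q$, strict monotonicity hence uniqueness) in one stroke, at the cost of invoking trigonometric machinery you do not actually carry out. The paper's argument buys brevity and stays inside the explicit ball-model framework it has already set up, which is then reused verbatim for the cube, octahedron, etc.\ in later sections. Your reference to Section~\ref{sec:pyramids} for the hyperideal regime is apt but not quite what the paper does here: the pyramid construction is used later to build infinite $P_{q,n}$ with large $q$ or $n$, whereas the present lemma is handled entirely by the half-space intersection $\bigcap H(ax)$, which remains a well-defined convex body with the correct symmetry even when the would-be vertices lie outside the ball.
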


\begin{table}[h]
   \centering
   \begin{tabular}{@{} lllllll @{}} 
      \toprule
      \multicolumn{7}{c}{Values of $n$ for Platonic polyhedra with dihedral angle $2\pi/n$ sorted by space} \\
      \midrule 
       symbol   & name  &Spherical  &Euclidean& & Hyperbolic & \\
          &  	  &	  &	& finite & ideal & hyperideal	\\
      \midrule
      $\{3,3\} $     & tetrahedron & $3,4,5$ & -- & -- & $6$ & $\ge7$ \\
      $ \{4,3\}$      & cube     &  3 & 4 & 5 & 6 & $\ge7$ \\
      $ \{3,4\}$       & octahedron  & 3 & -- & -- & 4 & $\ge 5$ \\
      $\{5,3\} $       & dodecahedron  & 3  &-- & $4,5$ & 6 &$\ge 7$\\
      $\{3,5\} $       & icosahedron  & -- & -- & $3$ & -- & $\ge 4$ \\
      \bottomrule
   \end{tabular}
   \label{tab:plato}
\end{table}

\begin{proof}
To see why there are Platonic solids with arbitrarily small dihedral angle in hyperbolic space, we use the ball model. Place spheres $S(a x)$ at the vertices $x$ of the dual Platonic solid which has been normalized so that $|x|=1$. For $a\to 1+$ these spheres are certainly disjoint, and for $a$ large enough they certainly intersect. Thus there must be a value of $a$ so that  the spheres $S(a x)$ for neighboring vertices $x$ touch. The intersection of the corresponding half spaces is the extreme case of a Platonic solid  whose dihedral angle has become $0^\circ$. By increasing $a$ and by continuity, we can obtain hyperideal polyhedra with arbitrarily small positive dihedral angles in hyperbolic space.
\end{proof}

By the Poincaré polyhedron theorem, these polyhedra $P_{p,q}^n$ are fundamental domains for a discrete group of isometries $\Gamma_{p,q}^n$ that is generated by the reflections at its faces. The prismatic polyhedral surfaces we construct will be invariant under $\Gamma_{p,q}^n$. Note that while $P_{p,q}^n$ will not be compact for large $n$, the quotient of our polyhedral surfaces $\Pi_{p,q}^n$ by  $\Gamma_{p,q}^n$ will be compact.

\section{The Prismatic Cubes}

\subsection{The Euclidean Mucube}
We begin by reviewing a classical example.

The {\em mucube} is an infinite regular polyhedron in Euclidean space discovered by Coxeter and Petrie (\cite{cox38}).
It can be obtained by an inductive procedure that uses cubes and regular prims over squares (which are just cubes with top and bottom removed), see figure \ref{fig:mucube}.

\begin{figure}[h] 
   \centering
   \includegraphics[width=2in]{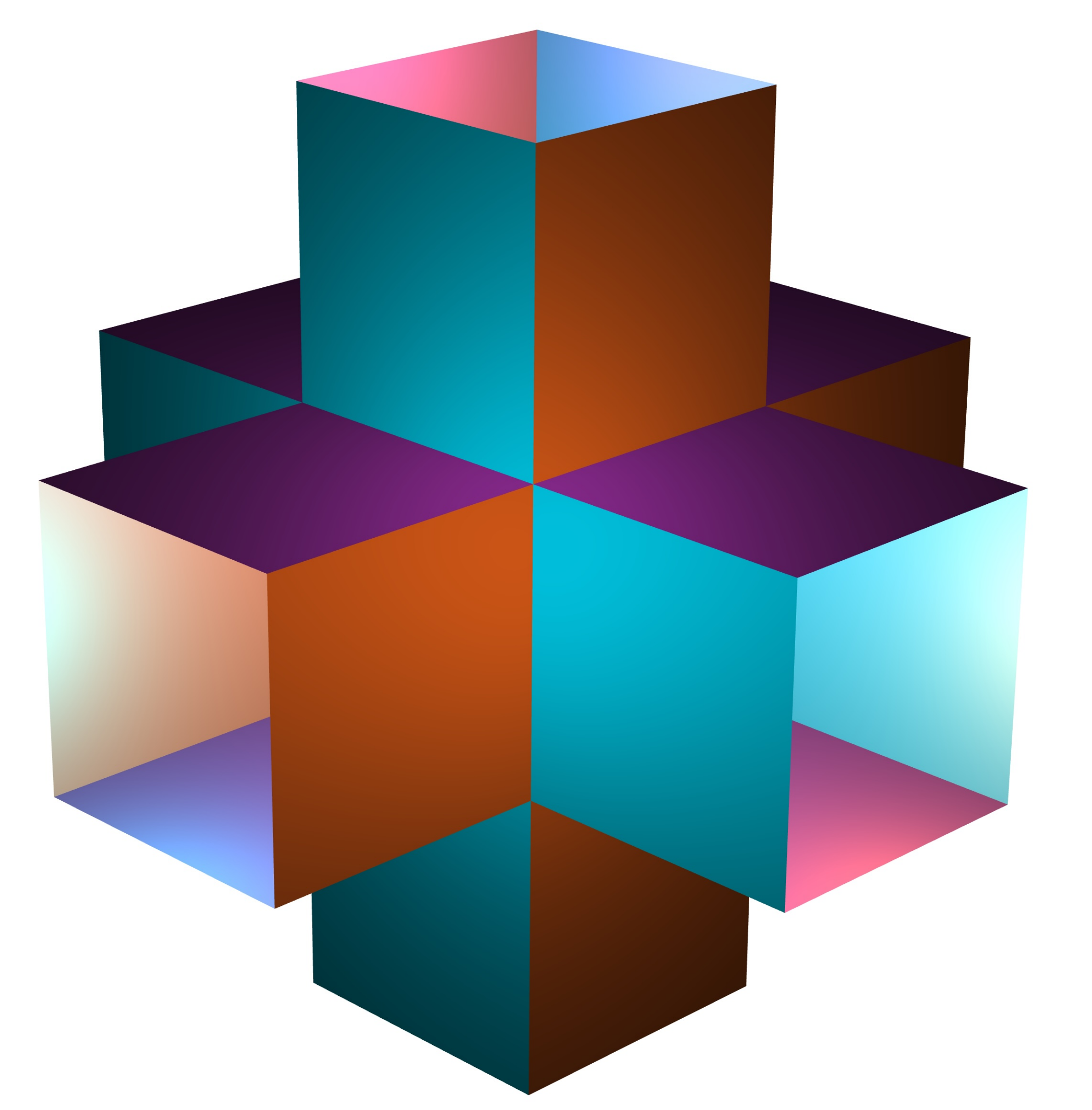}\qquad 
   \includegraphics[width=2in]{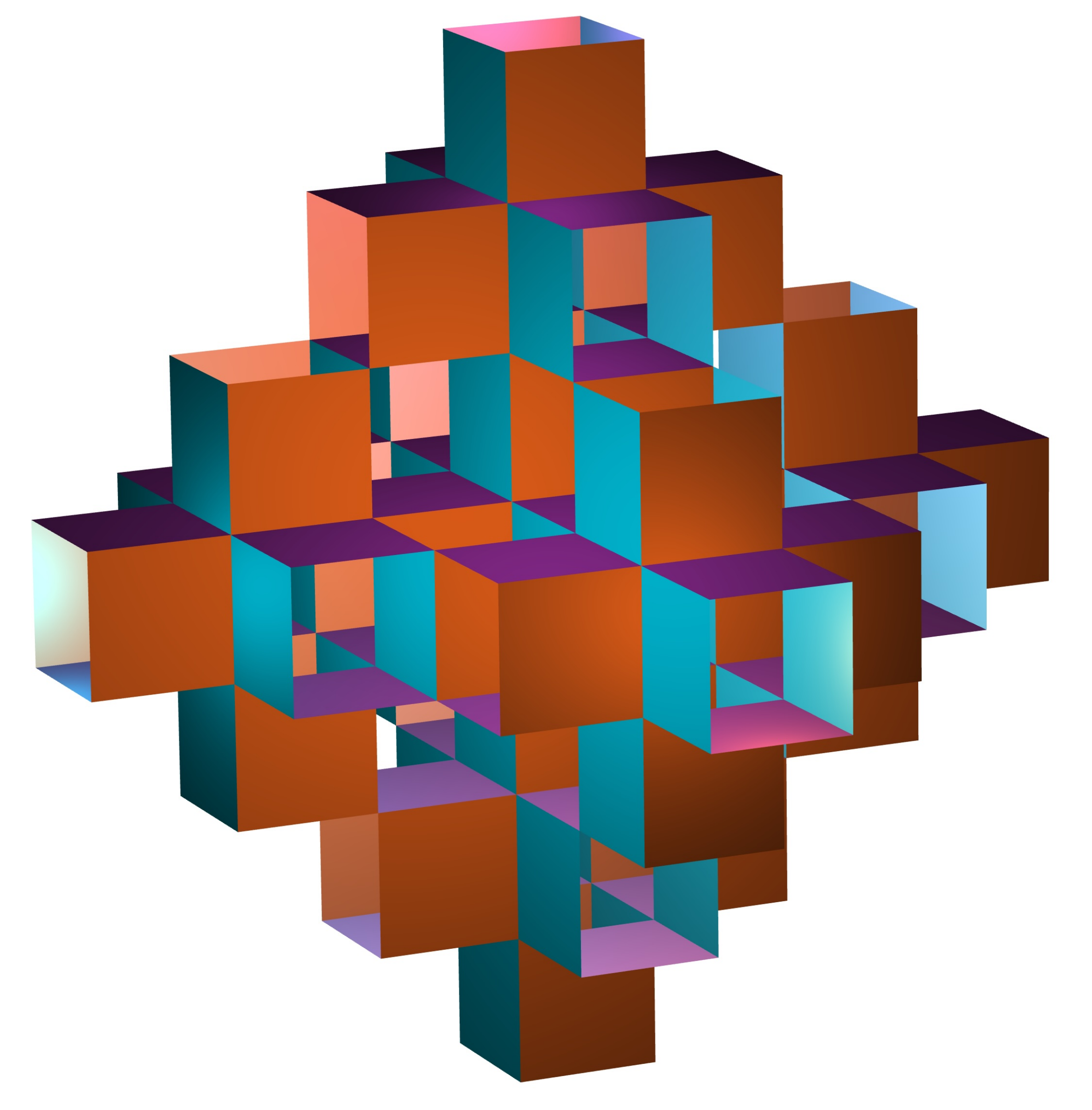} 
   \caption{The first two mucube generations}
   \label{fig:mucube}
\end{figure}

We begin with a cube of fixed size. Attach six regular prisms along their bottoms to each  face of the cube. 

Then attach cubes to the  tops of the prisms.  We repeat this construction, adding regular prisms to the unattached faces of the cubes,  and cubes to the tops of the newly attached prisms, and so on.

When done, we discard all cubes and just retain the squares of the prism sides.
They form an infinite Platonic polyhedron $\Pi_{4,3}^4$, called the {\em mucube}.

The translational symmetries of this surface form a cubical lattice with fundamental cell twice as large as the cubes used for the construction. The quotient of the mucube by its translational symmetries is a genus 3 polyhedral surface tiled by 12 squares so that 6 squares meet at each of its 8 vertices. A more abstract construction of the same surface is that of a double cover of a cube, branched at its 8 vertices. 

Thus we have a geometric realization of an algebraic curve as a triply periodic polyhedral surface.

\subsection{The  Prismatic Cube in $\bH^3$}
\label{sec:hypricu}

We begin by constructing (generalized) cubes in hyperbolic space.

\begin{definition}
For a real number $a>1$, denote by
$P_{4,3}(a)$ the intersection of the  sets $H(a x)$ for all $x\in \{(\pm1,0,0), (0,\pm1,0), (0,0,\pm1)$ with the unit ball. We call $P_{4,3}(a)$ a (generalized) {\em hyperbolic cube}. 
\end{definition}

We distinguish four types of cubes $P_{4,3}(a)$:
\begin{itemize}
\item finite cubes when $S(a,0,0)$,  $S(0,a,0)$ and $S(0,0,a)$  have an intersection that completely lies inside the open unit ball;
\item the ideal cube when $S(a,0,0)$,  $S(0,a,0)$ and $S(0,0,a)$  have an intersection on the unit sphere
\item hyperideal cubes when the spheres like $S(a,0,0)$ and $S(0,a,0)$ only pairwise intersect 
\item edgeless cubes when $S(a,0,0)$ and $S(0,a,0)$ are disjoint.
\end{itemize}

For the construction of prismatic cubes in hyperbolic space, we will use finite cubes to attach prisms to, and allow finite, ideal and hyperideal cubes with dihedral angle $2\pi/n$ as fundamental domains for the reflection groups $\Gamma_{4,3}^n$. This ensures that the fundamental domains have edges and we can determine their dihedral angles. In particular, edgeless cubes will be irrelevant for us.

The following lemma quantifies for which values of $a$ we obtain what type of cube:

\begin{lemma}
\begin{enumerate}
\item For  $a>\sqrt3$,  $S(a,0,0)$,  $S(0,a,0)$ and $S(0,0,a)$ have an intersection $p$ with $|p|<1$. In this case $P_{4,3}(a)$ is a finite cube. For $a=\sqrt3$, the cube is ideal.
\item For $a=\sqrt2$, $S(a,0,0)$ and  $S(0,a,0)$ touch.
\item For $a<\sqrt{2}$, $S(a,0,0)$ and $S(0,a,0)$ are disjoint, so $P_{4,3}(a)$ is edgeless.
\item For $a>\sqrt2$, $S(a,0,0)$ and $S(0,a,0)$ meet at a dihedral angle  $\alpha$ with $\cos \alpha = \frac1{a^2-1}$.
\item For   $\sqrt3>a>\sqrt2$, $P_{4,3}(a)$ is hyperideal.
\end{enumerate}
\end{lemma}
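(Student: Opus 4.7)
The plan is to reduce every item to an elementary Euclidean computation on the three spheres $S(ae_1), S(ae_2), S(ae_3)$, each centered at distance $a$ from the origin along a coordinate axis and of radius $\sqrt{a^2-1}$, and then translate the Euclidean conclusions into hyperbolic ones through the ball-model dictionary. I would handle the items in the order (4), (2)--(3), (1), (5), since (4) sets up the algebra that controls the rest.

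For (4) I apply the angle-of-intersection formula from the preceding lemma with $|x_1-x_2|^2=2a^2$ and $r_1r_2=a^2-1$, which gives $\cos\alpha=1/(a^2-1)$ after one line of arithmetic. Items (2) and (3) then follow either by inspection of this expression or by a direct comparison of the center distance $a\sqrt 2$ with the sum of radii $2\sqrt{a^2-1}$: the spheres are externally tangent exactly when $a=\sqrt 2$, disjoint for $a<\sqrt 2$, and transversal at the dihedral angle above for $a>\sqrt 2$.

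For (1), the $S_3$-symmetry of the three coordinate spheres forces any common point to lie on the diagonal $x=y=z$. Substituting $p=(t,t,t)$ into $|p-(a,0,0)|^2=a^2-1$ yields the quadratic
\[
3t^2-2at+1=0,
\]
whose discriminant $4(a^2-3)$ is non-negative iff $a\ge\sqrt 3$. The smaller root $t_-=(a-\sqrt{a^2-3})/3$ is the vertex nearest the origin, and since the product of the two roots is $1/3$, one has $3t_-^2<1$ iff $t_-<1/\sqrt 3$; a short check shows this holds precisely for $a>\sqrt 3$, with equality when $a=\sqrt 3$. By the octahedral symmetry of the full six-sphere configuration, the eight vertices of $P_{4,3}(a)$ then sit at $(\pm t_-,\pm t_-,\pm t_-)$ and are all simultaneously finite, ideal, or absent.

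Finally (5) is pure bookkeeping. For $\sqrt 2<a<\sqrt 3$, item (4) produces genuine pairwise intersections of adjacent face-spheres, hence edges in the ball, while item (1) forbids any three such spheres from meeting in the closed unit ball, so vertices fail to exist in $\overline{\bH^3}$; this is exactly the definition of a hyperideal cube. I do not foresee any serious obstacle: everything reduces to tracking the sign of the discriminant $a^2-3$ and the position of $1/(a^2-1)$ relative to $\pm 1$. The only mildly delicate step is selecting the correct root of the vertex quadratic in (1) and converting the strict inequality $3t^2<1$ into the threshold $a>\sqrt 3$.
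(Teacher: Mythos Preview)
Your proposal is correct and follows essentially the same route as the paper: compute the triple intersection on the diagonal via the quadratic $3t^2-2at+1=0$ (the paper simply writes down the roots $\frac{1}{3}(a\pm\sqrt{a^2-3})(1,1,1)$), and read off the dihedral angle from the sphere-intersection formula to handle items (2)--(4). The paper's proof is much terser and leaves (5) and the check $|p|<1$ implicit, whereas you fill these in; your product-of-roots shortcut $t_-t_+=1/3$ for the inequality $3t_-^2<1$ is a clean detail not spelled out in the paper.
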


\begin{proof}
The two intersections of  $S(a,0,0)$,  $S(0,a,0)$ and $S(0,0,a)$ are given by
\[
z_{1,2} = \frac{1}{3} \left(a\pm\sqrt{a^2-3}\right)(1,1,1)\ .
\]
So the intersection is non-empty if $a\ge \sqrt3$. 

The angle $\alpha$ between the spheres $S(a,0,0)$ and $S(0,a,0)$ satisfies $\cos \alpha = \frac1{a^2-1}$. Thus the spheres touch if $a=\sqrt2$. For smaller $a$ they are disjoint. For larger $a$ they make an angle between $0$ and $\pi/2$. 
\end{proof}

As a consequence, we can determine the (well-known)  cubes that tile hyperbolic space:

For $a=\sqrt{2+\sqrt5}$ we have $\alpha=72^\circ$ and $n=5$. This is the only finite cube that tiles hyperbolic space.

For $a=\sqrt{3}$  the dihedral angle becomes $\alpha=60^\circ$ and $n=6$. This cube is ideal. To see this without computation, we first show that the dihedral angle of an ideal cube is $60^\circ$. To this end we switch to the upper half space model and place one vertex of the ideal cube at $(0,0,\infty)$. The edges resp. faces incident with that vertex become vertical lines resp. vertical halfplanes. By symmetry, the halfplanes meet at a $60^\circ$ angle. Thus, back in the ball model, the cube becomes ideal for the value of $a$ for which $\alpha = 60^\circ$.

For any $n>6$, the cubes with dihedral angle $2\pi/n$ are hyperideal.

In particular the dihedral angle of these cubes ranges in the interval $(0,\pi/2)$,
and  for any integer $n>4$ there is a (unique) $a=a_n$ so that  $P_{4,3}(a_n)$ has dihedral angle $2\pi/n$.

\begin{corollary}
For any $n>4$, $P_{4,3}(a_n)$ is a fundamental domain for the group generated by the reflections at the faces of $P_{4,3}(a_n)$.
\end{corollary}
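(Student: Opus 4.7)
The plan is to apply the Poincaré polyhedron theorem, which the paper has already invoked in this context for the general family of Platonic solids $P_{p,q}^n$; the corollary is really just the specialization to $(p,q)=(4,3)$ of that general result.

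First I would confirm that $a_n$ is unambiguously defined. By part~(4) of the preceding lemma, the dihedral angle $\alpha(a)$ of $P_{4,3}(a)$ satisfies $\cos\alpha = 1/(a^2-1)$, so $\alpha$ is continuous and strictly decreasing on $(\sqrt{2},\infty)$ with image $(0,\pi/2)$. Since $2\pi/n\in(0,\pi/2)$ precisely when $n>4$, there is a unique $a_n$ realizing dihedral angle $2\pi/n$, and the cube $P_{4,3}(a_n)$ is well-defined.

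Next I would verify the hypotheses of Poincaré's theorem for the side pairing given by the reflection at each of the six faces of $P_{4,3}(a_n)$ (each face self-paired by the reflection across its carrier sphere $S(a_n e_i)$). By the octahedral symmetry of the construction, all twelve edges carry the same dihedral angle $2\pi/n$, so exactly $n$ reflected copies of the cube fit around every edge with total rotation $2\pi$, and the corresponding cycle of face reflections composes to the identity. This is the sole nontrivial edge-cycle relation.

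The main obstacle is the hyperideal range $n\ge 6$, where the would-be vertices of $P_{4,3}(a_n)$ lie on or outside $\partial\bH^3$ and the classical Poincaré theorem does not apply verbatim. For the finite case $n=5$ the classical version is enough; for ideal and hyperideal cases I would invoke the standard extension (as in Maskit and Epstein--Petronio) in which each hyperideal vertex is truncated along its polar hyperbolic plane. This produces a finite-sided convex polyhedron whose new orthogonal truncation faces are compatibly permuted by the group already generated, so the only nontrivial cycle conditions are still the edge cycles just verified, while cusps at ideal vertices ($n=6$) are handled by the usual horoball argument. Discreteness of $\Gamma_{4,3}^n$ and the fundamental-domain property of $P_{4,3}(a_n)$ then follow in all cases.
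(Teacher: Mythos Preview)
Your proposal is correct and follows essentially the same approach as the paper: both invoke the Poincar\'e polyhedron theorem. The paper's proof is the single sentence ``This follows from the Poincar\'e polyhedron theorem,'' whereas you have supplied the details (well-definedness of $a_n$, the edge-cycle verification, and the extension to the ideal and hyperideal range $n\ge 6$) that the paper leaves implicit.
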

\begin{proof}
This follows from the Poincaré polyhedron theorem.
\end{proof}

Fix $n>4$, and denote $a=a_n$. Consider a finite cube $P_{4,3}(b)$ with $b>\sqrt3$ and $b>a$ so that $P_{4,3}(b)$ is finite and lies inside $P_{4,3}(a)$. Take any of the faces of $P_{4,3}(b)$ and reflect it (using sphere inversion) at the corresponding face of $P_{4,3}(a)$ (which is a hyperbolic isometry).  The original and the reflected squares are the bottom and top faces of a  prism over a square with rectangular sides.

\begin{proposition}
For a suitable (and unique) value of $b$, the rectangular sides of this prism become squares.
\end{proposition}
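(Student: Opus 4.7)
The plan is to reduce the vector-valued condition ``all four side faces are squares'' to a single scalar equation in $b$ and solve it by an intermediate-value argument. The key symmetry is the $\pi/2$ rotation about the line through the origin perpendicular to the chosen face of $P_{4,3}(a)$: it preserves both $P_{4,3}(a)$ and $P_{4,3}(b)$ (both cubes are centered at the origin in standard orientation) and commutes with the sphere inversion at $S(a,0,0)$ (whose center lies on the rotation axis). Consequently this rotation cyclically permutes the four rectangular side faces of the prism, which are therefore mutually congruent; they are all squares if and only if one is, which is equivalent to the scalar condition $s(b) = h(b)$, where $s(b)$ denotes the hyperbolic edge length of the square face of $P_{4,3}(b)$ and $h(b)$ denotes the hyperbolic distance between a vertex $v$ of that face and its image $R(v)$ under reflection across the corresponding face of $P_{4,3}(a)$.

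I would then express $s(b)$ and $h(b)$ as explicit algebraic (hence continuous) functions of $b$ on the open interval $(b_0, \infty)$ with $b_0 = \max(a, \sqrt{3})$, using the vertex formula $\frac{1}{3}(b \pm \sqrt{b^2 - 3})(1,1,1)$, the sphere inversion at $S(a,0,0)$, and the hyperbolic distance formula in Section \ref{sec:formulas}. For the boundary behavior: as $b \to \infty$, the inner cube shrinks to the origin, so $s(b) \to 0$ while $h(b)$ tends to twice the hyperbolic distance from $0$ to the face of $P_{4,3}(a)$, a positive limit, giving $h > s$. At the lower end, if $a > \sqrt{3}$ (the case $n = 5$), then $b \to a^{+}$ sends the inner face onto the outer one, producing $h \to 0$ while $s$ tends to the positive edge length of $P_{4,3}(a)$, so $s > h$; if $a \le \sqrt{3}$ (the cases $n \ge 6$), then $b \to \sqrt{3}^{+}$ renders the inner cube ideal with $s(b) \to \infty$, and a comparison of growth rates, most cleanly carried out in the upper half-space model with the diverging vertex placed at infinity, shows that $h$ grows more slowly than $s$, so $s > h$ there as well. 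The intermediate value theorem applied to $s(b) - h(b)$ then produces at least one $b \in (b_0, \infty)$ with $s(b) = h(b)$.

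For uniqueness, I would convert $s(b) = h(b)$ via the $\cosh$-distance formula into a polynomial identity in $b$ of modest degree and verify that it admits a unique root in the admissible range; equivalently, one may check by direct differentiation that $s(b) - h(b)$ is strictly monotone. The main obstacle is the hyperideal limit $b \to \sqrt{3}^{+}$ when $a < \sqrt{3}$ (i.e.\ $n \ge 7$), where $s$ and $h$ both diverge and their leading asymptotics must be compared with care; everything else reduces to routine continuity or algebraic manipulations.
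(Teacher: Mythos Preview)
Your approach is essentially the paper's: an intermediate value argument on the difference between the base edge $s(b)$ and the prism height $h(b)$, with the same case split at the lower end according to whether $P_{4,3}(a)$ is finite or not. Two remarks are worth making. First, for uniqueness the paper simply observes that $s$ decreases and $h$ increases with $b$, and then follows the qualitative proof with an explicit computation yielding the closed form $b=a+\sqrt{a^2-1}$; your proposed reduction to a polynomial identity would recover exactly this, and it is in fact the cleanest way to settle both existence and uniqueness simultaneously. Second, you are right to flag the hyperideal endpoint as the delicate spot: the paper asserts that the height stays bounded there, which is literally correct only for $n=6$ (the ideal outer cube). For $n\ge 7$ both $s$ and $h$ diverge as $b\to\sqrt3^{+}$, so your phrase ``grows more slowly'' is not quite accurate either; what actually happens is that the ratio $\beta/\alpha=(b-a)^2/(a^2-1)$ tends to $(\sqrt3-a)^2/(a^2-1)<1$, so $h<s$ near the endpoint and the IVT goes through as intended.
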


\begin{proof}
For $b$ very large, $P_{4,3}(b)$ becomes arbitrarily small, and the rectangular sides of the prisms tall.

When we make $b$ smaller, two things can happen: If $P_{4,3}(a)$ is finite, then $b$ will approach $a$. If $P_{4,3}(a)$ is ideal or hyperideal, $b$ will approach $\sqrt3$ and become ideal.

In the first case (for $b\to a$), the height of the rectangles goes to zero while the length of the base  approaches that of the edge length of $P_{4,3}(a)$.

In the second case the edge length of the base square goes to infinity as $P_{4,3}(b)$ approaches the ideal cube, while the height of the rectangles remains bounded by the distance to a face of $P_{4,3}(a)$.

By the intermediate value theorem, there is a  value of $b$ where the sides of the prism are squares. We note that with increasing $b$ the base of the rectangles decreases in length while the height increases, so uniqueness follows from monotonicity.
\end{proof}

\begin{corollary}
For $n\ge 5$, there exists a prismatic cube $\Pi_{4,3}^n$ in hyperbolic space.
\end{corollary}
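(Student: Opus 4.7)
The plan is to take the unique value of $b$ supplied by the preceding proposition, form the initial configuration $X$ consisting of the inner cube $P_{4,3}(b)$ together with the six square-sided prisms erected on its faces, and then define $\Pi_{4,3}^n$ to be the $\Gamma_{4,3}^n$-orbit of the collection of prism side faces. The remainder of the argument consists of verifying that this orbit is a genuine polyhedral surface with square faces and congruent vertex figures.

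First I would exploit the symmetry built into the construction. Each prism was defined so that its top face is the $\sigma_F$-image of its bottom face, where $\sigma_F$ denotes the reflection across the corresponding face $F$ of $P_{4,3}(a_n)$; consequently the whole prism, and in particular each of its four side squares, is $\sigma_F$-invariant. Applying $\sigma_F$ to $X$ therefore carries the prism attached to a face $f$ of $P_{4,3}(b)$ to the prism attached to $\sigma_F(f)$ in $\sigma_F(P_{4,3}(b))$, but as point sets in $\bH^3$ these two prisms coincide. The two half-prisms lying in the adjacent tiles $P_{4,3}(a_n)$ and $\sigma_F(P_{4,3}(a_n))$ thus glue into a single prism, so no spurious edge or crease arises along $F$, and local consistency across every face of every tile is automatic.

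Second, since $P_{4,3}(a_n)$ is a fundamental domain for $\Gamma_{4,3}^n$ by the earlier corollary, the tiles $g\cdot P_{4,3}(a_n)$ cover $\bH^3$ without overlap. The inner cubes $g\cdot P_{4,3}(b)$ are then pairwise disjoint, and any two neighbouring ones are joined by exactly one prism shared between the two adjacent tiles. At each vertex $v$ of an inner cube, three cube faces meet and each carries a prism that contributes two side squares through $v$, so six squares meet at every vertex of $\Pi_{4,3}^n$. The octahedral symmetry group of $P_{4,3}(b)$, realised as orthogonal transformations fixing the origin (hence as hyperbolic isometries preserving $X$), acts transitively on the vertices of $P_{4,3}(b)$ and normalises $\Gamma_{4,3}^n$; combined with $\Gamma_{4,3}^n$ itself, this supplies enough isometries to carry any vertex of $\Pi_{4,3}^n$ to any other, so all vertex figures are congruent. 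All faces are regular hyperbolic squares by the proposition.

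The main obstacle I foresee is verifying that the six prisms attached to $P_{4,3}(b)$ genuinely fit inside the tiles without overlapping each other and without protruding through edges of $P_{4,3}(a_n)$ before meeting their mirror halves across faces. Equivalently, the cross-section of each prism with the face $F$ it crosses must lie strictly inside $F$. For $n\ge 6$ this is essentially free: the faces of $P_{4,3}(a_n)$ reach to ideal or hyperideal vertices and are hyperbolically much larger than any compact prism cross-section. For $n=5$, where $P_{4,3}(a_5)$ is still finite, a modest explicit estimate is required, most naturally via a monotonicity argument in $b$: as $b$ decreases from $+\infty$ toward $a_5$, the inner cube expands continuously toward the outer one and the prism cross-section grows continuously, so one checks that at the distinguished value of $b$ given by the proposition the cross-section still fits inside $F$. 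Once this disjointness is established, the remaining verification is routine.
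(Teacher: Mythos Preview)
Your approach is exactly the paper's: take the $b$ from the proposition, erect the six regular prisms on $P_{4,3}(b)$, and let $\Pi_{4,3}^n$ be the $\Gamma_{4,3}^n$-orbit of the prism side squares. The paper, however, treats the corollary as an immediate consequence of the proposition and the fundamental-domain corollary and gives no further verification; your careful checks that the half-prisms glue seamlessly across faces of $P_{4,3}(a_n)$, that all vertex figures are congruent, and that the prisms stay inside the tile (your ``main obstacle'') go well beyond what the paper supplies, but they are consistent with the intended construction and correct in outline.
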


The above argument is  valid for all prismatic constructions that we will discuss below, with minor modifications. 

The case of the cube is simple enough to give the explicit data:

To determine the parameter $b$ for the inner cube $P_{4,3}(b)$, we first compute its edge length in terms of an arbitrary $b$. The vertices of $P_{4,3}(b)$  are given by $\frac{1}{3} \left(b-\sqrt{b^2-3}\right)(\pm1,\pm1,\pm1)$, and the hyperbolic distance between two adjacent vertices is $\arcosh(1+2\alpha)$ for $\alpha=\frac{1}{b^2-3}$.

The faces of this cube $P_{4,3}(b)$ are the bottoms of the prisms of our construction. The tops are obtained by reflecting the faces of $P_{4,3}(b)$ at the corresponding faces of $P_{4,3}(a)$. The hyperbolic distance of a vertex of $P_{4,3}(b)$ to its reflection at $P_{4,3}(a)$ is $\arcosh(1+2\beta)$ for $\beta=\frac{(b-a)^2}{\left(b^2-3\right)
   \left(b^2-1\right)}$.
   
 We want the prism to be regular, i.e. $\alpha=\beta$. The resulting quadratic equation has one solution with $b>1$, namely $b=a+\sqrt{a^2-1}$.

\subsection{The Prismatic Cube in $\bS^3$}

The 3-sphere $\bS^3$ can be tiled with eight spherical cubes that have a dihedral angle of $120^\circ$; these cubes correspond to the eight 3-dimensional faces of a 4-dimensional hypercube (in the same way the six square faces of a 3-dimensional cube correspond to the tiling of the 2-sphere by spherical $120^\circ$ squares).

These spherical cubes can be used to construct a prismatic cube $\Pi_{4,3}^3$  inside of $\bS^3$, see figure \ref{fig:sphmucube}.

\begin{figure}[h] 
   \centering
   \includegraphics[width=2in]{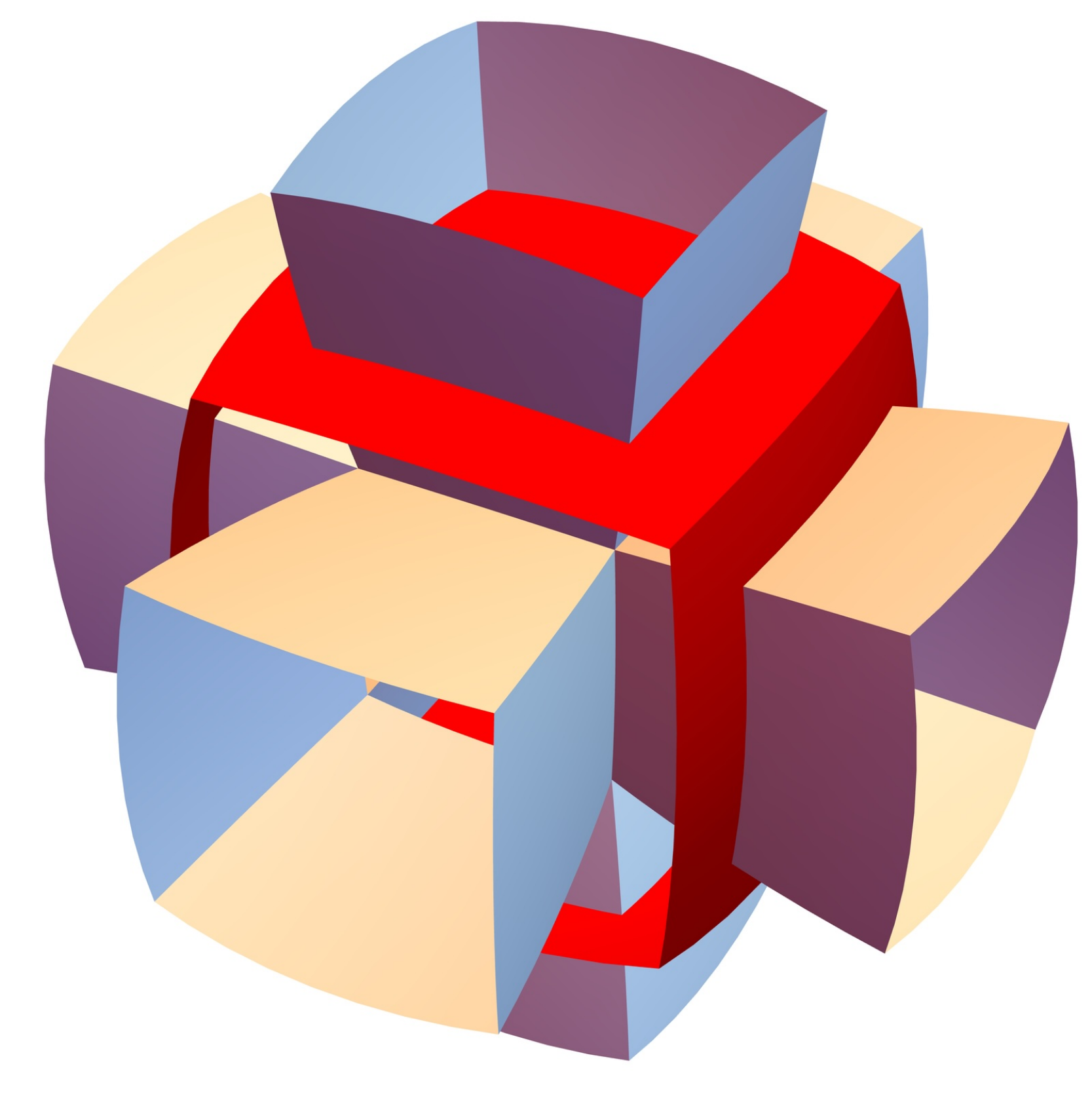}\qquad \includegraphics[width=2in]{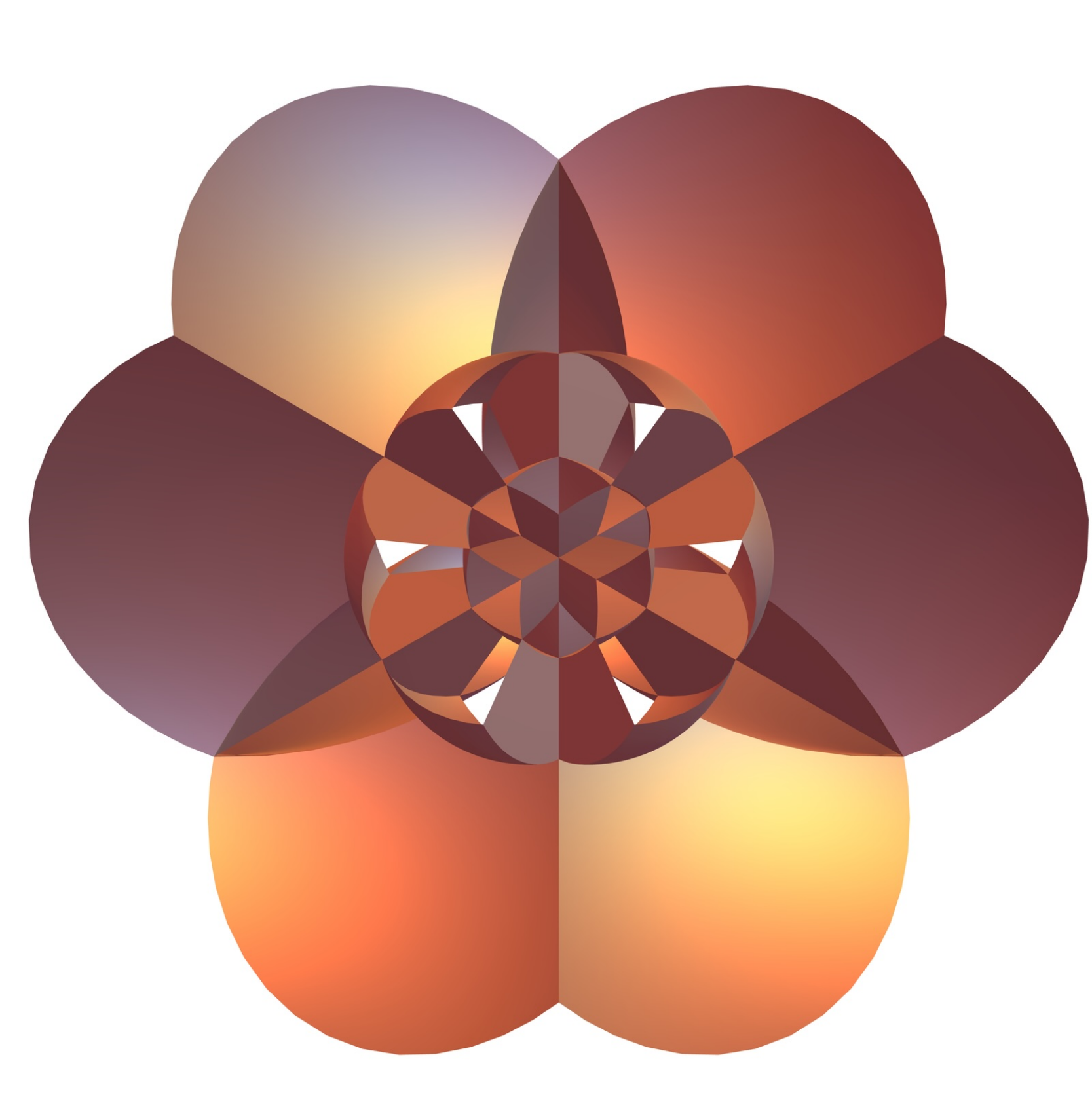} 
   \caption{Constructing the spherical prismatic cube for $n=3$}
   \label{fig:sphmucube}
\end{figure}

We visualize  polyhedral surfaces in $\bS^3$ by applying a stereographic projection which is normalized so that the equator $\bS^2\subset \bS^3$ is mapped to the unit sphere in $\bR^3$.

The following claims can be verified by easy computation:

Consider the 8 vertices with coordinates $a(\pm1, \pm1,\pm1)\in\bR^3$ with $a=1/\sqrt3$. These correspond in $\bS^3$ to the vertices of a spherical cube that has faces making a dihedral angle of $120^\circ$. 

The 8 vertices with coordinates $b(\pm1, \pm1,\pm1)\in\bR^3$ with 
$$b=\left(-1-\sqrt{2}+\sqrt{2 \left(3+\sqrt{2}\right)}\right)/{\sqrt{3}}$$

correspond in $\bS^3$ to the vertices of a smaller spherical cube. The regular prisms attached to its faces are symmetric with respect to reflection  at the  spherical faces of the larger cube. 

Extending further by reflection results in a closed polyhedral surface tiled by 96 squares with valency 6 at each vertex.


This is related to the {\em Poincaré cube space} (also called {\em quaternionic space}, see \cite{gord}) where opposite faces of the $120^\circ$ cube are identified by a translation followed by a $90^\circ$ rotation. This identification ensures that the cycles around each edge have length 3, so we obtain a spherical 3-manifold.

%

We mention in passing the case that $n=2$. Then the spherical cube will have dihedral angles of $180^\circ$, making it a great $\bS^2\subset \bS^3$. The prismatic cube $\Pi_{4,3}^2$ can be obtained by using two copies of  the six prisms over the faces of a specific cube, so the figure \ref{fig:smucube2} shows only half of the truth: At the vertices where three squares appear to meet, there are actually six of them, each face occurring twice.

The same construction works (trivially) for any Platonic solid in $\bS^3$, giving us degenerate examples for the case $n=2$, which we will not mention again.

\begin{figure}[h] 
   \centering
   \includegraphics[width=2in]{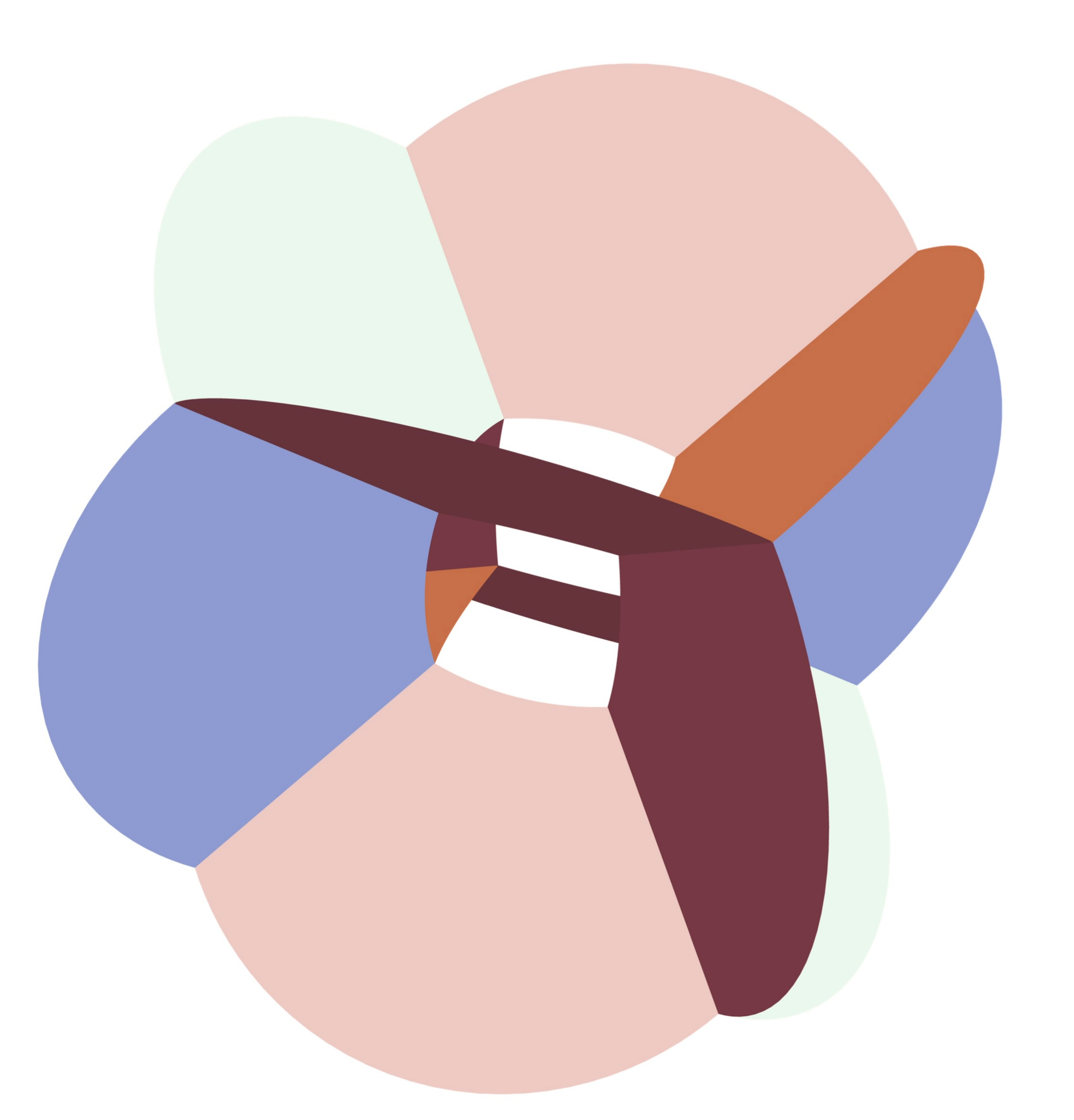} 
   \caption{The degenerate spherical prismatic cube for $n=2$}
   \label{fig:smucube2}
\end{figure}

\section{The Prismatic Tetrahedra}

The regular tetrahedron in Euclidean space has a dihedral angle of $\arccos(1/3)\approx 70.5^\circ$ and doesn't tile. Thus we will have hyperbolic tetrahedra $P_{3,3}^n$ with dihedral angle $2\pi/n$ for $n\ge 6$, and spherical tetrahedra for $n=3$, $n=4$, and $n=5$. We begin with the hyperbolic case:

\subsection{Hyperbolic Prismatic Tetrahedra}

Tetrahedra $P_{3,3}(a)$ can be constructed in the ball model of hyperbolic space as the intersection of the complements of spheres with centers at $a(1,1,1)$, $a(1,-1,-1)$, $a(-1,1,-1)$ and $a(-1,-1,1)$ that meet the unit sphere at a right angle. The dihedral angle of $P_{3,3}(a)$ is given by 
\[
\cos\alpha = \frac{a^2+3}{3(a^2-1)}
\]
so that a tiling hyperbolic tetrahedron $P_{3,3}(a_n)$ exists for all $\alpha = 2\pi/n$ for $n\ge 6$. 

For $n=6$, $P_{3,3}(a_6)$ is ideal, and for $n>6$ all $P_{3,3}(a_n)$ are hyperideal. 

\begin{figure}[h] 
   \centering
   \includegraphics[width=3in]{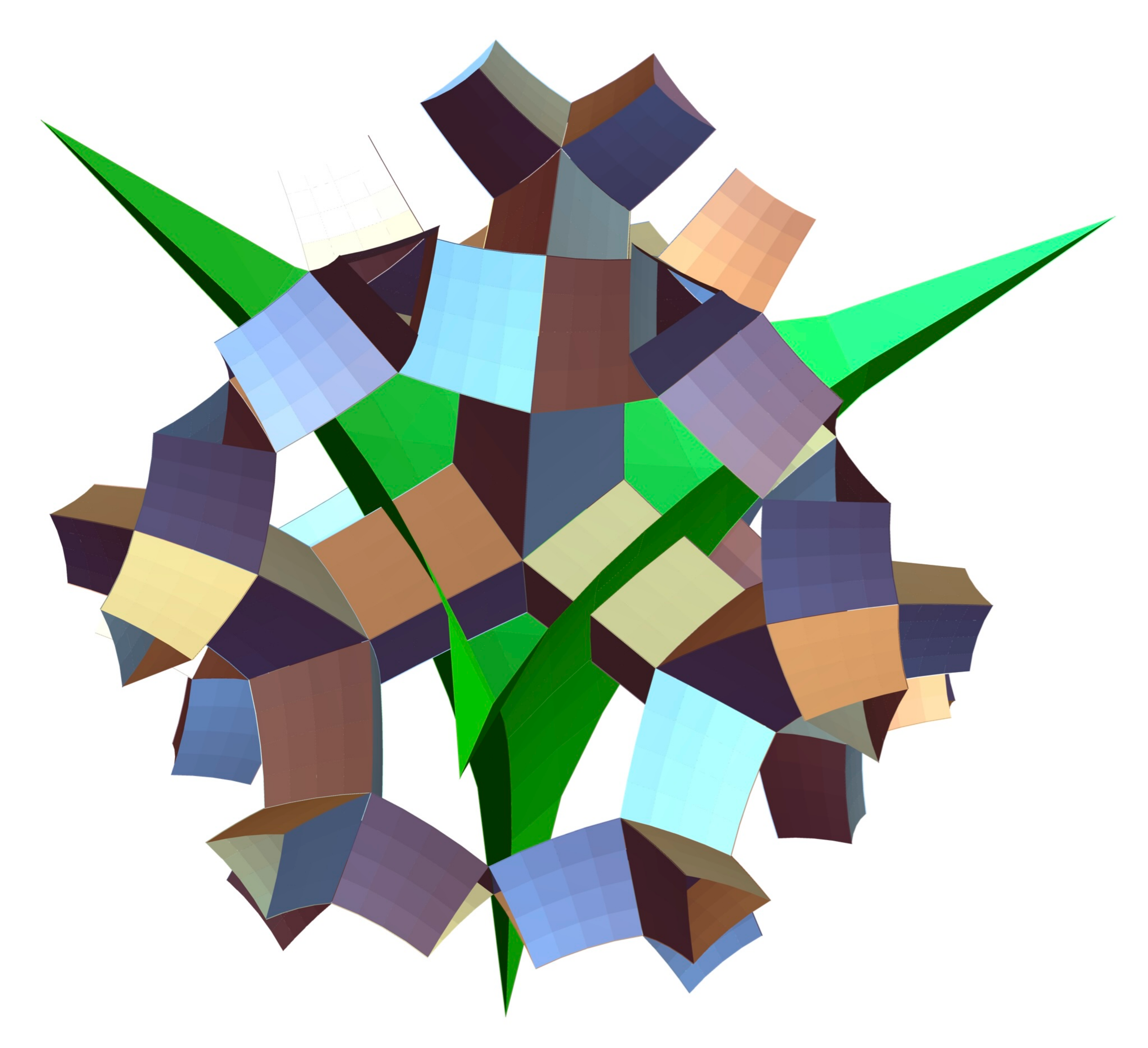}
   \qquad \includegraphics[width=3in]{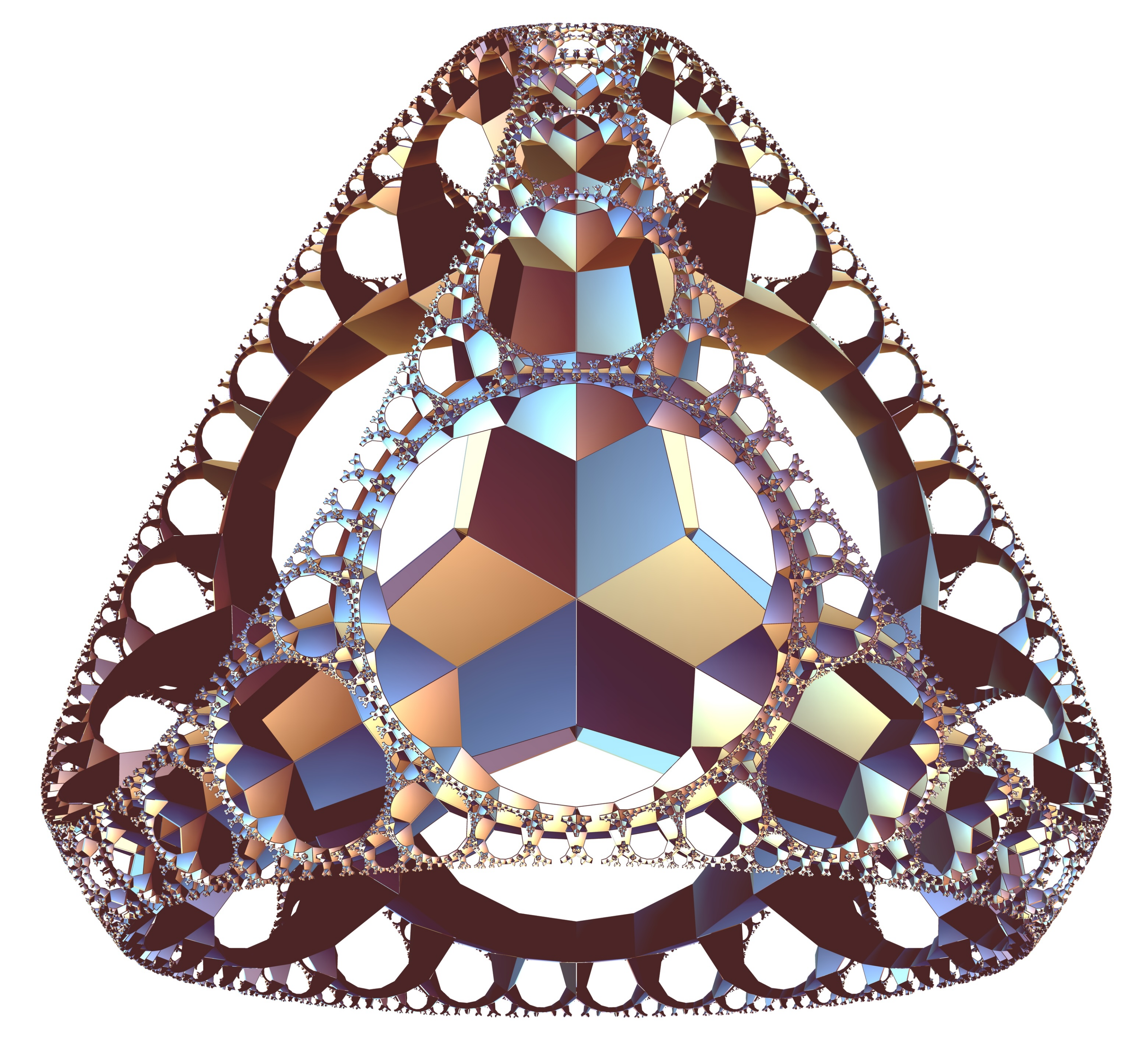} 
   \caption{A small portion of $\Pi_{3,3}^6$ and a larger portion of $\Pi_{3,3}^{10}$}
   \label{fig:pritetra}
\end{figure}

The same argument as in the cubical prismatic case from section \ref{sec:hypricu} shows:

\begin{theorem}
For $n\ge 6$ there exists a  hyperbolic prismatic tetrahedron $\Pi_{3,3}^n$.
\end{theorem}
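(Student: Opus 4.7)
The plan is to mirror the proof strategy used for the prismatic cube in section \ref{sec:hypricu}, replacing cubes by tetrahedra throughout. Fix $n \ge 6$ and set $a = a_n$, so that $P_{3,3}(a)$ is the (ideal for $n=6$, hyperideal for $n>6$) fundamental domain for the reflection group $\Gamma_{3,3}^n$. Select a parameter $b$ for which $P_{3,3}(b)$ is a genuinely finite tetrahedron strictly contained in $P_{3,3}(a)$. Reflect each triangular face of $P_{3,3}(b)$ at the corresponding face of $P_{3,3}(a)$ via sphere inversion, which is a hyperbolic isometry. The original face and its image span a prism whose three lateral faces are rectangles; the reflective symmetry guarantees that this prism is right, and the base is an equilateral triangle congruent to a face of $P_{3,3}(b)$.

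I would then run an intermediate value argument in $b$ to force the lateral rectangles to be squares, i.e.\ to make the prism regular. As $b \to \infty$ the inner tetrahedron collapses toward the origin, so its edge length tends to zero while the height of the prism (twice the hyperbolic distance from a face of $P_{3,3}(b)$ to the corresponding face of $P_{3,3}(a)$) remains bounded below. At the other extreme, as $b$ decreases toward the threshold $b_0$ at which $P_{3,3}(b)$ itself becomes ideal — this threshold lies strictly above $a$ whenever $n \ge 6$, since $P_{3,3}(a)$ is already ideal or hyperideal — the base edge grows without bound while the prism height stays bounded by the distance to the ambient face of $P_{3,3}(a)$. Continuity of both quantities in $b$ therefore produces a value $b^* \in (b_0, \infty)$ for which the lateral rectangles are squares; monotonicity of the base-to-height ratio yields uniqueness of $b^*$.

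Finally, I would attach the four resulting regular prisms to the four faces of $P_{3,3}(b^*)$ and propagate this configuration under $\Gamma_{3,3}^n$. By the construction, neighboring copies glue compatibly across each face of $P_{3,3}(a)$, and discarding the triangular bases leaves an infinite polyhedral surface $\Pi_{3,3}^n$ tiled by congruent squares with a common vertex figure and with compact quotient under $\Gamma_{3,3}^n$. The main obstacle, exactly as in the cube case, is checking that the two limiting regimes genuinely bracket the squareness condition — that at $b \to b_0$ the base edge diverges while the height stays bounded. This follows from the explicit hyperbolic distance formula in the ball model together with the tetrahedral version of the vertex computation, in direct analogy with the cube calculation following Corollary~3.3; I would make this quantitative by computing the vertices of $P_{3,3}(b)$ from the intersection of three of the defining spheres, extracting the edge length as $\arcosh(1+2\alpha)$ and the prism height as $\arcosh(1+2\beta)$ for explicit rational functions $\alpha, \beta$ of $a$ and $b$, and verifying the limiting behavior of $\alpha/\beta$.
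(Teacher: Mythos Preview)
Your proposal is correct and follows exactly the paper's approach: the paper's proof is literally the sentence ``The same argument as in the cubical prismatic case from section \ref{sec:hypricu} shows'', and you have spelled out that argument for tetrahedra. One minor inaccuracy: for $n=6$ the outer tetrahedron $P_{3,3}(a_6)$ is itself ideal, so the ideal threshold $b_0$ equals $a$ rather than lying strictly above it; the intermediate value argument is unaffected, since as $b\searrow b_0$ the base edge still diverges while the prism height stays bounded.
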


Figure \ref{fig:pritetra} shows examples. One can clearly see that for $n>6$ the limit set of $\Gamma_{3,3}^n$ leaves large gaps in $\bS^2$ that originate from the circles where the hyperideal $P_{3,3}(a_n)$ meet $\bS^2$.

\subsection{Spherical Prismatic Tetrahedra}

We have spherical tetrahedra for $n=3$, $n=4$, and $n=5$ with dihedral angle $2\pi/n$. Their tilings of  $\bS^3$ correspond to the 5-cell, 16-cell and  600-cell, respectively.

\begin{figure}[h] 
   \centering
   \includegraphics[width=3in]{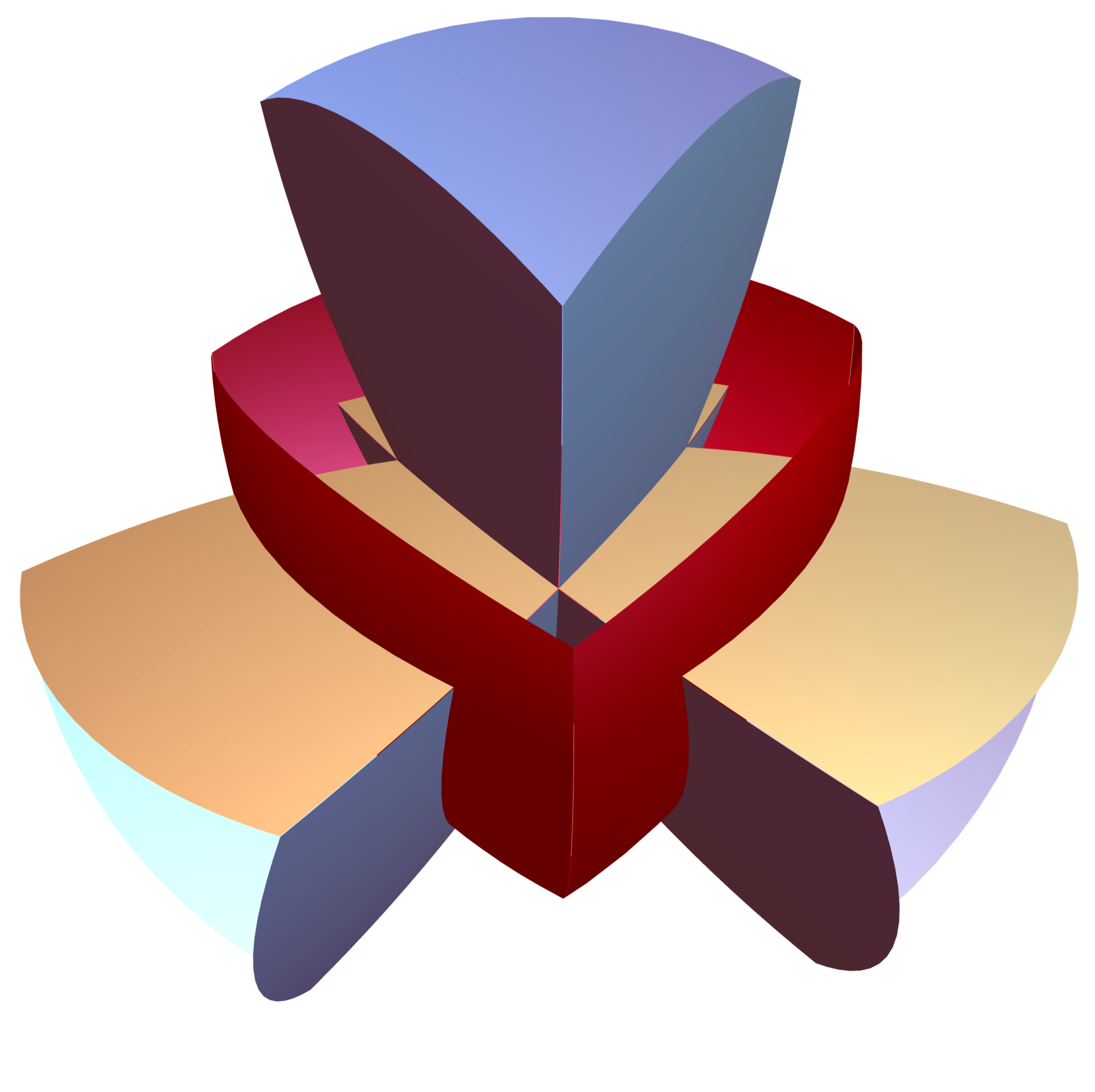
   }\qquad \includegraphics[width=3in]{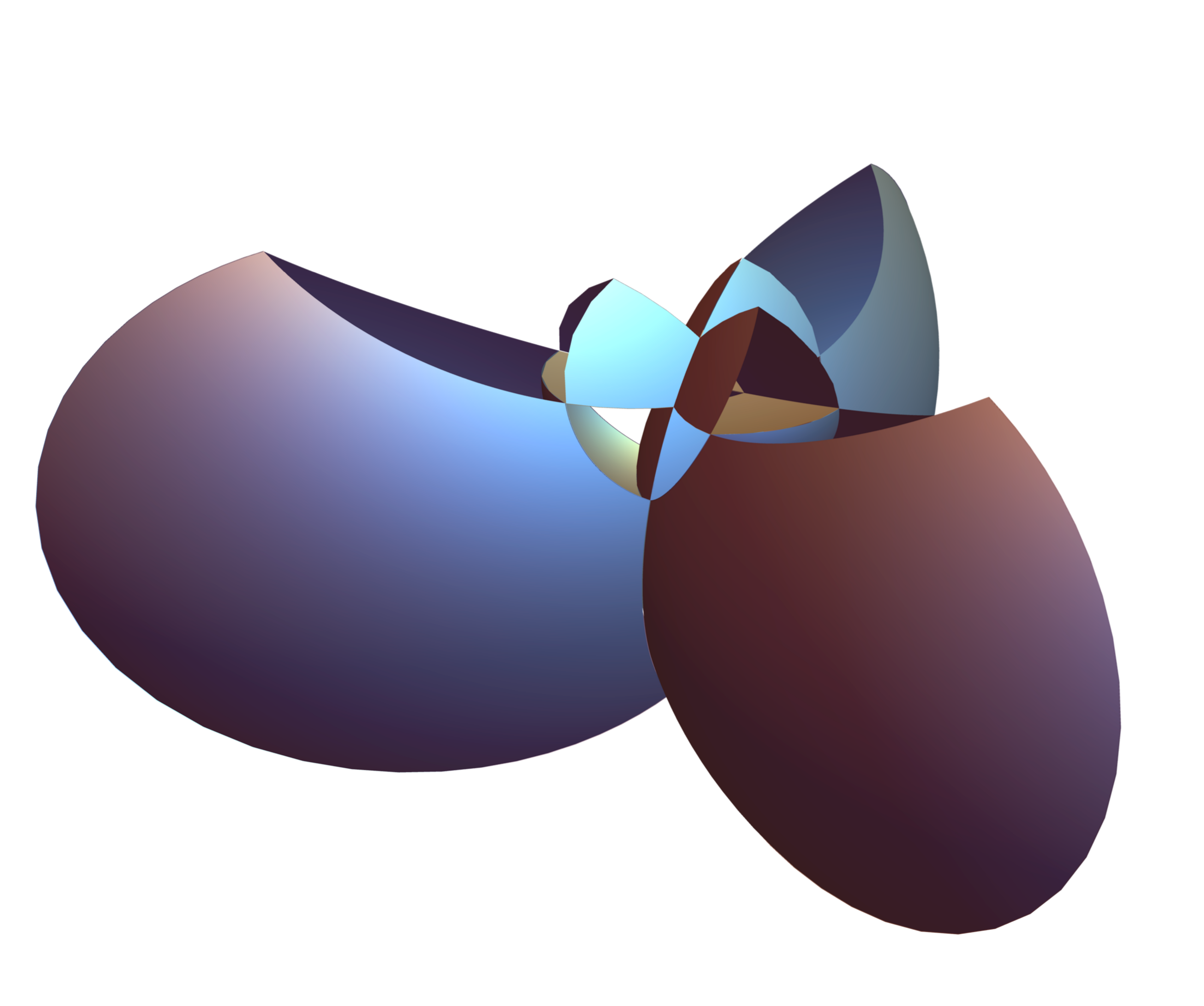} 
   \caption{Domain of  $\Pi_{3,3}^3$ and  most  of $\Pi_{3,3}^3$}
   \label{fig:pritetra3}
\end{figure}

Each of the 10, 32 and 1200 faces of these 4-polytopes is cut by a prism, so the spherical prismatic tetrahedra $\Pi_{3,3}^n$ will have 30, 96 and 3600 square faces.

\begin{figure}[h] 
   \centering
   \includegraphics[width=3in]{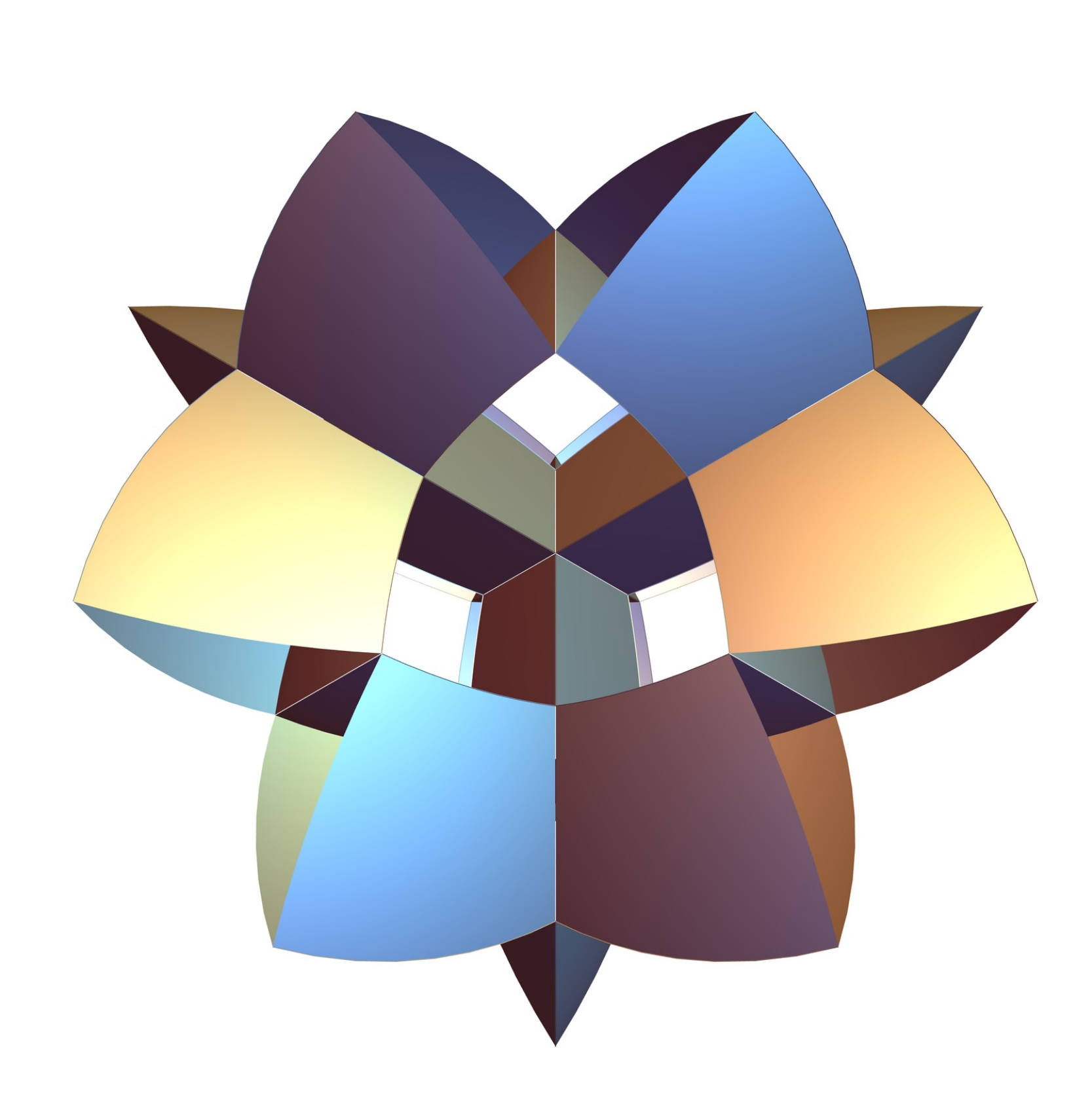
   }\qquad \includegraphics[width=3in]{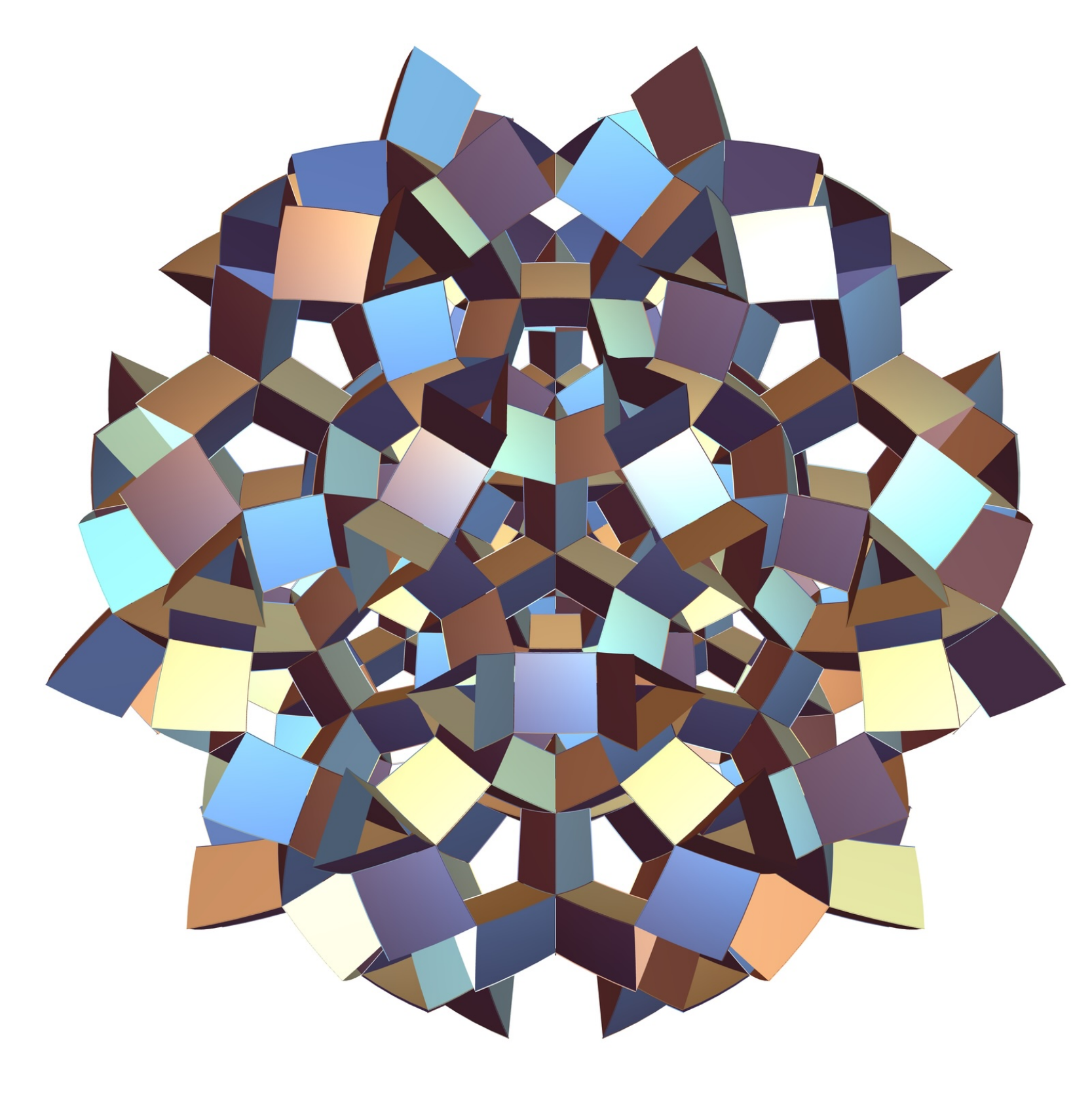} 
   \caption{Half of  $\Pi_{3,3}^4$ and  half of of $\Pi_{3,3}^5$}
   \label{fig:pritetrasphere}
\end{figure}

\section{The Prismatic Octahedron, Cuboctahedron, and Truncated Octahedron}

In this section we will discuss three different prismatic polyhedra that are all invariant under a group $\Gamma_{3,4}^n$ that is generated by reflections at the faces of an octahedron $P_{3,4}(a_n)$. 

The prismatic polyhedra will be constructed using   smaller octahedra $P_{3,4}(b_n)$ and truncations $TP_{3,4}$ and $RP_{3,4}$ of suitable sizes to attach prisms to their faces. In the two latter cases, the prisms are being attached only to the non-square faces of $TP_{3,4}$ and $RP_{3,4}$.

\subsection{Hyperbolic Prismatic Octahedra}

We begin by describing octahedra in hyperbolic space.

\begin{definition}
For a real number $a>1$, denote by $P_{3,4}(a)$ the intersection of the sets $H(a p)$ for all $p\in \{(\pm1,\pm1,\pm1)/\sqrt3\}$
with the unit ball. We call $P_{3,4}(a)$ a generalized hyperbolic octahedron.
\end{definition}

A simple computation shows:

\begin{lemma}
\begin{enumerate}
\item For  $a>\sqrt3$,  the four spheres $S(\pm a,\pm a,a)$ have an intersection $p$ with $|p|<1$. In this case $P_{3,4}(a)$ is a finite octahedron. For $a=\sqrt3$, the octahedron is ideal with dihedral angle $\pi/2$. For $a\to\infty$, the dihedral angle increases to $\arccos(-1/3)\approx 109.47^\circ$, the dihedral angle of the Euclidean octahedron.
\item For $a=\sqrt{3/2}$, $S(a(1,1,1)/\sqrt3$ and  $S(a(1,-1,1)/\sqrt3$ touch.
\item For $a<\sqrt{3/2}$, $S(a(1,1,1)/\sqrt3$ and  $S(a(1,-1,1)/\sqrt3$ are disjoint.
\item For $a>\sqrt{3/2}$, $S(a(1,1,1)/\sqrt3$ and  $S(a(1,-1,1)/\sqrt3$ meet at a dihedral angle  $\alpha$ with $\cos \alpha = \frac{3-a^2}{3(a^2-1)}$.
\item For   $\sqrt3>a>\sqrt{3/2}$, $P_{3,4}(a)$ is hyperideal.
\end{enumerate}
\end{lemma}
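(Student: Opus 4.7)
The plan is to mirror the cubical argument of Section \ref{sec:hypricu}, reducing everything to explicit computations with the sphere-intersection formula of Section \ref{sec:formulas} and then reading off the five claims one by one. The combinatorial setup to keep in mind is the cube--octahedron duality: the eight sphere centers $ap$ with $p \in \{(\pm 1, \pm 1, \pm 1)/\sqrt{3}\}$ are the vertices of a scaled cube, and they correspond to the eight triangular faces of $P_{3,4}(a)$. A vertex of the octahedron, where four faces meet, therefore corresponds to a face of the dual cube, i.e.\ to the set of four centers sharing a fixed sign in one coordinate.

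For claim (1), I would pin down the vertex $(0,0,z)$ on the positive $z$-axis as the common intersection of the four spheres $S(a(\pm 1, \pm 1, 1)/\sqrt{3})$, each of radius $\sqrt{a^2-1}$. Symmetry reduces the four equations to the single condition $(z - a/\sqrt{3})^2 = a^2/3 - 1$. This has a real solution exactly when $a \ge \sqrt{3}$; taking the root closer to the origin, one checks that $z = 1$ at $a = \sqrt{3}$ and that $z$ decreases monotonically to $0$ as $a \to \infty$, so the vertex is ideal at $a = \sqrt{3}$ and lies strictly inside the unit ball for $a > \sqrt{3}$. For $a < \sqrt{3}$ there is no real common intersection, which is the first half of the hyperideal criterion needed for (5).

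Claim (4) comes from the angle formula applied to two adjacent face spheres, which are those whose centers differ in exactly one coordinate sign, e.g.\ $x_1 = a(1,1,1)/\sqrt{3}$ and $x_2 = a(1,-1,1)/\sqrt{3}$. Then $|x_1 - x_2|^2 = 4a^2/3$ and $r_1^2 = r_2^2 = a^2 - 1$, so
\[
\cos\alpha \;=\; \frac{4a^2/3 - 2(a^2-1)}{2(a^2-1)} \;=\; \frac{3-a^2}{3(a^2-1)},
\]
as claimed. From this closed form everything else is immediate: setting $\cos\alpha = 1$ gives $a = \sqrt{3/2}$, which proves (2); for $a < \sqrt{3/2}$ one instead has $|x_1-x_2| > r_1 + r_2$, so the two spheres are disjoint, giving (3); at $a = \sqrt{3}$ one has $\cos\alpha = 0$ and as $a \to \infty$ one has $\cos\alpha \to -1/3$, and a quick derivative check in $a^2$ shows monotonicity, confirming the limiting behavior asserted in (1). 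Claim (5) is then a synthesis: for $\sqrt{3/2} < a < \sqrt{3}$, part (4) guarantees that the edges (pairwise intersections) still exist, while the vertex analysis from claim (1) shows that the quadruple intersections fail to be real points of $\bH^3$, so $P_{3,4}(a)$ is hyperideal.

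The main obstacle is really only bookkeeping, namely keeping straight which four of the eight spheres contribute to a given vertex and verifying that ``spheres touch'' in the Euclidean ambient space coincides with ``dihedral angle equals zero'' in hyperbolic space; both facts are standard consequences of the angle formula, so no serious difficulty arises.
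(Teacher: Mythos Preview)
Your proposal is correct and follows exactly the template the paper sets up for the analogous cubical lemma in Section~\ref{sec:hypricu}; the paper itself omits a proof here (writing only ``A simple computation shows:''), and your computations with the sphere-intersection formula from Section~\ref{sec:formulas} are the intended ones, with the correct vertex equation $(z-a/\sqrt{3})^2 = a^2/3 - 1$ and the correct dihedral angle formula.
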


As a consequence, we can determine all octahedra that tile hyperbolic space:

For $a=\sqrt{3}$ we have $\alpha=90^\circ$ and $n=4$. This octahedron is ideal. No finite octahedron tiles hyperbolic space.

For any $n\ge 5$, 
\[
a_n=\frac{\sqrt{6} \cos \left(\frac{\pi }{n}\right)}{\sqrt{3 \cos \left(\frac{2 \pi
   }{n}\right)+1}}
   \]
determines the hyperideal octahedron that tiles hyperbolic space with dihedral angle $2\pi/n$.

\subsection{The Prismatic Octahedron in $\bH^3$}

\begin{figure}[h] 
   \centering
   \includegraphics[width=2.5in]{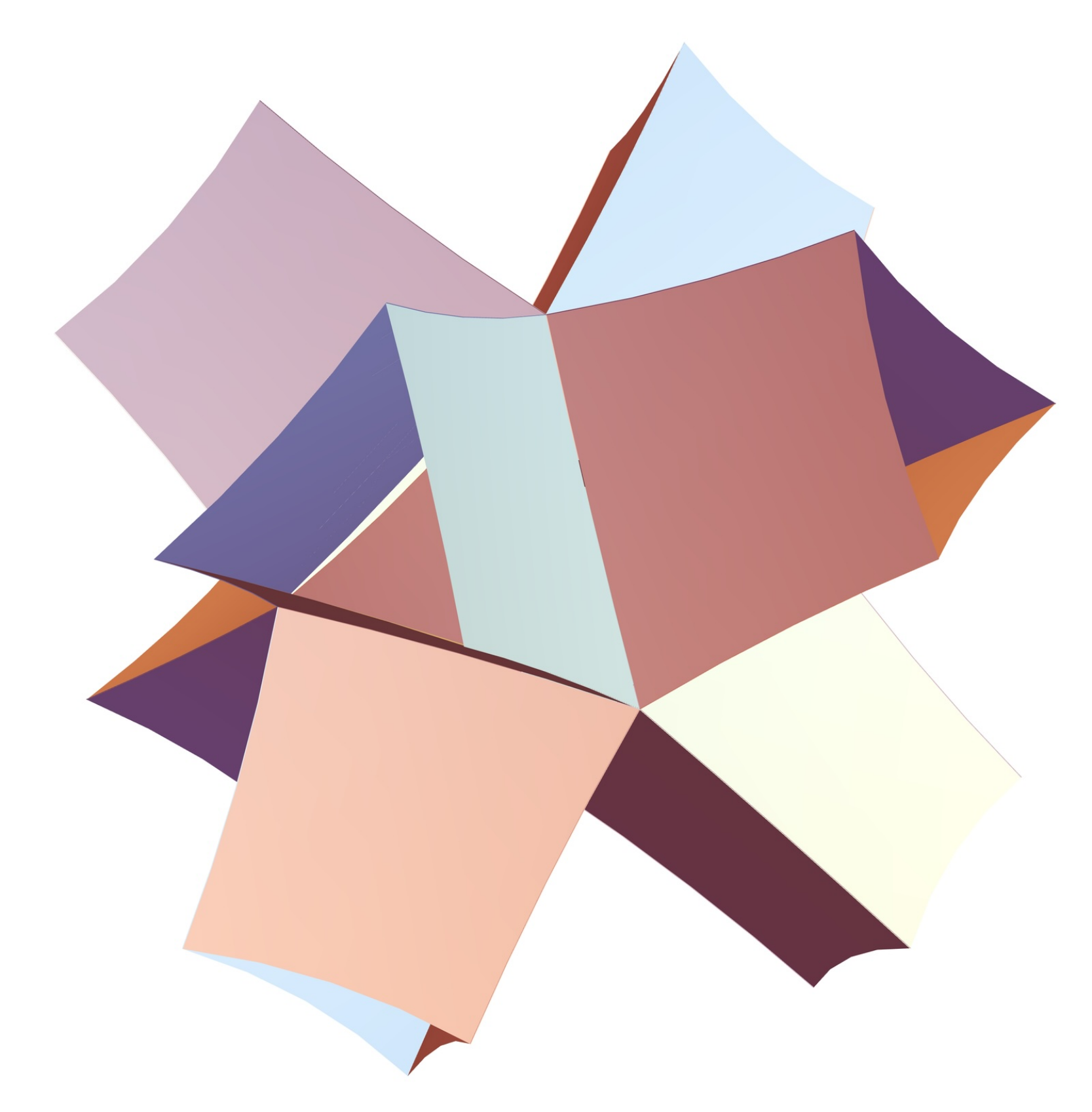}
   \qquad 
   \includegraphics[width=2.8in]{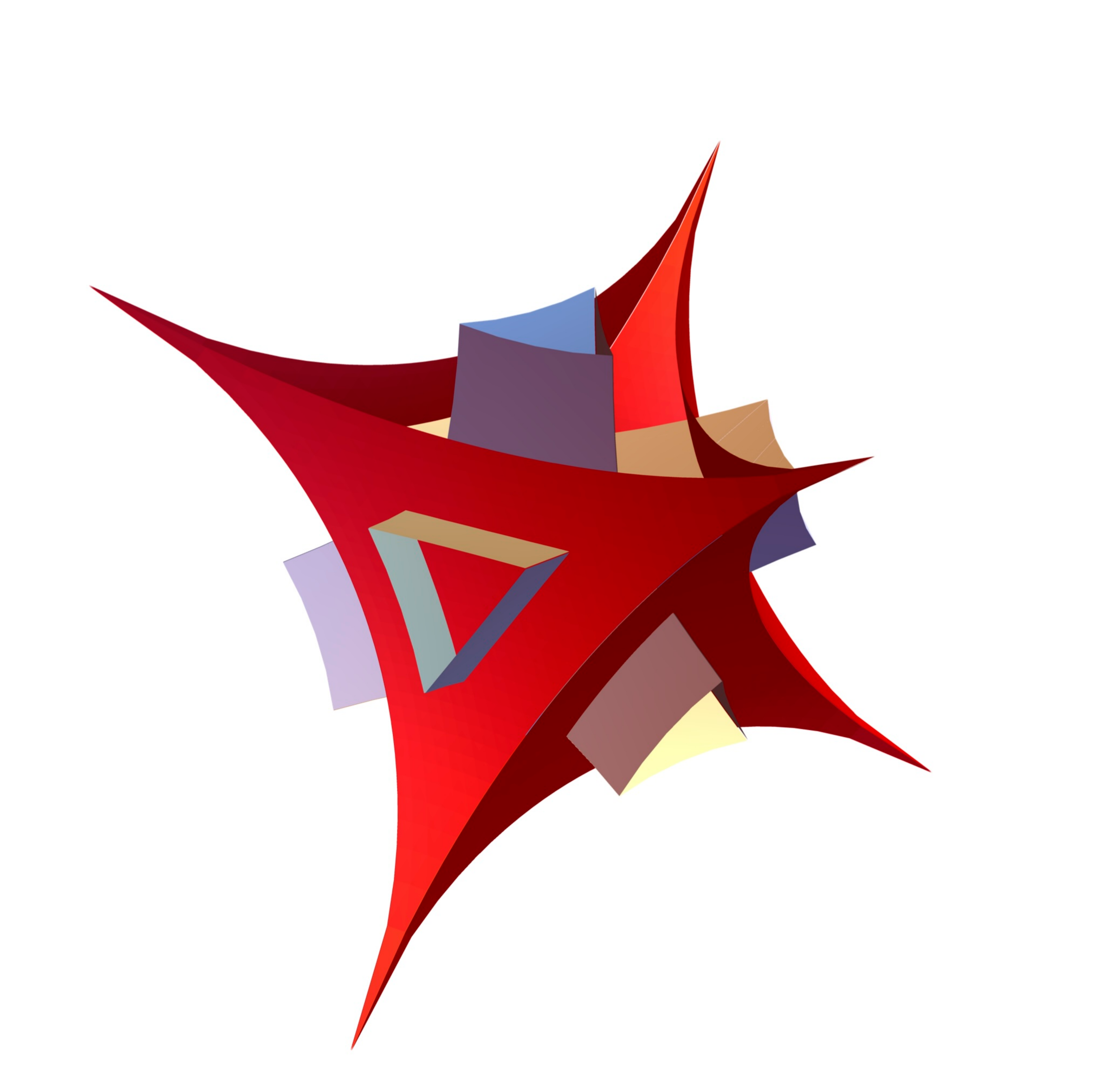}
      \caption{The Hyperbolic Prismatic Octahedron $\Pi_{3,4}^4$}
   \label{fig:hypprisocta4}
\end{figure}

For $n\ge 4$,  place a finite octahedron $P_{3,4}(b)$ inside $P_{3,4}(a_n)$. Replace each face of $P_{3,4}(b)$ by a regular prism over that face pointing outside. We have, with the same proof as before:

\begin{theorem}
For each $n\ge 4$ there is a value of $b=b_n$ so that the prisms over the faces of   $P_{3,4}(b)$ are symmetric with respect to inversions at the corresponding faces of $P_{3,4}(a_n)$. Continuing to invert at the faces of $P_{3,4}(a_n)$ creates an infinite hyperbolic polyhedron $\Pi_{3,4}^n$ with square faces and valency 8 at each vertex. The identification space is a  compact surface of genus 4 tiled by 12 squares with valency 8.
\end{theorem}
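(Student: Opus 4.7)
My plan is to follow the three-step template used for the prismatic cube in Section~\ref{sec:hypricu}.

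First, I would establish the existence of $b_n$ by an intermediate value argument. For $b>\sqrt 3$ the inner polyhedron $P_{3,4}(b)$ is a finite octahedron, and for $b$ sufficiently large it is strictly contained in $P_{3,4}(a_n)$. Write $\ell(b)$ for its hyperbolic edge length and $h(b)=2\,d_{\bH}(v,F)$ for twice the hyperbolic distance from a vertex $v$ of $P_{3,4}(b)$ to the face $F$ of $P_{3,4}(a_n)$ corresponding to an incident triangle; by the intrinsic symmetries of the prism, its rectangular sides are hyperbolic squares precisely when $\ell(b)=h(b)$. As $b\to\infty$, $P_{3,4}(b)$ contracts hyperbolically toward the origin, so $\ell(b)\to 0$ while $h(b)$ tends to the positive limit $2\,d_{\bH}(0,F)$; hence $\ell<h$ there. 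As $b$ approaches its lower limit $\sqrt 3$, $P_{3,4}(b)$ becomes ideal and $\ell(b)\to\infty$; one then checks using the explicit distance formulas of Section~\ref{sec:formulas} that $\ell(b)-h(b)$ remains positive in this limit (for $n=4$ because $h\to 0$, since the vertex $v$ approaches an ideal point of $F$; for $n\ge 5$ because $h(b)$ diverges at a strictly slower logarithmic rate than $\ell(b)$). Continuity and the intermediate value theorem then yield $b_n$, and monotonicity of $\ell/h$ in $b$ gives uniqueness.

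Second, the construction of $\Pi_{3,4}^n$ proceeds by iterating the reflections in the faces of $P_{3,4}(a_n)$. By the Poincar\'e polyhedron theorem $\Gamma_{3,4}^n$ is discrete with $P_{3,4}(a_n)$ as a fundamental domain. Each prism over a triangle $T$ of $P_{3,4}(b_n)$ is manifestly symmetric under the reflection at its corresponding face $F$, and its top face $r_F(T)$ is already a face of $r_F(P_{3,4}(b_n))$ in the adjacent fundamental domain; so the prisms match up consistently under the reflections in $\Gamma_{3,4}^n$, producing a well-defined closed infinite polyhedral surface whose faces are, by the choice of $b_n$, all congruent hyperbolic squares.

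Third, the valency and topology are combinatorial consequences of the construction. At each vertex $v$ of any translate of $P_{3,4}(b_n)$, four triangles of the inner octahedron meet, contributing four base edges of the octahedron and four vertical edges (one per incident prism), hence eight incident edges and eight incident square faces. A fundamental region for the identification space contains $6$ vertex orbits (one per vertex of $P_{3,4}(b_n)$) and $12$ square faces; together with valency $8$ this forces $E=\tfrac12\cdot 6\cdot 8=24$ edges, Euler characteristic $\chi=6-24+12=-6$, and hence genus $g=4$. Compactness is inherited from the compactness of the fundamental polygon for $\Gamma_{3,4}^n$.

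The main technical obstacle lies in the first step: while the qualitative behavior at the endpoints is clear, for $n\ge 5$ both $\ell$ and $h$ diverge as $b\to\sqrt 3^+$, and a precise asymptotic comparison (using the sphere-intersection distance formulas) is required to verify that $\ell-h$ changes sign.
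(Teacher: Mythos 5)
Your proposal is correct and is essentially the paper's own argument: the paper proves this theorem by declaring it has ``the same proof as before,'' i.e.\ the intermediate value argument for the prismatic cube of Section~\ref{sec:hypricu} (small inner polyhedron makes the prism sides tall, the ideal limit $b\to\sqrt3^+$ makes the base edge win, plus monotonicity for uniqueness), combined with the Poincar\'e polyhedron theorem for $\Gamma_{3,4}^n$, and your combinatorial count ($V=6$, $E=24$, $F=12$, $\chi=-6$, genus $4$, valency $8$) is exactly the identification-space data asserted in the statement. One small correction to your asymptotics: for $n\ge5$ both $\ell(b)$ and $h(b)$ diverge at the \emph{same} logarithmic rate as $b\to\sqrt3^+$ (not a strictly slower one for $h$), with the difference $\ell-h$ tending to a positive constant, so your conclusion that $\ell-h$ stays positive is right and must indeed be checked against the explicit formulas --- a point on which you are in fact more careful than the paper, whose cube proof loosely asserts that the prism height ``remains bounded'' in the hyperideal case even though it diverges there too.
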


In figure \ref{fig:hypprisocta46} you can see portions of hyperbolic prismatic octahedra $\Pi_{3,4}^4$ and $\Pi_{3,4}^6$.

\begin{figure}[h] 
   \centering
   \includegraphics[width=2.5in]{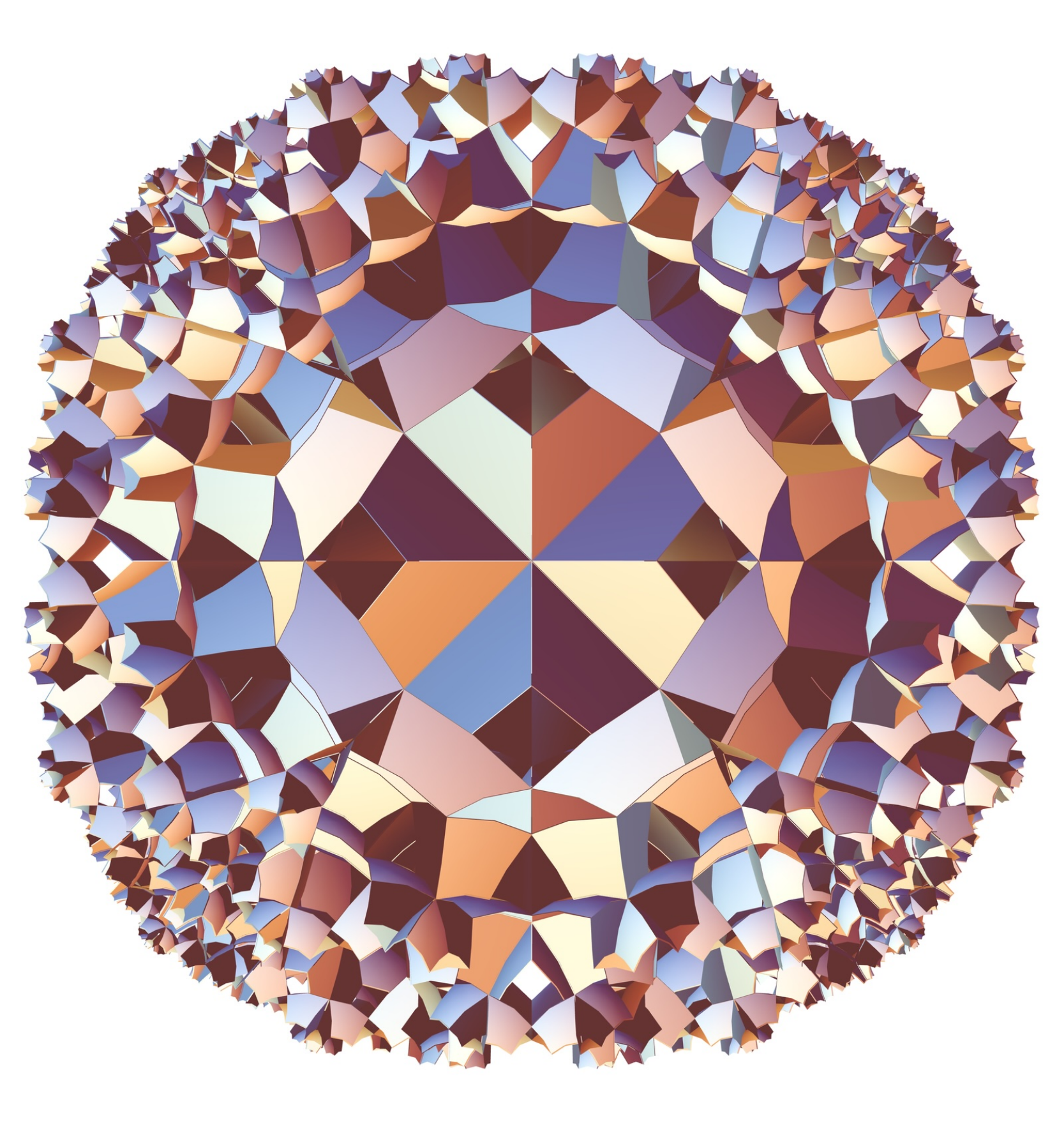}
   \qquad 
   \includegraphics[width=2.5in]{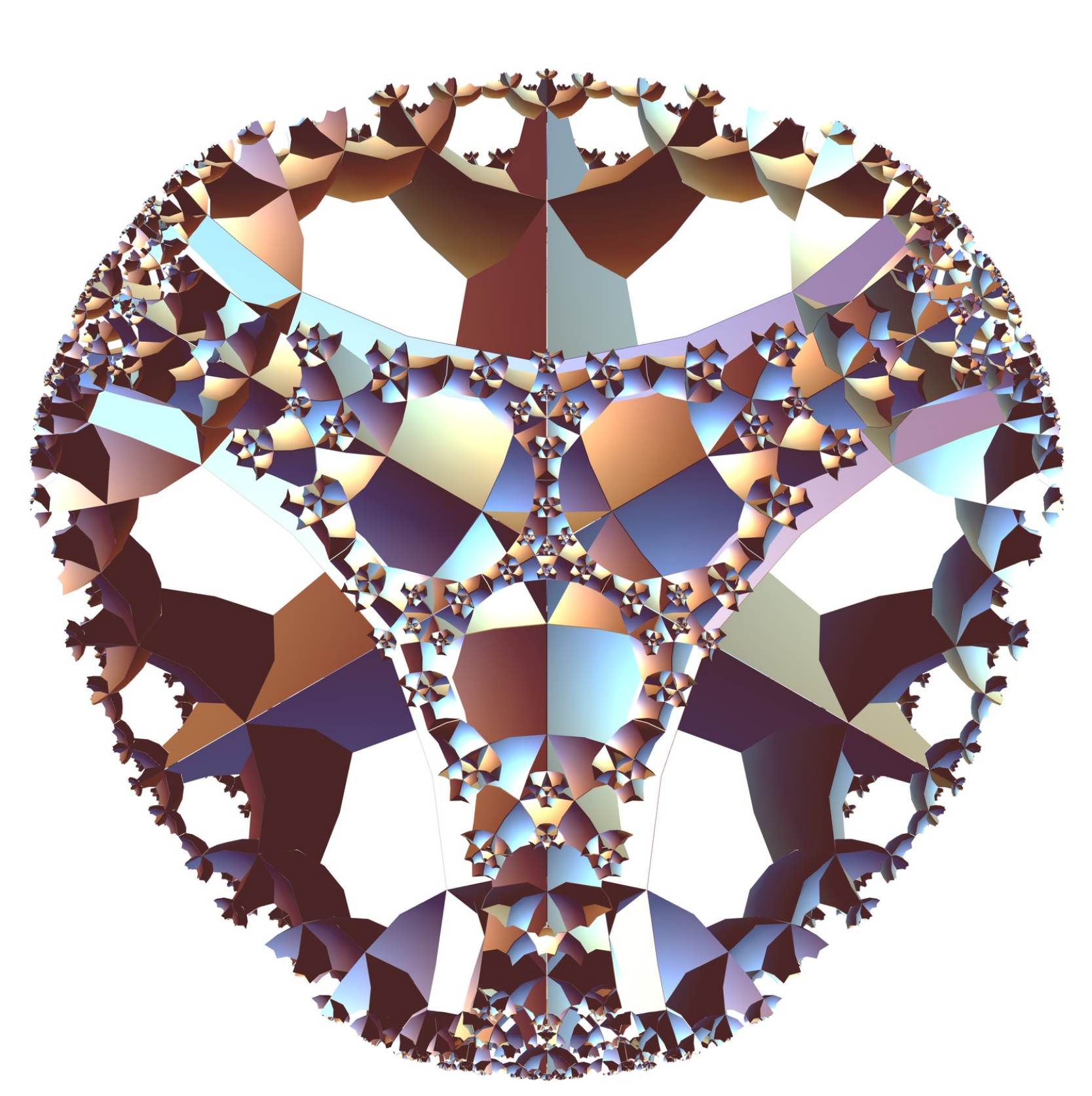}
   \caption{Hyperbolic Prismatic Octahedra for $n=4$ and $n=6$}
   \label{fig:hypprisocta46}
\end{figure}

\subsection{The Prismatic Cuboctahedron and Truncated Octahedron in $\bH^3$}
\label{sec:ppricub}

The construction for the prismatic octahedron can be varied as follows. We still use the same hyperbolic octahedra $P_{3,4}(a_n)$ as fundamental domains but use cuboctahedra $RP_{3,4}$ or truncated octahedra $TP_{3,4}$ as building blocks for the prismatic versions. In either case, we keep the square faces of the truncations and construct prisms over the triangular (resp. hexagonal) faces of the cuboctahedron (resp. truncated octahedron), see figure \ref{fig:hyppristruncocta1}.

\begin{figure}[h] 
   \centering
   \includegraphics[width=2.5in]{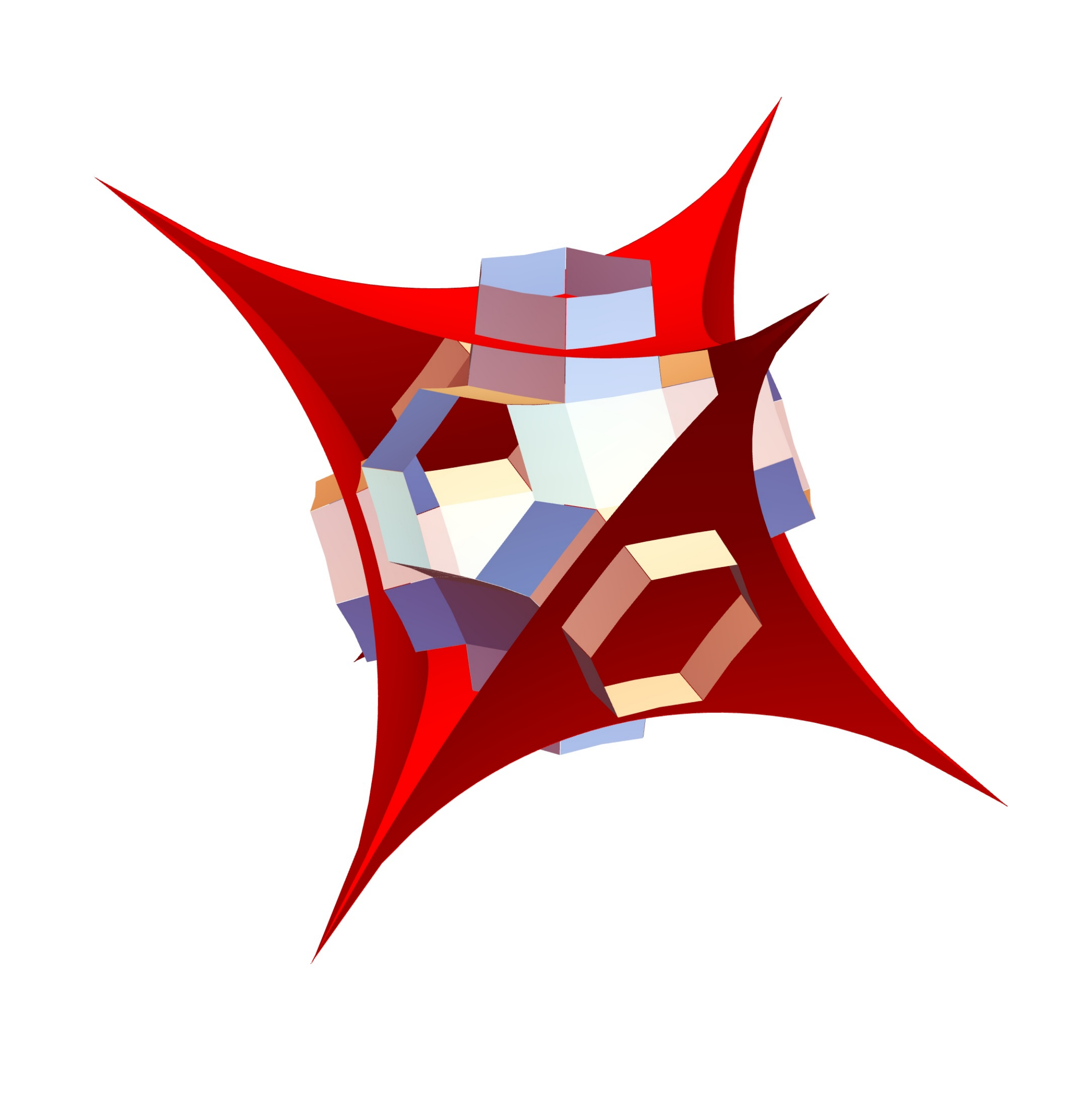}
   \qquad 
   \includegraphics[width=2.5in]{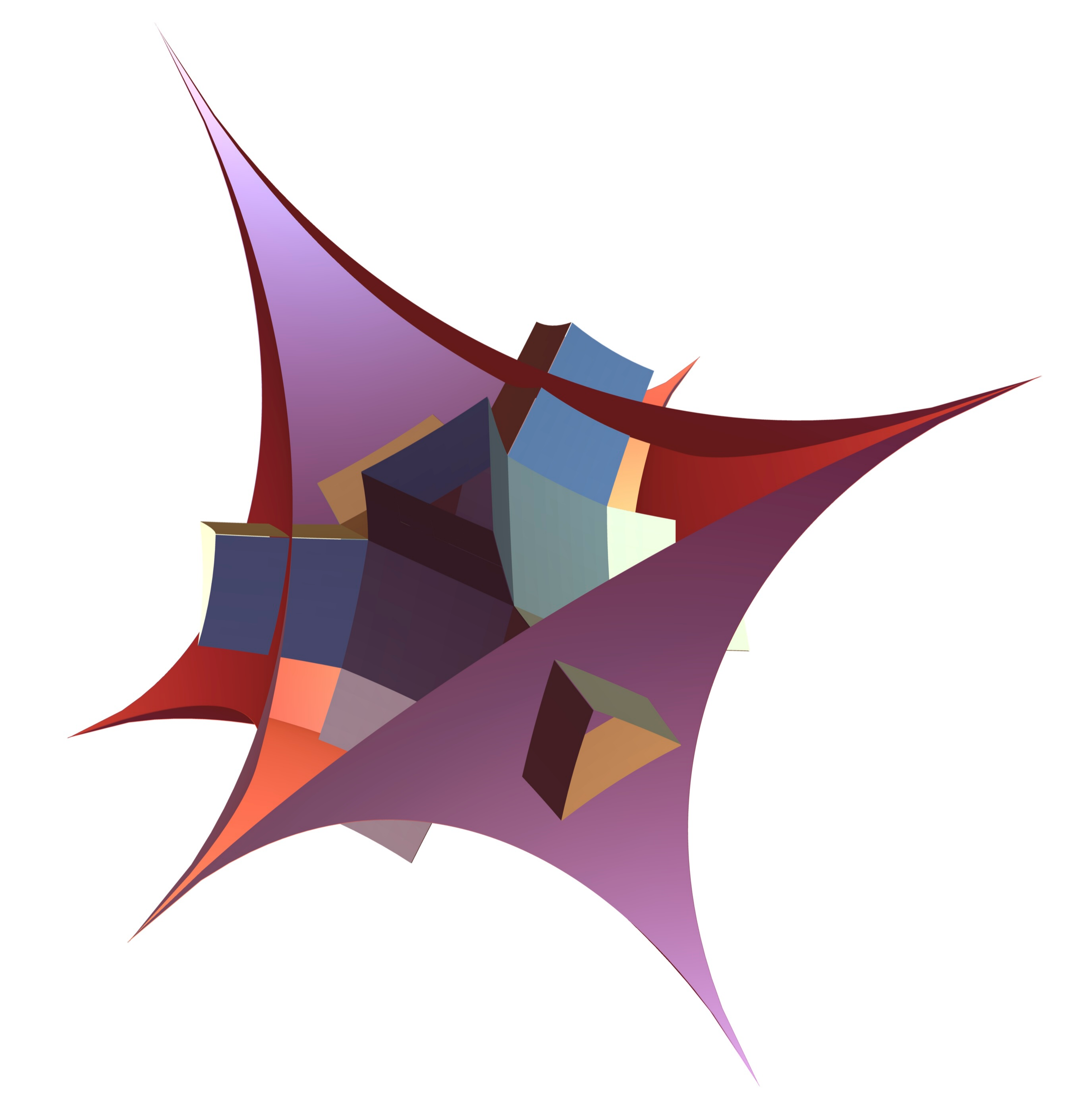}
   \caption{Hyperbolic Prismatic Truncated Octahedron and Cuboctahedron}
   \label{fig:hyppristruncocta1}
\end{figure}

The resulting surfaces exist again for any $n\ge 4$ (with essentially the same proof):

\begin{theorem}
For $n\ge 4$ there are hyperbolic prismatic cuboctahedra $RP_{3,4}^n$ and truncated octahedra $TP_{3,4}^n$ invariant under $\Gamma_{3,4}^n$. 

The identification surface of  $RP_{3,4}^n$ has genus 4 and is tiled by 18 hyperbolic squares with valency 6.

The identification surface of  $TP_{3,4}^n$ has genus 4 and is tiled by 30 hyperbolic squares with valency 5.
\end{theorem}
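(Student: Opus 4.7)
The plan is to adapt the intermediate-value argument used for the prismatic octahedron, keeping the same ambient group $\Gamma_{3,4}^n$ generated by reflections at the faces of $P_{3,4}(a_n)$ but replacing the inner building block. Place a regular cuboctahedron $RP_{3,4}(b)$ (respectively truncated octahedron $TP_{3,4}(b)$) of variable size $b$ concentrically inside $P_{3,4}(a_n)$, oriented so that its eight triangular (respectively hexagonal) faces lie in planes parallel to the eight face planes of $P_{3,4}(a_n)$. Over each such non-square face I would erect the prism obtained by taking the convex hull of that face with its reflection at the corresponding face of $P_{3,4}(a_n)$; by construction this prism is automatically symmetric with respect to that reflection.

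The size $b$ remains as the only free parameter. As in Section~\ref{sec:hypricu}, for small $b$ the bases of the prisms are small while their heights are bounded below by the distance from the inner solid to the nearest face of $P_{3,4}(a_n)$, making the rectangular sides tall and thin; as $b$ grows toward the value at which the non-square faces of the inner solid touch the faces of $P_{3,4}(a_n)$, the prism heights shrink to zero while the base edges remain bounded. By the intermediate value theorem there is a value $b = b_n$ at which all rectangular sides become hyperbolic squares, and monotonicity of the ratio of base edge to height yields uniqueness. Since the construction is symmetric under each generating reflection of $\Gamma_{3,4}^n$, the Poincar\'e polyhedron theorem produces a well-defined infinite $\Gamma_{3,4}^n$-invariant polyhedral surface.

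For the topology I count per fundamental domain $P_{3,4}(a_n)$. In the cuboctahedron case the six square faces of $RP_{3,4}(b_n)$ lie inside $P_{3,4}(a_n)$ and contribute six full squares; the eight triangular prisms have $8 \cdot 3 = 24$ rectangular sides, each glued to its reflected counterpart across a face of $P_{3,4}(a_n)$ to form $12$ further squares, giving $F = 18$. Every vertex of $RP_{3,4}(b_n)$ is surrounded by two cuboctahedron squares and four prism rectangles meeting at six edges (four edges of the cuboctahedron and two vertical prism edges), so the valency is $6$; top vertices of the prisms become interior points of edges of the final surface and contribute no new vertices. Hence $V = 4F/6 = 12$, $E = 4F/2 = 36$, $\chi = V - E + F = -6$, and the genus is $4$. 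The truncated octahedron case is handled identically: $F = 6 + 8 \cdot 6/2 = 30$, valency $5$ at each of the $24$ vertices of $TP_{3,4}(b_n)$, so $V = 24$, $E = 60$, $\chi = -6$, and again genus $4$.

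The one step genuinely new compared to the earlier prismatic constructions is verifying that regular-faced hyperbolic cuboctahedra and truncated octahedra exist in a continuous one-parameter family indexed by size. I expect this to follow from the full octahedral symmetry imposed on the configuration, which forces all edges of each combinatorial type to be congruent and thereby makes face-regularity automatic; the remainder of the argument then reduces to the same explicit ball-model bookkeeping already used in Section~\ref{sec:hypricu}.
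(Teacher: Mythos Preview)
Your approach is essentially the paper's own: the paper proves this theorem by saying only that it holds ``with essentially the same proof'' as the prismatic octahedron, and your intermediate-value argument and Euler counts spell that out correctly. One small slip: the top vertices of your full prisms are not interior points of edges but rather vertices of the adjacent copy of the inner solid, identified in the quotient with the twelve (respectively twenty-four) vertices of the central one; your counts $V=12$ and $V=24$ are unaffected, since they also follow directly from $V=4F/\text{valency}$.
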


None of the maps for the three kinds of prismatic octahedral surfaces can be Platonic as can be seen by inspecting closed (combinatorial) geodesics that pass through opposite edges of squares. For instance, for the prismatic octahedra, we will have such closed geodesics of length 3 (i.e. passing through three squares) for the waists of the prisms, but regardless how we identify the prisms, closed geodesics through bottom and top of the prisms will always have an even length.

\begin{figure}[h] 
   \centering
   \includegraphics[width=2.5in]{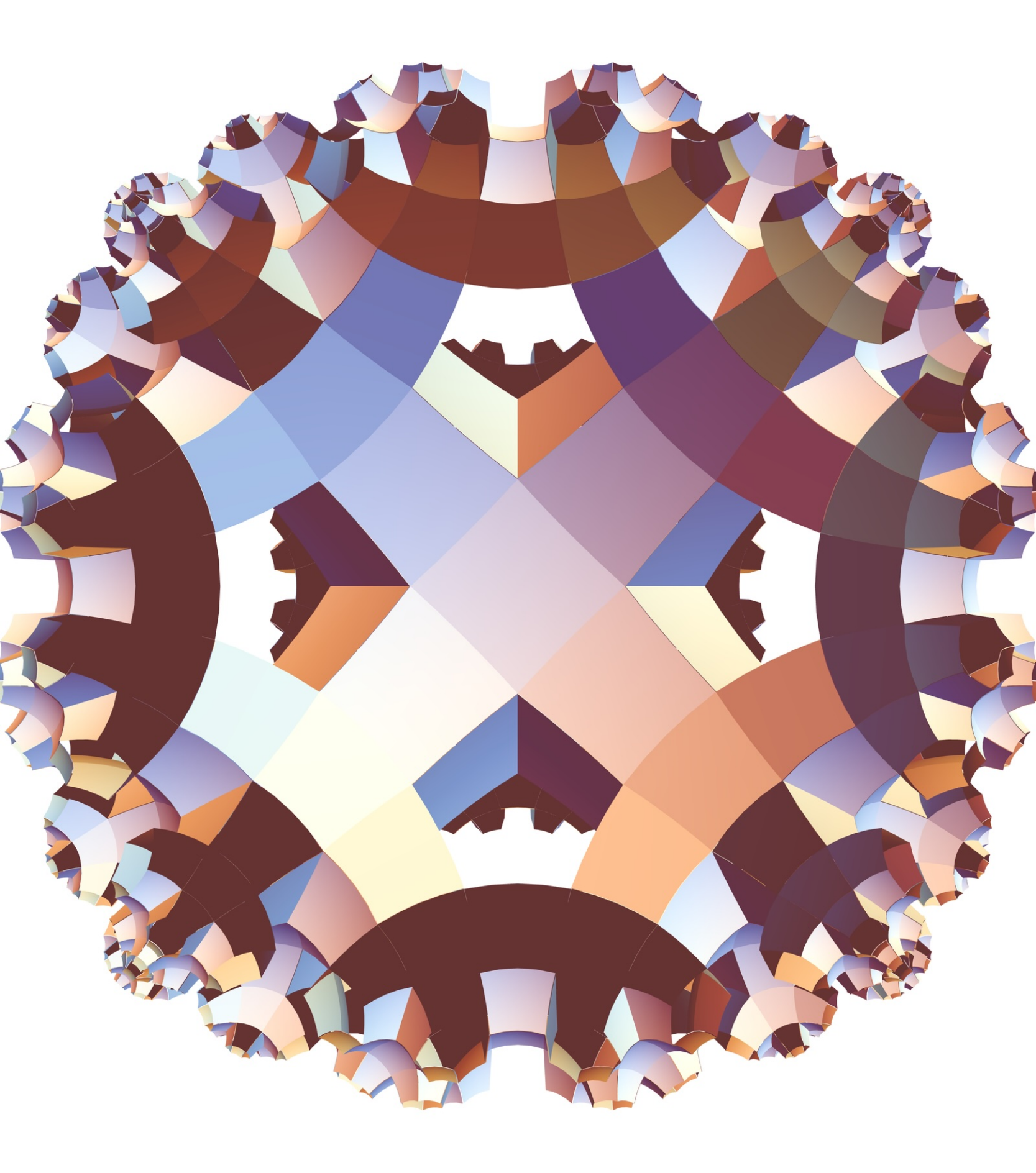}
   \qquad 
   \includegraphics[width=2.5in]{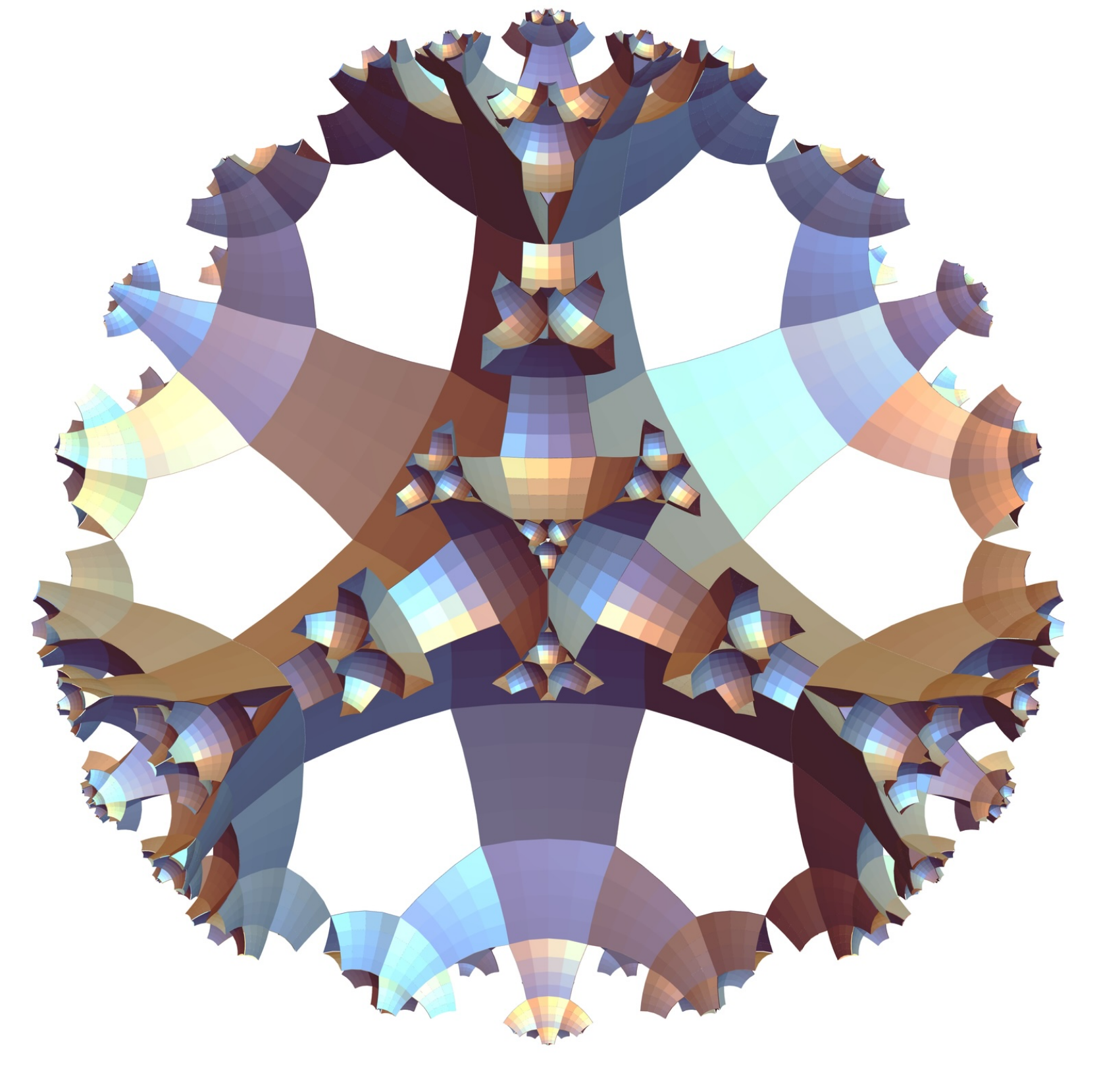}
   \caption{Hyperbolic Prismatic Truncated Octahedron and Cuboctahedron}
   \label{fig:hyppristruncocta}
\end{figure}

\subsection{The Prismatic Cuboctahedron and Truncated Octahedron in $\bS^3$}
\label{sec:pricubsp}

There also exist spherical versions of octahedron, truncated octahedron and cuboctahedron for $n=3$, using the octahedra of the 24-cell for the construction. See figure \ref{fig:sphprismocta}.

\begin{figure}[h] 
   \centering
   \includegraphics[width=1.9in]{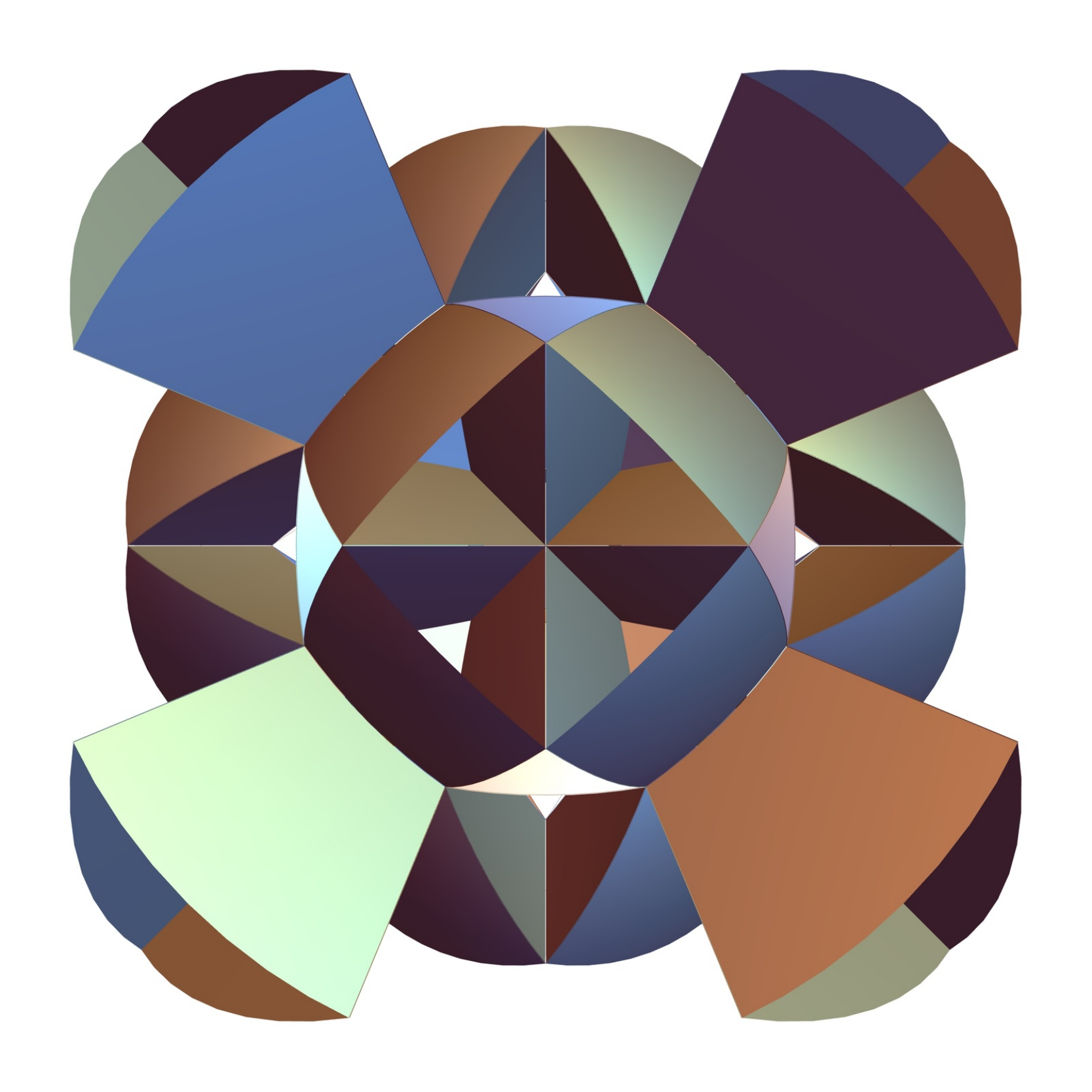}
   \qquad 
   \includegraphics[width=1.9in]{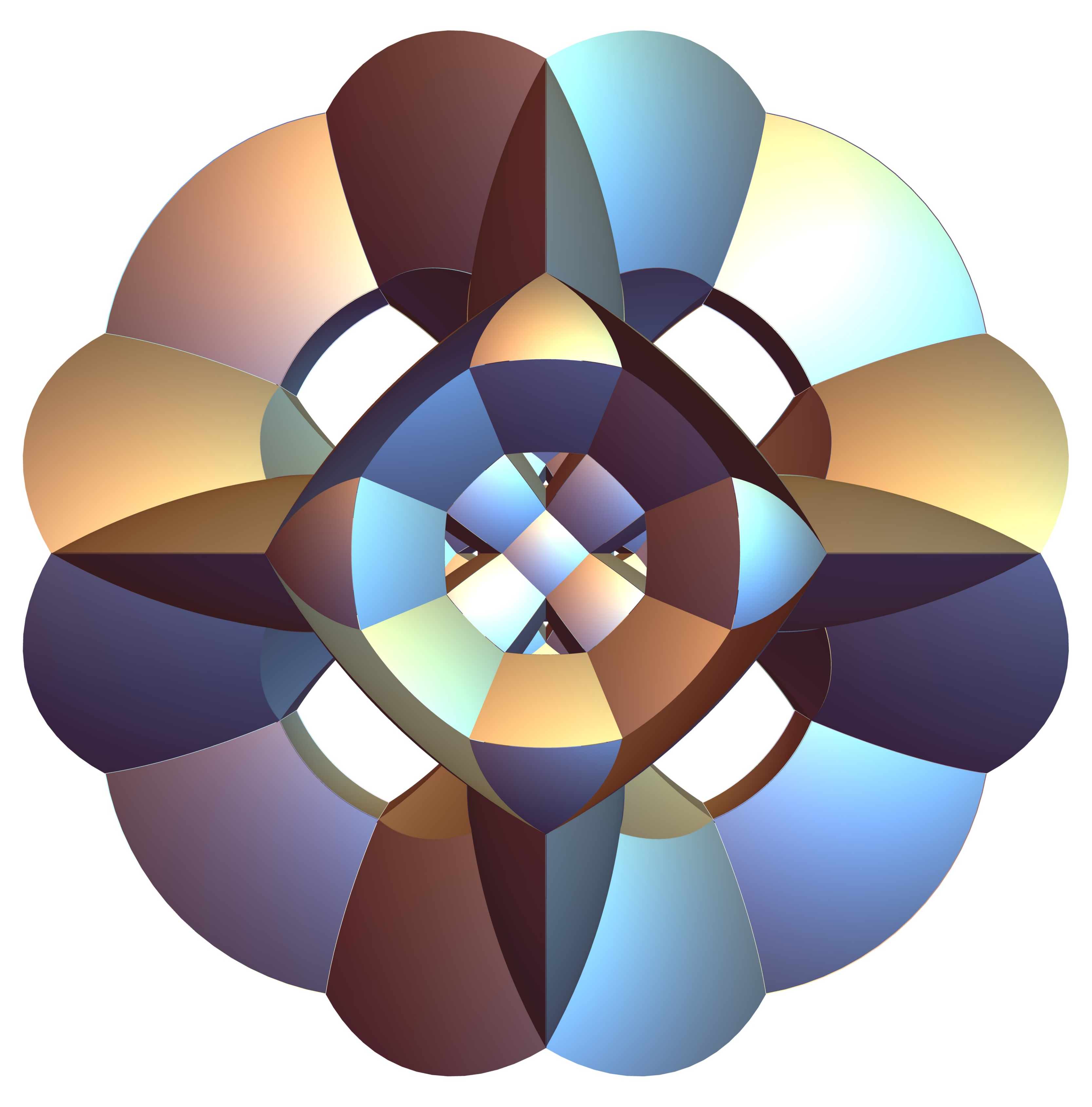}
   \qquad 
   \includegraphics[width=1.9in]{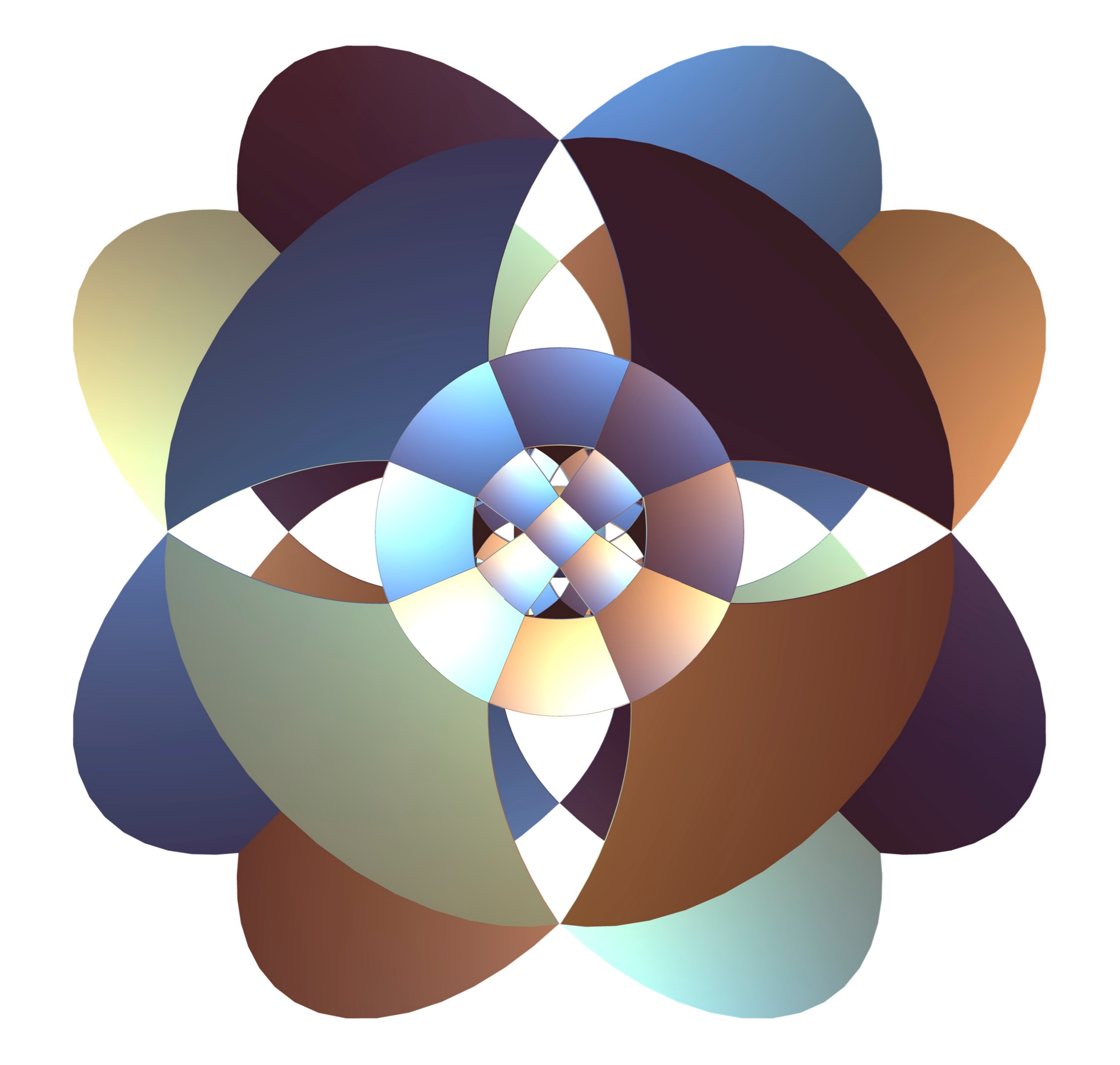}
   \caption{Spherical Prismatic Octahedron, Truncated Octahedron and Cuboctahedron}
   \label{fig:sphprismocta}
\end{figure}

\section{Prismatic Dodecahedra and Icosahedra}

We can also use dodecahedra and icosahedra to create hyperbolic and spherical prismatic polyhedra.

\begin{figure}[h] 
   \centering
   \includegraphics[width=2.5in]{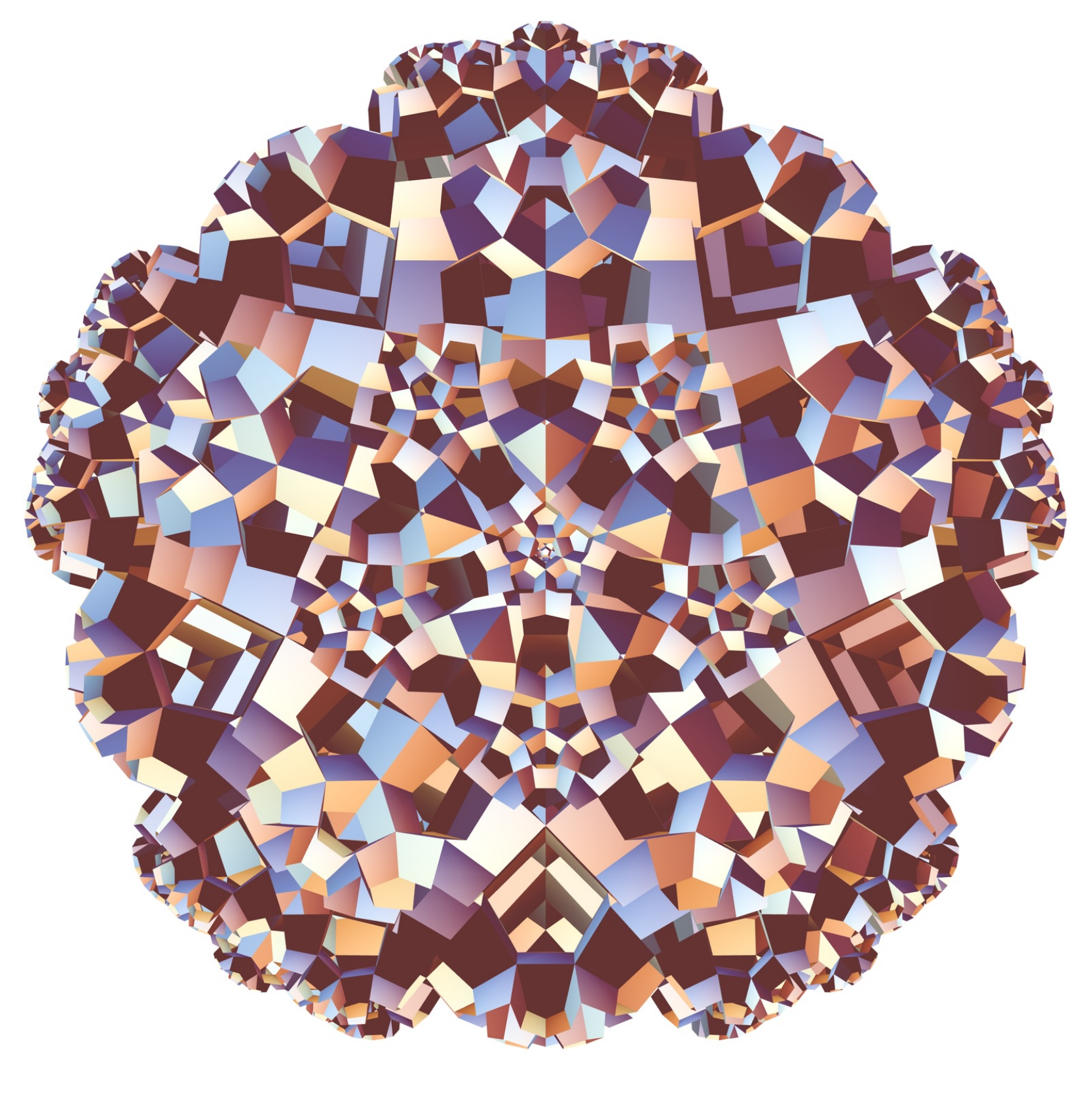}
   \qquad 
   \includegraphics[width=2.5in]{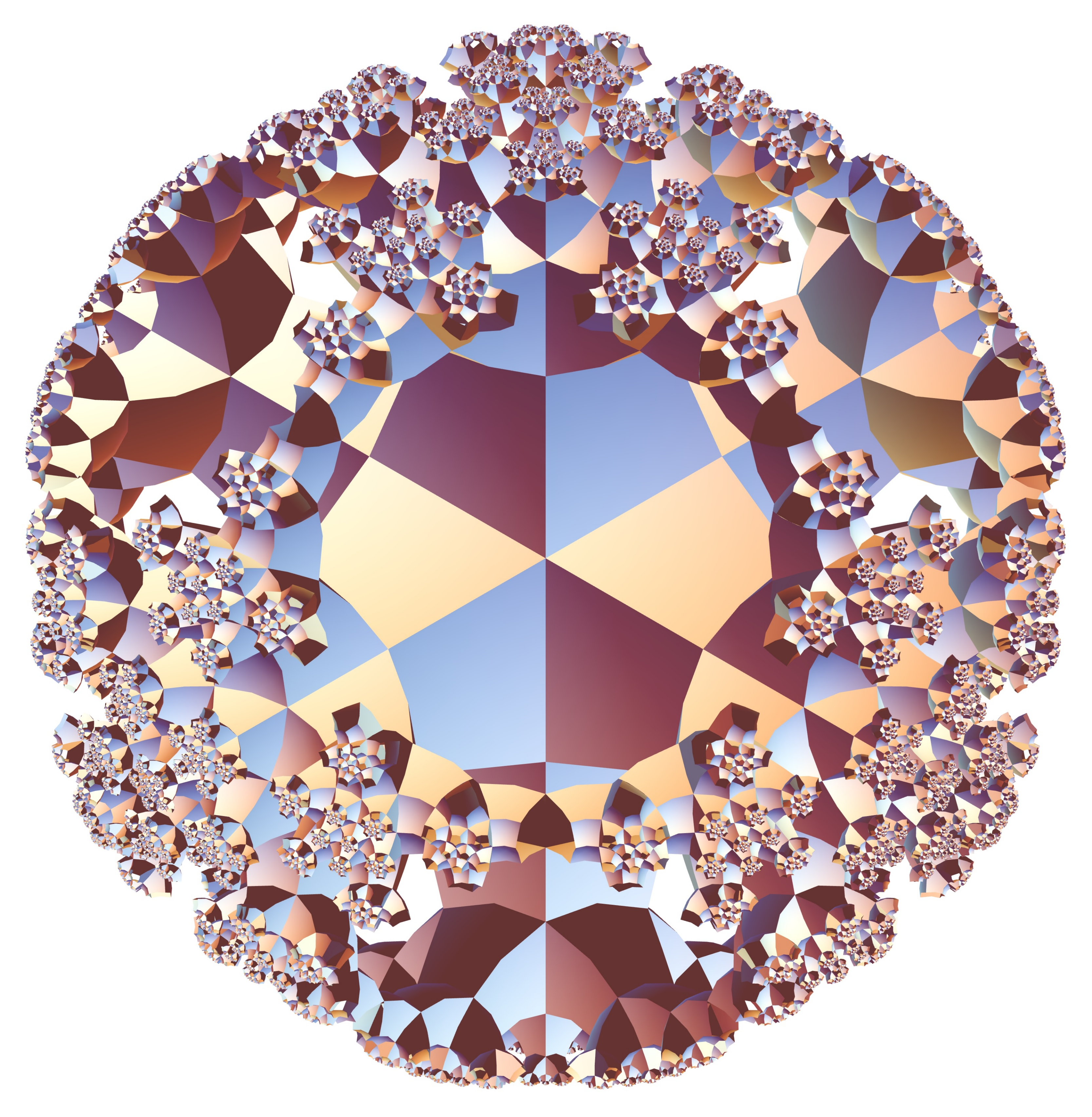}
   \caption{Hyperbolic prismatic dodecahedra for $n=4$ and $n=8$}
   \label{fig:hypprisdodeca}
\end{figure}

\begin{theorem}
For each $n\ge3$ there exists a prismatic dodecahedron $\Pi_{5,3}^n$. For $n=3$ the prismatic dodecahedron resides in $\bS^3$, and its prisms are symmetric with respect to the 2-dimensional faces of the 120-cell. For $n\ge4$ the prismatic dodecahedra reside in hyperbolic space. The quotient surface is a compact polyhedral surface of genus 6 tiled by 30 squares. 
\end{theorem}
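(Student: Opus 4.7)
The plan is to mirror the prismatic cube construction of section \ref{sec:hypricu} nearly verbatim, with routine adjustments for the combinatorics of the dodecahedron, and to handle the special spherical case $n=3$ using the 120-cell.

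First I would define a family of generalized hyperbolic dodecahedra $P_{5,3}(a)$ in the ball model by intersecting the twelve half-spaces $H(a p_i)$, where $p_1,\dots,p_{12}$ are the unit-length face centers of a Euclidean regular dodecahedron. A direct calculation using the angle-of-intersection lemma from section \ref{sec:formulas} expresses the dihedral angle between two adjacent sphere-faces as a monotonic function of $a$; by continuity and the bookkeeping already recorded in Table \ref{tab:plato}, for each $n\ge 4$ there is a unique $a_n$ such that $P_{5,3}(a_n)$ has dihedral angle $2\pi/n$, with $P_{5,3}(a_n)$ finite for $n=4,5$, ideal for $n=6$, and hyperideal for $n\ge 7$. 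By the Poincar\'e polyhedron theorem this is a fundamental domain for $\Gamma_{5,3}^n$.

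Next, inside $P_{5,3}(a_n)$ I would place a strictly smaller finite dodecahedron $P_{5,3}(b)$, co-centered with $P_{5,3}(a_n)$. On each of its twelve pentagonal faces, reflect (via sphere inversion at the corresponding face of $P_{5,3}(a_n)$) to obtain a ``top'' pentagonal face; the two pentagons together with their connecting rectangles form a hyperbolic prism that is automatically symmetric with respect to the outer face. For this prism to be regular I need the rectangle side lengths to equal the pentagonal edge length, which is a single equation in $b$. The intermediate value argument is exactly the one used in the Proposition for the cube: as $b$ grows large, the pentagonal base shrinks while the prism grows tall; as $b$ decreases to the appropriate lower limit (either $b\to a_n$, in the finite case, so that the prism collapses in height, or $b$ approaches the ideal threshold for $P_{5,3}(b)$, so that the base diverges while the height stays bounded by the distance to the reflecting face), the discrepancy reverses sign. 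Monotonicity in $b$ of both quantities yields uniqueness.

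For $n=3$ the construction takes place in $\bS^3$: the 120-cell is a tiling by 120 spherical dodecahedra of dihedral angle $2\pi/3$, so I would place a smaller concentric spherical dodecahedron inside each cell and solve the same (now spherical) regularity equation for the prism height. By the same continuity/monotonicity argument there is a unique radius for which the lateral faces become squares, and then the prisms are symmetric under reflection at the shared 2-dimensional faces of the 120-cell, so the family closes up globally.

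Finally, for the quotient surface, the fundamental domain $P_{5,3}(a_n)$ meets the polyhedron $\Pi_{5,3}^n$ in twelve half-prisms, one per face of the inner dodecahedron; each contributes half of its five lateral squares, giving $12\cdot 5/2=30$ square faces, $30\cdot 4/2=60$ edges, and (because each of the $20$ vertices of the inner dodecahedron is incident to $3$ prisms, each contributing $2$ lateral squares, for a valency of $6$) exactly $20$ vertices. Then $\chi = 20-60+30 = -10$, i.e.\ genus $6$. The only real obstacle beyond bookkeeping is the algebraic opacity of the dodecahedral geometry: the dihedral-angle function $\alpha(a)$ and the regularity equation both involve the golden ratio, and verifying strict monotonicity of the prism dimensions in $b$ is the step most likely to require care rather than routine manipulation.
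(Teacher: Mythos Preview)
Your proposal is correct and follows essentially the same approach as the paper: the paper does not give a separate proof for the dodecahedral case but explicitly defers to the prismatic-cube argument of section~\ref{sec:hypricu}, which you have faithfully transcribed with the appropriate combinatorial adjustments, and your Euler-characteristic computation for the genus is the natural complement the paper leaves implicit.
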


Figure \ref{fig:hypprisdodeca} shows portions of the hyperbolic $\Pi_{5,3}^4$ and $\Pi_{5,3}^8$, while in 
figure \ref{fig:sphprisdodeca} you can see a fundamental piece and all of $\Pi_{5,3}^3$.

\begin{figure}[h] 
   \centering
   \includegraphics[width=2.5in]{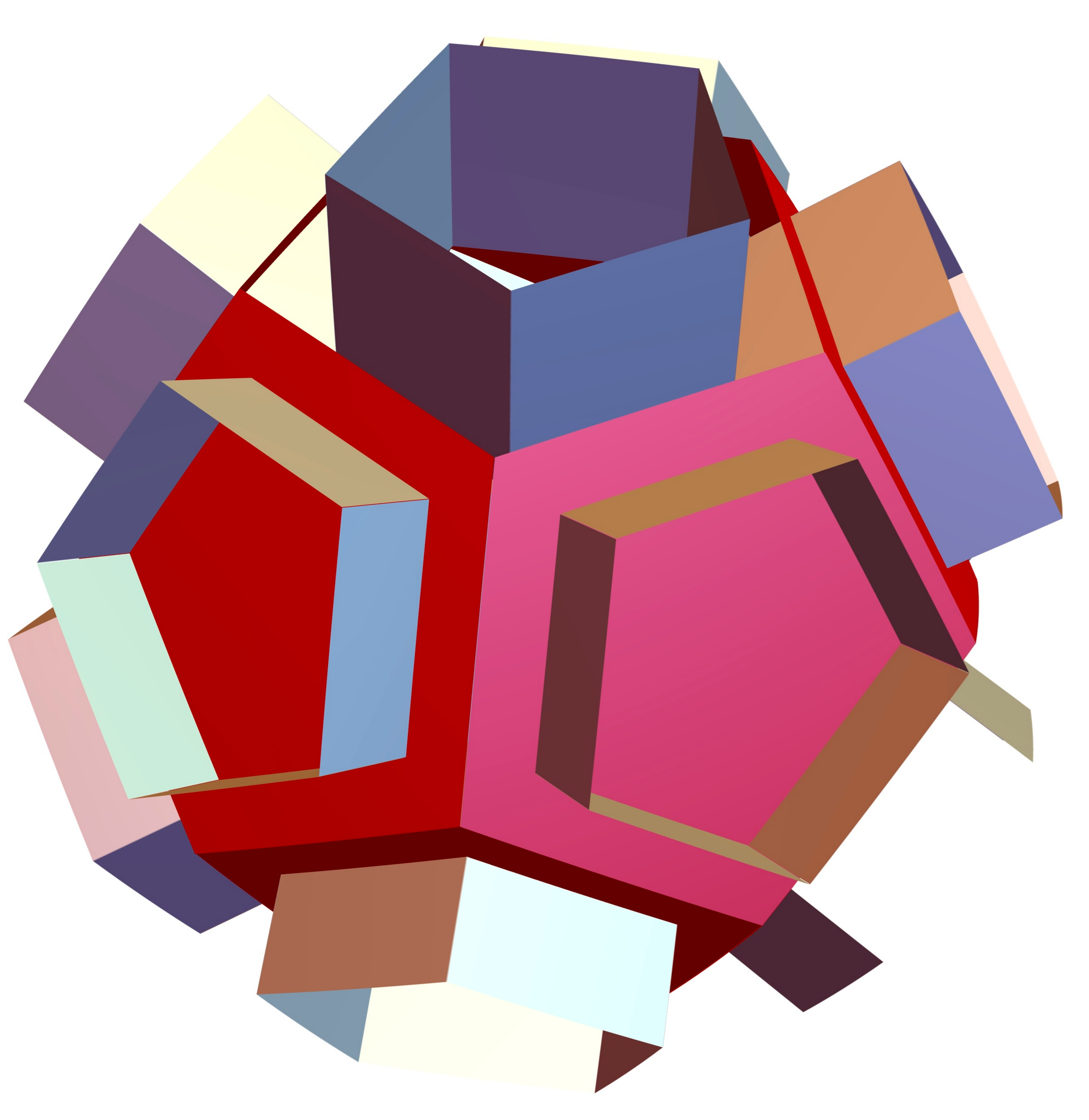}
   \qquad 
   \includegraphics[width=2.5in]{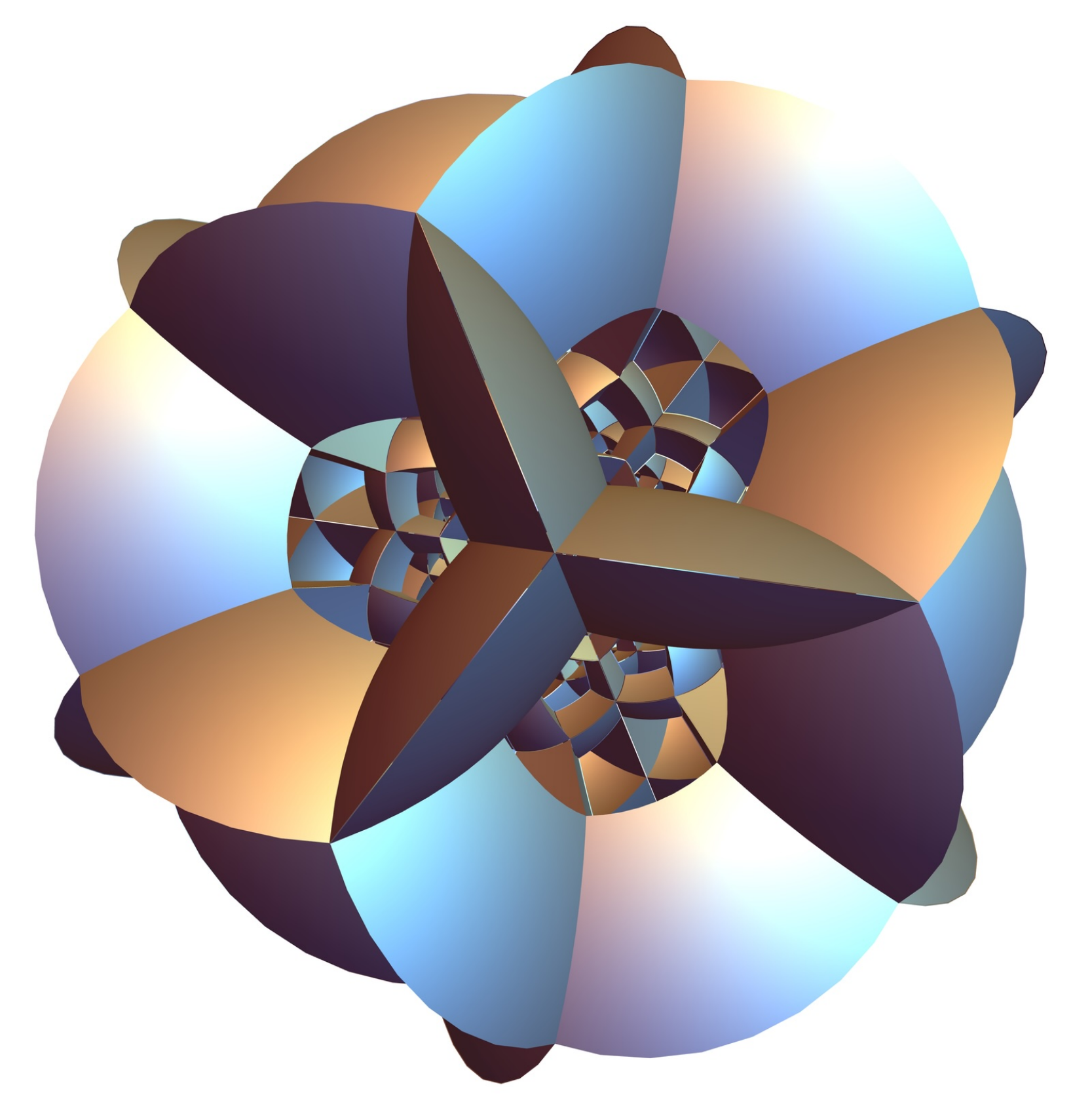}
   \caption{Spherical prismatic dodecahedron}
   \label{fig:sphprisdodeca}
\end{figure}

Similarly, we have an existence theorem for hyperbolic prismatic icosahedra $\Pi_{3,4}^n$:
\begin{theorem}
For each $n\ge3$ there exists a prismatic icosahedron in hyperbolic space. The quotient surface is a compact polyhedral surface of genus 10 tiled by 30 squares with valency 10 at each vertex. 
\end{theorem}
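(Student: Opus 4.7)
The plan is to adapt the construction from Section~\ref{sec:hypricu} to icosahedra. First I would realize the generalized hyperbolic icosahedra $P_{3,5}(a)$ in the ball model as $\bigcap_{i=1}^{20} H(a v_i)\cap B^3$, where the $v_i$ are the 20 unit face-normals of the Euclidean regular icosahedron (equivalently, the 20 vertices of its dual dodecahedron normalized to the sphere). A direct computation yields a monotonic formula $\cos\alpha = f(a)$ for the dihedral angle and, as recorded in Table~\ref{tab:plato}, isolates a unique $a_n>1$ with $\alpha(a_n)=2\pi/n$ for every $n\ge 3$: the $n=3$ case gives the finite hyperbolic icosahedron whose reflection group tiles $\bH^3$ with dihedral angle $120^\circ$, while for $n\ge 4$ one obtains hyperideal icosahedra (the icosahedral family has no ideal representative, since the ideal dihedral angle is $108^\circ$, not of the form $2\pi/n$).

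Next I would place a smaller finite icosahedron $P_{3,5}(b)$ concentric with $P_{3,5}(a_n)$ and erect over each of its 20 triangular faces the regular triangular prism whose top face is the reflection of the base across the corresponding face of $P_{3,5}(a_n)$. The existence of a unique $b=b_n$ for which the three lateral faces of each prism are squares follows from the same intermediate-value argument as in the Proposition on the prismatic cube: as $b\to\infty$ the base triangles shrink to a point while the prism heights approach the inradius of $P_{3,5}(a_n)$, so the ratio of height to base edge length diverges; as $b$ decreases toward its lower critical value (either $a_n$ when $n=3$, or the finiteness threshold of the inner icosahedron when $n\ge 4$) either the prism height collapses or the base edge length diverges, driving the ratio to zero. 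Continuity and monotonicity of this ratio in $b$ then single out the unique $b_n$ at which the ratio equals $1$. Extending by the reflection group $\Gamma_{3,5}^n$ produces the periodic polyhedral surface $\Pi_{3,5}^n$.

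The combinatorics of the quotient follow by direct counting. The fundamental domain $P_{3,5}(a_n)$ contains 20 half-prisms, each with 3 lateral half-squares that glue in pairs across the reflecting outer faces to yield $20\cdot 3/2=30$ squares, so $F=30$. The prism bases are supported on the 12 vertices of $P_{3,5}(b_n)$, and $q=5$ prisms meet at each such vertex; since a triangular prism contributes exactly 2 square corners at each base vertex, the vertex valency is $2q=10$ and $V=12$. The handshake identity then gives $E = 12\cdot 10/2 = 60$, so $\chi = 12-60+30 = -18$, and the quotient is a compact surface of genus $10$ tiled by 30 squares of valency 10, as claimed.

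The main obstacle is the monotonicity step in the hyperideal regime $n\ge 4$, where the outer icosahedron $P_{3,5}(a_n)$ has no vertices in $\bH^3$ and each of its ``faces'' is only the open hyperbolic plane truncating the cap $S(a_n v_i)\cap B^3$. The endpoints of the $b$-interval do not involve compact polyhedra, so the prism dimensions must be controlled directly via the orthogonal-sphere computations from Section~\ref{sec:formulas} rather than from combinatorial vertex data. This is exactly the difficulty resolved for the hyperideal prismatic cubes in Section~\ref{sec:hypricu}, and the same reasoning transfers with only the icosahedral data substituted.
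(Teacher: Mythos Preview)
Your proposal is correct and follows essentially the same approach as the paper: you transport the intermediate-value construction of Section~\ref{sec:hypricu} to the icosahedral family (noting correctly that $P_{3,5}(a_n)$ is finite for $n=3$ and hyperideal for $n\ge4$, with no ideal case) and then read off the combinatorics of the quotient, which is precisely how the paper treats all the prismatic Platonic solids after the cubical template. The paper states this theorem without a separate proof, so your explicit Euler-characteristic count ($V=12$, $E=60$, $F=30$, $\chi=-18$, genus $10$) and your handling of the hyperideal endpoint simply fill in details the paper leaves implicit.
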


\begin{figure}[h] 
   \centering
   \includegraphics[width=1.8in]{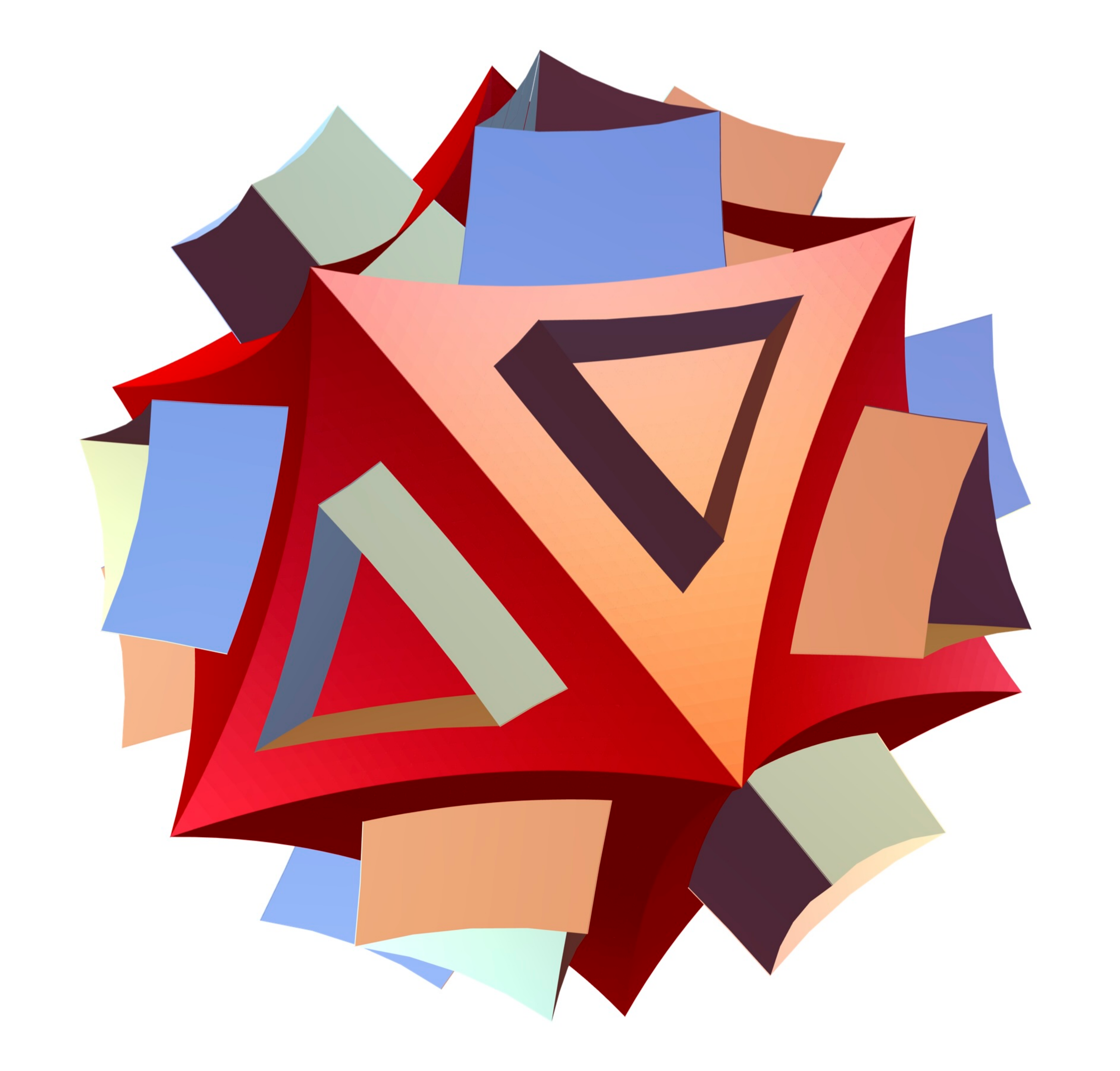}
   \qquad 
   \includegraphics[width=1.8in]{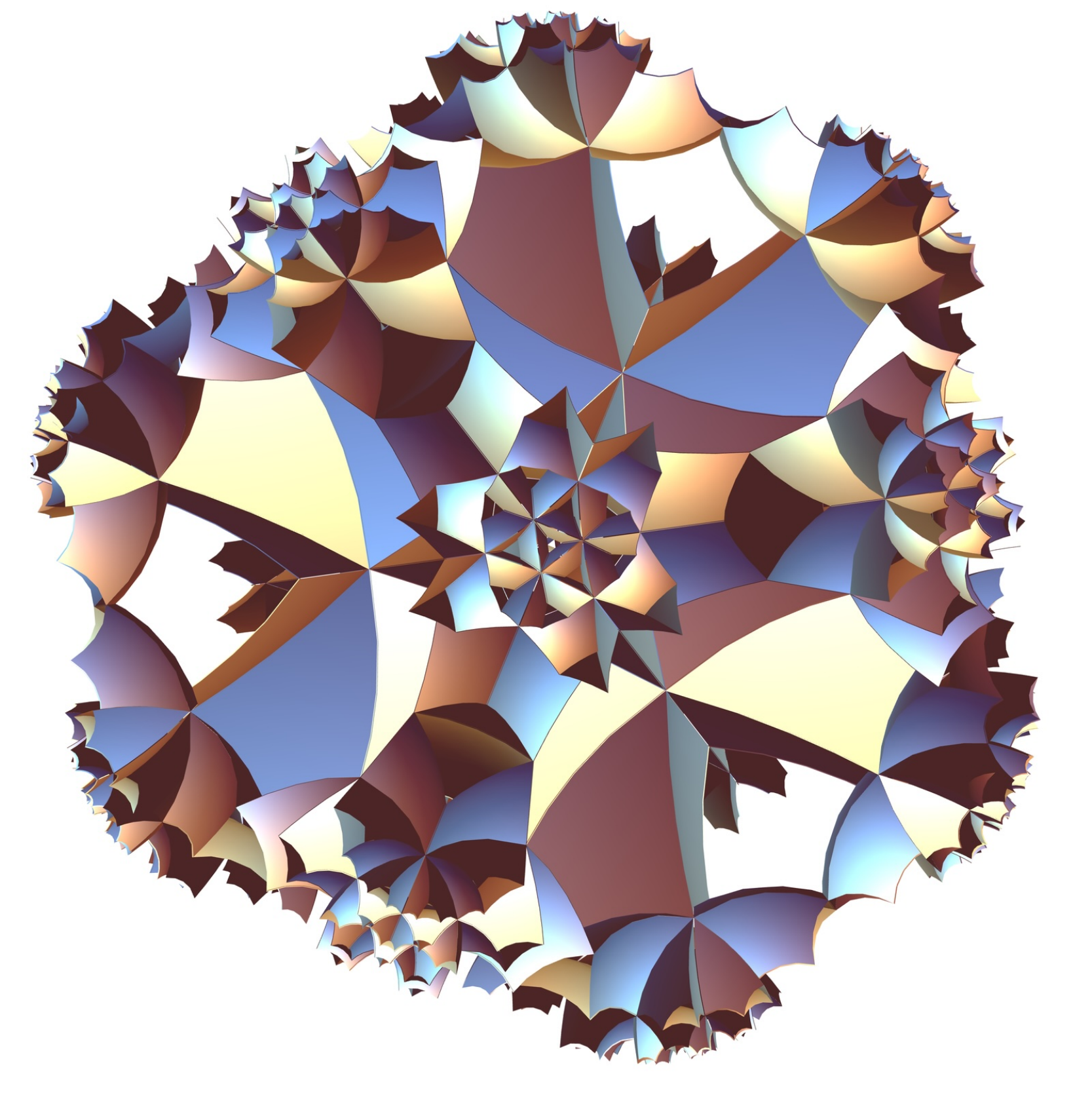}
   \qquad 
   \includegraphics[width=1.8in]{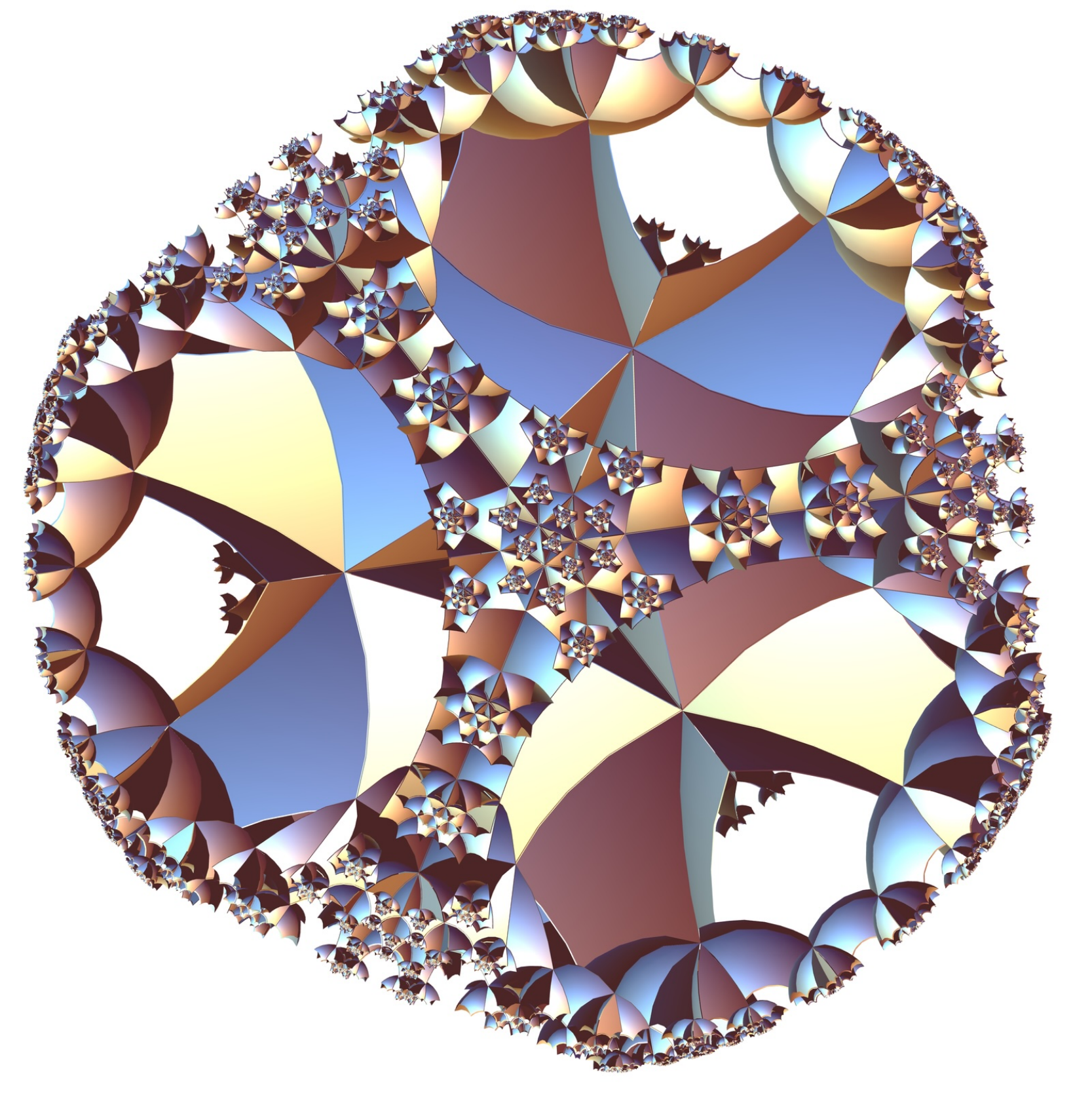}
   \caption{Hyperbolic prismatic icosahedra}
   \label{fig:sphprisicosa}
\end{figure}

In figure \ref{fig:sphprisicosa} you can see  portions of $\Pi_{3,5}^3$, $\Pi_{3,5}^4$ and $\Pi_{3,5}^5$.

\section{Antiprismatic Polyhedra}

\subsection{Introduction}

In this section, we will construct antiprismatic polyhedra  analogously to the prismatic polyhedra: We begin with a Platonic solid, attach regular antiprisms to all of its faces, then attach more Platonic solids to the top faces of the antiprisms, and keep going. Again a condition on the dihedral angle of the Platonic solid will be needed to make sure that this construction produces an embedded infinite polyhedral surface.

Antiprismatic polyhedra will be invariant under a discrete group $R_{p,q}^n$ of isometries that is generated by reflection-rotations at the faces of a Platonic solid $P_{p,q}$, where the  rotation is by the angle $\pi/p$ about an axis perpendicular through the face center. Clearly the Platonic solid itself is not suitable as a fundamental domain for this group action. A better candidate would be the truncation $TP_{p,q}$ of $P_{p,q}$, but attempts to tile with $TP_{p,q}$ usually leave gaps. These can sometimes be filled with Platonic solids, leading to the following:

Fundamental domains for $R_{p,q}^n$ are constructed by adding pyramids $PY_q†^n$ to the truncation $TP_{p,q}$ of the Platonic solid.

\begin{definition}
A generalized pyramid is a polyhedron with a distinguished $q$-gonal face, called the base, and $q$ possibly infinite triangles, called sides, that are adjacent in pairs and to the base.  We denote by  $PY_q^n$  a pyramid   such that
\begin{itemize}
\item the base is a regular $q$-gon;
\item $PY_q^n$ is rotational symmetric by a rotation of angle $2\pi/q$ about an axis through  the center of and perpendicular to the base;
\item the dihedral angle between adjacent sides is $2\pi/n$.
\end{itemize}
\end{definition}

In the simplest case, subdividing a Platonic solid $P_{p,q}$ will result in pyramids $PY_p^q$ in all spaces of constant curvature. We will construct more complicated examples in section \ref{sec:pyramids}.

The resulting polyhedron obtained by gluing  pyramids $PY_q^n$  to the $q$-gonal faces of a truncation $TP_{p,q}$ will be called an  $n$-akis $P_{p,q}$, denoted by $KP_{p,q}^n$.  In order for $P=KP_{p,q}^n$ to tile (and $R_{p,q}^n$ to be discrete), these polyhedra  need to satisfy an angle condition:

\begin{theorem}

Let $\alpha$ be the dihedral angle between two $2p$-gonal faces of a fixed truncated $TP_{p,q}$, $\beta$ the angle between a $q$-gonal and a $2p$-gonal face, and $\gamma$ the dihedral angle between the  base and a side face of the attached pyramid  $PY_q^n$ . If these angles satisfy
\[
\alpha +2\beta +2\gamma=2\pi  \ ,
\] 
then $P=KP_{p,q}^n$ is the fundamental domain of the group $R_{p,q}^n$ generated by the transformations that first reflect at the $2p$-gonal faces of $TP_{p,q}$ and then rotate by $\pi/p$ about an axis perpendicular to and through the center of the $2p$-gonal face.
\end{theorem}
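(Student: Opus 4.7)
The plan is to invoke the Poincar\'e polyhedron theorem with $P = KP_{p,q}^n$ as the candidate fundamental domain and the reflection-rotations $g_F$, one per $2p$-gonal face $F$ of $TP_{p,q}$, as the side-pairing generators. Each $g_F$ fixes its face $F$ setwise, since the reflection in $F$ fixes $F$ pointwise and the subsequent rotation by $\pi/p = 2\pi/(2p)$ is a rotational symmetry of the regular $2p$-gon; thus $g_F$ is a self-pairing of $F$ carrying a nontrivial twist by one edge-step. The triangular side faces of the pyramids will not be paired by a single generator, but their pairings emerge as compositions of the $g_F$ forced by the cycle relations at the edges.

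The first task is to classify the edges of $P$ and read off their dihedral angles. Type 1 edges, shared by two $2p$-gonal faces of $TP_{p,q}$, have dihedral angle $\alpha$. Type 2 edges, originally shared by a $2p$-gon and a $q$-gon of $TP_{p,q}$, now lie between a $2p$-gon and a triangular side of the attached pyramid; their dihedral angle is $\beta + \gamma$, because the wedge of angle $\beta$ inside $TP_{p,q}$ and the wedge of angle $\gamma$ inside $PY_q^n$ meet along the (now internal) $q$-gonal face. Type 3 edges are the new pyramid edges from a base vertex to the apex, with dihedral angle $2\pi/n$ by the defining property of $PY_q^n$.

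The central step is the cycle condition at each edge. For a type 1 edge $e$ shared by two $2p$-gons $F_1, F_2$ of $P$: crossing $F_1$ via $g_{F_1}$ takes $P$ to a neighbor $P_2$; because the $\pi/p$ rotational twist shifts $e$ by one edge-step along $F_1$, and because the edges of $F_1$ in $TP_{p,q}$ alternate between type 1 and type 2, the face of $P_2$ meeting $e$ opposite $F_1$ is a triangular side of a pyramid rather than a $2p$-gon. Hence $P_2$ contributes the dihedral angle $\beta + \gamma$ at $e$. A symmetric step across this triangular face and then across $F_2$ produces a third tile $P_3$ contributing another $\beta + \gamma$, after which we return to $P$. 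The cycle therefore closes precisely when
\[
\alpha + 2(\beta + \gamma) = 2\pi,
\]
which is the hypothesis. The same three-tile loop, entered from the triangular side instead of from $F_1$, verifies the cycle condition at each type 2 edge. At each type 3 edge the cycle consists of $n$ tiles meeting around the edge, each contributing $2\pi/n$, a condition built into the definition of $PY_q^n$.

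With the cycle conditions verified, Poincar\'e's polyhedron theorem implies that the images $\{h \cdot P : h \in R_{p,q}^n\}$ cover the ambient space with pairwise disjoint interiors and that $R_{p,q}^n$ is discrete, so $P$ is a fundamental domain. The main subtlety is bookkeeping for the nontrivial self-pairing of each $2p$-gon: it is precisely this twist that causes the edge types to alternate around each $2p$-gonal face and forces the three-tile structure of the cycles at type 1 and type 2 edges. In the hyperideal cases one applies the version of Poincar\'e's theorem that permits vertices at infinity, exactly as in the earlier Platonic reflection-group results of the paper.
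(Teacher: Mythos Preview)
Your proposal is correct and follows essentially the same approach as the paper: both invoke the Poincar\'e polyhedron theorem and verify the edge-cycle (matching) conditions, with the $2\pi/n$ condition handling the pyramid-side edges and the hypothesis $\alpha+2\beta+2\gamma=2\pi$ handling the edges coming from $TP_{p,q}$. Your write-up is considerably more detailed than the paper's two-sentence sketch---in particular, your explicit classification into three edge types and your observation that the $\pi/p$ twist forces type~1/type~2 alternation around each $2p$-gon (hence the three-tile cycle) make precise what the paper only gestures at.
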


\begin{proof}
This follows from the Poincaré polyhedron theorem. The transformations of $R_{p,q}^n$ contribute to two matching conditions: $n$ copies of $PY_q^n$ will fit around a common edge of two sides of $PY_q^n$. Secondly, after attaching two copies of  
$KP_{p,q}^n$ along a common $2p$-gon with a $\pi/p$ rotation and using angle $\alpha$), two of the attached pyramids will meet along a side face (using twice the angle $\beta$ and $\gamma$) of the pyramids.
\end{proof}

We refer the reader to section \ref{sec:tetra} for pictures and a more detailed description in the tetrahedral Euclidean case.

Note that for $n=2$, the pyramids $PY_q^2$ degenerate to $q$-gons, so that no pyramids are needed.

\subsection{Construction of the Pyramids $PY_{q}^n$}
\label{sec:pyramids}

\begin{theorem}
Pyramids $PY_{q}^n$ exist for any $q\ge3$ and $n\ge3$, but not necessarily in all spaces of constant curvature, and not necessarily of all sizes. 
\end{theorem}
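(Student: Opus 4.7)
The plan is to construct each candidate $PY_q^n$ as the intersection of a base half-space with $q$ rotationally symmetric side half-spaces, and to vary the free geometric parameters until the dihedral angle between adjacent sides equals $2\pi/n$. Fix $q\ge 3$, a plane $B$, and an axis $\ell$ orthogonal to $B$. Place a regular $q$-gon of circumradius $s$ in $B$ centered at $\ell\cap B$, and let the $q$ side planes each contain one edge of this $q$-gon and make a common inclination $\theta$ with $B$, related to each other by the rotation of angle $2\pi/q$ about $\ell$. This construction automatically enforces base regularity and $q$-fold rotational symmetry, so the only remaining condition is on the dihedral angle $\gamma=\gamma(s,\theta)$ between two adjacent side planes, which must equal $2\pi/n$.

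In Euclidean space, $\gamma$ is scale-invariant and depends only on $\theta$. A direct trigonometric calculation shows $\gamma(\theta)\to (q-2)\pi/q$ in the prism limit $\theta\to\pi/2$ and $\gamma(\theta)\to\pi$ in the flat limit $\theta\to 0$, so by the intermediate value theorem every value $\gamma_0\in\bigl((q-2)\pi/q,\pi\bigr)$ is attained. This produces Euclidean $PY_q^n$ whenever $3\le n<2q/(q-2)$; in particular it handles $PY_3^n$ for $n=3,4,5$ and $PY_q^3$ for every $q\ge 3$.

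For the remaining values $n\ge 2q/(q-2)$, I would realize $PY_q^n$ in hyperbolic space using the ball model of section \ref{sec:formulas}. Fixing $\theta$ and letting the base size $s$ grow, the side planes are represented by spheres $S(x_i)$, and the two-sphere angle formula
\[
\cos\phi=\frac{|x_1-x_2|^2-r_1^2-r_2^2}{2r_1r_2}
\]
gives $\gamma$ as an explicit algebraic function of $s$. One checks by direct computation that $\gamma(s)$ decreases continuously through $(q-2)\pi/q$ as the pyramid passes the Euclidean scale, and tends to $0$ as the apex moves from finite through ideal to hyperideal. Another application of the intermediate value theorem then yields hyperbolic $PY_q^n$ for each remaining $n$. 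A handful of small cases with $2\pi/n$ close to $\pi$ can also be realized spherically within the same framework, using the stereographic projection setup of section \ref{sec:formulas} and the reversed monotonicity of $\gamma(s)$ in positive curvature.

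The main obstacle is verifying that $\gamma(s)$ sweeps monotonically across the full interval $\bigl(0,(q-2)\pi/q\bigr)$ as the pyramid grows from finite through ideal to hyperideal apex, so that the side half-spaces continue to meet pairwise at a well-defined dihedral angle through the ideal transition, where the apex lies on the sphere at infinity and the pyramid becomes non-compact. This is essentially the intermediate-value-plus-monotonicity template already used for the prismatic cube in section \ref{sec:hypricu}, adapted to handle the three regimes of apex position and the fact that, by the discreteness of the condition $\gamma=2\pi/n$, only isolated values of $s$ produce genuine $PY_q^n$, accounting for the ``not necessarily of all sizes'' clause in the statement.
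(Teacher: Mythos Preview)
Your outline differs from the paper's construction and, as you yourself flag, leaves the central step unverified. The paper avoids the intermediate-value problem for the side-to-side dihedral angle entirely: it builds the $q$ side planes \emph{first} as an infinite column whose dihedral angle is $2\pi/n$ by direct choice of one parameter, and only afterwards cuts with a base plane. Concretely for $q=4$ in the ball model one takes the four hyperbolic planes $S(\pm r,0,0)$, $S(0,\pm r,0)$, solves $\cos(2\pi/n)=1/(r^2-1)$ for $r=r_n$, and then any plane $S(0,0,-s)$ perpendicular to the axis cuts the resulting column in a regular square base, giving a whole one-parameter family of $PY_4^n$. No sweep or monotonicity argument for the side angle is needed, because that angle is correct by construction; what varies with $s$ is only the base size and the base-to-side angle. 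Your scheme trades this explicit construction for an existence problem you do not resolve.

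Your reading of ``not necessarily of all sizes'' is also off. The equation $\gamma(s,\theta)=2\pi/n$ is one condition on two free parameters, so its solution set is a curve, not a discrete set of sizes. The genuine size restriction goes the other way: when $(n-2)(q-2)\ge 4$ a hyperbolic $PY_q^n$ cannot be made arbitrarily \emph{small}, because small hyperbolic pyramids are nearly Euclidean and the Euclidean side dihedral angle is bounded below by $(q-2)\pi/q$. (The same inequality shows your claim that Euclidean space handles $PY_q^3$ for every $q\ge 3$ fails once $q\ge 6$.) In the paper's column picture this lower bound is visible as the limit $s\to\infty$: the base square shrinks only to a square of interior angle $2\pi/n$, not to a point.
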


Note that by the Poincaré polyhedron theorem, continued reflections at the sides of a pyramid $PY_{q}^n$ will create a possibly infinite polyhedron with $q$-gons as faces and valency $n$. The construction implies that this polyhedron will be Platonic, so serve as a model of a generalized $P_{q,n}$.

The rest of this  section is devoted to proving this theorem for $n=4$, i.e. for pyramids over a square. The proof for $n\ne4$ is very similar.

\begin{proof}

For $p=4$ and $n=3$, we  take any cube in $\bS^3$, $\bR^3$ or $\bH^3$ and subdivide it into pyramids that have the cube faces as a base and the center of the cube as an apex. Each of these pyramids is a pyramid of type $PY_{4}^3$.

As there is no finite $P_{n,4}$ for any $n\ge4$, there is no finite $PY_{4}^4$ either, and a $PY_{4}^4$ can only exist in Euclidean or hyperbolic space.

For $n=4$, the simplest case is $PY_{4}^4$ in Euclidean space. It simply is a half-column over a square.

 \begin{figure}[h] 
   \centering
   \includegraphics[width=3in]{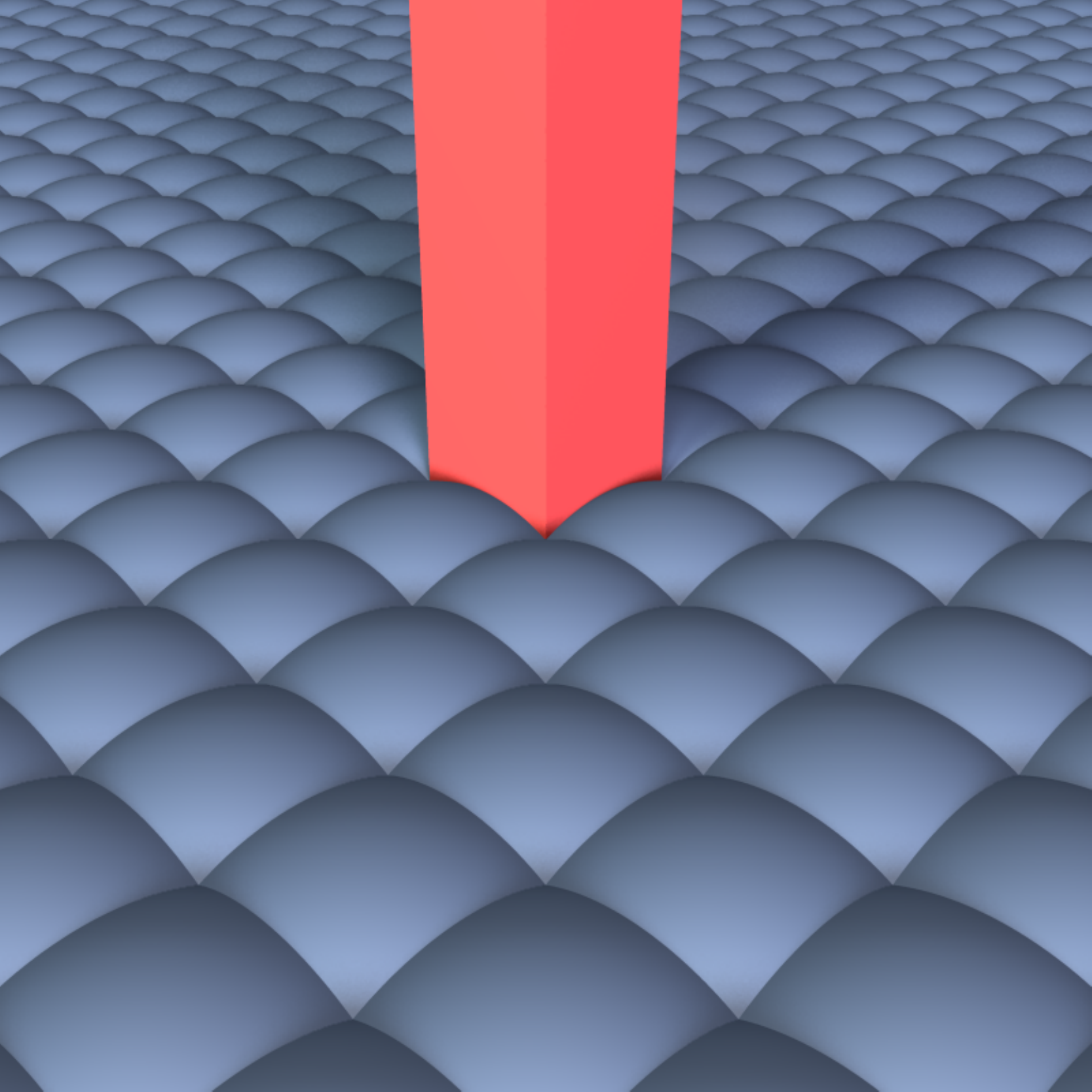}
   \caption{A doubly periodic hyperbolic cube $P_{4,4}$}
   \label{fig:hypcube4}
\end{figure}

For $n=4$ and in hyperbolic space, we will use the upper half space model for the construction of a pyramid $PY_4^4$ (see figure \ref{fig:hypcube4}). Place upper hemispheres with radius $r$ centered at the points $(i,j,0)$ for $i,j\in\Z$. These represent hyperbolic geodesic planes in $\bH^3$. For $r>\sqrt2/2$, four of them meet at points with coordinates $(i+1/2,j+1/2,\sqrt{r^2-1/2})$ so that the region above the spheres is an infinite doubly periodic hyperbolic solid polyhedron  $P_{4,4}$ bounded by hyperbolic squares with vertices in the horosphere $z=\sqrt{r^2-1/2}$. For large $r$, the squares become smaller, and the dihedral angle between them approaches $\pi$. For $r\to \sqrt2/2$ the squares become ideal with dihedral angle $\pi/2$.

We divide $P_{4,4}$ periodically into ideal hyperbolic pyramids $PY_{4}^4$ over each of the square faces, using vertical half strips as sides. Evidently, the vertical sides have dihedral angle $\pi/2$.

\medskip

We will now construct hyperbolic generalized Platonic polyhedra $P_{4,n}$ for $n\ge 5$ in several steps.

First, we construct infinite columns $PY_4^n$ with a square base and dihedral angle $2\pi/n$ in the ball model of hyperbolic space. To this end, we consider four spheres centered at points $(\pm r,0,0)$  and $(0,\pm r,0)$ and radius $\sqrt{r^2-1}$ for any $r>1$. 

\begin{lemma}\label{lem:rn}
These four spheres cut the unit ball in hyperbolic planes  that make a dihedral angle of $\arccos(1/(r^2-1))$. This dihedral angle becomes $2\pi/n$ when
\[
r=r_n=\sqrt{1+\sec(2\pi/n)} \ .
\]
\end{lemma}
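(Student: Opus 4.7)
The plan is to apply the general sphere intersection formula from the lemma in Section~\ref{sec:formulas} to two neighbouring spheres among the four, and then solve the resulting equation for $r$. Since the four centres $(\pm r,0,0),(0,\pm r,0)$ and the common radius $\sqrt{r^2-1}$ are arranged symmetrically about the $z$-axis, by the $\pi/2$ rotational symmetry it suffices to compute the dihedral angle between two adjacent spheres, say those centred at $x_1=(r,0,0)$ and $x_2=(0,r,0)$; the opposite pair will meet at the same angle, and non-adjacent spheres are either equal or on opposite sides.

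First I would plug $|x_1-x_2|^2=2r^2$ and $r_1^2=r_2^2=r^2-1$ into the formula
\[
\cos\phi=\frac{|x_1-x_2|^2-r_1^2-r_2^2}{2r_1r_2},
\]
which immediately gives $\cos\phi=\dfrac{2r^2-2(r^2-1)}{2(r^2-1)}=\dfrac{1}{r^2-1}$. This is a one-line computation. For it to make sense one needs $r^2-1>0$ (so the spheres are nonempty hyperbolic planes) and $r^2-1\ge1$, i.e.\ $r\ge\sqrt2$, for the spheres to actually intersect; for $r>\sqrt2$ one gets a genuine dihedral angle strictly between $0$ and $\pi/2$.

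The subtle point, and the only real step beyond plugging into a formula, is making sure that $\phi$ is the \emph{inner} dihedral angle of the solid region $\bigcap_i H(x_i)$ containing the $z$-axis (the ``column''), rather than its supplement. I would argue this by locating an intersection point of $S(x_1)$ and $S(x_2)$ in the $z=0$ plane, writing down the two outward unit normals there (each of the form $(p-x_i)/|p-x_i|$), observing that both point away from the $z$-axis, and concluding that the interior of the column sits on the side where the normals diverge; hence the interior dihedral angle is $\pi$ minus the angle between the outward normals, which by a short rearrangement of the formula equals $\phi$ itself. Alternatively, one can invoke the standard convention already used elsewhere in the paper: the formula in Section~\ref{sec:formulas} has been set up precisely so that $\phi$ returns the dihedral angle of the half-space intersection, which is how it was applied for the cube and octahedron cases earlier.

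Finally, setting $\phi=2\pi/n$ and solving gives
\[
\frac{1}{r^2-1}=\cos\!\left(\frac{2\pi}{n}\right)\qquad\Longleftrightarrow\qquad r^2=1+\sec\!\left(\frac{2\pi}{n}\right),
\]
so $r_n=\sqrt{1+\sec(2\pi/n)}$, as claimed. Note that $\sec(2\pi/n)>0$ requires $n\ge5$ (so that $2\pi/n<\pi/2$), which is consistent with the range $n\ge5$ in which these columns are being used. I expect no real obstacle beyond confirming the sign convention in step~3; everything else is bookkeeping.
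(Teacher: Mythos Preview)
Your proposal is correct and follows exactly the approach the paper intends: the lemma is stated without a separate proof because it is the same one-line application of the sphere intersection formula from Section~\ref{sec:formulas} that was already used for the cube (where $\cos\alpha=1/(a^2-1)$). Your added remarks on the sign convention and the requirement $n\ge5$ are more careful than the paper itself, but the underlying computation is identical.
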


Next we cut these infinite columns by using a horizontal hyperbolic plane, i.e. a sphere centered at $(0,0,-s)$ for $s>1$ with radius  $\sqrt{s^2-1}$. The intersection of this plane with the column will be a hyperbolic square (the base) and a face of  the generalized Platonic $P_{4,n}$ that we want to construct. 
This means that we need to ensure that the plane actually intersects the column.

 \begin{figure}[h] 
   \centering
   \includegraphics[width=2.5in]{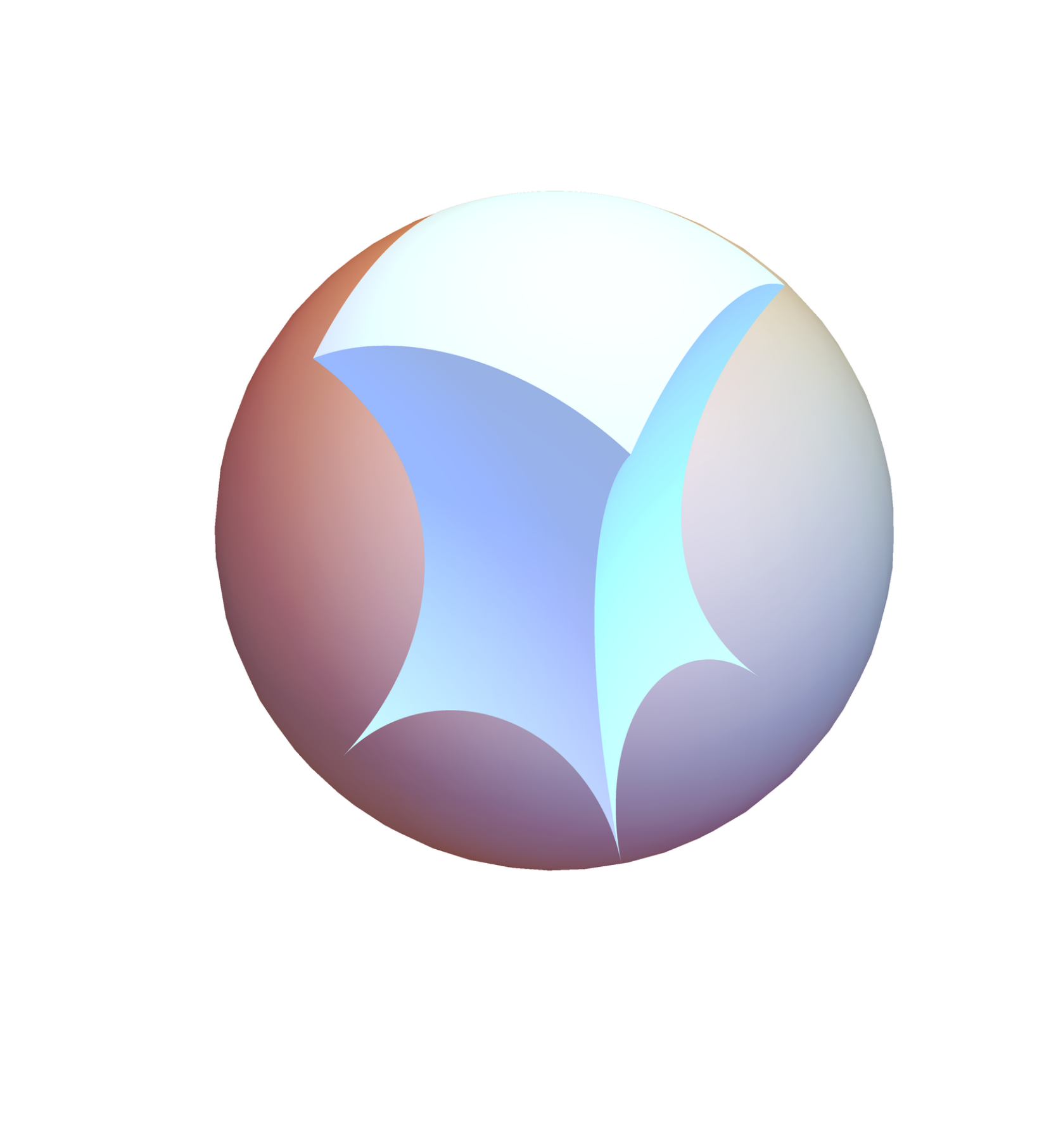}
   \quad
   \includegraphics[width=2.5in]{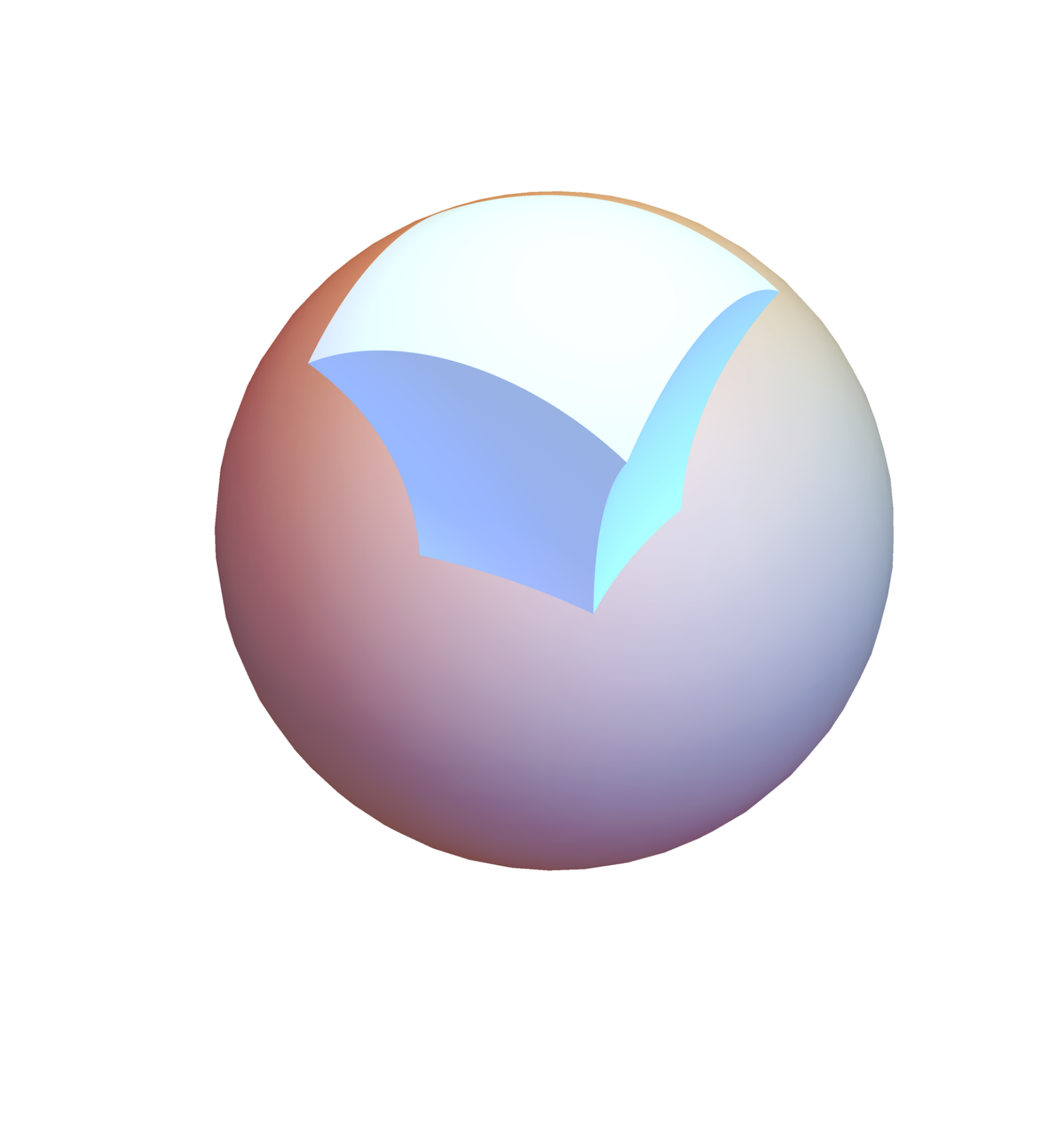}
      \caption{Half-columns for a $P_{4,5}$}
   \label{fig:halfcolumn45}
\end{figure}

In the extreme case, the plane will intersect the column in its ideal vertices, rendering the base square intersection  ideal as well. This happens for $s=-\cot(\pi/n)$. The dihedral angle $\gamma$ between the base square and a side of the column is given by 

\[
\cos \gamma=\frac {\sqrt{\cos(\pi/n)}}{\sqrt{s^2-1}}
\]

For $s\to\infty$, the base square will lie in the $xy$-plane, so that its interior angle will be $2\pi/n$ and the dihedral angle between base and face will be $\pi/2$. For $s=\cot(\pi/n)$, the dihedral angle between base and face becomes 
\[
\gamma = \pi/2-\pi/n \ .
\]

This can be seen without computation by switching to the upper half space model and placing one of the ideal vertices of the column at $\infty$. Then the faces of the column meeting at this ideal vertex are half strips over an isosceles Euclidean triangle in the $xy$-plane with angles $2\pi/n$, $\gamma$, $\gamma$.

Finally we repeatedly reflect this half column at its sides to obtain the (infinite) $P_{4,n}$. In contrast to the finite Platonic solids and the infinite $P_{4,4}$, the edge lengths of the square faces are bounded below by the edge length of a hyperbolic square with interior angle $\pi/n$ (but still can become arbitrarily large). This will significantly complicate the intermediate value argument in the proof of Theorem \ref{thm:anglecond}.

\end{proof}

For later use, we note:

\begin{lemma}\label{lem:gamma}
The dihedral angle $\gamma$ between base and one side of $PY_4^n$ is given by
\[
\cos\gamma = \frac{1}{\sqrt{r^2-1} \sqrt{s^2-1}} \ .
\]
\end{lemma}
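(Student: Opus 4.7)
The plan is a direct application of the sphere-intersection formula recorded earlier in Section \ref{sec:formulas}, since the base face and each side face of $PY_4^n$ have already been realized as pieces of explicitly given Euclidean spheres in the ball model. First I would identify the data: the four sides of the half-column come from the spheres $S(\pm r,0,0)$ and $S(0,\pm r,0)$ of radius $\sqrt{r^2-1}$, while the base sits on the sphere centered at $(0,0,-s)$ of radius $\sqrt{s^2-1}$. By rotational symmetry it suffices to compute the angle between one specific side, say $S(r,0,0)$, and the base sphere.

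Next I would plug $x_1=(r,0,0)$, $r_1^2=r^2-1$, $x_2=(0,0,-s)$, $r_2^2=s^2-1$ into the formula
\[
\cos\phi \;=\;\frac{|x_1-x_2|^2-r_1^2-r_2^2}{2\,r_1 r_2}.
\]
The computation is immediate: $|x_1-x_2|^2=r^2+s^2$ and $r_1^2+r_2^2=r^2+s^2-2$, so the numerator collapses to $2$ and the identity $\cos\gamma = 1/(\sqrt{r^2-1}\sqrt{s^2-1})$ falls out.

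The only genuinely non-mechanical step is verifying that the angle $\phi$ returned by the formula is actually the interior dihedral angle $\gamma$ of $PY_4^n$, rather than its supplement. To handle this I would note that both sphere centers lie outside the pyramid (the side center $(r,0,0)$ is a hyperideal point beyond the unit ball, and the base center $(0,0,-s)$ lies below the ball), so in each case the pyramid is contained in the unbounded component $H(x)$ of the sphere's complement, consistent with the convention set up in Section \ref{sec:formulas}. A short law-of-cosines argument in the triangle with vertices $x_1,x_2$ and a point on the intersection circle then shows that under this side-convention the formula returns precisely the interior dihedral angle, confirming $\gamma = \phi$.

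I do not anticipate any real obstacle: the formula is already proved, the geometric data were fixed in the construction, and the orientation check is routine. The one place to be mildly careful is that the formula does not implicitly pick the wrong branch — this is why I would spell out the $H(x)$-side convention before writing down the equality $\gamma=\phi$.
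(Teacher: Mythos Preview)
Your proposal is correct and is exactly the computation the paper has in mind: the lemma is stated in the paper without an explicit proof, and the sphere-intersection formula from Section~\ref{sec:formulas} applied to the side sphere $S(r,0,0)$ and the base sphere centered at $(0,0,-s)$ is the intended one-line verification. Your orientation check is a reasonable extra bit of care, though the paper does not bother with it.
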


\begin{lemma}\label{lem:edgepy}
The edge length $l$ of the base square of $PY_4^n$  is given by
\[
\cosh l = 1+\frac{2s^2}{\left(r^2-2\right) s^2-r^2} \ .
\]
\end{lemma}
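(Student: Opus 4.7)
The plan is to pin down the coordinates of one vertex of the base square and then apply the hyperbolic distance formula, reducing the problem to an algebraic identity that follows from the defining quadratic of the vertex. Using the setup from the proof of Lemma~\ref{lem:rn}, I represent the four sides of $PY_4^n$ by the spheres $S(\pm r,0,0)$ and $S(0,\pm r,0)$ of radius $\sqrt{r^2-1}$, and the base by $S(0,0,-s)$ of radius $\sqrt{s^2-1}$. Because $PY_4^n$ is rotationally symmetric under $90^\circ$-rotations about the $z$-axis, one vertex of the base square lies in the plane $x=y$; write it $V=(v,v,w)$, and its neighbor as $V'=(-v,v,w)$.

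Next I locate $V$ as the intersection of $S(r,0,0)$, $S(0,r,0)$, and $S(0,0,-s)$. Subtracting the first two sphere equations forces $x=y$, and combining one side sphere with the base sphere yields $rx+sz=0$, so $w=-rv/s$. Substituting this back into the side equation gives the quadratic
\[
(2s^2+r^2)v^2 - 2rs^2\,v + s^2 = 0,
\]
which determines $v$. The side sphere equation also provides the decisive simplification $|V|^2 = 2v^2+w^2 = 2rv-1$, hence $1-|V|^2 = 2(1-rv)$, while $|V-V'|^2 = 4v^2$. Applying the ball-model distance formula from the earlier lemma gives
\[
\cosh l \;=\; 1 + \frac{2v^2}{(1-rv)^2}.
\]

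It remains to show this equals the claimed expression. Setting $u=rv$, the quadratic becomes $(2s^2+r^2)u^2 - 2r^2s^2\,u + r^2s^2 = 0$, so $u^2 = r^2s^2(2u-1)/(2s^2+r^2)$. A short manipulation of the same relation yields
\[
(1-u)^2 = \frac{\bigl((r^2-2)s^2 - r^2\bigr)(2u-1)}{2s^2+r^2},
\]
so the factor $2u-1$ cancels in the ratio $u^2/(1-u)^2$, leaving $r^2s^2/\bigl((r^2-2)s^2-r^2\bigr)$. Dividing by $r^2$ gives $v^2/(1-rv)^2 = s^2/\bigl((r^2-2)s^2-r^2\bigr)$, which is the stated formula.

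The main obstacle is organizing the final simplification: substituting the explicit root of the quadratic for $v$ and then squaring leads to an unpleasant expression, whereas exploiting the quadratic as an identity (to rewrite both $u^2$ and $(1-u)^2$ as multiples of $2u-1$) makes the cancellation transparent. The positivity of the denominator $(r^2-2)s^2-r^2$ is automatic from the construction, since $PY_4^n$ exists precisely when the base sphere actually meets the column, which is exactly the range where this discriminant-like quantity is positive.
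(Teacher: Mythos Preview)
Your proof is correct and follows essentially the same route as the paper: locate the vertices of the base square as the common intersections of two side spheres and the base sphere, then apply the ball-model distance formula. The paper simply records the explicit vertex coordinates $\frac{1}{\sqrt{(r^2-2)s^2-r^2}+rs}(\pm s,\pm s,r)$ and invokes the distance formula without showing the algebra; you instead keep the quadratic $(2s^2+r^2)v^2-2rs^2v+s^2=0$ unsolved and use it as an identity to cancel the factor $2u-1$, which is a clean way to avoid carrying the square root through the computation. Note that your relation $|V|^2=2rv-1$ also guarantees $2u-1=|V|^2>0$, so the cancellation is legitimate.
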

\begin{proof}
A direct computation shows that the vertices of the base square are given by

\[
\frac{1}{\sqrt{\left(r^2-2\right) s^2-r^2}+r s}\left(\pm s, \pm s, r\right) \ .
\]

We now use the distance formula in hyperbolic space from section \ref{sec:formulas}. 
\end{proof}

In general, the pyramids $PY_{q}^n$ exist as follows:

\begin{itemize}
\item
In $\bS^3$ (and also $\bR^3$ and $\bH^3$) only  pyramids $PY_{q}^n$ that come from the Platonic solids by subdivision (i.e. for $(q,n)$ from the set $\{(3,3), (3,4), (3,5), (4,3), (5,3)\}$) exist . 
\item
For $(q,n)$ from the set $\{(3,6), (4,4), (6,3)\}$, $PY_{q}^n$ exists in $\bR^3$ and $\bH^3$ only.
\item
For all other values of $(q,n)$, $PY_{q}^n$ exists only in hyperbolic space.
\end{itemize}

The details of the construction for all such pyramids are very similar to what we did earlier in this section.

\subsection{Geography of Antiprismatic Polyhedra}

The following table gives the values for the angle condition for Euclidean truncated Platonic solids for the cases $n=2$ and $n=3$ The angles $\alpha$ and $\beta$ are the two dihedral angles of the truncated solid, while $\gamma_n$ is the dihedral angle between base and side of a pyramid that has dihedral side angle $2\pi/n$. For $=2$ this means $\gamma_2=0$, and no pyramid will be attached. 

 This allows us to determine for which values of $n$ and in which spaces we can expect a tiling with $n$-akis polyhedra $KP_{p,q}^n$. For instance, for the cube the angle sum is less than $360^\circ$ for $n=2$ in Euclidean space. As in $\bS^3$ the angles will become larger, there is hope that the case $n=2$ can be realized in $\bS^3$. On the other hand, for $n=3$ the angle sum is larger than $360^\circ$, and it will become even larger for $n>3$. Therefore we need hyperbolic space to make the angle sum smaller for all $n\ge 3$.

 This information is given in the last three columns of the table.

\begin{table}[h]
   \centering
   \begin{tabular}{@{} ll | ll | lll @{}} 
      \toprule
          &   & Angle Sums in   &Euclidean Space  & Values of & $n$ possible  & in \\
        symbol   & name  &$\alpha+2\beta+2\gamma_2$  &$\alpha+2\beta+2\gamma_3$ & $\bS^3$ & $\bR^3$ & $\bH^3$ \\
      \midrule
      $\{3,3\} $     & tetrahedron & $289.47^\circ$ & $360^\circ$ & 2 & 3 & $\ge 4$ \\
      $ \{4,3\}$      & cube     &  $340.53^\circ$ & $411.06^\circ$ & 2 & - & $\ge3$\\
      $ \{3,4\}$       & octahedron  & $360^\circ$ & $450^\circ$ & - & 2 & $\ge3$\\
      $\{5,3\} $       & dodecahedron  & $401.81^\circ$ & $472.34^\circ$ & - & - & $\ge2$ \\
      $\{3,5\} $       & icosahedron  & $423.44^\circ$ & $540^\circ$ & - & - & $\ge2$ \\
      \bottomrule
   \end{tabular}
   \label{tab:plato2}
\end{table}

\section{The Antiprismatic Octahedron}

We will use antiprismatic octahedra to give full details of the existence proof of antiprismatic polyhedra.  The other cases will be very similar.

\subsection{The Euclidean Antiprismatic Octahedron $AP_{3,4}^2$}

We begin by constructing a fundamental domain for a group of reflection-rotations $R_{3,4}^2$ at the faces of a Euclidean octahedron:

The truncated octahedron tiles space by reflecting it across its hexagonal faces with a $60^\circ$ rotational twist. This tiling is called the {\em bitruncated cubic honeycomb} or {\em truncoctahedrille}, see \cite{cbg} page 295,  and figure \ref{fig:TruncatedOctahedronTiling}.

In this tiling, three truncated octahedra fit around an edge. For one of these three truncated octahedra two of its hexagonal faces meet at that edge, for the two others a hexagonal and a square face meet. This means that the angle condition $\alpha+2\beta$ is satisfied without the use of pyramids ($n=2$). 

\begin{figure}[h] 
   \centering
   \includegraphics[width=3.0in]{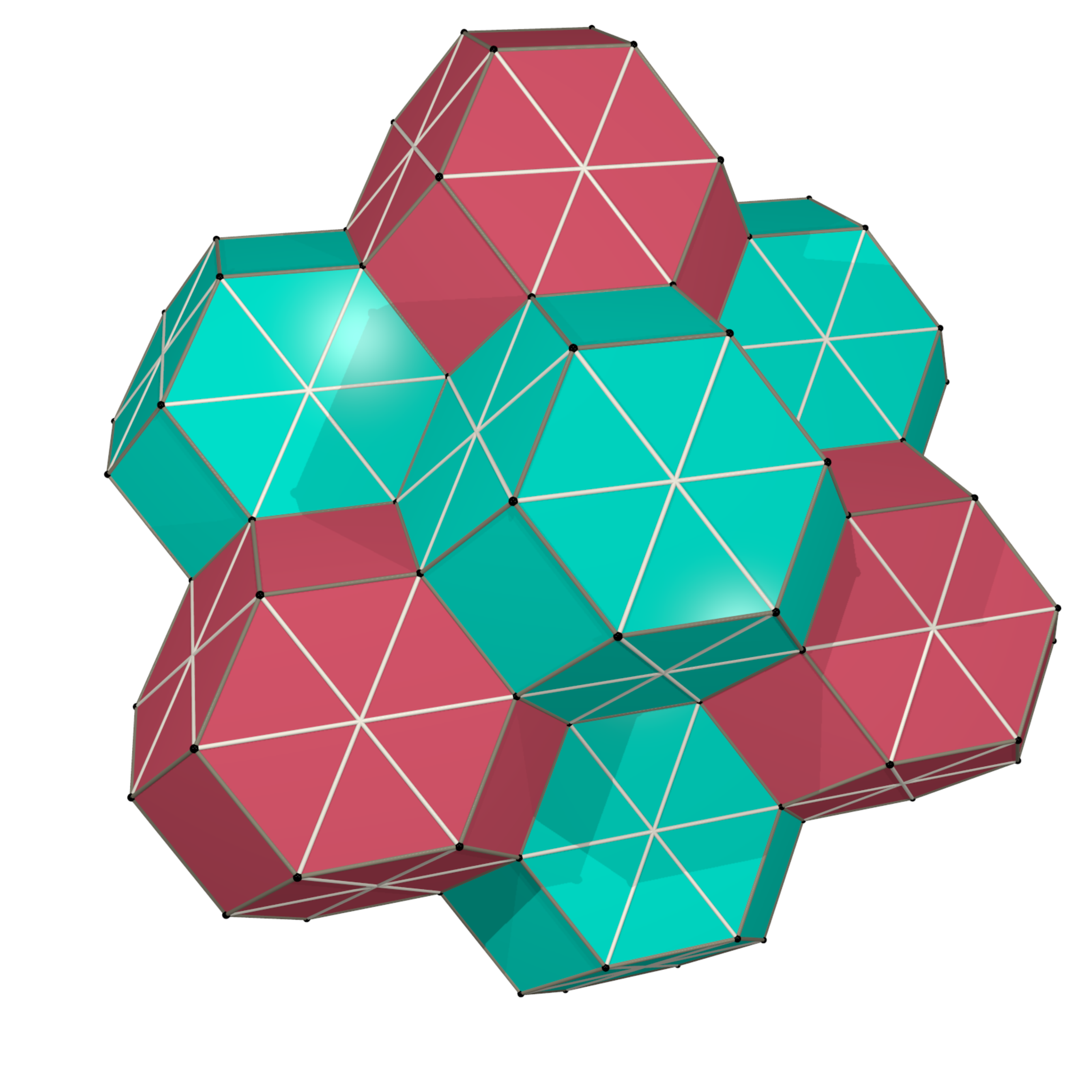}
   \caption{The bitruncated cubic honeycomb}
   \label{fig:TruncatedOctahedronTiling}
\end{figure}

We now center an octahedron inside one of the truncated octahedra and attach regular antiprisms to its faces.
The size of the octahedron is chosen so that the hexagonal faces of the truncated octahedron cut the antiprisms in half. This way the group $R_{3,4}^2$ generated by the rotation-reflections at the hexagonal faces of the truncated octahedron extend the prisms to a Euclidean antiprismatic octahedron $AP_{3,4}^2$, see figure \ref{fig:apoct3}.

 \begin{figure}[h] 
   \centering
   \includegraphics[width=2.5in]{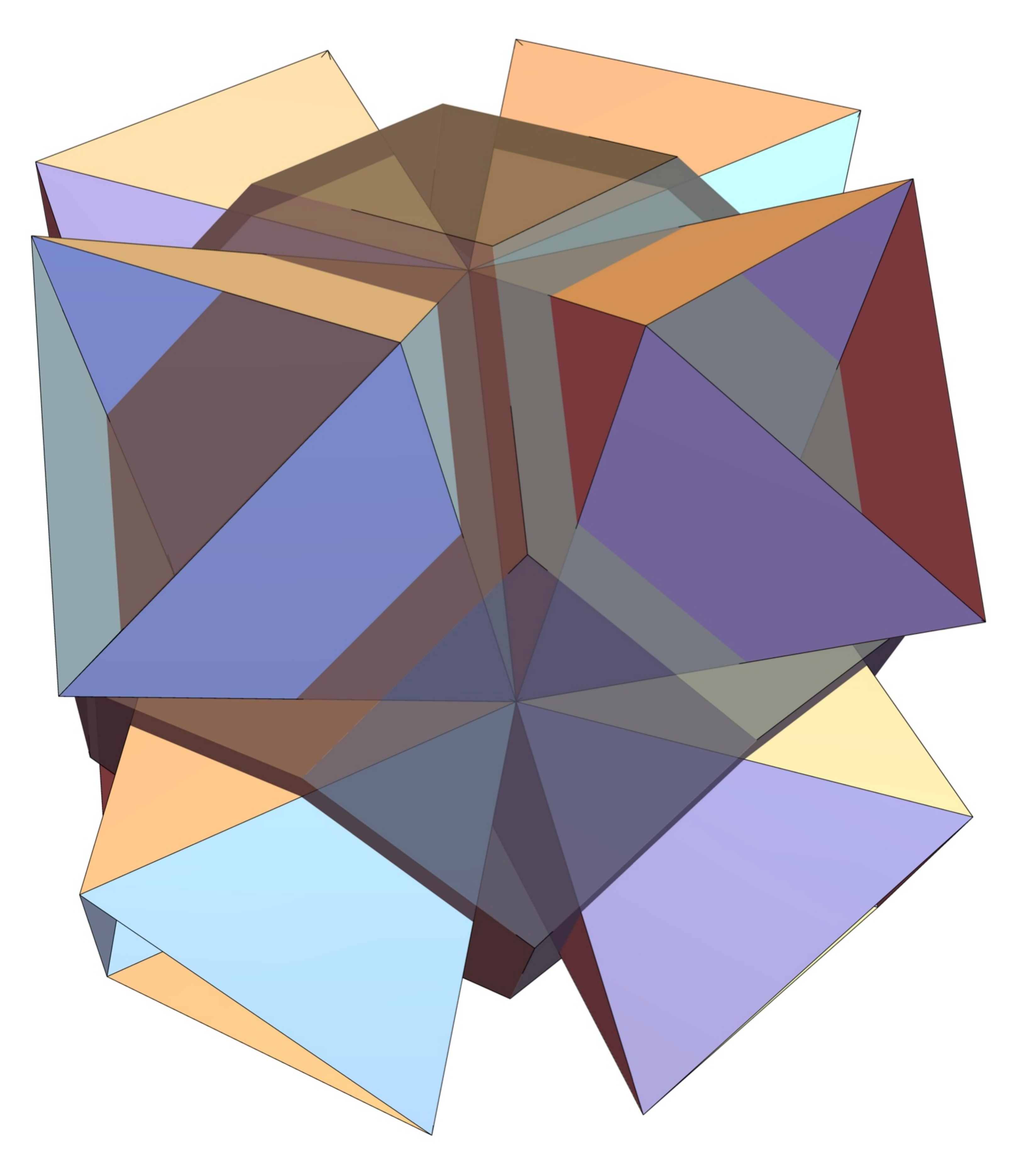}
   \qquad
   \includegraphics[width=2.5in]{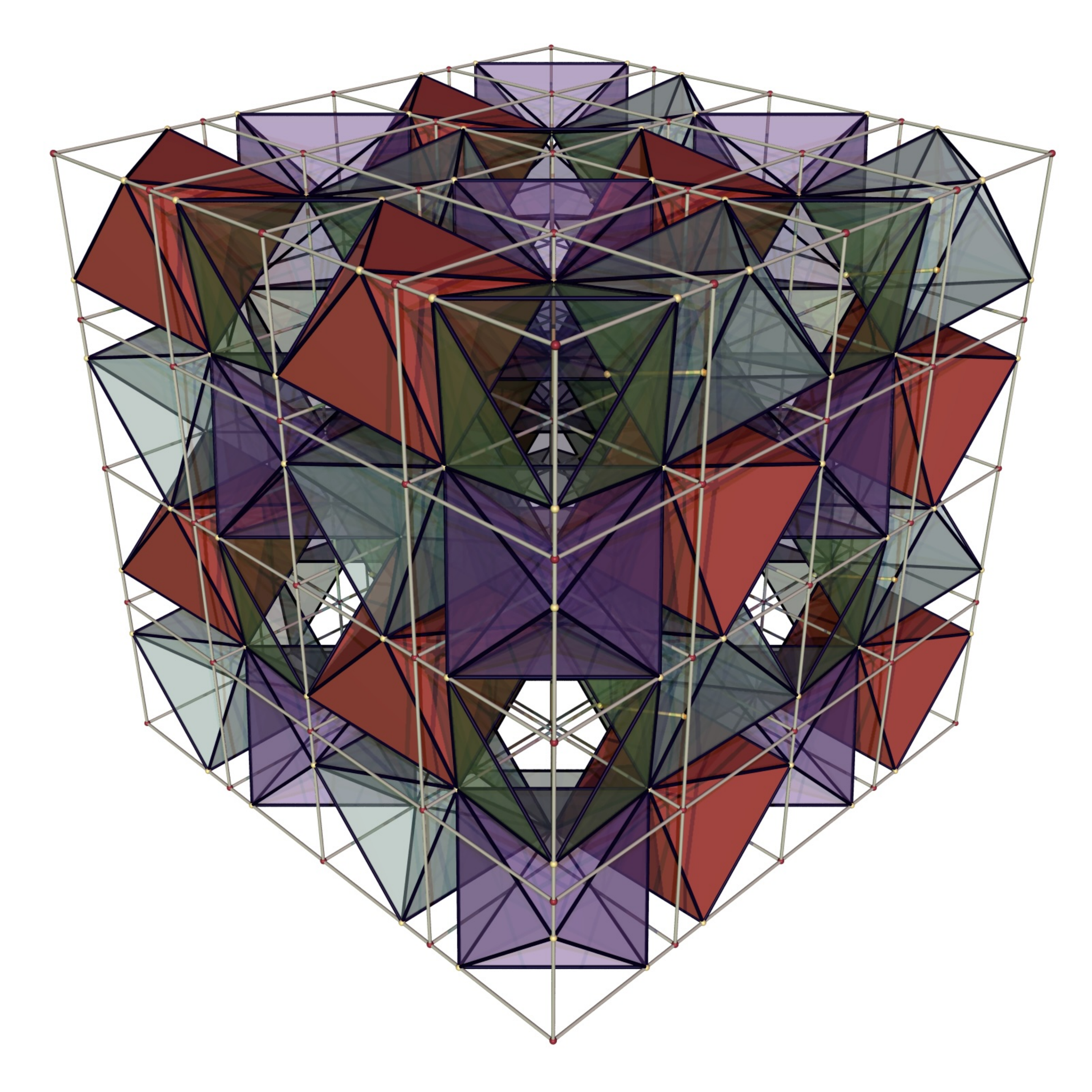}
   \caption{The Euclidean antiprismatic octahedron $AP_{3,4}^2$}
   \label{fig:apoct3}
\end{figure}

The quotient surface under translational symmetries has genus 4 and is quite interesting. It has a conformal hidden symmetry of order 12 which can be used to represent the surface as a 12-fold cyclically branched cover over the thrice punctured sphere and serves as a conformally correct simplicial approximation of Alan Schoen's I-WP surface, see \cite{lwy} for details.

\subsection{Hyperbolic Antiprismatic Octahedra}\label{sec:octa}

In the antiprismatic hyperbolic case, we need more complicated fundamental domains than truncated octahedra, because 
hyperbolic truncated octahedra don't tile hyperbolic space, regardless of the chosen dihedral angles. 

If we arrange three of them around an edge as in Euclidean space, there will be a gap that sometimes can be filled by a generalized and possibly infinite Platonic solid with square faces.

In the simplest case, these filling Platonic solids will be hyperbolic cubes.

We begin with a hyperbolic truncated octahedron of edge length $l$ and attach pyramids to its square faces that have dihedral angle $2\pi/3$ at their triangular sides, i.e. that can be obtained from a regular hyperbolic  cube by dividing it into six pyramids that have their apex at the center of the cube.

This hyperbolic {\em tetrakis truncated octahedron} $KP_{3,4}^3$ comes in a 1-parameter family with the edge length $l$ of the truncated octahedron varying between 0 and $\infty$. We need to find the correct parameter so that $KP_{3,4}^3$ can be used to tile hyperbolic space.

\begin{theorem}\label{thm:anglecond}
There is a value of $l$ so that the  tetrakis truncated octahedron $KP_{3,4}^3(l)$ satisfies the angle condition
\[
\alpha +2\beta +2\gamma=2\pi  \ .
\] 
We will denote this polyhedron simply by $KP_{3,4}^3$.
\end{theorem}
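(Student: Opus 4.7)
The plan is to prove the theorem by an intermediate value argument applied to the total angle defect
\[
f(l) \;:=\; \alpha(l) + 2\beta(l) + 2\gamma(l)
\]
viewed as a continuous function of the single parameter $l$, the common edge length of the truncated octahedron and of the square base of the attached pyramid (which forces the edge length of the underlying hyperbolic cube to equal $l$ as well). For each $l \in (0, l_0]$, where $l_0$ denotes the edge length at which the vertices of the hyperbolic truncated octahedron become ideal, there is a unique $TP_{3,4}$ with regular faces of edge $l$, a unique hyperbolic cube $P_{4,3}$ of the same edge length $l$, and hence, by the subdivision construction, a unique pyramid $PY_4^3$ matching the square faces. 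Each of $\alpha,\beta,\gamma$ depends continuously on $l$ (they are real algebraic functions of the position of the defining spheres in the ball model), so $f$ is continuous.

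First I would analyze $f(l)$ as $l\to 0^+$. In this limit, after rescaling, the hyperbolic truncated octahedron and the hyperbolic cube both converge to their Euclidean counterparts, and the dihedral angles converge to the Euclidean values $\alpha_E=\arccos(-1/3)$, $\beta_E=\arccos(-1/\sqrt{3})$, $\gamma_E=\pi/4$. Hence $f(l)\to \alpha_E+2\beta_E+2\gamma_E = 2\pi+\pi/2 = 5\pi/2$, which matches the Euclidean octahedron entry of the last table. In particular, $f(l)>2\pi$ for all sufficiently small $l$.

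Next I would analyze $f(l_0)$, at the end of the parameter range where the truncated octahedron just becomes ideal. At an ideal vertex of a hyperbolic polyhedron, the vertex link is a Euclidean polygon whose interior angles are the incident dihedral angles. Each vertex of the truncated octahedron has three incident edges in the combinatorial pattern (hex-hex, hex-square, hex-square), so this Euclidean triangle condition gives exactly $\alpha(l_0)+2\beta(l_0)=\pi$. The hyperbolic cube $P_{4,3}$ is still finite at $l=l_0$ (it only becomes ideal as $l\to\infty$), so $\gamma(l_0)$ lies strictly between the Euclidean value $\pi/4$ and the ideal-cube value, which a short computation in the ball model gives as $\pi/6$ (using the formula from Section~\ref{sec:formulas} and the explicit ideal vertices of $P_{4,3}(\sqrt{3})$). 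Thus $f(l_0)<\pi+\pi/2=3\pi/2<2\pi$.

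Combining these two endpoint estimates with continuity, the intermediate value theorem produces some $l^*\in(0,l_0)$ with $f(l^*)=2\pi$, which is exactly the desired angle condition, and by Theorem statement of the previous antiprismatic theorem $KP_{3,4}^3(l^*)$ is then a fundamental domain for $R_{3,4}^3$. The main technical obstacle I anticipate is not the intermediate value step itself but the rigorous verification that the angles behave continuously across the transition from finite to ideal vertices, and the clean identification of the Euclidean limit—both of which require tracking that the rescaling sending a shrinking hyperbolic polyhedron to its Euclidean model is smooth in $l$. A secondary issue, relevant if one wants uniqueness of $l^*$, is to show that $\alpha(l),\beta(l),\gamma(l)$ are each strictly decreasing in $l$, which would follow by direct differentiation of the explicit formulas from Section~\ref{sec:formulas} but is not required for existence.
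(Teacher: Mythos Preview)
Your approach is exactly the paper's: an intermediate value argument on $f(l)=\alpha+2\beta+2\gamma$, with the Euclidean limit $l\to 0^+$ giving $f\to 5\pi/2>2\pi$ and the other extreme giving $f<2\pi$.

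The one genuine slip is the claim that there is a \emph{finite} edge length $l_0$ at which the truncated octahedron becomes ideal while the cube of the same edge length remains finite. An ideal polyhedron has its vertices on the sphere at infinity, so every edge incident to such a vertex has infinite hyperbolic length; hence both $TP_{3,4}$ and $P_{4,3}$ become ideal only in the limit $l\to\infty$, and they do so simultaneously. Your sentence ``the hyperbolic cube $P_{4,3}$ is still finite at $l=l_0$ (it only becomes ideal as $l\to\infty$)'' is therefore internally inconsistent: if the cube needs $l\to\infty$ to become ideal, so does the truncated octahedron, since both are parametrized by the very same edge length $l$.

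The repair is immediate and is precisely what the paper does: use $l\to\infty$ as the second endpoint. Your link-triangle argument already gives $\alpha+2\beta\to\pi$ for the ideal truncated octahedron, and your ideal-cube computation gives $\gamma\to\pi/6$, so $f\to\pi+\pi/3=4\pi/3<2\pi$. The IVT then applies on any sufficiently long finite interval $[l_1,l_2]$ with $f(l_1)>2\pi>f(l_2)$, so your anticipated difficulty about continuity ``across the transition from finite to ideal vertices'' never actually arises.
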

\begin{proof}
For $l\to 0$ we approach the Euclidean case. For $l>0$ all dihedral angles are smaller than their Euclidean counterparts and $\alpha +2\beta +2\gamma\to 2\pi+\pi/2>2\pi$.
For $l\to\infty$ both the truncated octahedron and the cube become ideal, and we can compute their dihedral angles explicitly as 
\[
\alpha=\arccos(2/3), \beta=\arccos(1/\sqrt6), \gamma= \arccos(1/2)
\]
so that $\alpha +2\beta +2\gamma=4\pi/3<2\pi$. By the intermediate value theorem, there is a value of $l$ so that $\alpha +2\beta +2\gamma=2\pi$.
\end{proof}

 \begin{figure}[h] 
   \centering
   \includegraphics[width=2in]{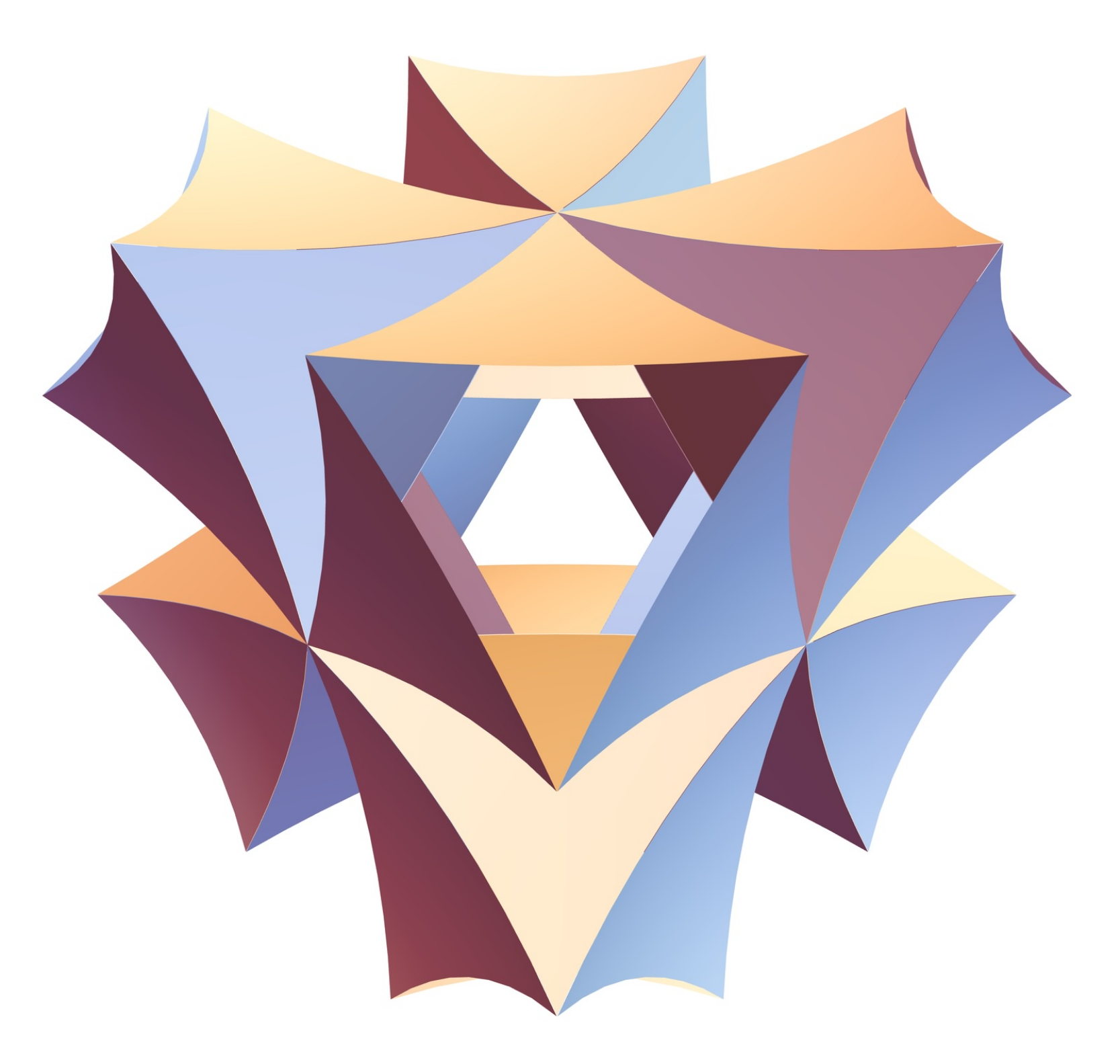}
   \quad
   \includegraphics[width=2in]{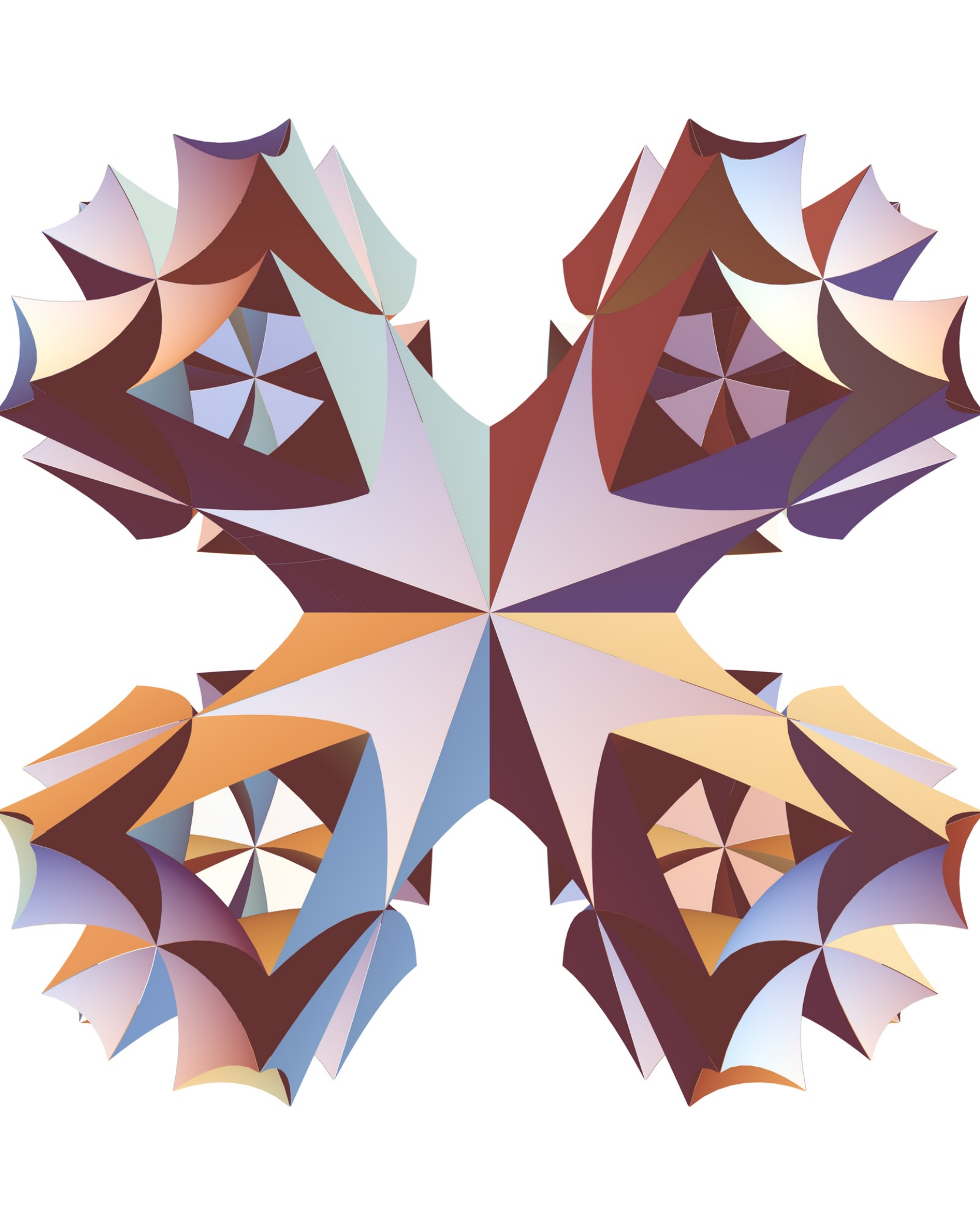}
   \quad
   \includegraphics[width=2in]{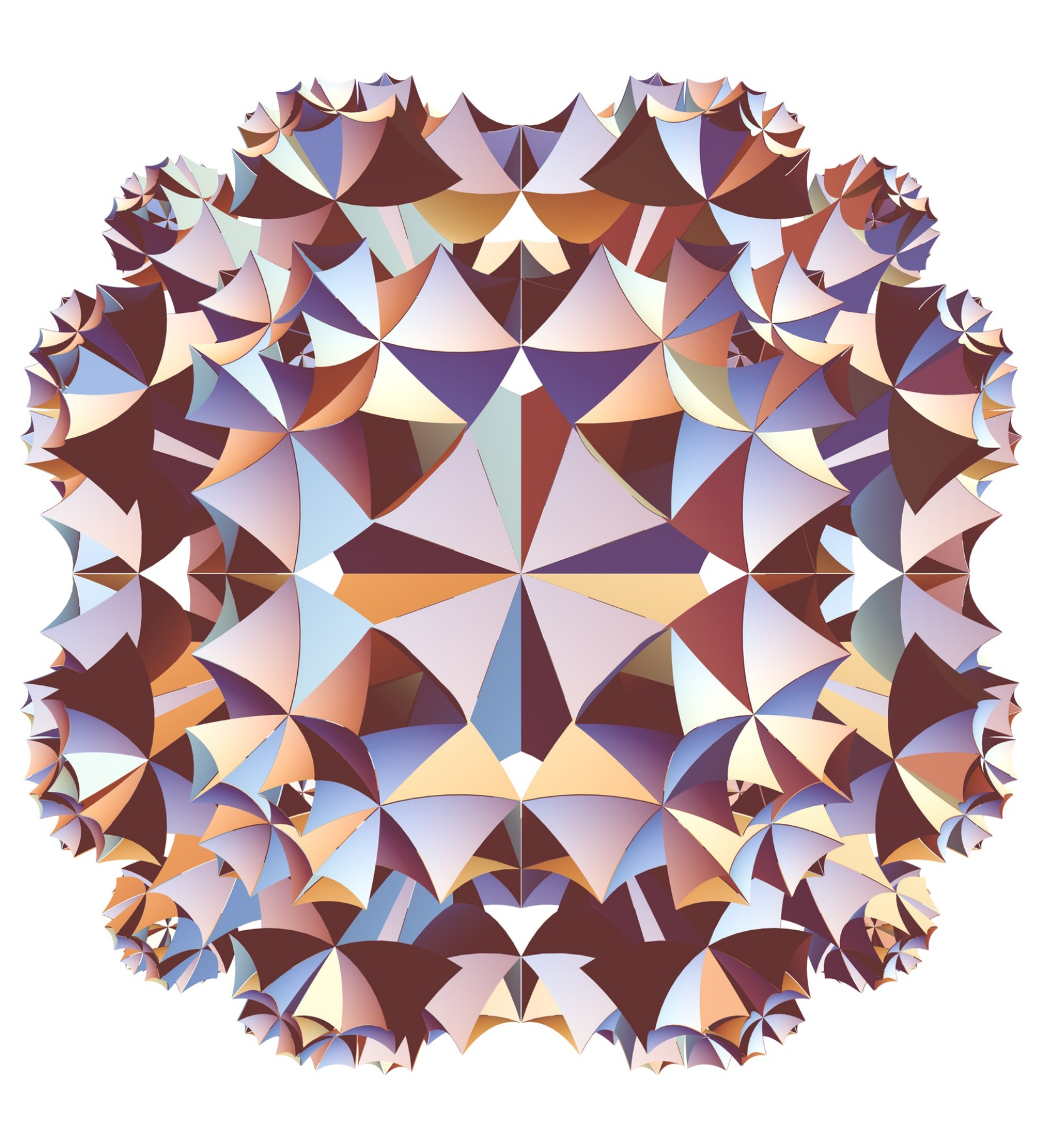}
   \caption{Antiprismatic Octahedra for $n=3$}
   \label{fig:AntiprismaticOcta3}
\end{figure}

 By the Poincaré polyhedron theorem $KP_{3,4}^3$ is a fundamental domain for the group $R_{3,4}^3$ generated by rotation-reflections at its hexagonal faces.

 \subsection{$KP_{3,4}^n$ for $n\ge 5$}

Our goal is to complete the proof of the following theorem for $n\ge 5$

\begin{theorem}
For every $n\ge3$ there exists a $n$-akis truncated octahedron $KP_{3,4}^n$ that tiles hyperbolic space.
\end{theorem}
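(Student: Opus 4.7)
The argument follows the scheme of Theorem \ref{thm:anglecond} for the $n=3$ case. The plan is to exhibit a continuous one-parameter family of candidates $KP_{3,4}^n(l)$ indexed by the common edge length $l$ of the square face of the truncated octahedron and the square base of the pyramid $PY_4^n$ attached to it, and to show that
\[
f(l) := \alpha(l) + 2\beta(l) + 2\gamma(l) - 2\pi
\]
changes sign on the allowed range of $l$. The Poincaré polyhedron theorem applied at a zero of $f$ then furnishes the tiling.

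For the setup, hyperbolic truncated octahedra form a one-parameter family indexed by edge length $l \in (0, \infty)$: the Euclidean shape is recovered as $l \to 0$ and the all-ideal truncated octahedron as $l \to \infty$, with the dihedral angles $\alpha(l)$ and $\beta(l)$ continuous and strictly monotonically decreasing in $l$. In particular $\alpha + 2\beta$ decreases strictly from the Euclidean value $2\pi$ to the ideal value $\arccos(2/3) + 2\arccos(1/\sqrt{6}) = \pi$ already computed in the proof of Theorem \ref{thm:anglecond}. For $n \geq 5$, Lemmas \ref{lem:rn}, \ref{lem:gamma}, and \ref{lem:edgepy} give a one-parameter family of pyramids $PY_4^n$ with radius fixed at $r = r_n = \sqrt{1+\sec(2\pi/n)}$ and free parameter $s$ controlling both the base edge length and the base-side dihedral angle $\gamma$. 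As $s$ decreases from $\infty$ to the critical value $s_{\mathrm{id}}(n) = r_n/\sqrt{r_n^2 - 2}$ at which the base vertices reach the ideal boundary, the edge length $l$ increases from
\[
l_{\min}(n) = \arcosh\!\left(\frac{r_n^2}{r_n^2 - 2}\right)
\]
to $\infty$, while $\gamma$ decreases from $\pi/2$ to $\gamma_\infty(n) = \arccos\!\bigl(1/(\sqrt{r_n^2-1}\sqrt{s_{\mathrm{id}}(n)^2 - 1})\bigr) < \pi/2$. So $f$ is well-defined and continuous on $[l_{\min}(n), \infty)$.

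The endpoint signs are now immediate. At $l = l_{\min}(n)$ we have $\gamma = \pi/2$, hence
\[
f(l_{\min}(n)) = \alpha(l_{\min}(n)) + 2\beta(l_{\min}(n)) - \pi,
\]
and this is strictly positive since $\alpha + 2\beta$ exceeds its ideal value $\pi$ at every finite $l$ and $l_{\min}(n) < \infty$. As $l \to \infty$ one has $\alpha + 2\beta \to \pi$ and $\gamma \to \gamma_\infty(n) < \pi/2$, so $f(l) \to 2\gamma_\infty(n) - \pi < 0$. The intermediate value theorem therefore produces an $l^* \in (l_{\min}(n), \infty)$ with $f(l^*) = 0$, and the Poincaré polyhedron theorem certifies that $KP_{3,4}^n := KP_{3,4}^n(l^*)$ is a fundamental domain for $R_{3,4}^n$ and tiles hyperbolic space.

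The hardest step, flagged in Section \ref{sec:pyramids}, is that unlike the $n=3$ case the Euclidean limit $l \to 0$ is unavailable, because $l$ is bounded below by $l_{\min}(n) > 0$. One therefore cannot read off the positive endpoint sign from the Euclidean angle sum, but must instead locate it at the opposite pyramid degeneration $s \to \infty$, where $PY_4^n$ opens into an infinite half-column. The care required is to verify that the matching pair $(l, \gamma)$ extends continuously to the boundary values $(l_{\min}(n), \pi/2)$ using Lemmas \ref{lem:gamma} and \ref{lem:edgepy}, and that $\alpha + 2\beta$ at this finite edge length strictly exceeds the ideal value $\pi$ — both of which follow from monotonicity for each fixed $n$, so no quantitative rate estimate is needed for the intermediate value argument.
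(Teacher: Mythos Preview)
Your proof is correct and follows essentially the same approach as the paper: both arguments apply the intermediate value theorem to the angle sum $\alpha+2\beta+2\gamma$ between the same two extremes (the minimal-edge-length pyramid limit where $\gamma=\pi/2$, and the ideal limit where $\alpha+2\beta=\pi$ and $\gamma=\pi/2-\pi/n$). Your version is more explicit --- you parametrize by the common edge length $l$, invoke Lemmas~\ref{lem:rn}--\ref{lem:edgepy} to compute $l_{\min}(n)$ and $s_{\mathrm{id}}(n)$, and spell out the monotonicity --- but the structure and the two endpoint evaluations are identical to the paper's.
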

\begin{proof}

We will again use  an intermediate value argument. In the  first extreme case we have  $a=1$ and the truncated octahedron $TP_{3,4}(a)$ becomes ideal. To evaluate the angle sum $\alpha+2\beta$, we place one of its vertices at $\infty$ in the upper half space model of hyperbolic space. The faces of $TP_{3,4}(1)$ that meet at this vertex become infinite half-strips over a triangle in the $xy$-plane with angles $\alpha$, $\beta$, $\beta$ so that for $a=1$ we have $\alpha+2\beta=\pi$. 
 As noted in the proof of Lemma \ref{lem:rn}, for an ideal $PY_4^n$ we have  $2\gamma=\pi-2\pi/n$, so that 
 \[
 \alpha+2\beta+2\gamma = 2\pi -2\pi/n<2\pi \ .
 \]
 
 The other extreme case occurs when the edge length of the pyramid becomes minimal. The explicit evaluation of $\alpha+2\beta$ in this case is difficult, but all we will need is that $\alpha+2\beta>\pi$. This follows because the dihedral angles of any finite  $TP_{3,4}(a)$ will be smaller than  that of the ideal  $TP_{3,4}(1)$. Again, as noted in the proof of Lemma \ref{lem:rn}, in this extreme case we have $\gamma=\pi/2$ so that 
  \[
 \alpha+2\beta+2\gamma > 2\pi \ .
 \]
 
 By the intermediate value theorem, for any $n\ge5$ there is a value of $a$ such that the angle condition is satisfied.
  \end{proof}

\begin{theorem}
For $n\ge4$, there is an antiprismatic octahedron $AP_{3,4}^n$ in hyperbolic space.
\end{theorem}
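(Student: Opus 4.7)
The plan is to run the same kind of intermediate value argument used in the prismatic cases of earlier sections, but inside the fundamental domain $KP_{3,4}^n$ supplied by the previous theorem rather than inside a Platonic fundamental domain. For each parameter $b$ in a suitable range I would place a concentric finite hyperbolic octahedron $P_{3,4}(b)$ at the origin of the ball model and erect an antiprism on each of its eight triangular faces $F$ as follows: take as the top face the image $\rho(F)$, where $\rho$ is the reflection-rotation that first reflects across the corresponding hexagonal face $H$ of the enclosing truncated octahedron and then rotates by $\pi/3$ about the axis through the origin perpendicular to $H$; fill in the six lateral triangles joining vertices of $F$ to vertices of $\rho(F)$ in the zig-zag pattern of an antiprism.

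Because this construction is equivariant under the stabilizer of $H$ in $R_{3,4}^n$, the resulting figure is automatically symmetric with respect to $\rho$; in particular, its midsection lies in $H$ and traces out a regular hyperbolic hexagon that matches the hexagonal face. Propagating the configuration by $R_{3,4}^n$ therefore produces a globally consistent polyhedral surface across all hexagonal faces of the tiling without any additional matching condition at $H$. The remaining regularity requirement is purely local: the six lateral triangles of each antiprism must be equilateral and congruent to $F$, i.e.\ the lateral edge length $m_b$ (hyperbolic distance from a vertex of $F$ to its neighboring vertex in $\rho(F)$) must equal the base edge length $l_b$. Both quantities are explicit algebraic functions of $b$ via the distance formula of section \ref{sec:formulas}.

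For the intermediate value step, I would study $f(b) := l_b - m_b$ on its natural interval. In the large-$b$ regime, where the triangular face $F$ of $P_{3,4}(b)$ approaches the hexagonal face $H$, the antiprism collapses to zero height while the base edge length tends to a positive value inherited from $H$, and a direct computation shows that $f(b)$ has one definite sign. In the opposite limit, as $b \to \sqrt{3/2}^+$ (or to $1^+$ when the enclosing truncated octahedron is hyperideal), the octahedron $P_{3,4}(b)$ degenerates: $l_b \to 0$ while $m_b$ stays bounded below by the hyperbolic distance across $H$, so the sign of $f$ reverses. Continuity plus the intermediate value theorem then produces some $b^{\ast}$ with $f(b^{\ast})=0$, and monotonicity of $m_b/l_b$ in $b$, argued exactly as in the prismatic case of section \ref{sec:hypricu}, gives uniqueness.

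The main obstacle will be making the endpoint analysis rigorous when the enclosing $TP_{3,4}$ has ideal or hyperideal vertices (which happens for the larger values of $n$), so that the endpoints of the admissible $b$-interval are not of $P_{3,4}(a)$-type but must be described intrinsically in terms of the hexagonal face $H$ and the pyramid data used in Theorem \ref{thm:anglecond}. Once the correct endpoints are identified and the sign change of $f$ is certified, the existence of $AP_{3,4}^n$ follows from the intermediate value theorem, and its $R_{3,4}^n$-invariance was built into the construction from the start.
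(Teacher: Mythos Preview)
Your approach is the same as the paper's: place a concentric finite octahedron inside the fundamental domain $KP_{3,4}^n$, compare the base edge of the antiprism to its lateral edge, and apply the intermediate value theorem. In fact you make the comparison quantity more explicit than the paper does (the paper loosely speaks of ``edge length'' versus ``distance to $KP_{3,4}^n$'', whereas you correctly identify $f(b)=l_b-m_b$ as the relevant function).

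Where your write-up goes wrong is the endpoint analysis, because you have the direction of the parametrization reversed. In the paper's convention for $P_{3,4}(b)$, \emph{large} $b$ gives a \emph{small} octahedron near the origin (so the face $F$ is far from $H$, $l_b\to 0$, and $m_b$ stays bounded below), while $b\to\sqrt{3}^{\,+}$ gives the \emph{ideal} octahedron with $l_b\to\infty$. The threshold $\sqrt{3/2}$ you cite is where adjacent spheres merely touch (the edgeless case), and $b\to 1^+$ is not a meaningful limit here at all; neither value enters the argument. So your two limit descriptions (``large $b$: face approaches $H$, height collapses'' and ``$b\to\sqrt{3/2}^+$: $l_b\to 0$'') are each attached to the wrong end, and with wrong numerical thresholds.

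Once you swap the two endpoint behaviors and replace $\sqrt{3/2}$ by the correct boundary of the finite range ($b\to\infty$ for the small limit, $b$ decreasing toward the value where $P_{3,4}(b)$ fills $KP_{3,4}^n$ or becomes ideal for the large limit), the sign change of $f$ is exactly as the paper asserts, and your IVT argument goes through. The equivariance and uniqueness parts of your outline are fine.
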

\begin{proof}
To  construct a closed hyperbolic antiprismatic octahedron $AP_{3,4}^n$, we need to find an octahedron  whose attached antiprisms are symmetric  with respect to rotation-reflections at the hexagonal faces of $KP_{3,4}^n$. 

This is done by once more applying the  intermediate value theorem. We begin with a very small $P_{3,4}$ centered inside $KP_{3,4}^n$. Its edge length will be much smaller than the distance to $KP_{3,4}^n$. We now  increase its size. The distance to $KP_{3,4}^n$ will either become very small or its edge length very large, so that  in either case its edge length will become larger than its distance to $KP_{3,4}^n$. By the intermediate value theorem, there is a finite $P_{3,4}$ whose attached antiprisms are symmetric  with respect to rotation-reflections at the hexagonal faces of $KP_{3,4}^n$.

 \begin{figure}[h] 
   \centering
   \includegraphics[width=2in]{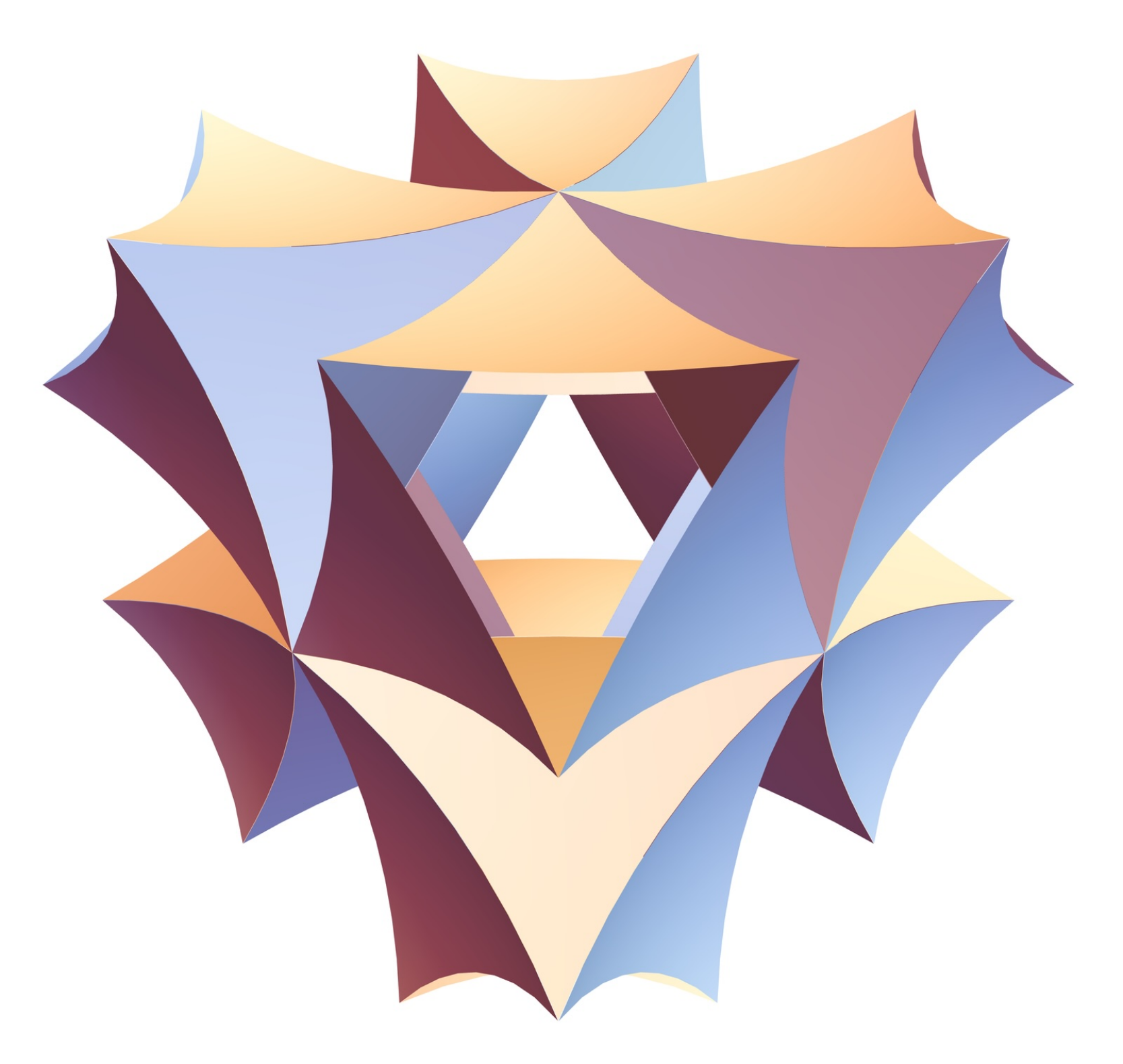}
   \quad
   \includegraphics[width=2in]{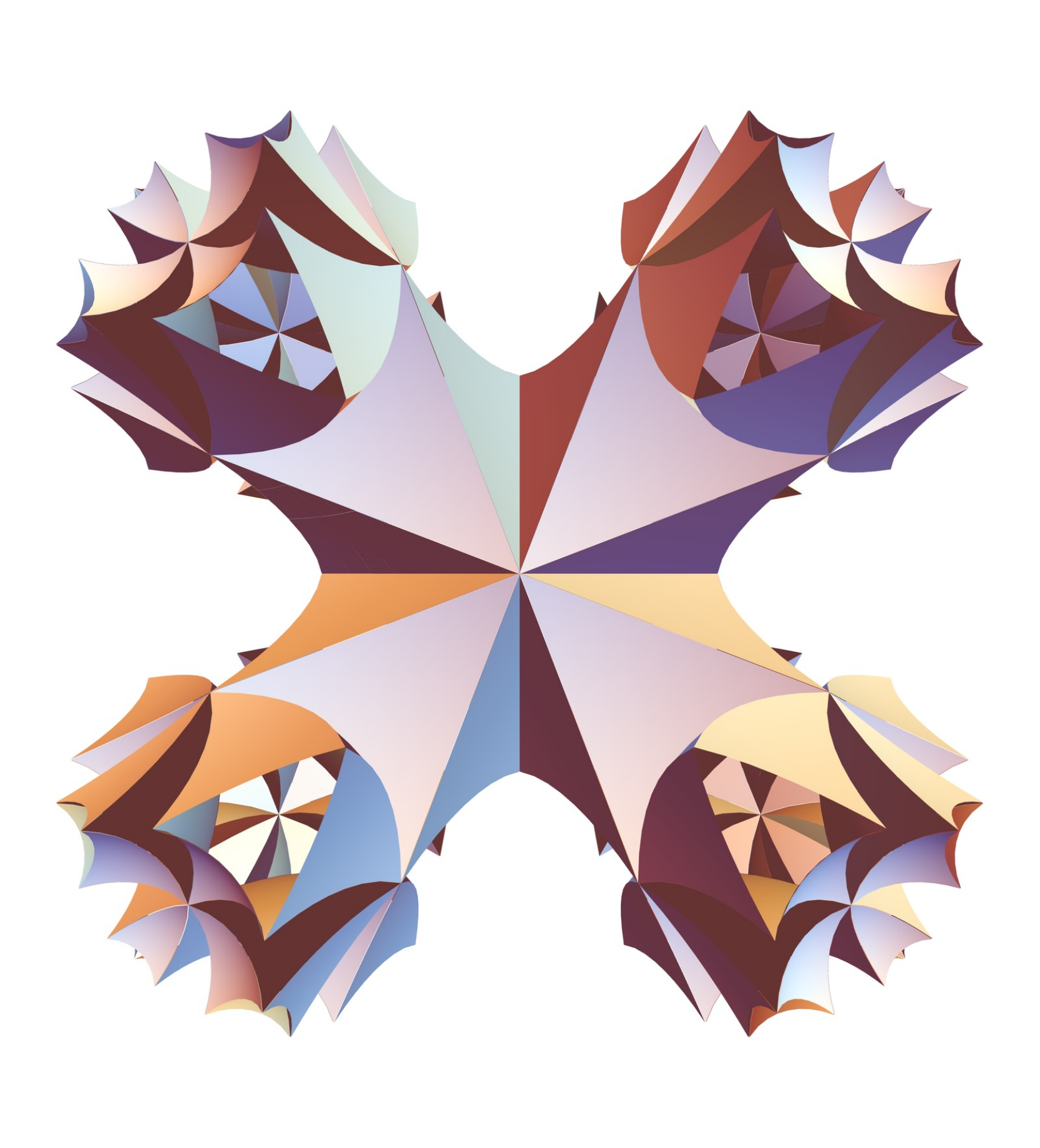}
   \quad
   \includegraphics[width=2in]{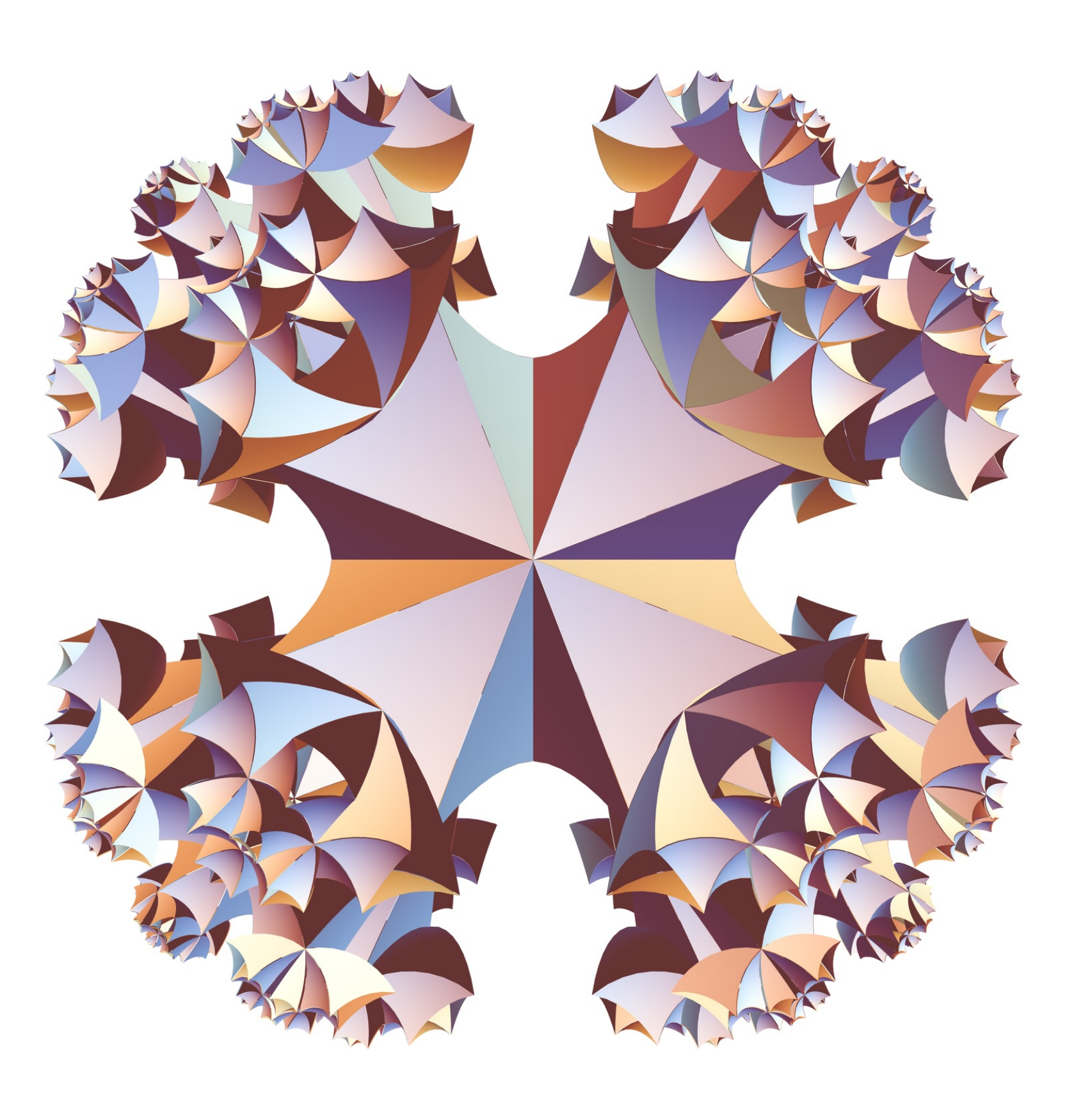}
   \caption{Antiprismatic Octahedra for $n=4$}
   \label{fig:AntiprismaticOcta4}
\end{figure}

\end{proof}

\newpage

\section{Antiprismatic Tetrahedra}
\label{sec:tetra}

Antiprismatic tetrahedra $AP_{3,3}^n$ exist in $\bS^3$ for $n=2$, in $\bR^3$ for $n=3$, and in $\bH^3$ for $n\ge4$.

\subsection{The Antiprismatic Tetrahedron $AP_{3,3}^3$ in $\bR^3$}

In this section, the basic building block is a tetrahedron. To its faces we attach regular antiprisms over a triangle, to them we attach tetrahedra, etc., see figure \ref{fig:antipristet}.

\begin{figure}[h] 
   \centering
   \includegraphics[width=2in]{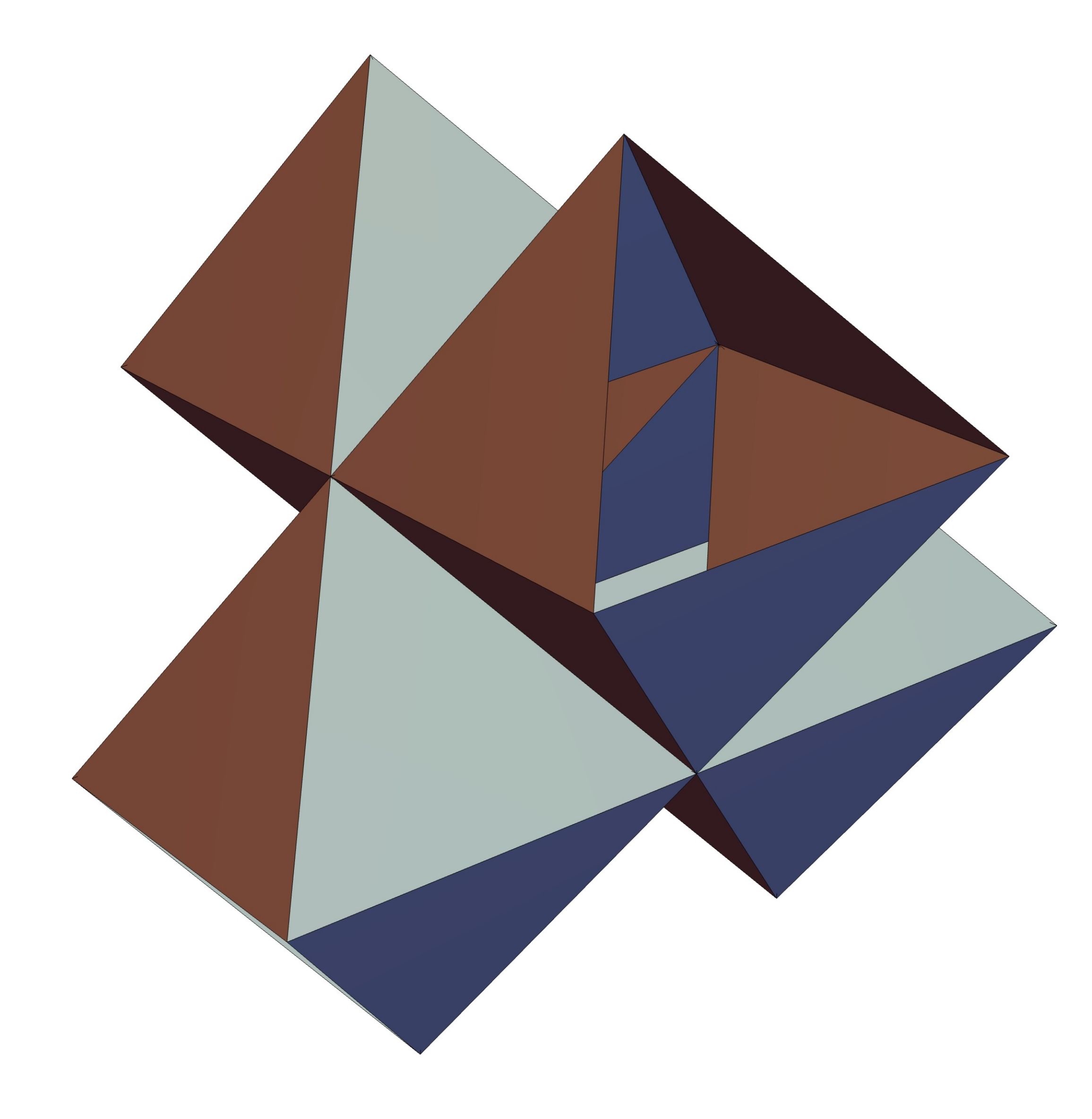}
   \quad 
   \includegraphics[width=2in]{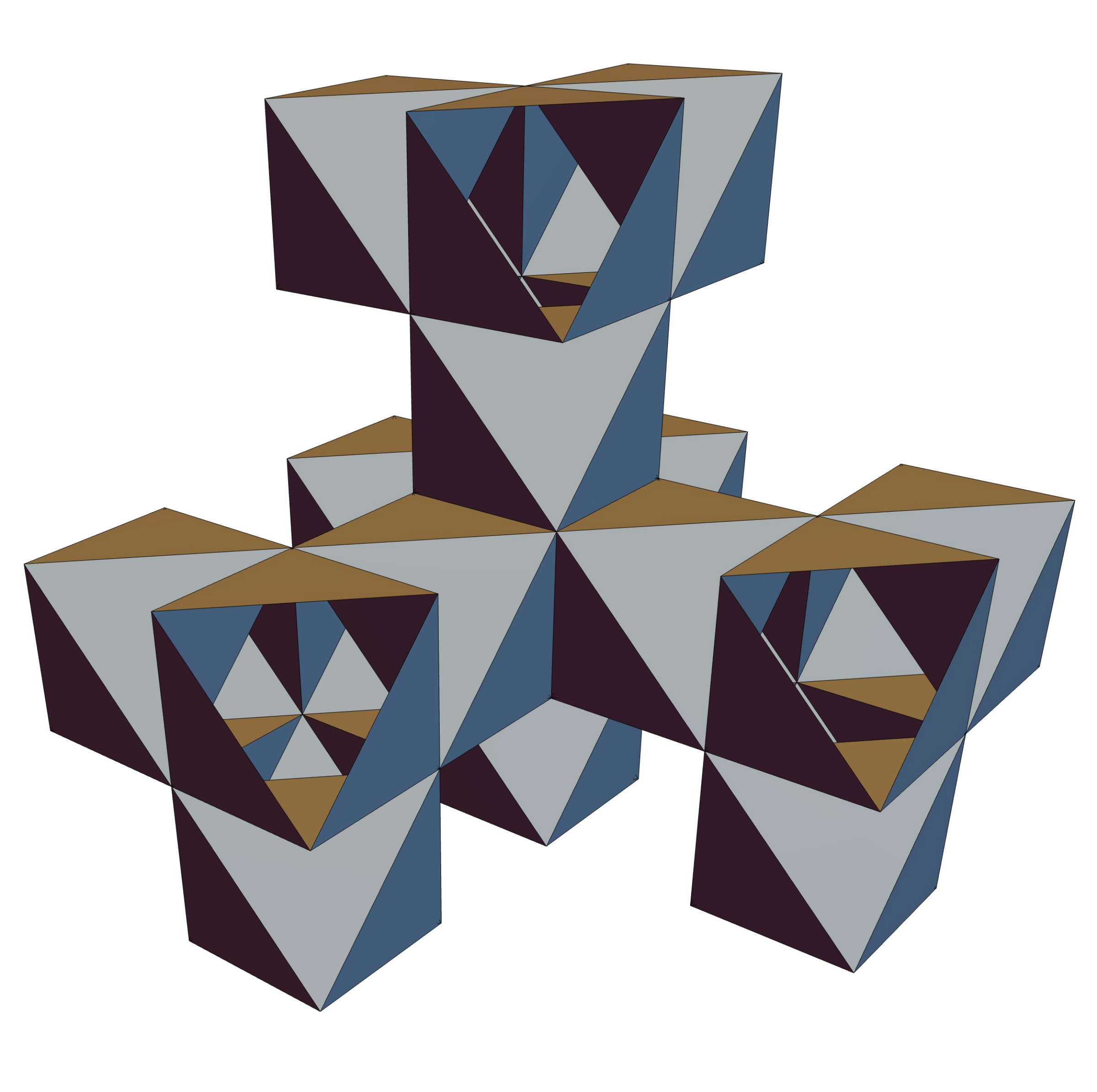} 
   \quad 
   \includegraphics[width=2in]{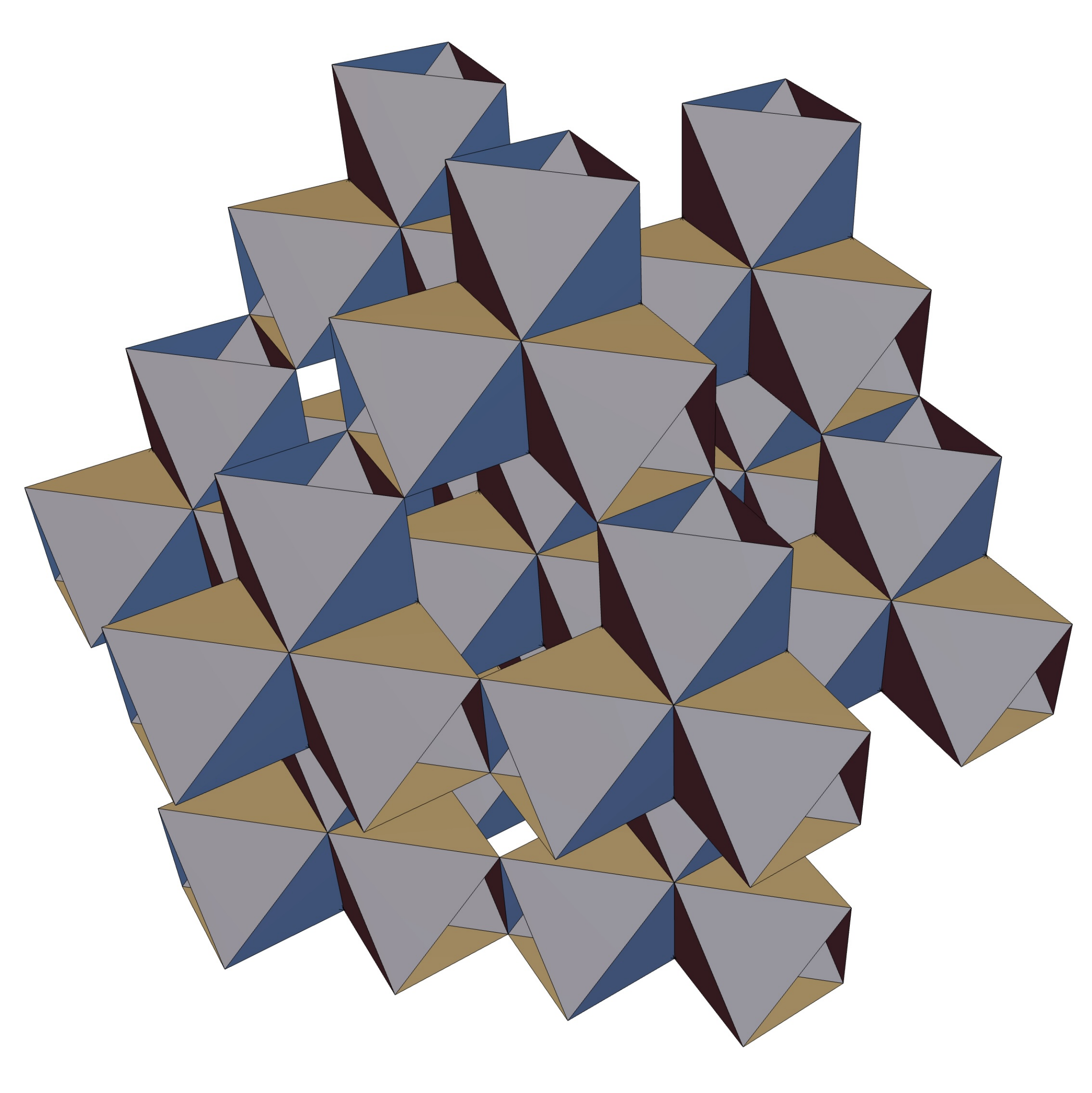} 
   \caption{The first three generations of the Euclidean antiprismatic tetrahedron $AP_{3,3}^3$}
   \label{fig:antipristet}
\end{figure}

The resulting surface is a periodic Euclidean polyhedron tiled with equilateral triangles so that $9$ triangles meet at each vertex. A fundamental domain under the translational symmetries consist of the first generation surface where parallel boundary edges are identified in pairs. This surface is not Platonically tiled: It has closed Petri-polygons of length 6 (the waists of the prisms) and 8 (across the prisms, following the identifications).



\subsection{The Quarter Cubic Honeycomb}
\label{sec:qch}

In this section, we will describe the fundamental domain $KP_{3,3}^3$ for the  group $R_{3,3}^3$ of reflection-rotations that leave the Euclidean antiprismatic tetrahedron invariant. It is the simplest case where one can visualize the angle condition.

For the Euclidean truncated tetrahedron $TP_{3,3}$ two hexagons make a dihedral angle of $\alpha=\arccos (1/3)$ (that of a regular tetrahedron), while a hexagon and triangle have dihedral
angle $\beta=\pi-\arccos (1/3)$. Therefore, $KP^3_{3,3}$ satisfies the angle condition, because $2\gamma_3=\alpha$.


Filling the gaps between the truncated tetrahedra  leads to a tiling of space called the {\em quarter cubic honeycomb}, see figure \ref{fig:qucuhoney}.

\begin{figure}[h] 
   \centering
   \includegraphics[width=1.8in]{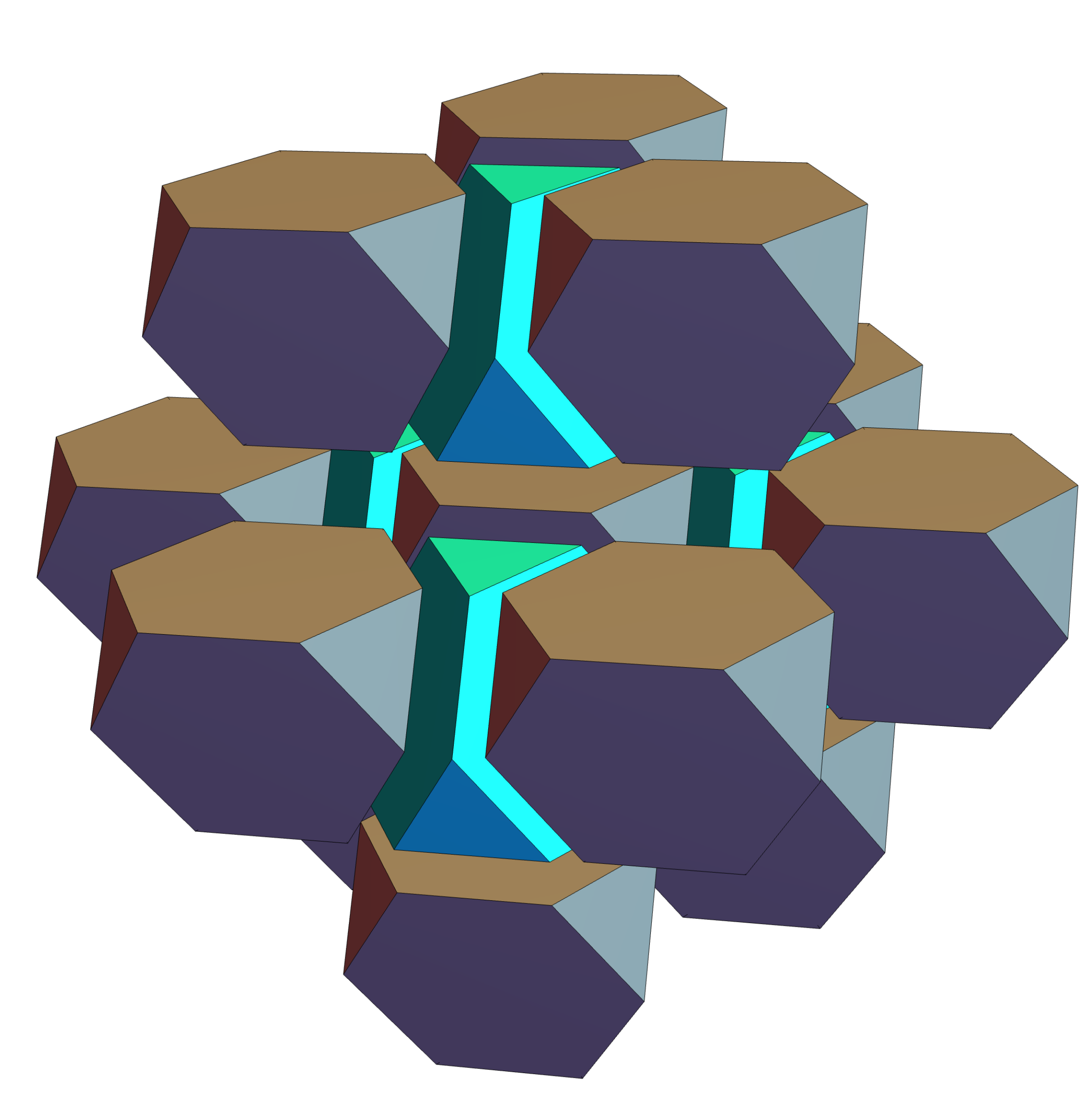}  \qquad
      \includegraphics[width=1.8in]{figures/antiprismatictetrahedron/TriakisTruncatedTetrahedron.pdf}\qquad \includegraphics[width=1.8in]{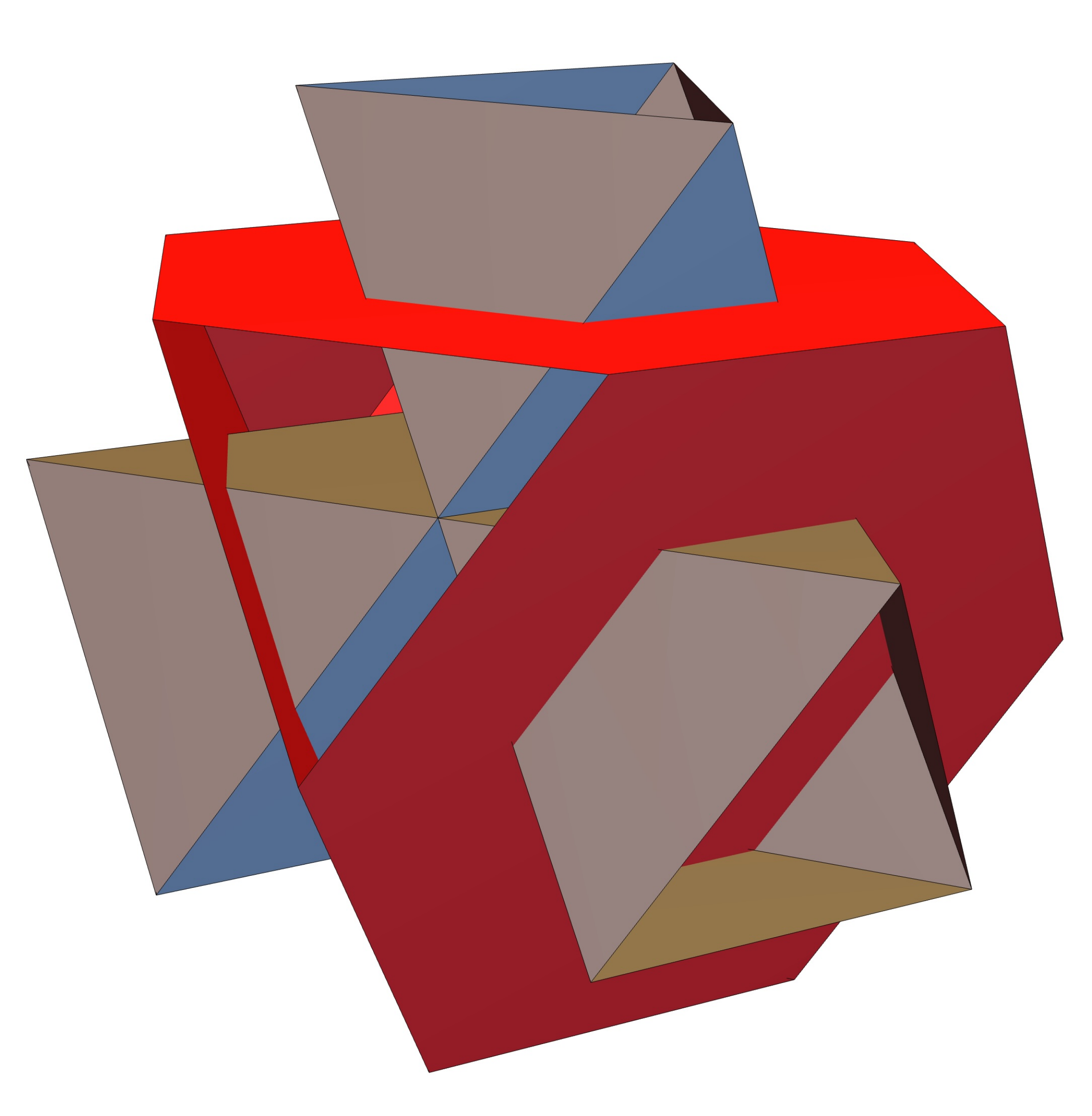}
   \caption{Quarter Cubic Honeycomb and  the Triakis Truncated Tetrahedron}
   \label{fig:qucuhoney}
\end{figure}

The group $R_{3,3}^3$ generated by the four reflection-rotations at the hexagonal faces of the truncated tetrahedron has as a fundamental domain the {\em Triakis Truncated Tetrahedron} $KP_{3,3}^3$.

We can now fit smaller tetrahedra into $KP_{3,3}^3$ so that antiprisms attached to its faces are symmetric with respect to the reflection-rotations at the the hexagonal faces of $TP_{3,3}^3$.

\subsection{The Hyperbolic Triakis Truncated Tetrahedra}

To show the existence of hyperbolic antiprismatic tetrahedra $AP_{3,3}^n$, we will use a hyperbolic version $KP_{3,3}^n$ of the  Euclidean Triakis Truncated Tetrahedron $KP_{3,3}^3$. In $KP_{3,3}^3$, four tetrahedral Triakis pyramids assemble to a regular tetrahedron so that three of them meet along a common edge from the apex to the base of the Triakis pyramids. In hyperbolic space, there will be $n\ge 4$ pyramids meeting along a common edge. 

For $n=4$, we construct a Triakis Truncated Tetrahedron $KP_{3,3}^4$ by taking a hyperbolic truncated tetrahedron and a regular octahedron of the same (hyperbolic) edge lengths. We divide the octahedron into eight pyramids which have as base  the triangular faces of the octahedron and whose apex is the center of the octahedron. We attach four such pyramids to the faces of the triangular faces of the truncated tetrahedron. We call the resulting polyhedron a hyperbolic {\em Triakis Truncated Tetrahedron} $KP_{3,3}^4$.

\begin{figure}[h]
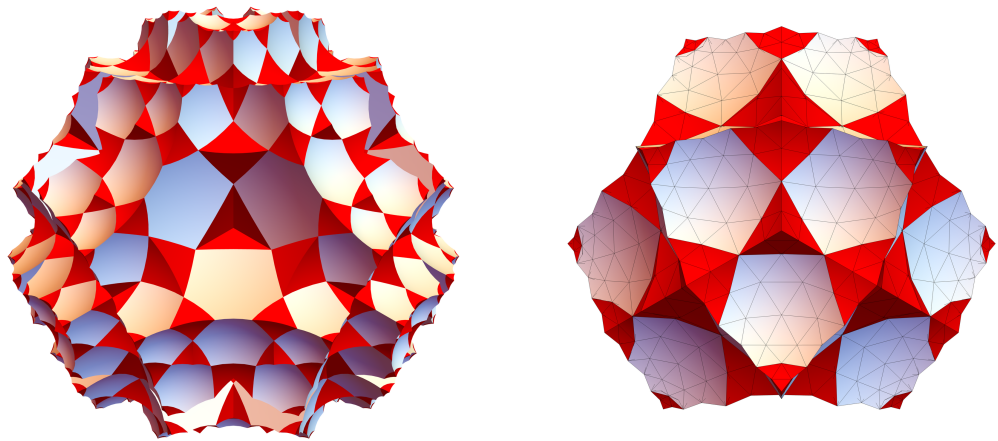
 
   \centering
\includegraphics[width=2.5in]{figures/antiprismatictetrahedron/tetratwist4solid-d.pdf}
\qquad
   \includegraphics[width=2.5in]{figures/antiprismatictetrahedron/tetratwist5solid.pdf}
   \caption{Tiling hyperbolic space with Triakis Truncated Tetrahedra $KP_{3,3}^4$ and }
   \label{fig:tritrutettile}
\end{figure}

In order that  a general $KP_{3,3}^n$ tiles hyperbolic space (see the left figure \ref{fig:tritrutettile}), we again need to satisfy the angle condition $\alpha+2\beta+2\gamma=2\pi$.

For $n=5$, we use an icosahedron instead of an octahedron to construct the pyramids, see the right figure of \ref{fig:tritrutettile}.

To construct Triakis Truncated Tetrahedra $KP_{3,3}^n$ for $n\ge6$  we need Platonic solids $P_{3,n}$ with triangular faces and vertex valency $n$. While there are no such solids that are compact, we can use non-compact versions.  These can be constructed in complete analogy to the solids $P_{4,n}$ for $n\ge 5$ in section \ref{sec:pyramids}.

\begin{theorem}
For any $n\ge4$ there exists a hyperbolic Triakis Truncated Tetrahedron $KP_{3,3}^n$ satisfying the angle condition.
\end{theorem}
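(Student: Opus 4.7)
The plan is to imitate the intermediate value argument used for the $n$-akis truncated octahedron $KP_{3,4}^n$ in Section~\ref{sec:octa}. For each $n\ge 4$ I would parametrize the construction by the common edge length $l$ shared by a triangular face of the hyperbolic truncated tetrahedron $TP_{3,3}$ and the base of the attached pyramid $PY_3^n$. The pyramids in the required sizes exist: for $n=4$ and $n=5$ they arise by centrally subdividing a hyperbolic octahedron and icosahedron respectively (both valid Platonic solids in $\bH^3$ over a continuous family of sizes), while for $n\ge 6$ they are constructed in complete analogy to the $PY_4^n$ of Section~\ref{sec:pyramids}. Writing $\alpha(l),\beta(l)$ for the hexagon--hexagon and hexagon--triangle dihedral angles of $TP_{3,3}(l)$ and $\gamma(l)$ for the base--side dihedral angle of $PY_3^n(l)$, all three are continuous functions of $l$ on the admissible interval.

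I would then evaluate $F(l) := \alpha(l) + 2\beta(l) + 2\gamma(l) - 2\pi$ at the two endpoints of this interval. At the ideal extreme (as $l\to\infty$) the truncated tetrahedron becomes ideal, so placing an ideal vertex at infinity in the upper half-space model makes the three faces meeting there bound a Euclidean triangle with angles $\alpha,\beta,\beta$; hence $\alpha+2\beta = \pi$. Simultaneously the pyramid becomes ideal with $2\gamma = \pi - 2\pi/n$ (cf.\ the $PY_4^n$ discussion in Section~\ref{sec:pyramids}), so $F \to -2\pi/n < 0$. At the other extreme---for $n\ge 6$ this is the smallest admissible $l$, where $\gamma = \pi/2$ is attained by the minimal pyramid as in Lemma~\ref{lem:rn}; for $n=4,5$ it is the Euclidean limit $l\to 0$---the pyramid contribution $2\gamma$ is maximal and the dihedral angles of $TP_{3,3}(l)$ strictly exceed their ideal values, yielding $\alpha+2\beta>\pi$ and hence $F>0$. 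A sanity check for $n=4$: the Euclidean values give $\alpha+2\beta\approx 289.47^\circ$ and $2\gamma\approx 109.47^\circ$, so $F\approx 38.94^\circ>0$, consistent with the table of angle sums preceding this section. By continuity and the intermediate value theorem there exists $l_n$ with $F(l_n)=0$, and we take $KP_{3,3}^n := KP_{3,3}^n(l_n)$.

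The main obstacle is the sign analysis of $F$ at the small-$l$ endpoint. This relies on the fact that smaller hyperbolic polyhedra of fixed combinatorial type carry strictly larger dihedral angles than their ideal counterparts---a standard monotonicity principle that can be verified here by differentiating the sphere-intersection formula of Section~\ref{sec:formulas} along the one-parameter family of defining sphere configurations, or, for $n=4,5$, by reducing to the Euclidean computation already tabulated. Once the sign change is established, the Poincar\'e polyhedron theorem combined with the angle-condition theorem stated earlier identifies $KP_{3,3}^n$ as a fundamental domain for the group $R_{3,3}^n$, completing the proof.
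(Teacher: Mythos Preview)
Your proposal is correct and follows exactly the approach the paper intends: the paper states this theorem without proof, implicitly deferring to the octahedral argument of Section~\ref{sec:octa}, and you have faithfully transferred that intermediate value argument with the appropriate case split ($n=4,5$ via the Euclidean limit using pyramids from subdivided octahedra/icosahedra, $n\ge 6$ via the minimal pyramid where $\gamma=\pi/2$). One minor citation issue: the claim $\gamma=\pi/2$ at the minimal pyramid is not Lemma~\ref{lem:rn} itself but the discussion immediately following it (the $s\to\infty$ limit); the mathematics is unaffected.
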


\begin{figure}[h] 
   \centering
   \includegraphics[width=2.5in]{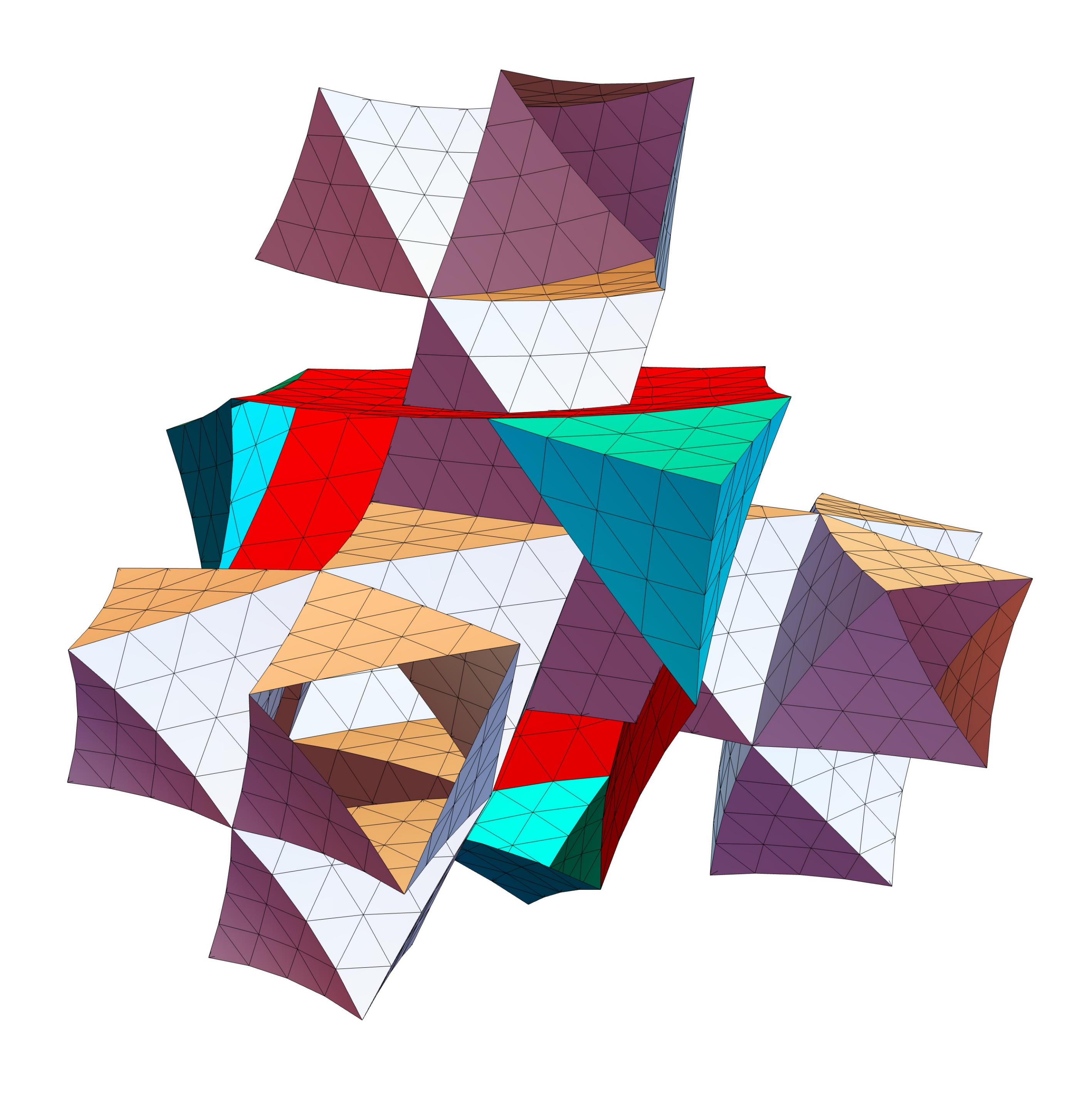}\qquad
   \includegraphics[width=2.5in]{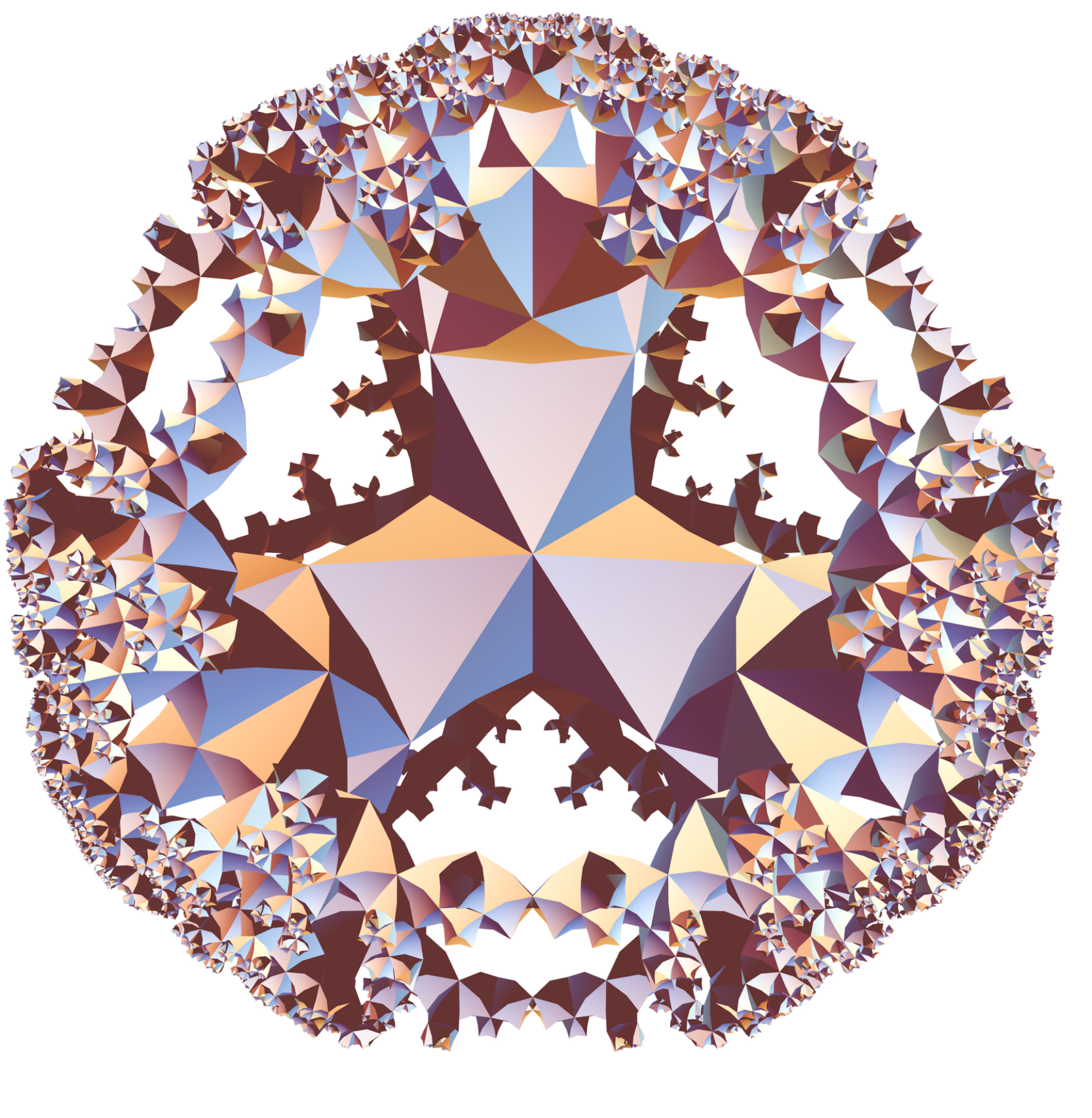}
      \caption{Hyperbolic antiprismatic tetrahedra}
   \label{fig:antitethyp4}
\end{figure}

\subsection{The Spherical Antiprismatic Tetrahedron}

For the construction of the spherical antiprismatic tetrahedron  $AP_{3,3}^2$ we need a spherical triakis truncated tetrahedon $KP_{3,3}^2$. This means that the triakis pyramids are degenerate (i.e. of height 0) and can be ignored: $KP_{3,3}^2$ is just a spherical truncated tetrahedron satisfying the angle condition $\alpha+2\beta=2\pi$ (with $\gamma=0$).  
\begin{theorem}
There exists a spherical truncated tetrahedron that tiles $\bS^3$.
\end{theorem}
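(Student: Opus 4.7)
The plan is to run the same intermediate value argument used for the hyperbolic triakis truncated tetrahedra, but reversed in sign, since for $n=2$ we need the angle sum $\alpha+2\beta$ (with $\gamma=0$) to equal $2\pi$ exactly.

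First I would parameterize the family of regular spherical truncated tetrahedra by a size parameter $t\in(0,t_{\max})$, for instance by the spherical circumradius, so that as $t\to 0$ the polyhedron converges (after rescaling) to a Euclidean truncated tetrahedron, while as $t\to t_{\max}$ the polyhedron degenerates by filling up all of $\bS^3$. Standard spherical trigonometry gives closed formulas for the dihedral angles $\alpha(t)$ (between two hexagons) and $\beta(t)$ (between a hexagon and a triangle), and both are continuous strictly increasing functions of $t$.

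Next I would evaluate the two extremes. At $t\to 0$ the Euclidean limit gives $\alpha=\arccos(1/3)$ and $\beta=\pi-\arccos(1/3)$, so
\[
\alpha+2\beta \;=\; 2\pi-\arccos(1/3) \;<\; 2\pi .
\]
At $t\to t_{\max}$ the faces become great $2$-spheres, so every dihedral angle approaches $\pi$, giving $\alpha+2\beta\to 3\pi>2\pi$. By continuity and the intermediate value theorem there is a value $t_*\in(0,t_{\max})$ with $\alpha(t_*)+2\beta(t_*)=2\pi$, which is exactly the angle condition with $\gamma=0$.

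Finally, for this value $t_*$ I would apply the Poincar\'e polyhedron theorem, as in the proof of Theorem for $KP_{3,4}^3$: the angle condition together with the obvious face-pairing (reflection at the hexagonal faces composed with the $60^\circ$ twist, i.e.\ the $n=2$ instance of $R_{3,3}^2$) ensures that the edge cycles close up, so the spherical truncated tetrahedron $KP_{3,3}^2$ is a fundamental domain for a finite subgroup of isometries of $\bS^3$ whose translates tile $\bS^3$. The main obstacle is purely bookkeeping: checking that the ``growing'' end of the one-parameter family really does force $\alpha,\beta\to\pi$ (rather than the polyhedron becoming degenerate before then), but this follows because the circumradius can be continuously pushed all the way to $\pi/2$ before any vertex or edge configuration collapses, so the monotonicity and the two endpoint values give what is needed.
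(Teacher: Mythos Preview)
Your proof is correct and follows essentially the same approach as the paper: an intermediate value argument on the one-parameter family of spherical truncated tetrahedra, using the Euclidean limit (where $\alpha+2\beta=2\pi-\arccos(1/3)<2\pi$) at one end and the degenerate case with all dihedral angles equal to $\pi$ (so $\alpha+2\beta=3\pi>2\pi$) at the other. The paper phrases the large-size endpoint as ``vertices lie on a great sphere $\bS^2\subset\bS^3$'' rather than your ``faces become great $2$-spheres,'' but these describe the same degeneration; your added remarks on monotonicity and the explicit invocation of the Poincar\'e polyhedron theorem are more detailed than the paper's sketch but do not change the argument.
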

\begin{proof}
We use an intermediate value argument: Small spherical truncated tetrahedra are almost Euclidean, and the angle sum $\alpha+2\beta$ is less than $2\pi$. We now increase the size of the truncated tetrahedron so that its vertices lie on a great sphere $\bS^2\subset \bS^3$. Then all dihedral angles are $\pi$, and the angle sum becomes $3\pi$. 

\end{proof}

\begin{figure}[h]
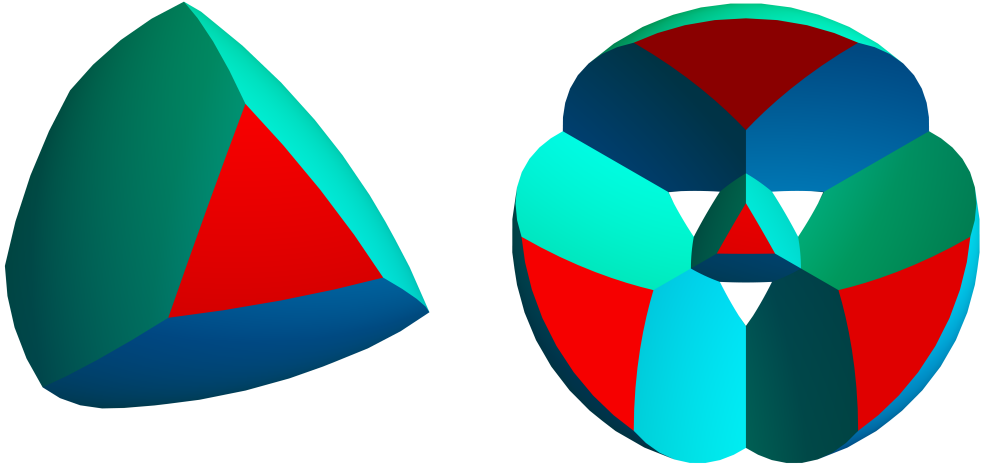
 
   \centering
   \includegraphics[width=2.5in]{figures/antiprismatictetrahedron/tetra-twist-sphere-tile.pdf}\qquad \includegraphics[width=2.5in]{figures/antiprismatictetrahedron/tetra-twist-sphere-tile3.pdf}
   \caption{Tiling  $\bS^3$ with truncated tetrahedra $KP_{3,3}^2$}
   \label{fig:tritrutettile2}
\end{figure}

Ten of these spherical truncated tetrahedra are needed to tile all of $\bS^3$, see figure \ref{fig:tritrutettile2}. The corresponding 4-polytope is called the {\em bitruncated 5-cell} (\cite{elte}, \S 22), see figure \ref{fig:sphantitet}.

\begin{figure}[h] 
   \centering
   \includegraphics[width=2.5in]{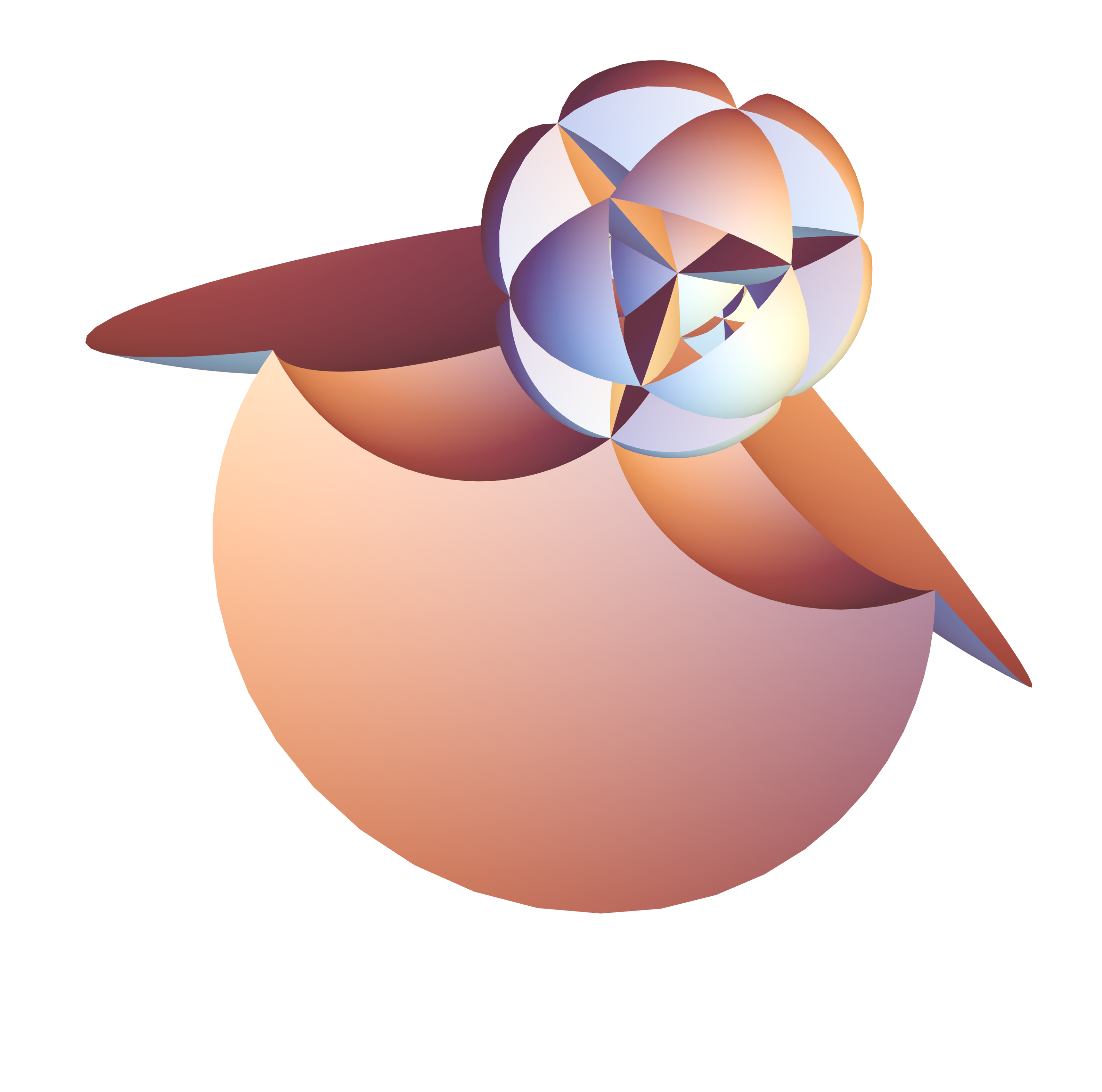}\qquad \includegraphics[width=2.5in]{figures/antiprismatictetrahedron/tetra-twist-sphere-cross4.pdf}
   \caption{The Spherical Antiprismatic Tetrahedron $AP_{3,3}^2$}
   \label{fig:sphantitet}
\end{figure}


\subsection{Antiprismatic Rectified Tetrahedra}

Using the same Triakis Truncated Tetrahedra as fundamental domains, we can vary the constructions of this section by using instead of prisms over a smaller tetrahedron prisms over a rectified tetrahedron (i.e. over every other face of an octahedron). These surfaces are tiled with equilateral triangles and have valency 8, see figure \ref{fig:euantirectet}.

The Euclidean version has been discussed in detail in \cite{Lee15}. It has a hidden conformal symmetry which makes it conformally Platonic. It can be represented as an eight-fold cyclically branched cover over the thrice punctured sphere and is conformally isomorphic to the Fermat quartic.

\begin{figure}[h] 
   \centering
   \includegraphics[width=2.5in]{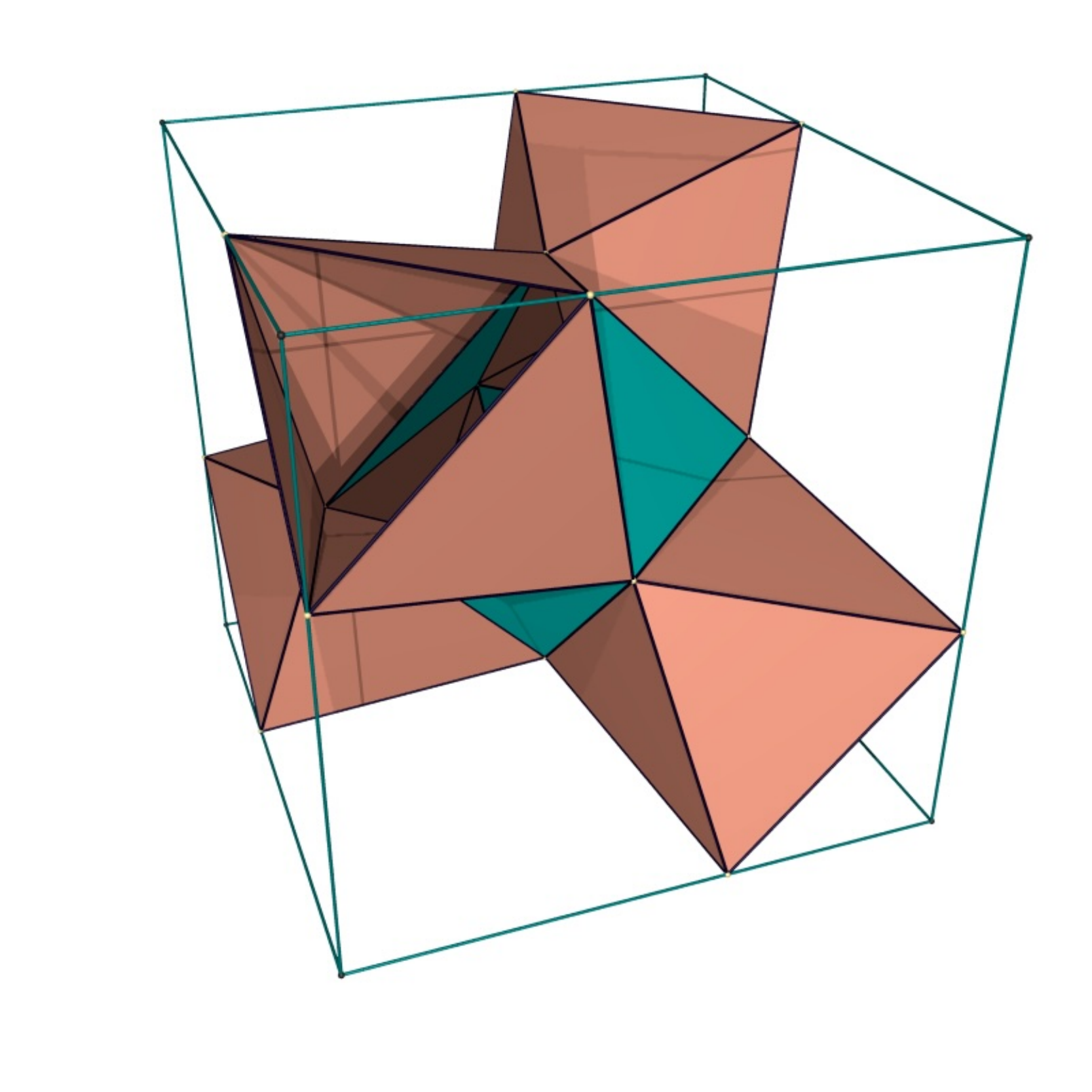}\qquad \includegraphics[width=2.5in]{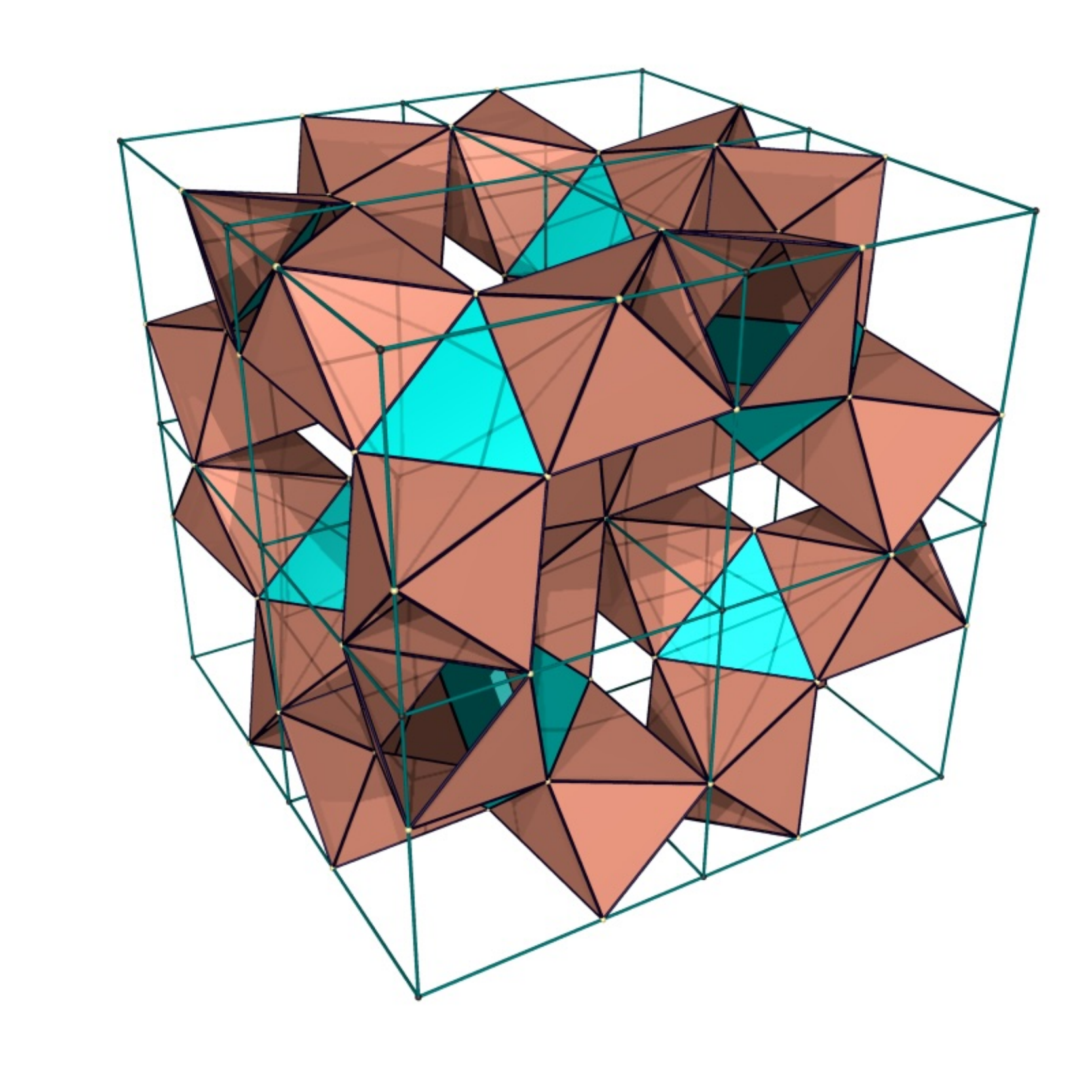}
   \caption{The Euclidean Antiprismatic Rectified Tetrahedron $ARP_{3,3}^3$}
   \label{fig:euantirectet}
\end{figure}

There also exist a spherical version and hyperbolic versions of this polyhedral surface. Suitable quotients are all conformally equivalent.


\section{The Antiprismatic Cube}

Antiprismatic cubes $KP_{4,3}^n$ exist in $\bS^3$ and $\bH^3$. They are obtained by attaching regular antiprisms to the faces of a cube. The resulting surfaces (after suitable identifications) have genus 3 and are tiled with 24 triangles and have valency 9 at each vertex.


\subsection{The Spherical Antiprismatic Cube}

48 copies of a slightly inflated truncated cube $KP_{4,3}^2$ will tile $\bS^3$, see figure \ref{fig:sphpristruncocta}. The identification space is  called the {\em truncated cube space} (\cite{mont}).

\begin{figure}[h] 
   \centering
   \includegraphics[width=2.5in]{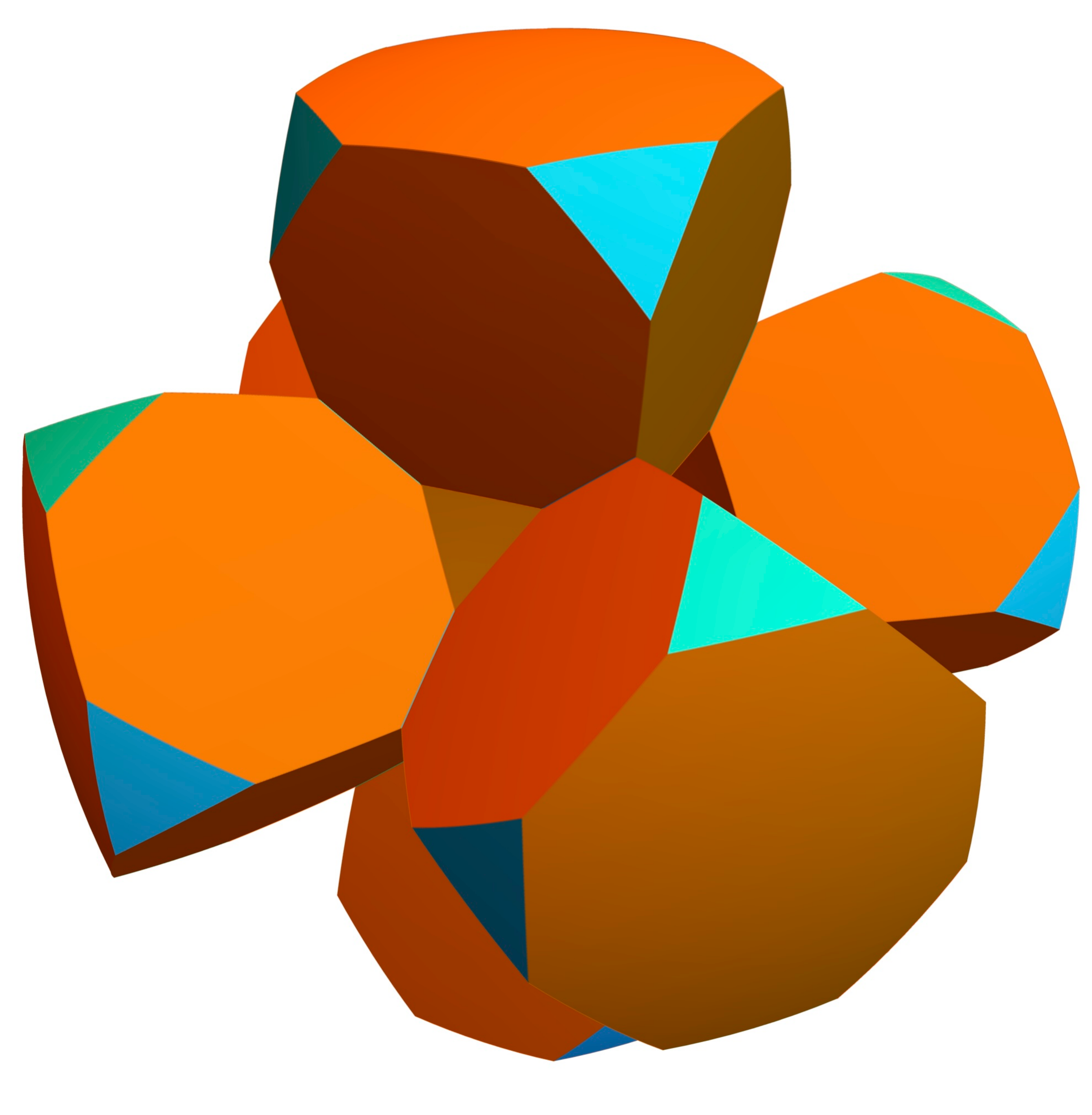}
   \qquad 
   \includegraphics[width=2.5in]{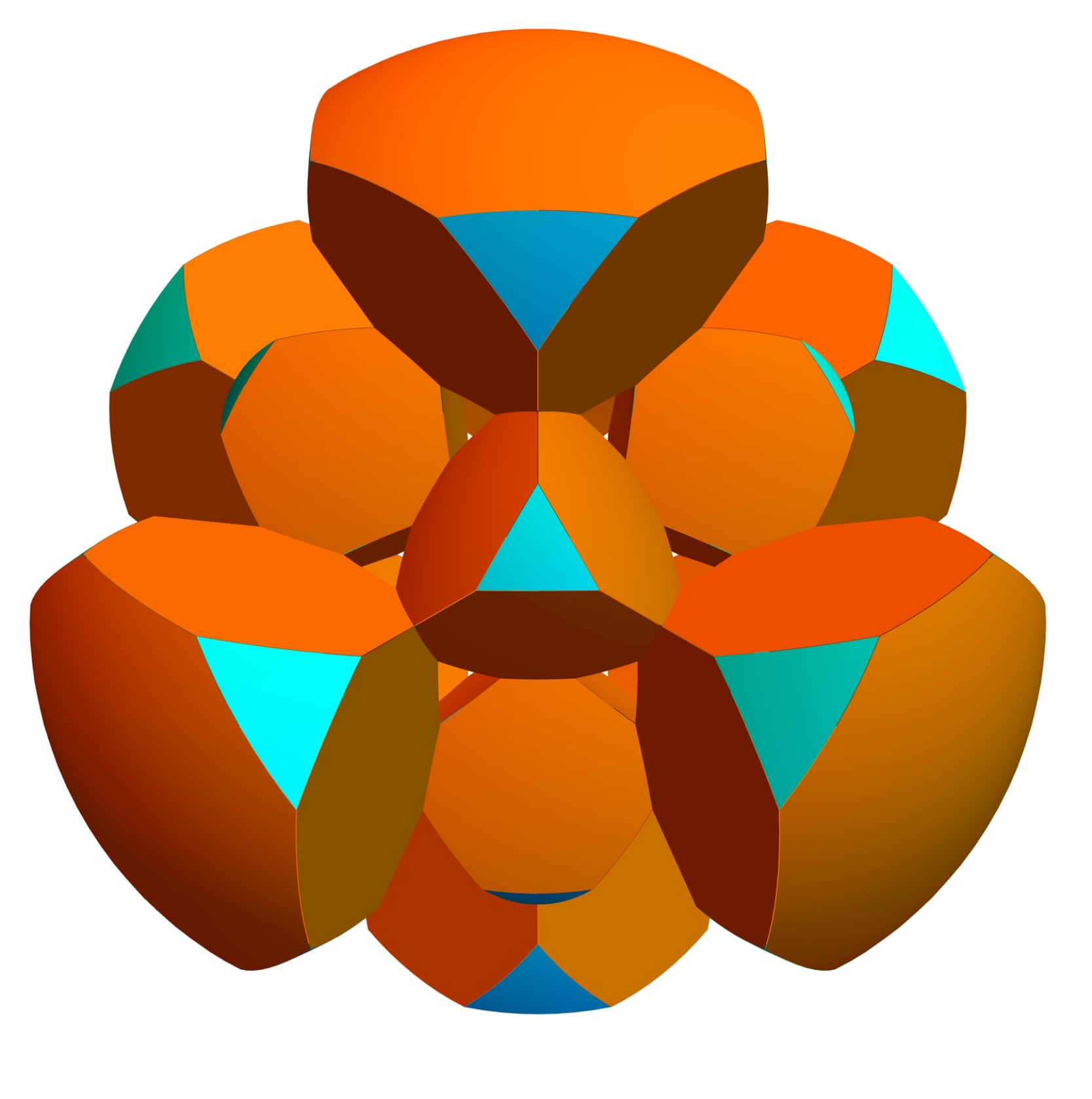}
   \caption{Spherical Truncated Cube Space}
   \label{fig:sphpristruncocta}
\end{figure}

The usual intermediate value argument allows to find a cube  inside the truncated cube so that the octagonal faces of the truncated cube bisect regular antiprisms over the faces of the cube, creating a spherical antiprismatic cube $AP_{4,3}^2$ see figure \ref{fig:sphericalanticube}.

\begin{figure}[h] 
   \centering
   \includegraphics[width=2.5in]{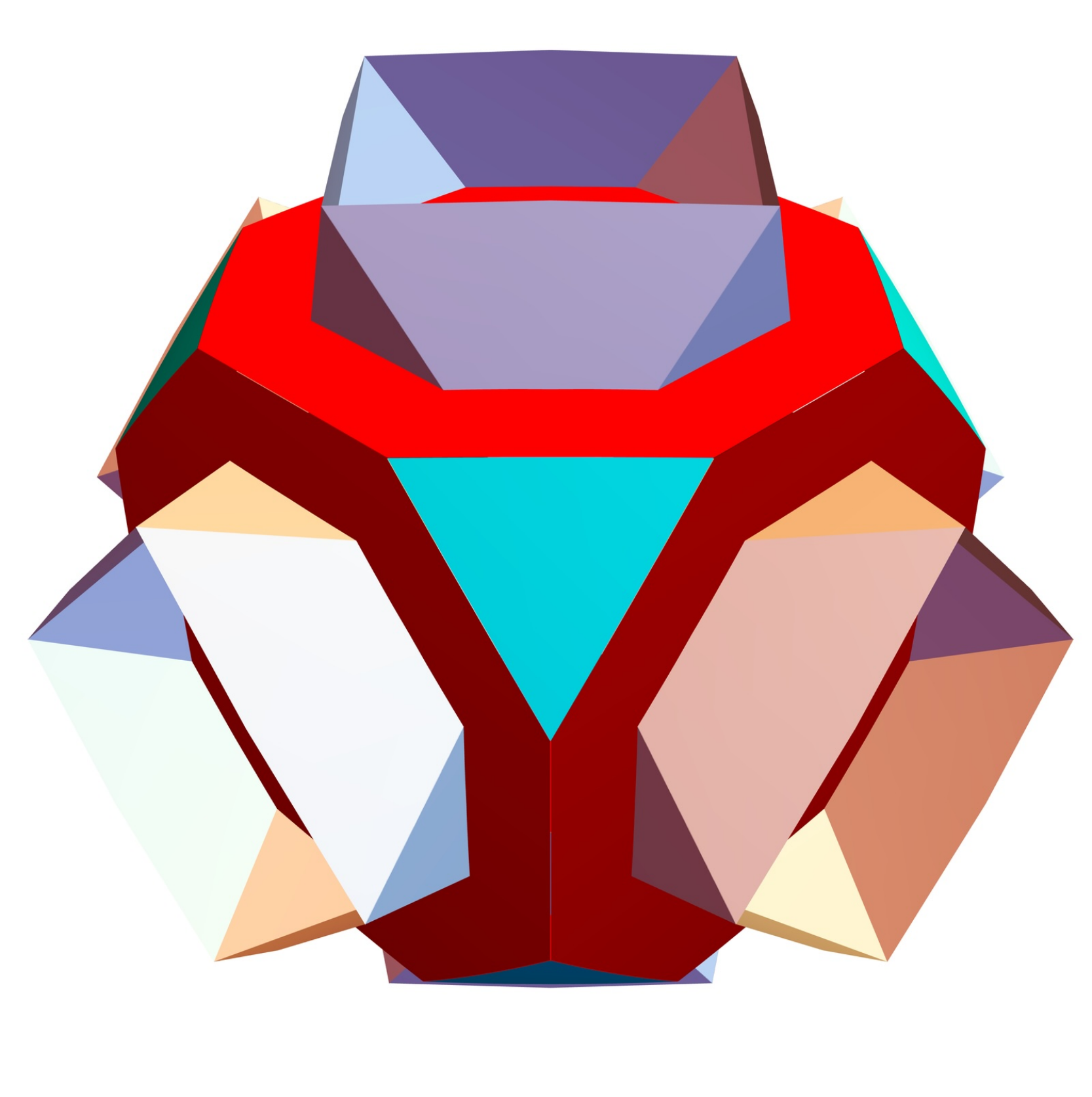}
   \qquad 
   \includegraphics[width=2.5in]{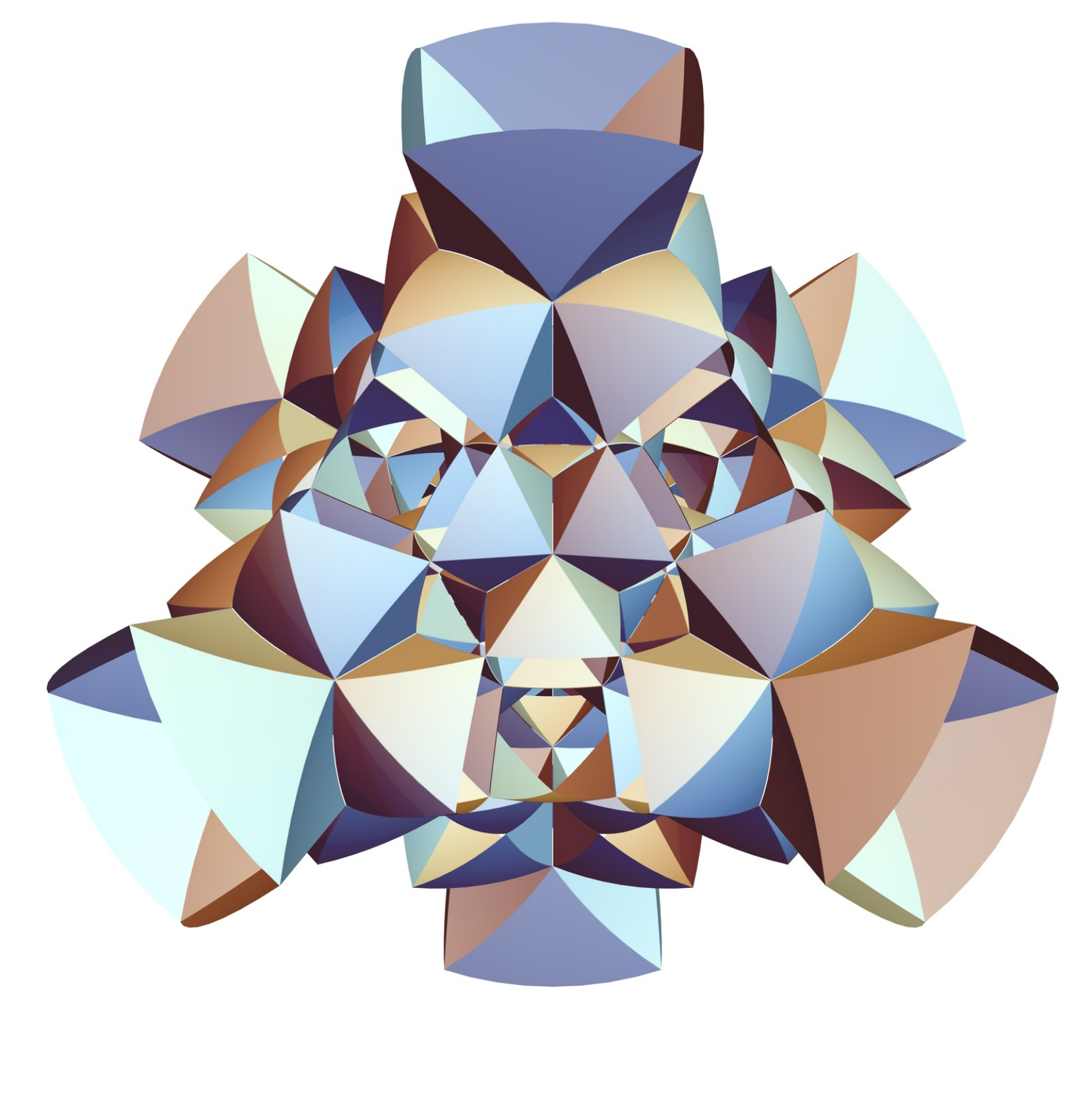}
   \caption{The Spherical Antiprismatic Cube $AP_{4,3}^2$}
   \label{fig:sphericalanticube}
\end{figure}

 \subsection{The Hyperbolic Antiprismatic Cubes}
 
 To construct hyperbolic antiprismatic cubes $AP_{4,3}^n$, we augment hyperbolic truncated cubes with pyramids  over their triangular faces to obtain $n$-akis truncated cubes $KP_{4,3}^n$. The pyramids are chosen so that their  triangular face angles are equal to $2\pi/n$ for $n\ge3$, see figure \ref{fig:hyperanticube}

We have:
 
 \begin{theorem}
 For each $n\ge3$ there exists an $n$-akis  truncated cube $AP_{4,3}^n$ that is a fundamental domain for a discrete group $R_{4,3}^n$ generated by  reflection-rotations at its octagonal faces. Inside $KP_{4,3}^n$  we can find a smaller cube so that regular antiprisms attached to its faces are bisected by the octagonal faces of $KP_{4,3}^n$. Replicating the antiprisms under the group elements of $R_{4,3}^n$ generates a hyperbolic antiprismatic cube $AP_{4,3}^n$.
 \end{theorem}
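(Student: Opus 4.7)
The plan is to execute the same three-stage intermediate-value scheme already used for the antiprismatic octahedron in Section \ref{sec:octa}, adapted to the cubical combinatorics. Stage one produces the $n$-akis truncated cube $KP_{4,3}^n$ satisfying the angle condition; stage two inscribes the correct cube inside it; stage three invokes the Poincaré polyhedron theorem to close up the surface.

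For stage one, I would parametrize the hyperbolic truncated cube by its edge length $l$ and consider the function $F(l)=\alpha(l)+2\beta(l)+2\gamma(l)$, where $\gamma$ is the base/side dihedral angle of the attached pyramid $PY_3^n$ (whose existence for $n\ge 3$ is guaranteed by the discussion in Section \ref{sec:pyramids}). At the Euclidean limit $l\to 0$, Table \ref{tab:plato2} gives $\alpha+2\beta+2\gamma_3\approx 411.06^\circ>2\pi$, and since $\gamma$ is increasing in $n$, this strict inequality persists for all $n\ge 3$. At the ideal limit, the truncated cube becomes ideal and placing an ideal vertex at $\infty$ in the upper half space model exhibits its vertex figure as a Euclidean triangle with angles $\alpha,\beta,\beta$, forcing $\alpha+2\beta=\pi$; meanwhile the pyramid degenerates to the ideal case with $2\gamma=\pi-2\pi/n$, so $F\to 2\pi-2\pi/n<2\pi$. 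By the intermediate value theorem there is a value of $l=l_n$ making $F(l_n)=2\pi$. The Poincaré polyhedron theorem, applied exactly as in the proof of Theorem \ref{thm:anglecond}, then certifies that $KP_{4,3}^n$ is a fundamental domain for the discrete group $R_{4,3}^n$ generated by the rotation-reflections at its octagonal faces.

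For stage two, fix $KP_{4,3}^n$ and place a concentric cube $P_{4,3}(b)$ with $b$ large enough that $P_{4,3}(b)$ sits strictly inside the triangular-pyramid core of $KP_{4,3}^n$. Attach over each face of $P_{4,3}(b)$ a triangular antiprism whose bottom triangle is inscribed in the square face (there is a canonical choice up to a $45^\circ$ twist) and whose top triangle is the image of the bottom under the reflection-rotation at the corresponding octagonal face of $KP_{4,3}^n$. As in the prismatic constructions of Section \ref{sec:hypricu}, one checks that as $b$ increases the hyperbolic edge length of the bottom triangle grows monotonically while the hyperbolic distance from $P_{4,3}(b)$ to the bisecting octagonal face of $KP_{4,3}^n$ decreases monotonically. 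At the two extremes (the small cube, where the antiprism is very tall and skinny; and the limit where $P_{4,3}(b)$ touches the octagonal face of $KP_{4,3}^n$, where the antiprism has zero height) the ratio height/edge passes through every positive value. Intermediate value plus monotonicity yields a unique $b=b_n$ making every attached antiprism regular and bisected by the octagonal face.

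Stage three is essentially automatic: since $KP_{4,3}^n$ is a fundamental domain for $R_{4,3}^n$ and the half-antiprism patch inside it is exactly symmetric under the rotation-reflection face pairings on the octagonal faces, the orbit of the patch under $R_{4,3}^n$ glues into an embedded closed polyhedral surface $AP_{4,3}^n$ with equilateral triangular faces and uniform valency $9$ at each vertex. The hardest step in practice will be stage one: confirming that the ideal-limit computation of $\alpha$ and $\beta$ for the ideal hyperbolic truncated cube really gives $\alpha+2\beta=\pi$ (the upper-half-space argument above is clean but must be carried out carefully) and that monotonicity of $F(l)$ can be used to extract a \emph{unique} $l_n$, rather than merely an existence statement. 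Everything else follows by direct analogy with the octahedral case worked out in Section \ref{sec:octa}.
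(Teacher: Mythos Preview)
Your three-stage scheme is exactly the paper's approach (the paper simply says ``similar to the octahedral case''), and your ideal-limit computation $\alpha+2\beta=\pi$, $2\gamma=\pi-2\pi/n$ is correct for all $n\ge 3$. There is, however, one genuine gap and one terminological slip.

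\textbf{The gap (Stage one, large $n$).} Your other extreme is the Euclidean limit $l\to 0$, and you argue that $\gamma$ increases with $n$. This is fine for $n=3,4,5,6$, where $PY_3^n$ arises from a tetrahedron, octahedron, icosahedron, or Euclidean half-column and exists with arbitrarily small edge length. But for $n\ge 7$ the pyramid $PY_3^n$ exists only in hyperbolic space, and (as in Section~\ref{sec:pyramids}) its base edge length is bounded below by that of a hyperbolic equilateral triangle with interior angle $2\pi/n$; you cannot send $l\to 0$. The fix is the one the paper uses for the octahedron when $n\ge 5$: at the minimal edge length of $PY_3^n$ one has $\gamma\to\pi/2$, while the truncated cube at that edge length is still finite, so its dihedral angles exceed the ideal values and $\alpha+2\beta>\pi$. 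Hence $F>2\pi$ at this extreme, and the intermediate value argument goes through. You should split Stage one into the two ranges $3\le n\le 6$ and $n\ge 7$ accordingly.

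\textbf{The slip (Stage two).} The antiprisms attached to the faces of the inner cube are \emph{square} antiprisms: the bottom is the square face of $P_{4,3}(b)$ and the top is its image under the reflection-rotation, a square rotated by $45^\circ$. They are not ``triangular antiprisms'' with a ``bottom triangle inscribed in the square face''. Your $45^\circ$ twist and valency~$9$ are correct for square antiprisms, so the underlying picture is right; just repair the language. Also, as $b$ increases the cube $P_{4,3}(b)$ \emph{shrinks} and its edge length \emph{decreases} (cf.\ Section~\ref{sec:hypricu}), so the monotonicity directions in your paragraph are reversed, though the IVT conclusion is unaffected.
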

 
 The proof is similar to the one given for octahedra in section \ref{sec:octa}.
 
 \begin{figure}[h] 
   \centering
   \includegraphics[width=2.5in]{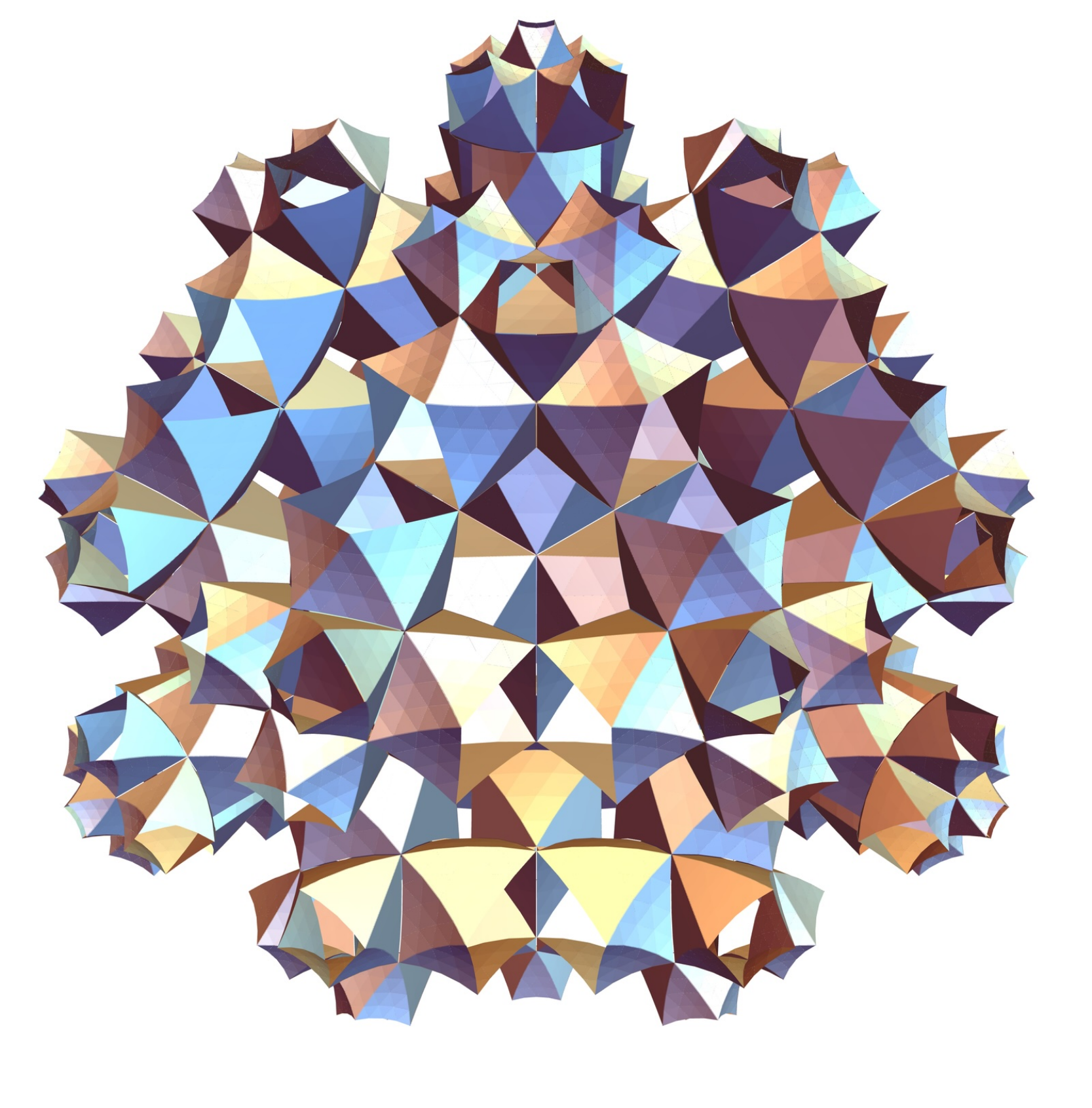}
   \qquad 
   \includegraphics[width=2.5in]{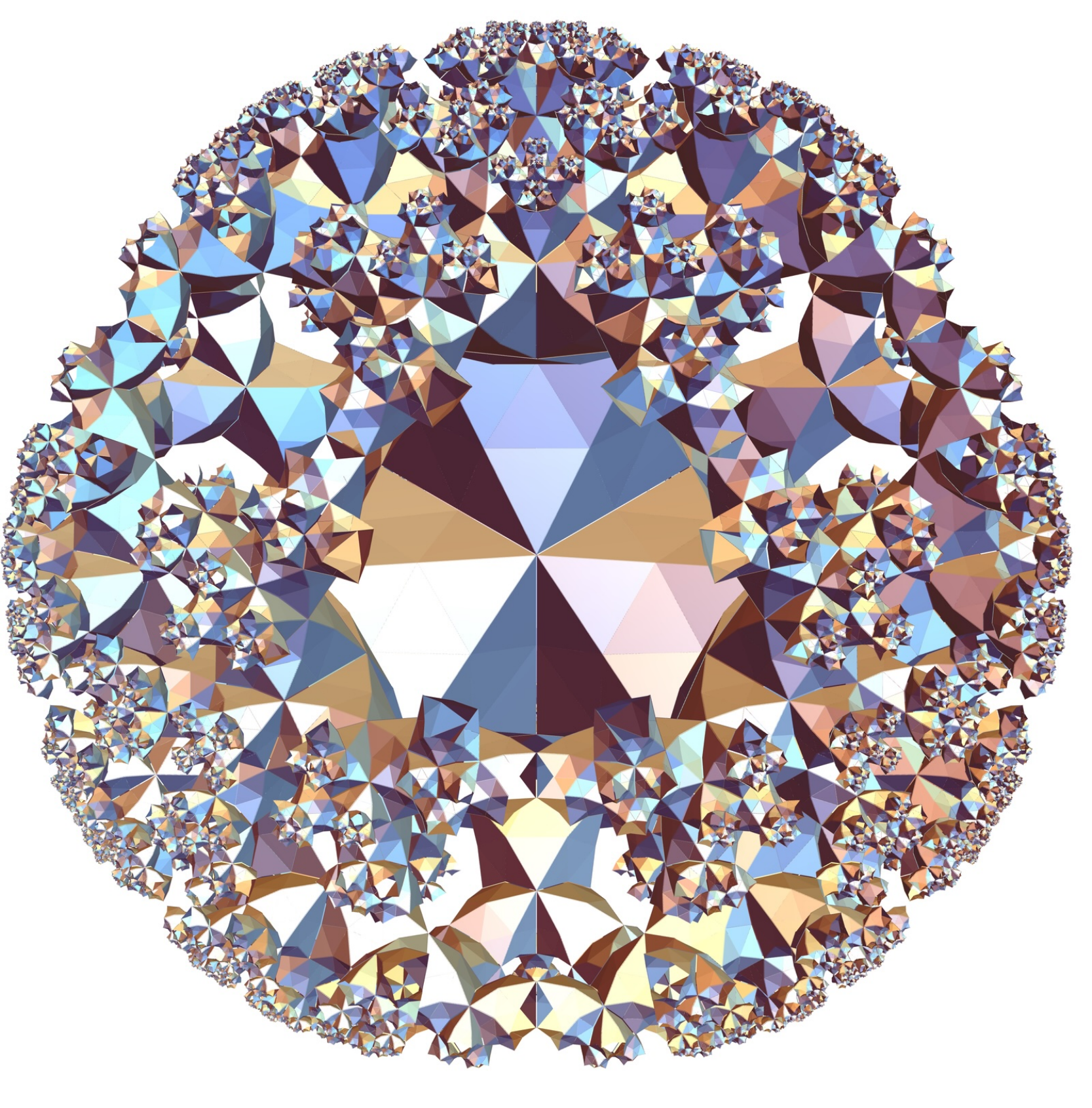}
   \caption{Hyperbolic Antiprismatic Cubes $AP_{4,3}^n$ for $n=3$ and $n=5$}
   \label{fig:hyperanticube}
\end{figure}

\subsection{Antiprismatic Cuboctahedra}\label{sec:apricuboc}

Using the same $n$-akis truncated cubes $KP_{4,3}^n$, we can also construct antiprismatic cuboctahedra $ATP_{4,3}^n$, see figure \ref{fig:hyperanticuboc}. For $n=2$ there is one in $\bS^3$, and for $n\ge 3$ they exist in $\bH^3$. The valency of the vertices is 8.

 \begin{figure}[h] 
   \centering
   \includegraphics[width=2.5in]{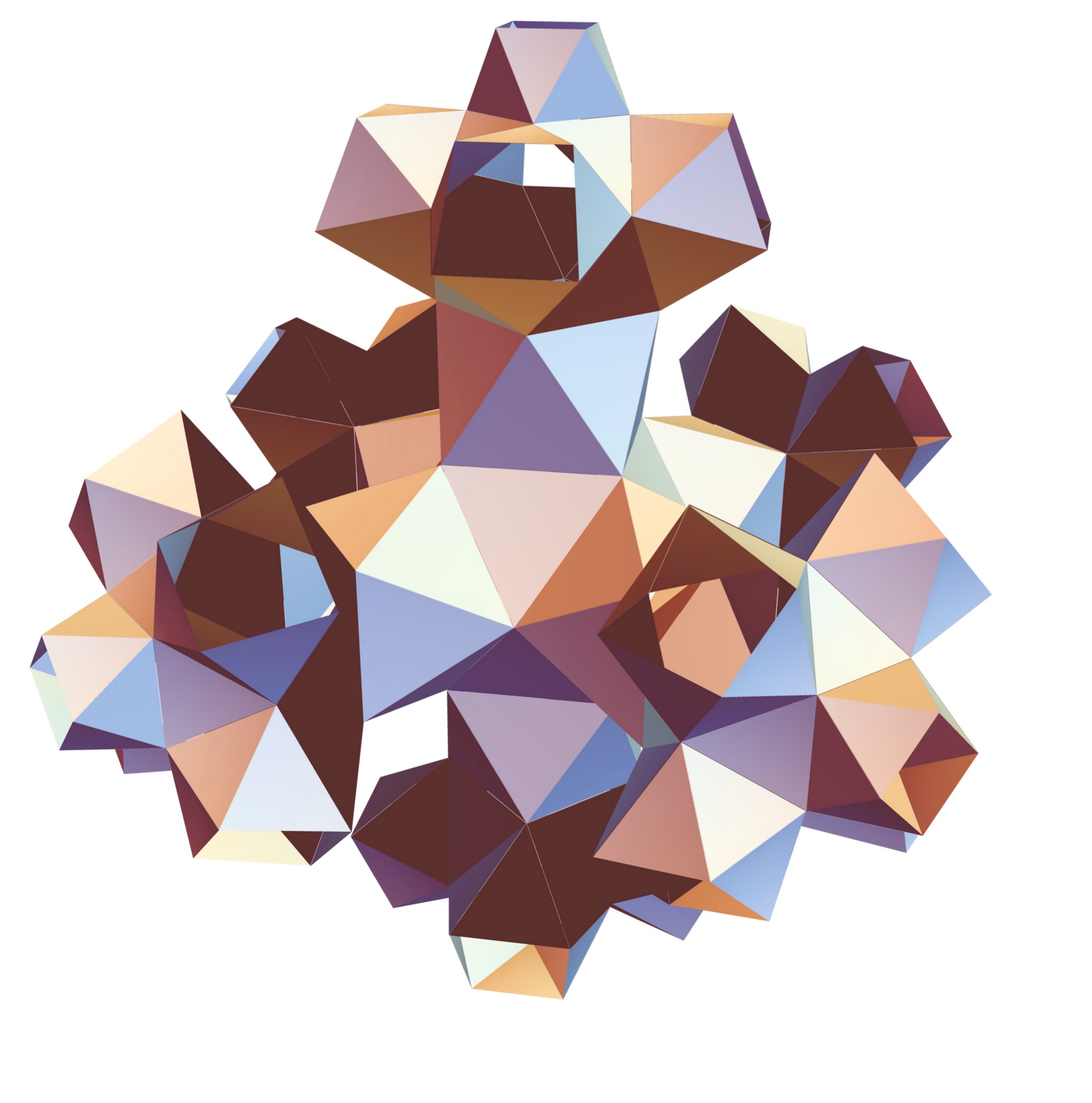}
   \qquad 
   \includegraphics[width=2.5in]{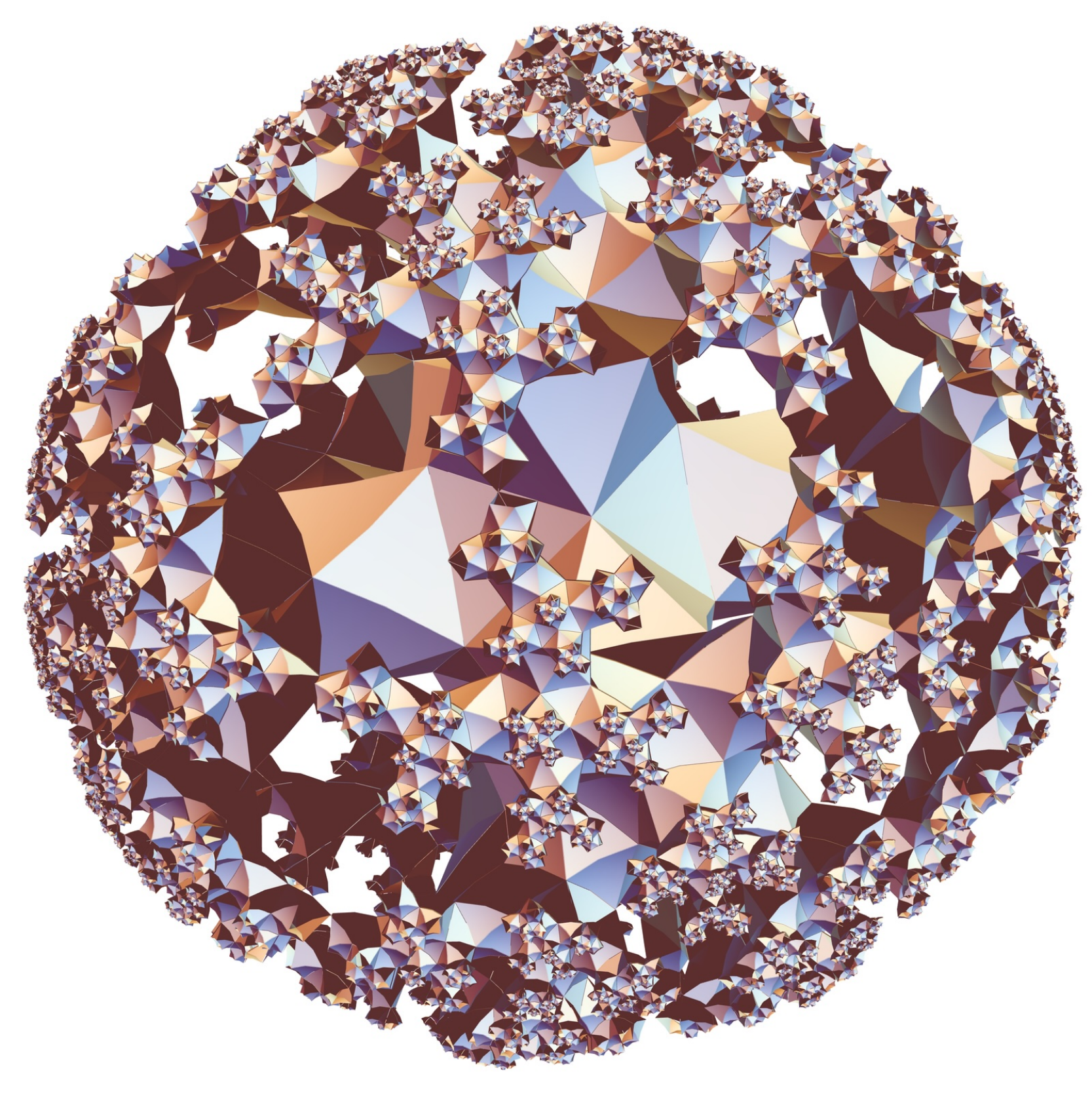}
   \caption{Hyperbolic Antiprismatic Cuboctahedra $ATP_{4,3}^n$ for $n=3$ and $n=5$}
   \label{fig:hyperanticuboc}
\end{figure}

\section{Antiprismatic Hyperbolic Dodecahedra and Icosahedra}

Antiprismatic dodecahedra $AP_{5,3}^n$ (figures \ref{fig:AntiprismaticDodeca2}, \ref{fig:AntiprismaticDodeca3}) and icosahedra  $AP_{3,5}^n$ (figure \ref{fig:AntiprismaticIcosa3}) can only exist in hyperbolic space, as in Euclidean space the angle sum exceeds $360^\circ$ already for $n=2$. 

In the simplest case, for $n=2$, the fundamental domain for the group $R_{5,3}^2$ is a truncated dodecahedron without added pyramids, see Figure \ref{fig:AntiprismaticDodeca2}.

 \begin{figure}[h] 
   \centering
   \includegraphics[width=2in]{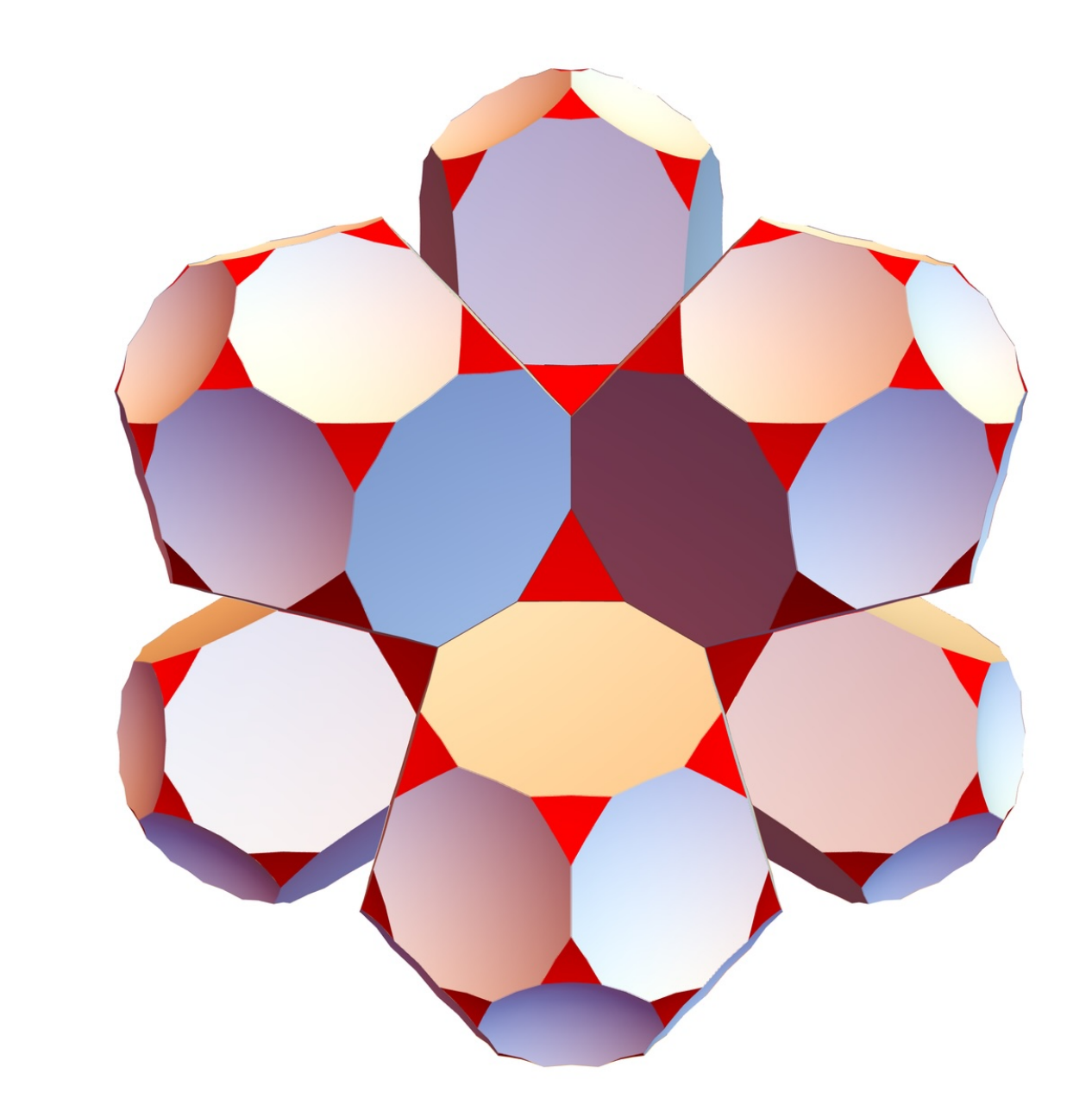}
   \quad
   \includegraphics[width=2in]{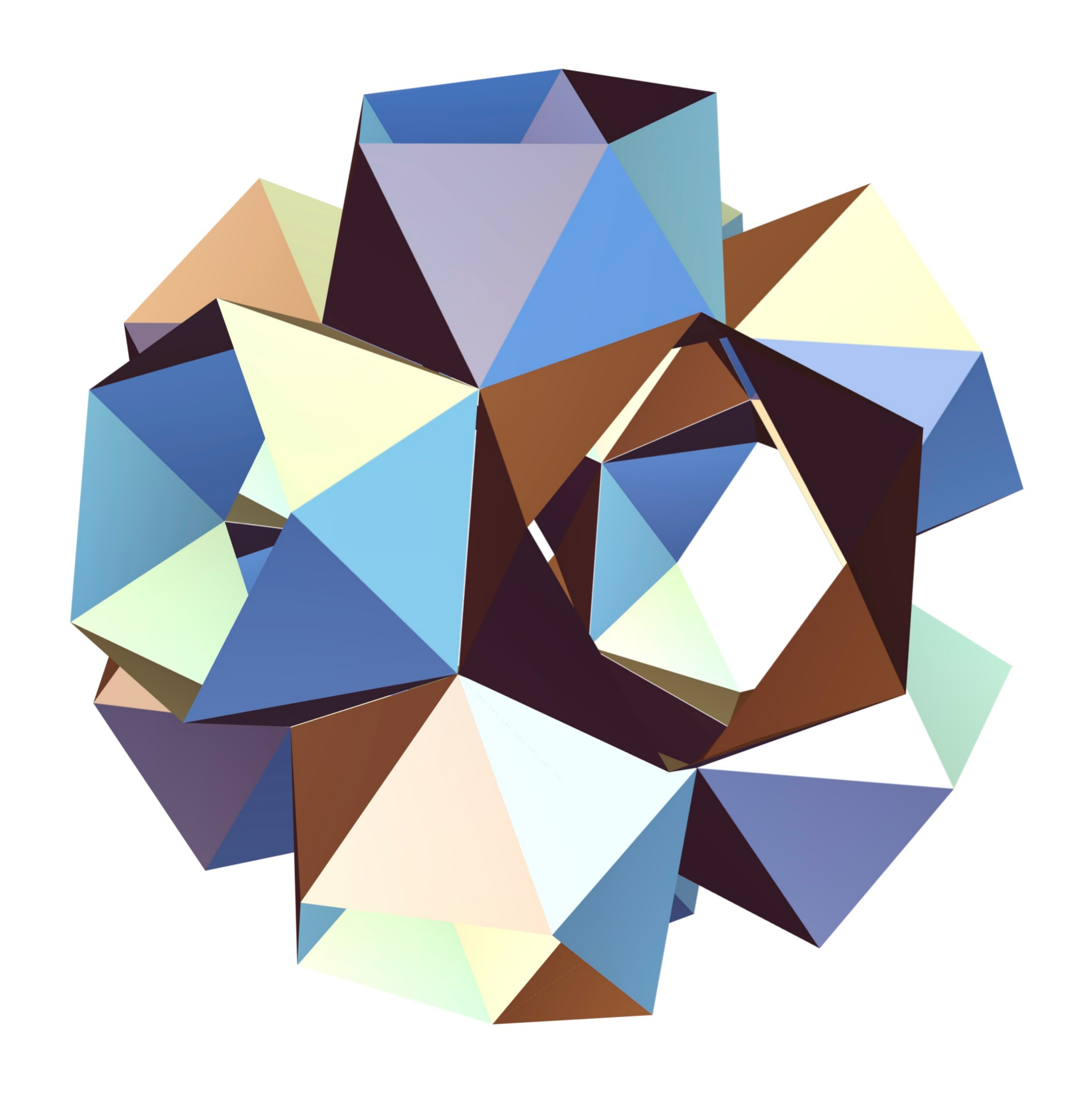}
      \caption{Tiling truncated dodecahedra and an antiprismatic dodecahedron $AP_{5,3}^2$}
   \label{fig:AntiprismaticDodeca2}
\end{figure}

We have

\begin{theorem}
For any $n\ge 2$, there are an antiprismatic dodecahedron $AP_{5,3}^n$ and icosahedron $AP_{3,5}^n$ in hyperbolic space.
\end{theorem}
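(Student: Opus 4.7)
The plan is to follow the template established for the antiprismatic octahedron in Section \ref{sec:octa} and for the Triakis Truncated Tetrahedron, applying it to the dodecahedral/icosahedral setting. The construction has three stages: (a) build a fundamental domain $KP_{5,3}^n$ (resp.\ $KP_{3,5}^n$) that tiles $\bH^3$ under the group $R_{5,3}^n$ (resp.\ $R_{3,5}^n$); (b) use the Poincaré polyhedron theorem to promote the angle condition to discreteness; (c) place a smaller Platonic solid inside and run an intermediate value argument to locate the correct size so that attached antiprisms are bisected by the $2p$-gonal faces of the fundamental domain.

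First I would construct the $n$-akis truncated dodecahedron $KP_{5,3}^n$. For $n=2$ the pyramids degenerate and the fundamental domain is just a hyperbolic truncated dodecahedron, as illustrated in Figure~\ref{fig:AntiprismaticDodeca2}; for $n\ge 3$ we attach pyramids $PY_3^n$ (whose existence is guaranteed by the theorem in Section \ref{sec:pyramids} for all $n\ge 3$) to the triangular faces of a hyperbolic truncated dodecahedron $TP_{5,3}(a)$. Varying the size parameter $a$ gives a one-parameter family; the angle condition $\alpha+2\beta+2\gamma=2\pi$ is then verified by an intermediate value argument. In the near-Euclidean (small) limit, the table in Section 7.3 shows that $\alpha+2\beta+2\gamma_n>2\pi$ (already $401.81^\circ$ for $n=2$ and larger for $n\ge3$, with hyperbolic values exceeding these). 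At the opposite extreme, as the truncated dodecahedron becomes ideal or the pyramid edge length reaches its lower bound, all dihedral angles decrease strictly below their Euclidean counterparts; the explicit ideal computations (analogous to the placement of a vertex at $\infty$ in the upper half-space model that was used in Theorem \ref{thm:anglecond}) yield $\alpha+2\beta+2\gamma<2\pi$. Continuity and the intermediate value theorem then produce the correct value of $a$.

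With $KP_{5,3}^n$ in hand, the Poincaré polyhedron theorem confirms it is a fundamental domain for $R_{5,3}^n$, the group generated by rotation-reflections at its decagonal faces (reflect across a decagon, then rotate by $\pi/5$ about the axis through the face center). The last step mirrors the closing argument for $AP_{3,4}^n$: center a small dodecahedron $P_{5,3}(b)$ inside $KP_{5,3}^n$ and attach regular antiprisms to its faces. For $b$ very small, each antiprism has edge length much less than the distance from its top to the nearest decagonal face; as $b$ grows, either the edge length exceeds that distance, or the dodecahedron expands past a decagonal face. By the intermediate value theorem there is a $b$ at which the decagonal faces of $KP_{5,3}^n$ bisect the antiprisms, and the orbit under $R_{5,3}^n$ assembles into the closed antiprismatic dodecahedron $AP_{5,3}^n$.

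The icosahedral case $AP_{3,5}^n$ is entirely analogous, using the $n$-akis truncated icosahedron $KP_{3,5}^n$ obtained by attaching pyramids $PY_5^n$ to the pentagonal faces of a truncated icosahedron, with the group $R_{3,5}^n$ now generated by reflection-rotations by $\pi/3$ at the hexagonal faces. I expect the main obstacle to be the verification of the angle condition at the small-size extreme when $n$ is large: here the pyramids $PY_5^n$ and $PY_3^n$ become non-compact and the auxiliary generalized Platonic solids $P_{3,n}$ and $P_{5,n}$ of Section \ref{sec:pyramids} must be used, so one must confirm that the pyramid edge length has a finite positive lower bound along the family and that $\alpha+2\beta>\pi$ persists for finite truncated solids (as in the proof for $KP_{3,4}^n$, $n\ge 5$), so that the intermediate value argument still straddles $2\pi$.
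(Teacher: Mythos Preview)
Your approach is exactly what the paper intends: it explicitly declines to give a proof, saying only that ``the proofs are very similar to the other cases but computationally more involved and left to the reader.'' Your outline---build $KP_{5,3}^n$ (resp.\ $KP_{3,5}^n$) from a truncated solid plus pyramids $PY_3^n$ (resp.\ $PY_5^n$), verify the angle condition $\alpha+2\beta+2\gamma=2\pi$ by an intermediate value argument between the near-Euclidean and ideal extremes, invoke Poincar\'e, and then run a second intermediate value argument on the inner solid---is precisely the template of Sections~\ref{sec:octa} and~\ref{sec:tetra} transported to the dodecahedral/icosahedral setting.

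One small correction: in the near-Euclidean limit you write that the ``hyperbolic values exceed'' the Euclidean angle sums. This is backwards---hyperbolic dihedral angles are strictly smaller than their Euclidean counterparts (cf.\ the proof of Theorem~\ref{thm:anglecond}). What actually happens is that as the edge length $l\to 0$ the hyperbolic angle sum \emph{approaches} the Euclidean value from below, and since that Euclidean value already exceeds $2\pi$ (e.g.\ $401.81^\circ$ for the dodecahedron at $n=2$), the inequality $\alpha+2\beta+2\gamma>2\pi$ holds for all sufficiently small $l$. This slip does not affect the validity of your intermediate value argument.
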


The identification space of the antiprismatic dodecahedron has genus 6, is tiled by 60 triangles with valency 9.

The proofs are very similar to the other cases but computationally more involved and left to the reader.

 \begin{figure}[h] 
   \centering
   \includegraphics[width=1.8in]{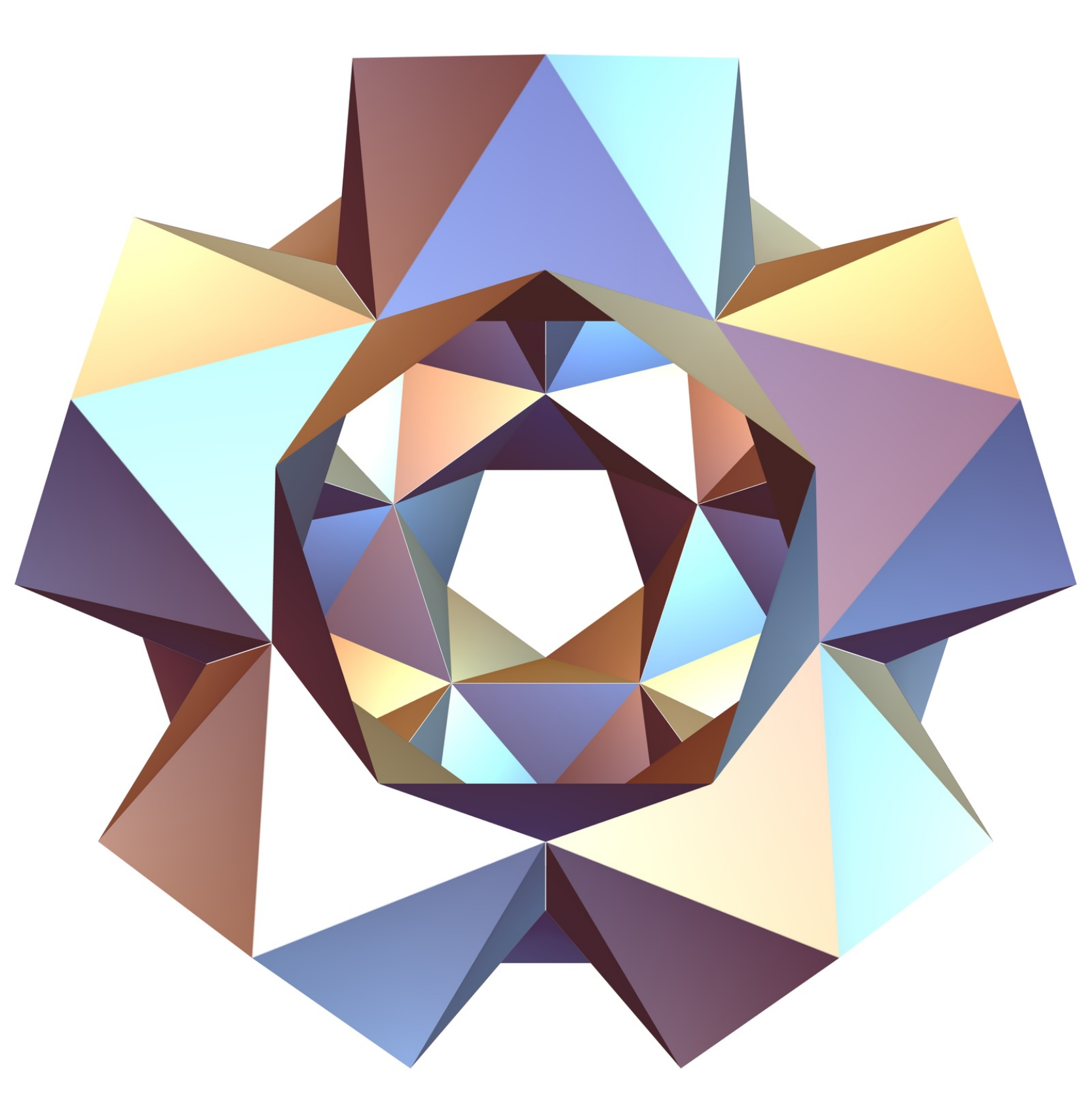}
   \quad
   \includegraphics[width=1.8in]{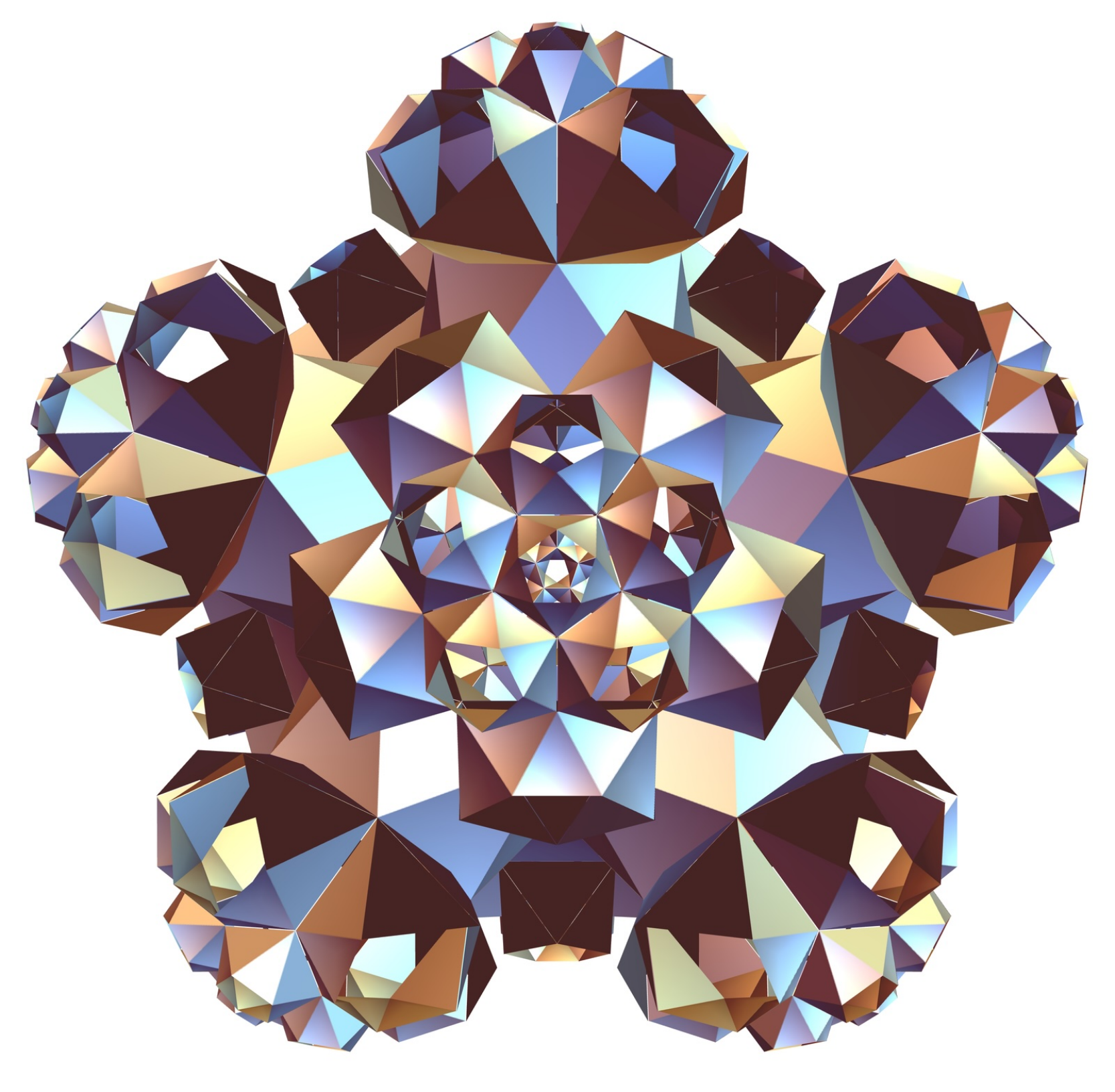}
   \quad
      \includegraphics[width=1.8in]{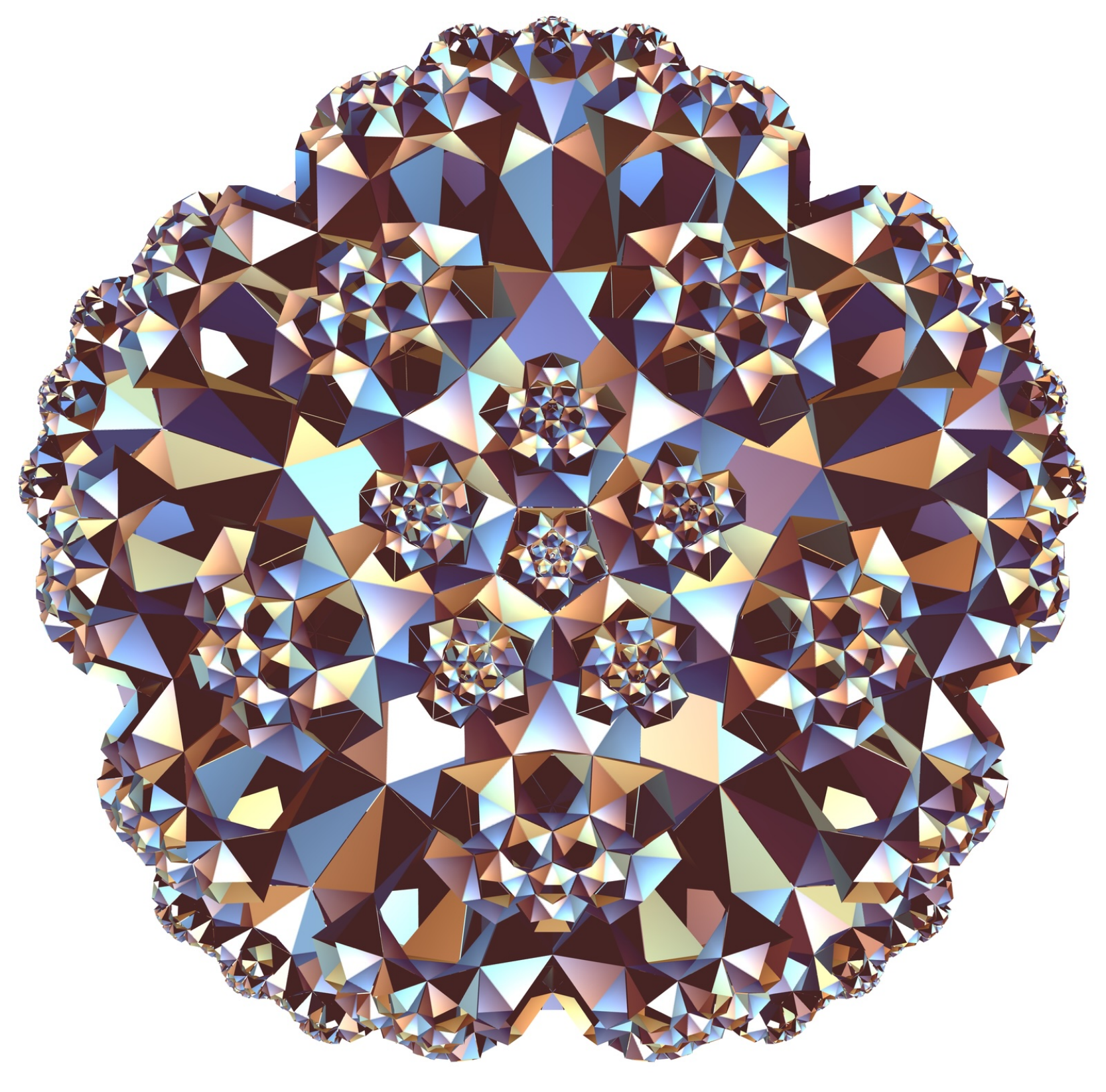}
      \caption{Antiprismatic Dodecahedra for $n=3$}
   \label{fig:AntiprismaticDodeca3}
\end{figure}

There is also the possibility to base the construction on an icosadodecahedron, the rectification of a dodecahedron, using its pentagonal faces for attaching antiprisms and keeping  its triangular faces. The quotient of the antiprismatic icosidodecahedron $ARP_{5,3}^n$ has genus 6, is tiled by 80 triangles with valency 8.

 \begin{figure}[h] 
   \centering
   \includegraphics[width=1.8in]{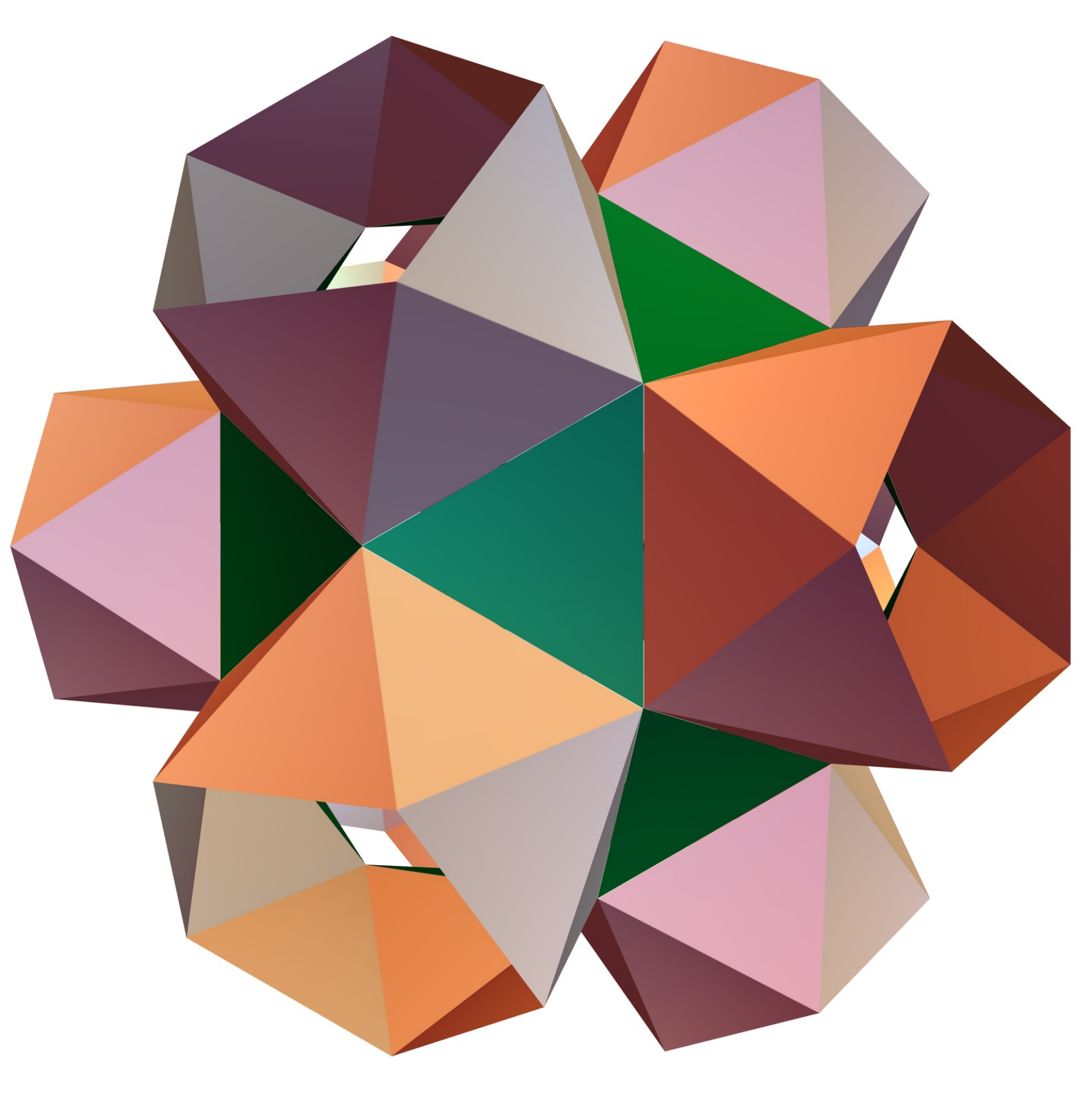}
   \quad
   \includegraphics[width=1.8in]{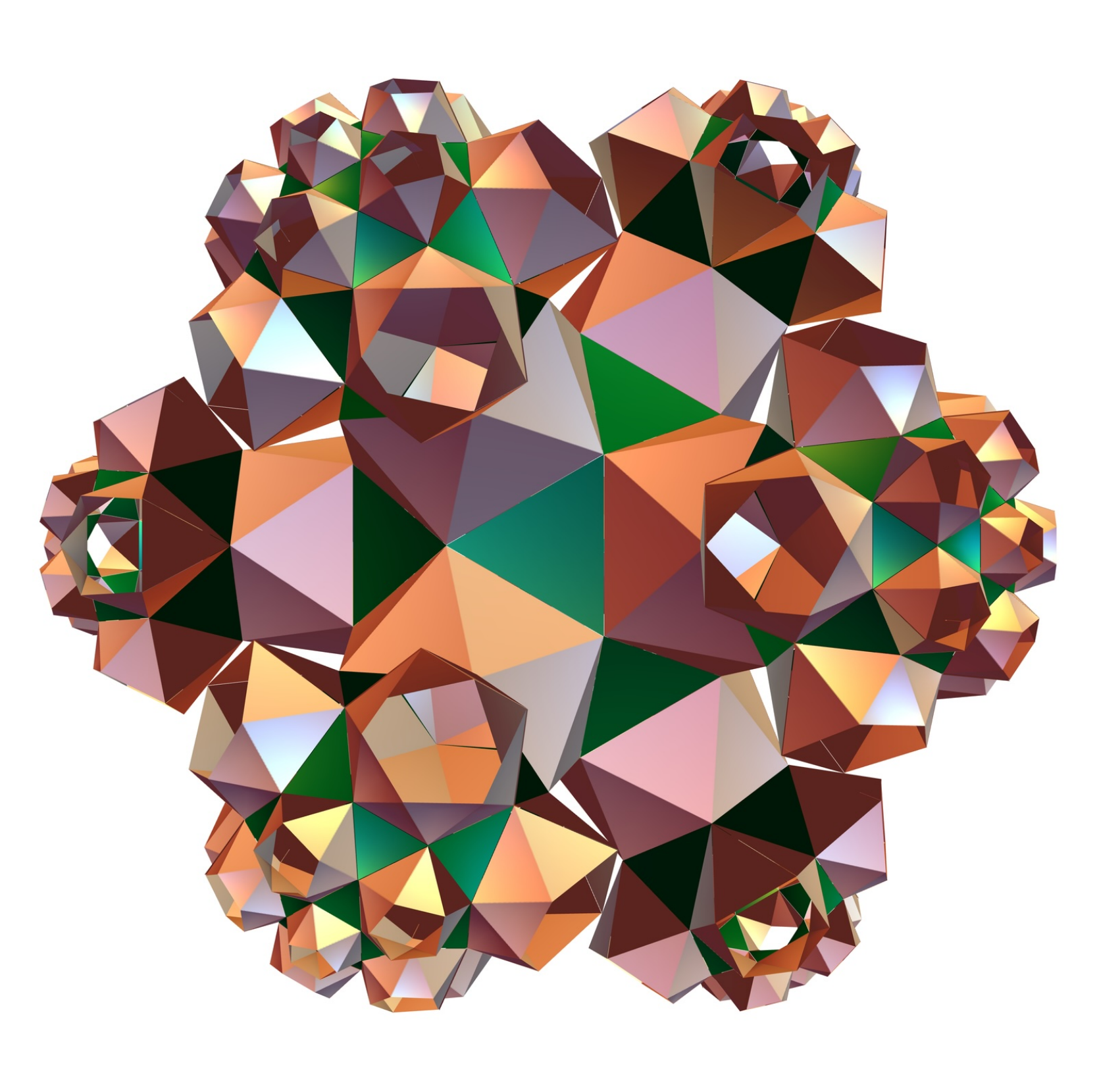}
   \quad
      \includegraphics[width=1.8in]{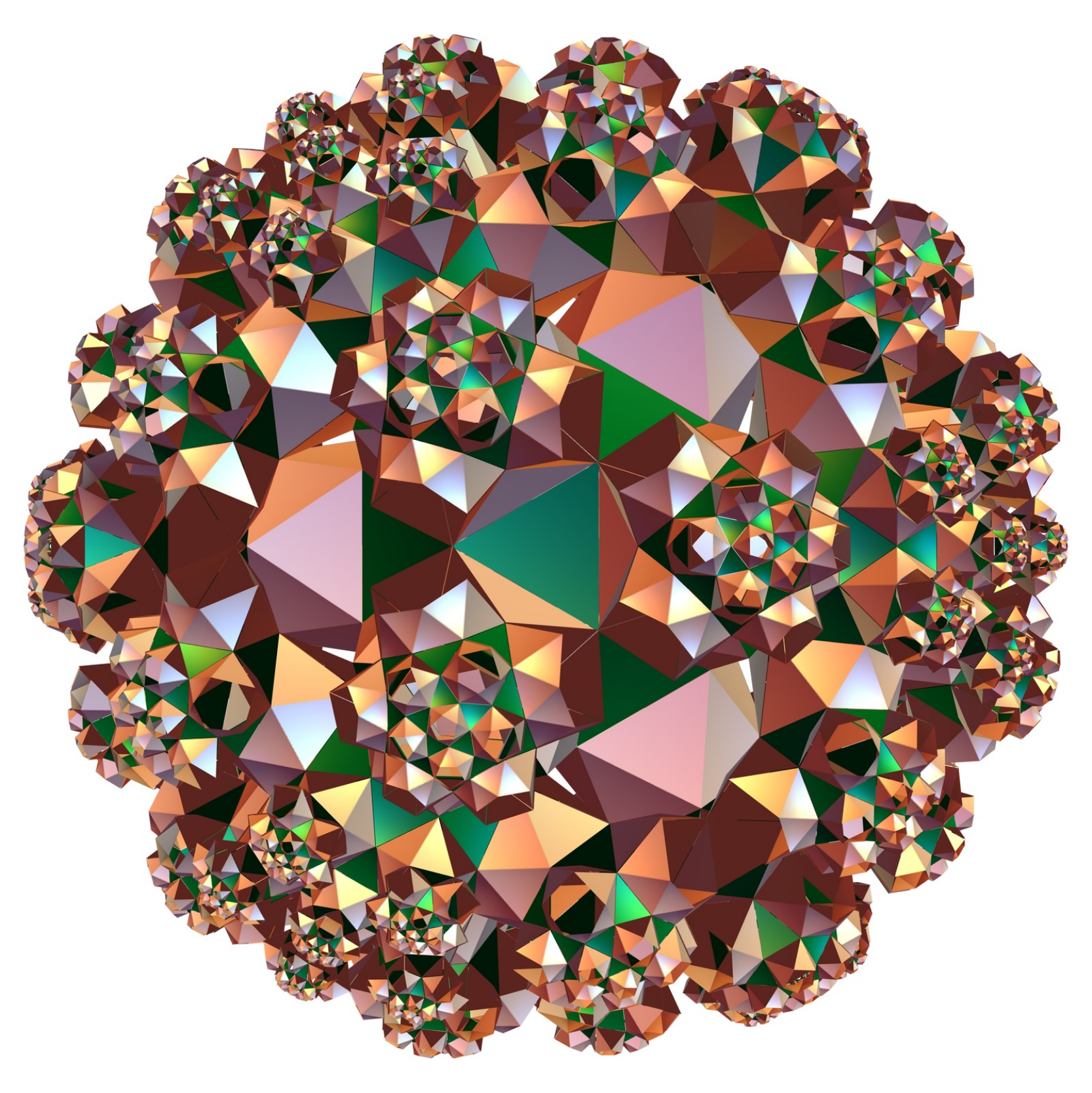}
      \caption{Antiprismatic Icosidodecahedra $ARP_{5,3}^n$ for $n=3$}
   \label{fig:AntiprismaticIcosa3}
\end{figure}

To construct antiprismatic icosahedra in hyperbolic space, we need pyramids $PY_5^n$ for $n\ge 2$. For $n=2$ these are just pentagons, for $n=3$ they can be obtained by subdividing a Platonic dodecahedron, and for $n\ge 4$ we can follow the same  construction as in section \ref{sec:pyramids}.

 \begin{figure}[h] 
   \centering
   \includegraphics[width=1.8in]{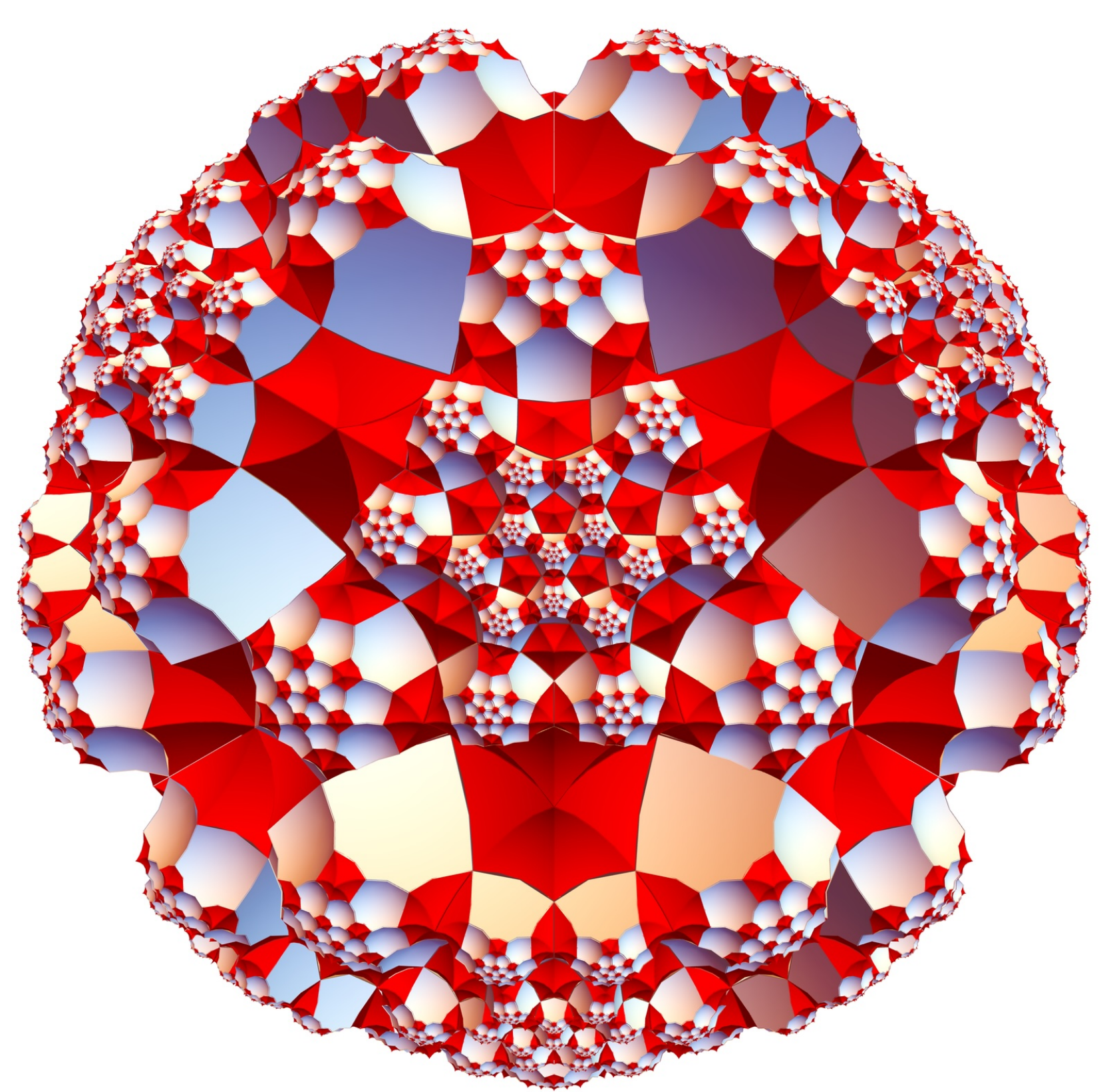}
   \quad
   \includegraphics[width=1.8in]{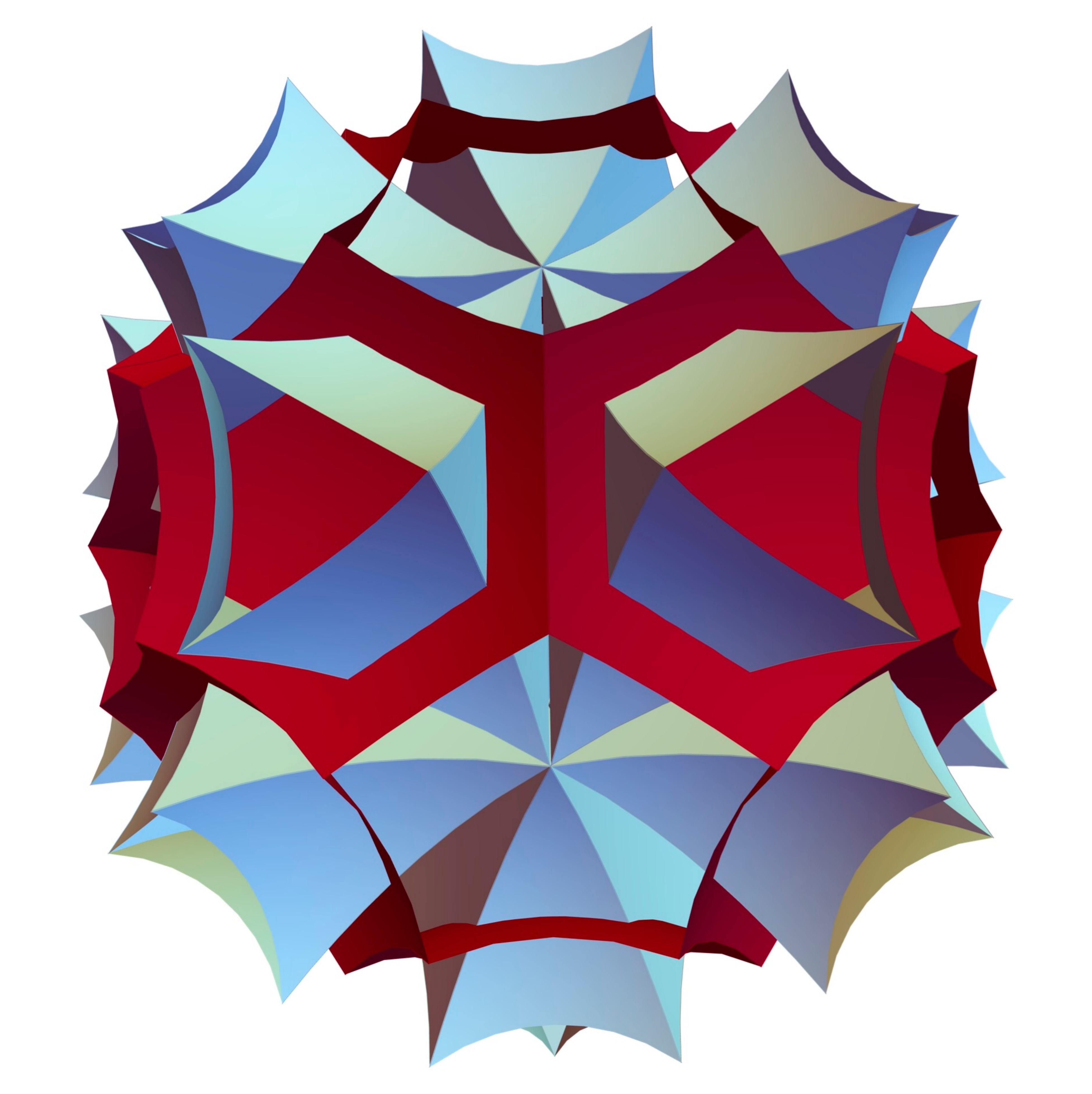}
    \quad
   \includegraphics[width=1.8in]{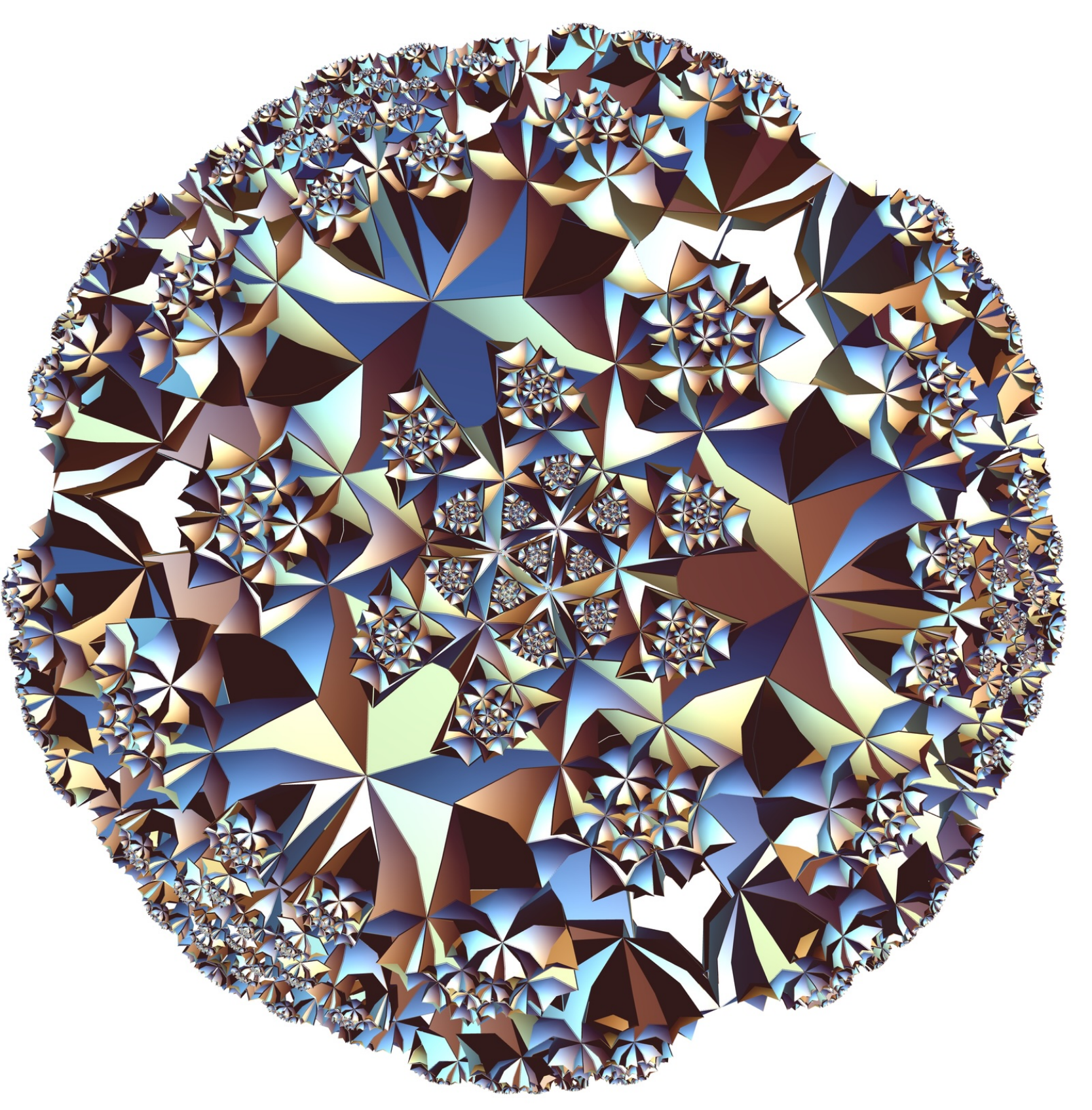}
      \caption{Tiling with $KP_{3,5}^3$ and an antiprismatic icosahedron $AP_{3,5}^3$}
   \label{fig:AntiprismaticIcosahedra}
\end{figure}

\bibliography{minlit}
\bibliographystyle{alpha}

\end{document}